\documentclass[10pt]{amsart}

\usepackage{amsmath,amssymb,amsfonts}
\usepackage{amsthm}
\usepackage{appendix}
\usepackage{xcolor}
\usepackage{physics}
\usepackage{bbm}
\usepackage[a4paper,margin=3cm]{geometry}
\usepackage[colorlinks=true,citecolor=magenta,linkcolor=blue]{hyperref}
\usepackage{fancyhdr}
\numberwithin{equation}{section}
\theoremstyle{plain}
\newtheorem{theorem}{Theorem}[section]

\newtheorem{proposition}[theorem]{Proposition}
\newtheorem{lemma}[theorem]{Lemma}

\newtheorem{corollary}[theorem]{Corollary}

\theoremstyle{definition}
\newtheorem{defn}[theorem]{Definition}
\newtheorem{example}[theorem]{Example}

\theoremstyle{remark}
\newtheorem{remark}[theorem]{Remark}

\newcommand{\thmref}[1]{Theorem~\ref{#1}}
\newcommand{\secref}[1]{\S\ref{#1}}
\newcommand{\lemref}[1]{Lemma~\ref{#1}}
\newcommand{\defref}[1]{Definition~\ref{#1}}
\newcommand{\propref}[1]{Proposition~\ref{#1}}

\newcommand{\corref}[1]{Corollary~\ref{#1}}

\newcommand{\R}{\mathbb{R}}
\newcommand{\U}{\mathcal{U}}
\newcommand{\F}{\mathcal{F}}
\newcommand{\B}{\mathcal{B}}
\newcommand{\V}{\mathcal{V}}
\newcommand{\N}{\mathbb{N}}
\newcommand{\M}{\mathcal{M}}
\newcommand{\ind}{\mathbbm{1}}
\newcommand{\pres}{\mathrm{P}}
\newcommand{\prob}{\mathbb{P}}
\newcommand{\tp}{\mathrm{top}}
\newcommand{\id}{\mathrm{id}}
\newcommand{\ran}{\mathrm{r}}
\newcommand{\probinte}{\dd\prob(\omega)}
\newcommand{\supnorm}[1]{\Vert #1 \Vert_{\infty}}
\newcommand{\onenorm}[1]{\Vert #1 \Vert_{1}}
\newcommand{\almost}{\prob\text{-a.e. }\omega\in\Omega}

\DeclareMathOperator{\diam}{\mathrm{diam}}
\DeclareMathOperator{\esssup}{\mathrm{ess\,sup}}

\begin{document}
\title{Induced and nonlinear topological pressure for random dynamical systems}
\author{Cunyi Nan}
\date{\today}
\address{Graduate School of Mathematics, Nagoya University,
Furocho, Chikusa-ku, Nagoya, 464-8602, Japan}
\email{cunyi.nan.h5@math.nagoya-u.ac.jp}
\subjclass[2020]{Primary 37D35; Secondary 37A35, 37H05}
\thanks{{\it Keywords and phrases}: thermodynamic formalism, random dynamical systems, induced topological pressure, nonlinear topological pressure, variational principle}
\begin{abstract}
    In this paper, we investigate induced and nonlinear fiber topological pressure for random dynamical systems. We define a non-averaged induced fiber pressure via spanning and separated sets, characterize it as the pseudo-inverse of the classical fiber topological pressure studied previously, and establish the corresponding variational principle. We also define the nonlinear fiber pressure and prove the associated variational principles for these types of pressures. Finally, we extend the combined theory to the higher-dimensional setting.
\end{abstract}
\maketitle
\section{Introduction}
Entropy and pressure are among the most fundamental topological invariants in dynamical systems. In the late 1950s, A. Kolmogorov \cite{kolmogorov1,kolmogorov2} and Ya. G. Sina\u{\i} \cite{sinai} introduced the measure-theoretic notion of entropy for measure-preserving transformations, now called the \emph{Kolmogorov--Sina\u{\i} entropy}. Motivated by this development, R. Adler, G. Konheim, and M. McAndrew \cite{adlerkonheim} introduced \emph{topological entropy} in 1965 by means of open covers, and later R. Bowen \cite{bowen1,bowen2} and E. Dinaburg \cite{dinaburg} gave equivalent formulations in terms of spanning and separated sets.

Another central quantity in thermodynamic formalism is \emph{topological pressure}, whose origins lie in statistical physics. It was introduced by D. Ruelle \cite{ruelle} for expansive maps and subsequently extended by P. Walters \cite{walters2} to general continuous maps. The bridge between entropy and pressure is provided by the variational principle, which identifies topological pressure with the supremum of the sum of measure-theoretic entropy and the integral of the potential; see \secref{sec2.1} for a brief review and \cite{walters} for a detailed account.

Induced versions of entropy and pressure arise naturally when time is reparametrized by a positive observable. In 1998, S. V. Savchenko \cite{savchenko} introduced a notion of entropy for special flows over countable Markov shifts via Krengel's formula and proved existence and uniqueness results for measures of maximal entropy. In this setting, the flow is generated by a positive \emph{scaling function}, also called a height function. Building on this idea, J. Jaerisch, M. Kesseb\"{o}hmer, and S. Lamei \cite{jaerisch} introduced induced topological pressure for H\"{o}lder potentials on countable Markov shifts, and later Z. Xing and E. Chen \cite{xing1} extended the theory to general topological dynamical systems.

To recall their definition, let $(X,f)$ be a topological dynamical system, let $\varphi,\psi\in C(X,\R)$ with $\psi>0$, and fix $T>0$. Define the induced-time set
\[S_{T}:=\{n\in\N\colon \text{there exists }x\in X\text{ with }S_{n}\psi(x)\leq T<S_{n+1}\psi(x)\}\]
and, for each $n\in S_T$, the corresponding partition element $X_{n}:=\{x\in X\colon S_{n}\psi(x)\leq T<S_{n+1}\psi(x)\}$. The \emph{$\psi$-induced topological pressure} of the potential $\varphi$ is then defined by
\[\pres_{\psi}(f,\varphi):=\lim_{\epsilon\to0}\limsup_{T\to\infty}\frac{1}{T}\log\inf_{F_n}\sum_{n\in S_T}\sum_{x\in F_n}e^{S_n\varphi(x)},\]
where the infimum is taken over all $(n,\epsilon)$-spanning sets $F_n$ of $X_n$. Z. Xing and E. Chen proved the variational principle
\[\pres_{\psi}(f,\varphi)=\sup\left\{\frac{h_{\mu}(f)}{\int_X\psi\,\dd\mu}+\frac{\int_X\varphi\,\dd\mu}{\int_X\psi\,\dd\mu}\colon \mu\in\M(X,f)\right\}\] where $\M(X,f)$ denotes the set of $f$-invariant Borel probability measures on $X$; see also \secref{sec2.1}.

For random dynamical systems, M. Rahimi and A. Ghodrati \cite{rahimi1} introduced an \emph{induced average pressure} together with Abramov--Rokhlin formula. More precisely, let $f=\{f_\omega\}_{\omega\in\Omega}$ be a family of continuous maps on a compact metric space $(X,d)$ over a base system $(\Omega,\F,\prob,\theta)$, where $(\Omega,\F,\prob)$ is a probability space with compact state space $\Omega$ and $\theta\colon\Omega\to\Omega$ is an invertible continuous map preserving $\prob$ with finite topological entropy. Let $\psi>0$ be a continuous scaling function on $J:=\Omega\times X$, and let $\varphi$ be a continuous potential on $J$ with continuous $\omega$-sections $\varphi_\omega\colon X\to\R$. If $\{E_k\}_{k\ge 1}$ is a sequence of maximal separated sets in $\Omega$, let $k_T:=\lfloor T/\inf\psi\rfloor+1$ for $T>0$, then the corresponding induced average pressure is defined by
\begin{equation}\label{average}
    \overline{\pres}_{\psi,\Omega}(f,\varphi):=\lim_{\epsilon\to0}\limsup_{T\to\infty}\frac{1}{k_T}\log\frac{1}{\#E_{k_T}}\sum_{\omega\in E_{k_T}}\inf_{F_n}\sum_{n\in S_T}\sum_{x\in F_n}e^{S_n\varphi_\omega(x)},
\end{equation}
where the infimum is taken over all $(\omega,k_T,\epsilon)$-spanning sets $F_n$ of $X_{\omega,n}$. We refer to \cite[Section IV]{rahimi1} for further properties. When $\psi\equiv\ind$, this pressure reduces to the average pressures studied in \cite{rahimi2,rahimi3}.

The first goal of the present paper is to introduce in \secref{sec3} a non-averaged induced pressure for random dynamical systems, defined directly through spanning and separated sets. More precisely, we define
\[\pres_{\psi,\Omega}(f,\varphi):=\lim_{\epsilon\to0}\limsup_{T\to\infty}\frac{1}{T}\int_\Omega \log A(\omega,\varphi)\,\probinte,
\]
where $A(\omega,\varphi)$ is determined by the weighted spanning or separated sets associated with the induced-time partition and is measurable in $\omega$. We show that this quantity is well defined, provide an equivalent formulation in terms of open covers in \propref{randominducedcover}, and prove in \corref{critical} that it is given by the pseudo-inverse of the classical fiber topological pressure, in the spirit of \cite{jaerisch}. Our first main result is the following variational principle; see \thmref{randominducedvp}. Here $L^{1}(\Omega,C(X))$ denotes the space of functions $\varphi\colon J\to\R$ such that $(\omega,x)\mapsto\varphi_\omega(x)$ is measurable and continuous in $x\in X$, and $\int_\Omega \supnorm{\varphi_\omega}\,\probinte<\infty$. 
The set of all $\Theta$-invariant probability measures on $J$ with marginal $\prob$ on $\Omega$ is denoted by $\M_\prob(f)$.
\begin{theorem}
    Let $(X,f)$ be a random dynamical system with potential $\varphi\in L^{1}(\Omega,C(X))$, and let $\psi\in L^1(\Omega,C(X))$ with $\psi>0$. Then
    \[\pres_{\psi,\Omega}(f,\varphi)=\sup\left\{\frac{h_{\mu}^{\ran}(f)}{\int_{J}\psi\,\dd\mu}+\frac{\int_{J}\varphi\,\dd\mu}{\int_{J}\psi\,\dd\mu}\colon\mu\in\M_{\prob}(f)\right\}.\]
\end{theorem}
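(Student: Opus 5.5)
The plan is to reduce the statement to the classical variational principle for the fiber topological pressure of random dynamical systems (Bogenschütz, Kifer), by way of the pseudo-inverse characterization of \corref{critical}. That corollary expresses $\pres_{\psi,\Omega}(f,\varphi)$ as
\[\beta^*:=\inf\{\beta\in\R\colon \pres_{\Omega}(f,\varphi-\beta\psi)\le 0\},\]
or equivalently as the unique zero of $\beta\mapsto \pres_\Omega(f,\varphi-\beta\psi)$. Here $\pres_\Omega$ is the standard fiber topological pressure. The random variational principle for $L^1(\Omega,C(X))$ potentials gives
\[\pres_\Omega(f,\varphi-\beta\psi)=\sup_{\mu\in\M_\prob(f)}\Bigl(h^{\ran}_\mu(f)+\int_J\varphi\,\dd\mu-\beta\int_J\psi\,\dd\mu\Bigr).\]
Note that $\varphi-\beta\psi\in L^1(\Omega,C(X))$ because both $\varphi$ and $\psi$ lie in this space.

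First I record the properties of $\beta\mapsto\pres_\Omega(f,\varphi-\beta\psi)$ that make the pseudo-inverse meaningful. It is convex (a supremum of affine functions of $\beta$) and nonincreasing. If $\psi\ge c>0$, or at least $\int\psi\,\dd\mu>0$ for every $\mu$, it is strictly decreasing, and it tends to $\mp\infty$ as $\beta\to\pm\infty$. These properties should already be available, since \corref{critical} relies on them. I will take the right-hand side supremum $s$ over $\mu$ with $\int\psi\,\dd\mu\in(0,\infty)$, which holds automatically for $\psi>0$ integrable.

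Next, the lower bound $\beta^*\ge s$. Fix $\mu\in\M_\prob(f)$ and set $\beta<\bigl(h^{\ran}_\mu+\int\varphi\,\dd\mu\bigr)/\int\psi\,\dd\mu$. Then the term for $\mu$ in the variational formula is positive, so $\pres_\Omega(f,\varphi-\beta\psi)>0$ and hence $\beta\le\beta^*$. Taking the supremum over such $\beta$ and then over $\mu$ gives $s\le\beta^*$.

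Then the upper bound $\beta^*\le s$. Fix $\beta>s$. For every $\mu$ we have $h^{\ran}_\mu+\int\varphi\,\dd\mu-\beta\int\psi\,\dd\mu\le (s-\beta)\int\psi\,\dd\mu\le 0$, so the supremum is $\le 0$ and hence $\beta^*\le\beta$. Letting $\beta\downarrow s$ gives the claim, including the case $s=+\infty$, which is vacuous, and $s$ finite. I also need to check that $s>-\infty$, which follows since $\M_\prob(f)\neq\emptyset$.

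The main obstacle is not this algebra but making sure the hypotheses of \corref{critical} and of the random variational principle are genuinely met in the stated generality. If $\psi$ is only positive rather than bounded below by a positive random variable, the induced-time sets $S_T$ and the strict monotonicity of $\beta\mapsto\pres_\Omega(f,\varphi-\beta\psi)$ need care. If problems arise, I would first check whether \corref{critical} already covers this case. If it does not, I would approximate $\psi$ from below by $\max(\psi,\delta)$ or use $\int\psi\,\dd\mu>0$ directly, then pass to the limit using monotonicity of both sides in $\psi$.
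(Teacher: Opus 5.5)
Your proposal is correct and takes essentially the paper's route: you combine the pseudo-inverse characterization of \corref{critical} with the variational principle \thmref{ranvp}. Each term $h_\mu^{\ran}(f)+\int_J(\varphi-\beta\psi)\,\dd\mu$ is written as $\bigl(\frac{h_\mu^{\ran}(f)+\int_J\varphi\,\dd\mu}{\int_J\psi\,\dd\mu}-\beta\bigr)\int_J\psi\,\dd\mu$ and its sign is compared. Your contrapositive phrasing of the second direction ($\beta>s$ forces every term to be $\le 0$) is slightly cleaner than the paper's. The paper passes from $\sup\ge 0$ to a bound on the supremum of the ratios, and this silently needs $\int_J\psi\,\dd\mu$ to be bounded away from $0$ uniformly in $\mu$.
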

The second theme of this paper concerns \emph{nonlinear thermodynamic formalism}. Following the work of J. Buzzi, B. Kloeckner, and R. Leplaideur \cite{buzzi}, one replaces the usual Birkhoff sum $S_n\varphi$ in the exponent by a nonlinear transformation of its average, namely $nF(S_n\varphi/n)$ for a continuous function $F\colon\R\to\R$. The classical setting is recovered when $F$ is the identity. This point of view was further developed for higher-dimensional systems and flows by L. Barreira and C. Holanda \cite{barreira,barreira1}. We refer to \cite{yang,ding,zhu3} for more recent studies on nonlinear pressure. W. Ma, Y. Zhao, and H. Zhu \cite{ma} combined nonlinear formalism with induced pressure in the deterministic setting, interpreting the induced parameter as a nonhomogeneous local time. We next develop the analogous construction of nonlinear pressure in the setting of random dynamical systems. In \secref{sec4.2} we introduce the nonlinear fiber topological pressure $\pres_{\Omega}^{F}(f,\cdot)$ and establish the following variational principle, which summarizes \thmref{vp} and \thmref{nonlinearconvex}.
\begin{theorem}
    Let $(X,f)$ be a random dynamical system, and let $\varphi\in L^{1}(\Omega,C(X))$. Let $F\colon\R\to\R$ be a continuous function. If either the pair $(f,\varphi)$ has an abundance of ergodic measures, or $F$ is convex, then
    \[\pres_{\Omega}^{F}(f,\varphi)=\sup\left\{h_{\mu}^{\ran}(f)+F\left(\int_J\varphi\,\dd\mu\right)\colon\mu\in\M_{\prob}(f)\right\}.\]
\end{theorem}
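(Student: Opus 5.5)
We prove the two inequalities separately. The lower bound holds for every continuous $F$, and the upper bound is where the two hypotheses enter.

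\emph{Lower bound.} Fix $\mu\in\M_\prob(f)$. We first treat ergodic $\mu$. Put $a=\int_J\varphi\,\dd\mu$ and fix $\delta>0$. By the random Birkhoff ergodic theorem, for $\prob$-a.e. $\omega$ the set of $x$ with $|S_n\varphi_\omega(x)/n-a|<\delta$ has $\mu_\omega$-measure tending to $1$. The random Katok entropy formula (or Brin--Katok applied fiberwise) then gives, for large $n$, an $(\omega,n,\epsilon)$-separated set $E$ inside this set with $\#E\ge e^{n(h^{\ran}_\mu(f)-\delta)}$. Every point of $E$ contributes $e^{nF(S_n\varphi_\omega(x)/n)}\ge e^{n(F(a)-\eta(\delta))}$, where $\eta$ is the modulus of continuity of $F$ near $a$. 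Integrating the logarithm over $\Omega$ and letting $\delta\to0$ gives $\pres^F_\Omega(f,\varphi)\ge h^{\ran}_\mu(f)+F(a)$.

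For non-ergodic $\mu$ we use the hypotheses.
\begin{itemize}
\item Under abundance of ergodic measures, choose ergodic $\nu$ with $h^{\ran}_\nu\ge h^{\ran}_\mu-\delta$ and $|\int\varphi\,\dd\nu-\int\varphi\,\dd\mu|<\delta$, then use continuity of $F$.
\item If $F$ is convex, write $F(t)=\sup_{\beta}(\beta t-F^*(\beta))$. By the linear random variational principle, $\pres_\Omega(f,\beta\varphi)\ge h^{\ran}_\mu+\beta\int\varphi\,\dd\mu$. The lower bound for non-ergodic $\mu$ is more naturally obtained from the upper-bound identity proved below, so we defer it.
\end{itemize}

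\emph{Upper bound.} This is the main obstacle. We cover the range $[-\int\supnorm{\varphi_\omega},\int\supnorm{\varphi_\omega}]$ (fiberwise $[-\supnorm{\varphi_\omega},\supnorm{\varphi_\omega}]$) by finitely many intervals $I_j$ of length $\delta$ and split each separated set $E_n(\omega)$ according to which interval contains $S_n\varphi_\omega(x)/n$. The sum is then at most $\#\{j\}\cdot\max_j e^{n\max_{I_j}F}\,\#E^{j}_n(\omega)$.

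For each $j$ we need a measure $\mu\in\M_\prob(f)$ with $\int\varphi\,\dd\mu\in \overline{I_j}$ (up to $\delta$) and $h^{\ran}_\mu\ge\limsup\frac1n\int\log\#E^j_n$. We build it by the Misiurewicz construction in the random setting: take the fiber measures $\sigma_{\omega,n}=\frac{1}{\#E}\sum\delta_x$ on the maximizing class, average along the orbit $\frac1n\sum_k\Theta^k_*$, and pass to a limit point in $\M_\prob(f)$ with the fiber-wise partition entropy estimate. This is exactly the argument used in the proof of the linear variational principle, now applied to the restricted set. Since Birkhoff averages are controlled, the integral of $\varphi$ passes to the limit because $\varphi\in L^1(\Omega,C(X))$. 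The one subtlety is that the index $j$ depends on $\omega$ and $n$. We handle this by choosing measurably the maximizing $j$ and using the pigeonhole principle to fix one $j$ along a subsequence with a positive-measure, $\Theta$-invariantly redistributed set. Alternatively, we apply the bound $\sum_j a_j\le N\max_j a_j$ before integrating and then use the integrability of $\log$ with a uniform-in-$\omega$ choice of $j$ from compactness. I expect this measurability/uniformity step to need the most care.

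Combining the two bounds yields $\pres^F_\Omega\le\sup_\mu\{h^{\ran}_\mu+F(\int\varphi\,\dd\mu)\}+O(\delta)$, and letting $\delta\to0$ gives the upper bound. No hypothesis is needed for this direction.

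\emph{Convex case lower bound.} For convex $F$ and non-ergodic $\mu$, use the ergodic decomposition $\mu=\int\mu_\tau\,d\tau$. Entropy is affine, so $h^{\ran}_\mu=\int h^{\ran}_{\mu_\tau}$. We need to show the supremum over all $\mu$ equals the supremum over ergodic $\mu$. Convexity alone goes the wrong way, since $F(\int\cdot)\le\int F(\cdot)$. So instead we compare both sides with the Legendre formulation: $\pres^F_\Omega\ge\sup_\beta(\pres_\Omega(f,\beta\varphi)-F^*(\beta))$, which follows by restricting to orbits with near-constant averages, plus duality. Then $\sup_\beta\sup_\mu(h^{\ran}_\mu+\beta\int\varphi-F^*(\beta))=\sup_\mu(h^{\ran}_\mu+F(\int\varphi))$, using the linear variational principle and Fenchel--Moreau for convex continuous $F$.
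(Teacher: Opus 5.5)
Your overall plan matches the paper's: a Katok-type lower bound for ergodic $\mu$, abundance or convexity to reach all of $\M_\prob(f)$, and level-set splitting plus a Misiurewicz construction for the upper bound. However, the upper bound has a genuine gap exactly at the step you flag, and neither proposed fix closes it.

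\textbf{The upper bound.} After pigeonholing fiberwise you must bound $\int_\Omega\max_j a^j_n(\omega)\,\dd\prob(\omega)$, where $a^j_n(\omega)=\frac1n\log\#E^j_n(\omega)+\max_{I_j}F$. A Misiurewicz construction for one fixed $j$ controls only $\int_\Omega a^j_n\,\dd\prob$, and $\int\max_j\le\max_j\int$ is false in general. Your two fixes do not repair this. If you choose $j(\omega)$ measurably and build one measure from the sets $E^{j(\omega)}_n(\omega)$, the limit $\mu$ has $\int_J\varphi\,\dd\mu\approx\int_\Omega c_{j(\omega)}\,\dd\prob$. You then get $F$ of an average of levels, not the average of $F(c_{j(\omega)})$. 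Fixing $j$ on a positive-measure set of $\omega$ does not give an element of $\M_\prob(f)$, since the marginal must be $\prob$, and it discards the rest of the integral. No compactness argument makes the maximizing $j$ independent of $\omega$.

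In fact the inequality is false without ergodicity of $\theta$, which your upper-bound sketch never uses. Take $\Omega=\{\omega_1,\omega_2\}$, $\theta=\id$, $\prob$ uniform, $X$ a single point, $\varphi_{\omega_1}\equiv1$, $\varphi_{\omega_2}\equiv-1$ and $F(t)=t^2$. Then $\pres^F_\Omega(f,\varphi)=1$, while the unique $\mu\in\M_\prob(f)$ gives $h^{\ran}_\mu(f)+F(\int_J\varphi\,\dd\mu)=F(0)=0$. So ``no hypothesis is needed'' is wrong. What is missing is a mechanism that makes the optimal level deterministic.

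One mechanism that works when $\theta$ is ergodic:
\begin{enumerate}
\item For each level pick $\beta_j$ from a finite set; then $\#E^j_n(\omega)\le\exp(-n\min_{c\in\overline{I_j}}\beta_jc)\,P_\omega(f,\beta_j\varphi,n,\epsilon)$.
\item Move $\limsup_n$ and $\lim_{\epsilon\to0}$ inside the integral. Reverse Fatou applies because $\frac1n\log P_\omega(f,\beta\varphi,n,\epsilon)\le\log\#\U+\frac{|\beta|}{n}\sum_{i<n}\supnorm{\varphi_{\theta^i\omega}}$ is uniformly integrable, for a finite cover $\U$ with $\diam\U\le\epsilon$; the $\epsilon$-limit is handled by monotonicity.
\item Replace the fiberwise limits by the constants $\pres_\Omega(f,\beta\varphi)$ via \thmref{randompres}.
\item Finish with the linear variational principle and a concave-conjugate argument. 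Here $\inf_\beta(\pres_\Omega(f,\beta\varphi)-\beta c)$ is the upper semicontinuous regularization of the concave function $c\mapsto\sup\{h^{\ran}_\mu(f):\int_J\varphi\,\dd\mu=c\}$. This bounds the growth rate of $\#E^j_n$ by $\sup\{h^{\ran}_\mu(f):|\int_J\varphi\,\dd\mu-c_j|\le\delta'\}+\eta$.
\end{enumerate}
This also makes the restricted Misiurewicz construction unnecessary. The paper's \lemref{vplem2} glosses over the same point: it treats the index $i$ as independent of $\omega$ and simply assumes the entropy bound for the limit measure. So following it will not fill the gap.

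\textbf{Boundedness of $\varphi$.} Covering the range of $S_n\varphi_\omega(x)/n$ by finitely many intervals needs $\esssup_\omega\supnorm{\varphi_\omega}<\infty$. The correct fiberwise bound is $\frac1n\sum_{i<n}\supnorm{\varphi_{\theta^i\omega}}$, which is not controlled uniformly in $\omega$ and $n$ by $\supnorm{\varphi_\omega}$ or $\onenorm{\varphi}$. This is not cosmetic. Take $X$ a point, $\theta$ ergodic, $\varphi_\omega=g(\omega)$ with $0\le g\in L^1(\prob)\setminus L^2(\prob)$, and $F(t)=t^2$; here $F$ is convex and abundance holds trivially. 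Then $\frac1n\int_\Omega\log Q^F_\omega(f,\varphi,n,\epsilon)\,\dd\prob=\int_\Omega(\frac1n\sum_{i<n}g\circ\theta^i)^2\,\dd\prob=\infty$ for every $n$. The right-hand side, however, is $(\int_\Omega g\,\dd\prob)^2$. So you must assume $\varphi$ essentially bounded; the paper's proof implicitly needs this too.

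\textbf{The convex case.} Your remark that convexity ``goes the wrong way'' is mistaken. Decompose $\mu=\int\nu\,\dd\tau(\nu)$ into ergodic components; these lie in $\M_\prob(f)$ because $\prob$ is ergodic, and $h^{\ran}_\mu(f)=\int h^{\ran}_\nu(f)\,\dd\tau(\nu)$. Jensen then gives $h^{\ran}_\mu(f)+F(\int_J\varphi\,\dd\mu)\le\int(h^{\ran}_\nu(f)+F(\int_J\varphi\,\dd\nu))\,\dd\tau(\nu)$. That is exactly the needed direction, and it is the paper's proof of \thmref{nonlinearconvex}.

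Your Legendre route is still correct, and simpler than you describe. Fenchel--Young gives pointwise $P^F_\omega(f,\varphi,n,\epsilon)\ge e^{-nF^*(\beta)}P_\omega(f,\beta\varphi,n,\epsilon)$, with no restriction to orbits with near-constant averages, and it avoids the ergodic decomposition of fiber entropy. Pushed further, supporting lines at finitely many points give $F(t)\le\max_{\beta\in G}(\beta t-F^*(\beta))+\eta$ on the bounded range. The interchange argument above then gives $\pres^F_\Omega(f,\varphi)\le\eta+\max_{\beta\in G}(\pres_\Omega(f,\beta\varphi)-F^*(\beta))$. So in the convex case no level-set construction is needed at all.
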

Finally, we introduce the higher-dimensional nonlinear induced pressure for random dynamical systems. We prove that this pressure is given by the pseudo-inverse and establish the associated variational principle; see \propref{highzero} and \thmref{nonlinearinducedvp}.
\begin{theorem}
    Let $(X,f)$ be a random dynamical system with potential $\Phi=(\varphi_1,\ldots,\varphi_d)\in (L^{1}(\Omega,C(X)))^{d}$, and let $\psi\in L^{1}(\Omega,C(X))$ with $\psi>0$. Let $F\colon\R^d\to\R$ be a continuous function. If either the pair $(f,\Phi)$ has an abundance of ergodic measures, or $F$ is convex, then
    \[\pres_{\psi,\Omega}^{F}(f,\Phi)=\sup\left\{\frac{h_{\mu}^{\ran}(f)}{\int_J\psi\,\dd\mu}+\frac{F\left(\int_J\Phi\,\dd\mu\right)}{\int_J\psi\,\dd\mu}\colon\mu\in\M_{\prob}(f)\right\}.\]
\end{theorem}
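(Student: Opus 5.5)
The proof reduces the induced nonlinear pressure to the (non-induced) nonlinear fiber pressure via the pseudo-inverse characterization, and then uses the nonlinear variational principle. First, by \propref{highzero}, $\pres_{\psi,\Omega}^{F}(f,\Phi)$ is the critical value
\[
\pres_{\psi,\Omega}^{F}(f,\Phi)=\inf\{\beta\in\R\colon \pres_{\Omega}^{F_\beta}(f,(\Phi,\psi))\le 0\},
\]
where the augmented potential $(\Phi,\psi)$ takes values in $\R^{d+1}$ and $F_\beta(a,t):=F(a)-\beta t$. Since $F$ is continuous, $F_\beta$ is continuous. Moreover, $F_\beta$ is convex whenever $F$ is, and abundance of ergodic measures for $(f,\Phi)$ passes to $(f,(\Phi,\psi))$, because $\psi$ is integrated linearly. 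So the hypotheses of the higher-dimensional versions of \thmref{vp} and \thmref{nonlinearconvex} hold for the pair $(F_\beta,(\Phi,\psi))$. (If these are only stated for $d=1$, I will first note that their proofs extend verbatim to $\R^d$-valued potentials.) They give
\[
\pres_{\Omega}^{F_\beta}(f,(\Phi,\psi))=\sup_{\mu\in\M_\prob(f)}\Bigl\{h^{\ran}_\mu(f)+F\Bigl(\int_J\Phi\,\dd\mu\Bigr)-\beta\int_J\psi\,\dd\mu\Bigr\}=:Q(\beta).
\]

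Second, I will study $Q$ as a function of $\beta$. Set $m:=\operatorname*{ess\,inf}_\omega\min_x\psi_\omega$. Assuming $m>0$ (as in the paper's setting), every $\mu\in\M_\prob(f)$ satisfies $\int\psi\,\dd\mu\ge m$. For $\beta<\beta'$ this gives
\[
Q(\beta')\le Q(\beta)-(\beta'-\beta)m,
\]
so $Q$ is strictly decreasing and tends to $-\infty$. Each map $\beta\mapsto h+F(\cdot)-\beta\int\psi$ is affine, so $Q$ is convex, and it is finite because $h^{\ran}_\mu\le h_{\tp}$ and $F$ is bounded on the compact range of $\int\Phi\,\dd\mu$. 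Hence $Q$ is continuous. If $m$ could be zero, I will instead use $\int\psi\,\dd\mu>0$ for each $\mu$ together with the monotonicity; the continuity argument below is unchanged.

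Third, I will identify the zero. Let $\beta^*:=\sup_\mu \frac{h_\mu+F(\int\Phi\,\dd\mu)}{\int\psi\,\dd\mu}$.
\begin{itemize}
\item If $\beta>\beta^*$, then for every $\mu$ we have $h_\mu+F(\int\Phi\,\dd\mu)-\beta\int\psi\,\dd\mu<0$. Hence $Q(\beta)\le 0$, and so $\pres_{\psi,\Omega}^{F}\le\beta^*$.
\item If $\beta<\beta^*$, some $\mu$ satisfies $h_\mu+F(\int\Phi\,\dd\mu)-\beta\int\psi\,\dd\mu>0$. Hence $Q(\beta)>0$, and so $\pres_{\psi,\Omega}^{F}\ge\beta^*$.
\end{itemize}
Together these give the claimed formula. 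The case $\beta^*=+\infty$ cannot occur, since $Q$ is finite.

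The main obstacle is the first step: making sure the critical-value characterization in \propref{highzero} is stated with exactly the augmented potential $(\Phi,\psi)$ and the nonlinearity $F_\beta$ used above. In particular, the nonlinear fiber pressure normalizes by $n$ while the induced pressure normalizes by $T$, and $\psi$ enters the exponent through $S_n\psi\approx T$. I would check that $nF_\beta(S_n(\Phi,\psi)/n)=nF(S_n\Phi/n)-\beta S_n\psi$, which matches the weights in the definition of $\pres_{\psi,\Omega}^{F}$ up to the $e^{-\beta T}$ factor. I would also verify that abundance of ergodic measures is preserved when $\psi$ is adjoined to $\Phi$.
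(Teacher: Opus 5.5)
Your route matches the paper's: the pseudo-inverse characterization of \propref{highzero}, the nonlinear variational principle for $\Phi-\beta\psi$ (your $F_\beta$ applied to $(\Phi,\psi)$), and then the two-sided sign argument in $\beta$, which is how the paper concludes. The convex branch is fine, since $F_\beta$ is convex whenever $F$ is.

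The gap is in the abundance branch. You assert that abundance of ergodic measures for $(f,\Phi)$ passes to $(f,(\Phi,\psi))$ ``because $\psi$ is integrated linearly.'' Abundance for $(f,\Phi)$ only supplies an ergodic $\nu$ with $h^{\ran}_\nu(f)>h$ and $\int_J\Phi\,\dd\nu\approx\int_J\Phi\,\dd\mu$. It says nothing about $\int_J\psi\,\dd\nu$. Linearity of $\mu\mapsto\int_J\psi\,\dd\mu$ does not help, because $\nu$ is not a component of $\mu$. For example, with $\Phi\equiv 0$ abundance holds trivially, since some ergodic component of $\mu$ has entropy at least $h^{\ran}_\mu(f)$. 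Yet on $X=X_1\sqcup X_2$ with $\psi\equiv1$ on $X_1$ and $\psi\equiv2$ on $X_2$, no ergodic measure has $\int\psi\,\dd\nu$ near $3/2$. Without abundance for the augmented potential, \lemref{vplem1} gives $Q(\beta)\ge h^{\ran}_\mu(f)+F(\int_J\Phi\,\dd\mu)-\beta\int_J\psi\,\dd\mu$ only for ergodic $\mu$. So your lower bound $\pres^F_{\psi,\Omega}(f,\Phi)\ge\beta^*$ is unproved, and the verification you planned in your last paragraph would fail.

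A better argument cannot fix this, because the abundance branch of the statement is false as written. The paper's proof has the same hole: it simply asserts the variational formula for $\pres^F_\Omega(f,\Phi-\beta\psi)$. Here is a counterexample.
\begin{itemize}
\item Take $\Omega$ a single point and $d=1$.
\item Let $X=X_1\sqcup X_2\sqcup X_3$, where $X_1,X_2$ are full $3$-shifts and $X_3$ is the full $6$-shift.
\item Let $\varphi=0$ on $X_1$, $\varphi=1$ on $X_2$, and $\varphi=\ind_A(x_0)$ on $X_3$ with $\#A=3$.
\item Let $\psi=1$ on $X_1\cup X_2$ and $\psi=2$ on $X_3$.
\item Let $F(a)=M\max\{0,1-4|a-\frac12|\}$ with $M>0$.
\end{itemize}
Let $H$ be the binary entropy function. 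Every invariant $\mu$ with $\int\varphi\,\dd\mu=a$ has $h_\mu\le H(a)+\log3$, with equality for a Bernoulli measure on $X_3$. Hence $(f,\varphi)$ has an abundance of ergodic measures. The average of the maximal-entropy measures of $X_1$ and $X_2$ gives the ratio $\log3+M$ in the supremum. A direct count, however, gives $\pres^F_\psi(f,\varphi)=\max\{\log3,\,(\log6+M)/2\}<\log3+M$.

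The hypothesis must therefore be abundance for the augmented pair $(f,(\Phi,\psi))$, so that $\nu$ also approximates $\int_J\psi\,\dd\mu$. Under that hypothesis, or when $F$ is convex, your argument goes through.
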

The paper is organized as follows. In \secref{sec2} we review the basic notions of topological pressure, induced pressure, and nonlinear pressure for topological dynamical systems. In \secref{sec3} we introduce induced pressure for random dynamical systems and establish its critical exponent formula, variational principle, and basic properties, including results on equilibrium states. In \secref{sec4} we develop the nonlinear topological pressure for random dynamical systems and prove its variational principle. Finally, in \secref{sec5} we extend these results to the higher-dimensional setting, including the nonlinear induced pressure.
\section{Classical pressures for topological dynamical systems}\label{sec2}
\subsection{Topological pressure}\label{sec2.1}
By a \emph{topological dynamical system} $(X,f)$, we mean a continuous map $f\colon X\to X$ on a compact metric space $(X,d)$. The set of all real-valued continuous functions on $X$ is denoted by $C(X,\R)$; equipped with the supremum norm, it is a Banach space. Let $\M(X,f)$ be the set of $f$-invariant Borel probability measures on $X$, and let $\M^{e}(X,f)$ be its subset of ergodic measures. In classical thermodynamic formalism, the \emph{variational principle} relates pressure to the energy of the system. More precisely, for a potential $\varphi\in C(X,\R)$, let $\pres(f,\varphi)$ denote the topological pressure of $\varphi$ with respect to $f$. Then
\begin{equation}\label{pres}
    \pres(f,\varphi)=\sup\left\{h_{\mu}(f)+\int_{X}\varphi\,\dd\mu:\mu\in\M(X,f)\right\}
\end{equation}
where $h_{\mu}(f)$ is the \emph{Kolmogorov-Sina\u{\i} entropy} of $f$ with respect to the invariant measure $\mu$. Note that when $\varphi\equiv0$, the term $\pres(f,0)$ coincides with the topological entropy $h_{\tp}(f)$ of the system, so the principle for topological entropy also holds as its corollary. It offers an alternative definition of the topological pressure of the potential $\varphi$.

Topological pressure can be defined via open covers of the phase space by \cite{adlerkonheim}, it can also be defined in terms of spanning and separated sets in the sense of R. Bowen \cite{bowen1,bowen2} and E. Dinaburg \cite{dinaburg}, which we will focus on during the paper. For $n\geq1$ and points $x,y\in X$, we define the \emph{Bowen's metric} of order $n$ by \[d_{n}(x,y):=\max\{d(f^{i}(x),f^{i}(y))\colon i=0,1,\ldots,n-1\}.\] The corresponding \emph{Bowen's ball} centered at $x\in X$ with radius $\epsilon>0$ is $B_{n}(x,\epsilon):=\{y\in X\colon d_{n}(x,y)<\epsilon\}$. Let $Y\subseteq X$ be a nonempty subset. For $n\geq1$ and $\epsilon>0$, a set $E\subset Y$ is called $(n,\epsilon)$\emph{-separated} if $d_{n}(x,y)>\epsilon$ for every two distinct points $x,y\in E$. A set $F\subset Y$ is called $(n,\epsilon)$\emph{-spanning} for $Y$ if for every $y\in Y$, there exists $x\in F$ with $d_{n}(x,y)\leq\epsilon$, which is equivalent to $\bigcup_{x\in F}B_{n}(x,\epsilon)=Y$. Since $X$ is compact, the maximal $(n,\epsilon)$-separated set $E_{n}(\epsilon)$ and the minimal $(n,\epsilon)$-spanning set $F_{n}(\epsilon)$ exist. For $\varphi\in C(X,\R)$, let $S_{n}\varphi:=\sum_{i=0}^{n-1}\varphi\circ f^i$ denote the $n$th Birkhoff sum. This leads to the following definition of topological pressure via Bowen balls.
\begin{defn}
    Let $(X,f)$ be a topological dynamical system, and let $\varphi\in C(X,\R)$. The topological pressure of the potential $\varphi$ with respect to $f$ is defined by
    \begin{equation}\label{pres_bowen}
        \pres(f,\varphi)=\lim_{\epsilon\to0}\limsup_{n\to\infty}\frac{1}{n}\log\sum_{x\in E_{n}(\epsilon)}e^{S_{n}\varphi(x)}=\lim_{\epsilon\to0}\limsup_{n\to\infty}\frac{1}{n}\log\sum_{x\in F_{n}(\epsilon)}e^{S_{n}\varphi(x)}.
    \end{equation}
\end{defn}
By \cite[Theorem 7.2.8]{kotus}, the $\limsup$ may be replaced by $\liminf$.
\subsection{Induced topological pressure}
We now recall from \cite{xing1} the notion of induced topological pressure for a topological dynamical system $(X,f)$. Let $\varphi,\psi\in C(X,\R)$ with $\psi>0$, for $T>0$, define
\[S_{T}:=\{n\in\N\colon \text{there exists }x\in X\text{ with }S_{n}\psi(x)\leq T< S_{n+1}\psi(x)\}.\] The time set $S_T$ is finite since $n\leq\left\lfloor\frac{T}{\inf\psi}\right\rfloor+1$ where $\lfloor\cdot\rfloor$ is the floor function. For $n\in S_T$, define the partition \[X_{n}:=\{x\in X\colon S_{n}\psi(x)\leq T<S_{n+1}\psi(x)\}.\] Let \[Q_{\psi,T}(f,\varphi,\epsilon):=\inf\left\{\sum_{n\in S_T}\sum_{x\in F_n}e^{S_{n}\varphi(x)}\colon F_{n} \text{ is $(n,\epsilon)$-spanning set of } X_n,\,n\in S_T\right\}.\]
\begin{defn}[{\cite[Definition 1.1]{xing1}}]
    The $\psi$-\emph{induced topological pressure} of the potential $\varphi$ with respect to $f$ is defined by
    \begin{equation}\label{induced_pres}
        \pres_{\psi}(f,\varphi):=\lim_{\epsilon\to0}\limsup_{T\to\infty}\frac{1}{T}\log Q_{\psi,T}(f,\varphi,\epsilon).
    \end{equation}    
\end{defn}
This quantity exists because the map $\epsilon\mapsto Q_{\psi,T}(f,\varphi,\epsilon)$ is nondecreasing as $\epsilon$ decreases. In particular, the pressure is well defined and bounded from below. Note that when $\psi\equiv\ind$, the limit \eqref{induced_pres} coincides with $\pres(f,\varphi)$ in \eqref{pres}. Equivalently, the induced topological pressure can be defined via separated sets; see \cite[Proposition 2.1]{xing1}, that is, define \[P_{\psi,T}(f,\varphi,\epsilon):=\sup\left\{\sum_{n\in S_T}\sum_{x\in E_n}e^{S_{n}\varphi(x)}\colon E_{n} \text{ is $(n,\epsilon)$-separated set of } X_n,\,n\in S_T\right\},\] and then we have \[\pres_{\psi}(f,\varphi)=\lim_{\epsilon\to0}\limsup_{T\to\infty}\frac{1}{T}\log P_{\psi,T}(f,\varphi,\epsilon).\] The following variational principle for induced topological pressure is standard.
\begin{theorem}[{\cite[Theorem 1.1]{xing1}}]\label{inducevp}
    Let $(X,f)$ be a topological dynamical system, and let $\varphi,\psi\in C(X,\R)$ with $\psi>0$, then
    \[\pres_{\psi}(f,\varphi)=\sup\left\{\frac{h_{\mu}(f)}{\int_{X}\psi\,\dd\mu}+\frac{\int_{X}\varphi\,\dd\mu}{\int_{X}\psi\,\dd\mu}\colon\mu\in\M(X,f)\right\}.\]
\end{theorem}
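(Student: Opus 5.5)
The plan is to reduce the induced pressure to the classical pressure through the critical-exponent characterization $\pres_\psi(f,\varphi)=\inf\{\beta\in\R\colon \pres(f,\varphi-\beta\psi)\le 0\}$, in the spirit of \cite{jaerisch}, and then to apply the classical variational principle \eqref{pres}. The first step is to observe that $\beta\mapsto \pres(f,\varphi-\beta\psi)$ is continuous and strictly decreasing. Indeed, for $\beta<\beta'$ we have
\[
\pres(f,\varphi-\beta'\psi)\le \pres(f,\varphi-\beta\psi)-(\beta'-\beta)\inf\psi,
\]
because $\psi\ge\inf\psi>0$. This pressure tends to $\mp\infty$ as $\beta\to\pm\infty$, so there is a unique root $\beta^*$ with $\pres(f,\varphi-\beta^*\psi)=0$.

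The main step, and the expected obstacle, is to show that $\pres_\psi(f,\varphi)=\beta^*$. For the upper bound, I take $\beta>\beta^*$, so that $\pres(f,\varphi-\beta\psi)<0$. For $n\in S_T$ and $x\in X_n$ we have $S_n\psi(x)\le T<S_{n}\psi(x)+\supnorm{\psi}$. Hence
\[
e^{S_n\varphi(x)}\le e^{\beta T+|\beta|\supnorm{\psi}}\,e^{S_n(\varphi-\beta\psi)(x)}.
\]
The spanning-set sums over $X_n$ are therefore bounded by $e^{\beta T+C}$ times the classical spanning sums for $\varphi-\beta\psi$ at time $n$, taken over a spanning set of all of $X$. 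One caveat is that spanning sets of $X_n$ must lie in $X_n$; I handle this by taking $(n,\epsilon)$ spanning points of $X$ and replacing each by a point of $X_n$ in its ball, at the cost of radius $2\epsilon$ and an error $e^{n\,\mathrm{var}_\epsilon}$ that is absorbed as $\epsilon\to0$. The classical sums for $\varphi-\beta\psi$ decay exponentially in $n$ for small $\epsilon$, up to a margin, and $n\ge T/\supnorm{\psi}-1$. Summing over $n\in S_T$, whose cardinality is linear in $T$, gives $\limsup_T\frac1T\log Q_{\psi,T}\le\beta$.

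For the lower bound, I take $\beta<\beta^*$, so that $\pres(f,\varphi-\beta\psi)>0$, and use the separated-set formulation. Every point of $X$ lies in $X_n$ for exactly one $n$ once $T$ is fixed. So, for each $n$, a maximal $(n,\epsilon)$-separated set $E$ of $X$ can be sorted according to the time $m$ with $x\in X_m$ at level $T\approx S_n\psi(x)$. I expect the cleanest route is the following. Choose $T_n$ along a sequence such that a definite exponential fraction of the weight $\sum_{x\in E}e^{S_n(\varphi-\beta\psi)(x)}$ comes from points with $S_n\psi(x)\in[T_n,T_n+1)$; this is possible by pigeonholing over the $O(n)$ values of $S_n\psi$. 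For those points $n\in S_{T_n}$ approximately, and $e^{S_n\varphi}\ge e^{\beta T_n-C}e^{S_n(\varphi-\beta\psi)}$. This yields $\limsup_T\frac1T\log P_{\psi,T}\ge\beta$, using the equivalence of the spanning and separated definitions from \cite[Proposition 2.1]{xing1}. The bookkeeping, namely that $T\to\infty$ along a sequence with $n/T$ bounded, is routine, and letting $\beta\uparrow\beta^*$ completes this step.

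Finally, I apply \eqref{pres}. Since $\pres(f,\varphi-\beta^*\psi)=0$, every $\mu\in\M(X,f)$ satisfies $h_\mu(f)+\int\varphi\,\dd\mu-\beta^*\int\psi\,\dd\mu\le0$, that is, $\frac{h_\mu(f)+\int\varphi\,\dd\mu}{\int\psi\,\dd\mu}\le\beta^*$, because $\int\psi\,\dd\mu>0$. Conversely, for $\beta<\beta^*$ the pressure is positive, so some $\mu$ has $h_\mu(f)+\int\varphi\,\dd\mu>\beta\int\psi\,\dd\mu$. Hence the supremum equals $\beta^*=\pres_\psi(f,\varphi)$, which proves the theorem.
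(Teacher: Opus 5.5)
The paper does not reprove this theorem; it cites \cite{xing1}. Its proof of the random analogue (\thmref{randominducedvp}) shows the intended route, and your proposal is correct and shares its outer structure: identify $\pres_\psi(f,\varphi)$ with the zero of $\beta\mapsto\pres(f,\varphi-\beta\psi)$ (\propref{tdscritical}), then read off the supremum from \eqref{pres}. Your last paragraph is exactly the paper's final step.

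The genuine difference is how you obtain the pseudo-inverse identity. The paper, following \cite{jaerisch}, first proves a critical-exponent characterization (\thmref{exponent}). This uses the tail partition function $R_{\psi,T}$, which runs over all $n$ with $S_n\psi>T$. It then relates convergence of that quantity to the sign of the classical pressure (\propref{infbeta}, \corref{critical}). You instead compare $Q_{\psi,T}$ and $P_{\psi,T}$ directly with the classical time-$n$ partition functions of $\varphi-\beta\psi$. For the upper bound you use $T-\supnorm{\psi}<S_n\psi\le T$ on $X_n$ together with $\# S_T=O(T)$. For the lower bound you pigeonhole the values of $S_n\psi$ into unit windows. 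Your route is shorter and avoids the auxiliary series. The paper's route yields, as a by-product, a characterization of $\pres_\psi$ as the critical exponent of a Poincar\'e-type series, which is the form the result takes in \cite{jaerisch}.

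A few details need care when you write it out.

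\emph{Infinite entropy.} The root $\beta^*$ exists only if $h_{\tp}(f)<\infty$. If $h_{\tp}(f)=\infty$, then $\pres(f,\varphi-\beta\psi)=\infty$ for every $\beta$. Your lower-bound argument then applies to every $\beta$ and gives $\pres_\psi(f,\varphi)=\infty$, and the supremum is also infinite by \eqref{pres}. This case should be treated separately.

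\emph{The pigeonhole step.} Pigeonholing gives a fraction $\ge 1/(n\supnorm{\psi}+1)$. This is a polynomial loss, not an exponential fraction, and it suffices because $n\le T_n/\inf\psi$. Choose the level $T=T_n+1$, just above the window. Then each selected point lies in $X_m$ with $n\le m\le n+1/\inf\psi$. Your $(n,\epsilon)$-separated set is therefore still $(m,\epsilon)$-separated, and $S_m\varphi\ge S_n\varphi-(m-n)\supnorm{\varphi}$. If you chose the level so that the induced time fell below $n$, separation would transfer only after shrinking $\epsilon$ via uniform continuity of $f,\dots,f^{K}$.

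\emph{A simplification.} The upper bound is easier with separated sets, since an $(n,\epsilon)$-separated subset of $X_n$ is also one of $X$. This removes the need to move spanning points into $X_n$.
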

To study further properties of the pressure function, we record a simple example.
\begin{example}
    Let $U$ be a nonempty open subset of $\R^d$, let $X\subset U$ be a nonempty compact subset, and let $f\colon U\to\R^d$ be a map. We say the triple $(X,U,f)$ is a \emph{conformal expanding repeller} if it satisfies the following conditions: (1) $T$ is conformal and $C^{1+\epsilon}$ class map on $U$ with $\epsilon>0$; (2) $f(X)=X$ and $\bigcap_{n\geq0}f^{-n}(U)=X$; (3) there exists $\lambda>1$ such that $|Df(x)|\geq\lambda$ for all $x\in X$; (4) the restriction $f|_{X}\colon X\to X$ is topologically transitive. We call the set $X$ \emph{limit set} of $f$.

    Consider the H\"{o}lder continuous geometric potential $\varphi_{t}:=-t\log|Df|$ for every parameter $t\in\R$, the family generates the pressure function $\pres(\cdot)\colon\R\to\R$, $t\mapsto \pres(t):=\pres(f,\varphi_t)$ with the following properties \cite[Proposition 16.3.1]{urban}:

    (1) $\pres(t)$ is Lipschitz continuous;

    (2) $\pres(t)$ is convex and strictly decreasing;

    (3) $\lim_{t\to-\infty}\pres(t)=\infty$ and $\lim_{t\to\infty}\pres(t)=-\infty$;

    (4) $\pres(0)=\pres(f,0)=h_{\tp}(f)>0$;

    (5) There exists a unique $h>0$ such that $\pres(h)=0$.
\end{example}
We are interested in the analogous question for the family of potentials $\{\varphi-\beta\psi\}_{\beta\in\R}$. In this case, the pressure function remains strictly decreasing and therefore has a unique zero. In fact, the $\psi$-induced pressure is the unique zero of the pressure function $\beta\mapsto\pres(f,\varphi-\beta\psi)$.
\begin{proposition}[{\cite[Corollary 3.2 and Corollary 3.3]{xing1}}]\label{tdscritical}
    Let $(X,f)$ be a topological dynamical system, and let $\varphi,\psi\in C(X,\R)$ with $\psi>0$, we have
    \[\pres_{\psi}(f,\varphi)=\inf\{\beta\in\R\colon\pres(f,\varphi-\beta\psi)\leq0\}=\sup\{\beta\in\R\colon\pres(f,\varphi-\beta\psi)\geq0\}.\] In addition, if $\pres_{\psi}(f,\varphi)<\infty$ for every $\beta\in\R$, then $\pres(f,\varphi-\pres_{\psi}(f,\varphi)\psi)=0$.
\end{proposition}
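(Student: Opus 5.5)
The statement to prove is Proposition~\ref{tdscritical}: the induced pressure is the critical value of $\beta\mapsto\pres(f,\varphi-\beta\psi)$. I will derive it from the two variational principles, \thmref{inducevp} for $\pres_\psi$ and \eqref{pres} for $\pres$, rather than working with spanning sets.

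Write $m:=\min\psi>0$ and $M:=\max\psi$. Put $p(\beta):=\pres(f,\varphi-\beta\psi)=\sup_{\mu}\{h_\mu(f)+\int\varphi\,\dd\mu-\beta\int\psi\,\dd\mu\}$. The first step is to record the monotonicity of $p$. For $\beta<\beta'$, every $\mu$ satisfies $\int(\varphi-\beta'\psi)\,\dd\mu\le\int(\varphi-\beta\psi)\,\dd\mu-(\beta'-\beta)m$. Taking suprema gives $p(\beta')\le p(\beta)-(\beta'-\beta)m$. So $p$ is strictly decreasing whenever it is finite. If $h_{\tp}(f)=\infty$, then $p\equiv+\infty$, $\pres_\psi=+\infty$, and both formulas hold with the convention $\inf\emptyset=+\infty$. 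I will assume finite entropy from now on. Then $p$ is finite, convex and Lipschitz with constant $M$, so it is continuous. It also tends to $\mp\infty$ as $\beta\to\pm\infty$. Hence $p$ has a unique zero $\beta_0$, and
\[\beta_0=\inf\{\beta\colon p(\beta)\le0\}=\sup\{\beta\colon p(\beta)\ge0\}.\]

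The core step is to show $\pres_\psi(f,\varphi)=\beta_0$. Set $\gamma:=\sup_\mu\frac{h_\mu(f)+\int\varphi\,\dd\mu}{\int\psi\,\dd\mu}$, which equals $\pres_\psi(f,\varphi)$ by \thmref{inducevp}. Since $\int\psi\,\dd\mu>0$, for each $\mu$ and $\beta$ we have
\[h_\mu+\int\varphi\,\dd\mu-\beta\int\psi\,\dd\mu\le0\iff\frac{h_\mu+\int\varphi\,\dd\mu}{\int\psi\,\dd\mu}\le\beta.\]
Taking the supremum over $\mu$ gives $p(\beta)\le0\iff\gamma\le\beta$. Hence $\inf\{\beta\colon p(\beta)\le0\}=\gamma$. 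In the same way, $p(\beta)>0$ holds iff some $\mu$ has ratio $>\beta$, i.e.\ iff $\beta<\gamma$. Together with $p(\gamma)\le 0$ and continuity, this gives $p(\gamma)=0$ and $\sup\{\beta\colon p(\beta)\ge0\}=\gamma$. This yields the displayed equalities and also the final claim $\pres(f,\varphi-\pres_\psi(f,\varphi)\psi)=0$ whenever $\pres_\psi$ is finite. Finiteness holds automatically under finite entropy, since $\gamma\le(h_{\tp}+\|\varphi\|_\infty)/m$.

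The main obstacle is bookkeeping in the infinite-entropy or infinite-valued cases. There I will check directly that $\gamma=+\infty$ and that both sets $\{p\le0\}$ and $\{p\ge0\}$ give $+\infty$ under the conventions $\inf\emptyset=+\infty$ and $\sup\R=+\infty$. A second delicate point is the strict versus non-strict supremum when passing from individual measures to $p$. It is handled by the equivalence $p(\beta)\le0\iff\gamma\le\beta$ above, which needs no attainment of suprema. If one prefers to avoid \thmref{inducevp}, the alternative is to compare $Q_{\psi,T}$ with sums $\sum_n\sum_x e^{S_n(\varphi-\beta\psi)(x)}$, using $S_n\psi\approx T$ on $X_n$ up to an error of $M$. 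In that approach the difficulty is controlling the sum over the $O(T)$ values $n\in S_T$, which costs only a subexponential factor.
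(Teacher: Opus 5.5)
Your argument is correct, but it runs in the opposite logical direction from \cite{xing1} and from this paper's random analogue. There, the pseudo-inverse formula is proved first, from the critical-exponent characterization via the sums $R_{\psi,T}$ (compare \thmref{exponent}, \propref{infbeta}, \corref{critical}). The induced variational principle is then deduced from it together with the classical one. The proof of \thmref{randominducedvp} is exactly your equivalence $p(\beta)\le0\iff\gamma\le\beta$ read the other way round. You instead take \thmref{inducevp} as input. Since the paper states \thmref{inducevp} before this proposition, that is legitimate here. Your handling of the $\inf\emptyset$/$\sup\R$ conventions, the infinite-entropy case, and the identification $p(\gamma)=0$ from $\{p>0\}=(-\infty,\gamma)$, $p(\gamma)\le 0$ and continuity is complete.

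The price is that your proof is not independent. In \cite{xing1} the variational principle is itself derived from this proposition. So what you have really shown is that the two statements are equivalent given the classical variational principle. The substantive step, connecting the spanning/separated-set definition of $\pres_\psi$ to $\beta\mapsto\pres(f,\varphi-\beta\psi)$, is delegated to \thmref{inducevp}. Your closing alternative would supply that step, and it is short:
\begin{itemize}
\item If $p(\beta)<0$, use $e^{S_n\varphi}\le e^{\beta T+|\beta|M}e^{S_n(\varphi-\beta\psi)}$ on $X_n$. Then sum over the $O(T)$ indices in $S_T$, all of which are at least $T/M-1$ and hence large.
\item If $p(\beta)>0$, take a large $(n,\epsilon)$-separated set and split it according to which window $((j-1)m,jm]$ contains $S_n\psi(x)$. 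The points in a given window lie in $X_n$ for $T=jm$. Then pigeonhole over the $O(n)$ windows.
\end{itemize}
This direct route is what lets the variational principle be derived afterwards rather than assumed. Your route buys brevity only by presupposing the harder theorem.
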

The $\psi$-induced topological pressure of $\varphi$ can also be defined via open covers of the compact space $X$; see \cite[Section 2.1]{ma} for details. Let $\U$ be an open cover of $X$. For $T>0$, let
\begin{equation}
    \begin{aligned}
        q_{\psi,T}(f,\varphi,\U)&:=\inf\left\{\sum_{n\in S_T}\sum_{V\in\V_n}\inf_{x\in V}e^{S_{n}\varphi(x)}\colon\V_{n}\subset\bigvee_{i=0}^{n-1}f^{-i}\U\text{ with }X_{n}\subset\bigcup_{V\in\V_{n}}V,\,n\in S_T\right\}, \\
        p_{\psi,T}(f,\varphi,\U)&:=\inf\left\{\sum_{n\in S_T}\sum_{V\in\V_n}\sup_{x\in V}e^{S_{n}\varphi(x)}\colon\V_{n}\subset\bigvee_{i=0}^{n-1}f^{-i}\U\text{ with }X_{n}\subset\bigcup_{V\in\V_{n}}V,\,n\in S_T\right\}.
    \end{aligned}
\end{equation}
We have the following equivalent proposition.
\begin{proposition}[{\cite[Theorem 2.1]{ma}}]
    Let $(X,f)$ be a topological dynamical system, and let $\varphi,\psi\in C(X,\R)$ with $\psi>0$, then
    \[\pres_{\psi}(f,\varphi)=\lim_{\delta\to0}\sup_{\U}\limsup_{T\to\infty}\frac{1}{T}\log p_{\psi,T}(f,\varphi,\U)=\lim_{\delta\to0}\sup_{\U}\limsup_{T\to\infty}\frac{1}{T}\log q_{\psi,T}(f,\varphi,\U)\] with the supremum taken over all open covers $\U$ with $\diam\U\leq\delta$. Furthermore, \[\pres_{\psi}(f,\varphi)=\sup_{\U}\limsup_{T\to\infty}\frac{1}{T}\log q_{\psi,T}(f,\varphi,\U),\] where the supremum is taken over all open covers $\U$ of $X$.
\end{proposition}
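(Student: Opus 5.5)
The plan is to prove the two equalities separately by comparing the Bowen-ball definitions directly: sandwich the open-cover quantities $p_{\psi,T}$ and $q_{\psi,T}$ between the spanning and separated quantities $Q_{\psi,T}$ and $P_{\psi,T}$, with a controlled loss in the exponent. Everything rests on one uniform continuity estimate. For an open cover $\U$ with $\diam\U\leq\delta$, set
\[\gamma(\delta):=\sup\{|\varphi(x)-\varphi(y)|\colon d(x,y)\leq\delta\}.\]
Then every $V\in\bigvee_{i=0}^{n-1}f^{-i}\U$ satisfies
\[\sup_{V}S_n\varphi-\inf_{V}S_n\varphi\leq n\gamma(\delta).\]
Since $n\in S_T$ forces $n\leq T/\inf\psi+1$, this loss is at most $(T/\inf\psi+1)\gamma(\delta)$. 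After dividing by $T$, it contributes $\gamma(\delta)/\inf\psi$, which tends to $0$ as $\delta\to0$. Hence $p_{\psi,T}$ and $q_{\psi,T}$ differ by a factor that vanishes in the limit, and it suffices to compare either one with the Bowen-ball versions.

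\emph{Upper bound for covers.} Fix $\U$ with diameter at most $\delta$. Take $(n,\delta)$-spanning sets $F_n$ of $X_n$. Each Bowen ball $B_n(x,\delta)$ can be placed into a bounded number of cover elements only at a cost in cardinality, so I would argue the other way round. Let $\epsilon$ be a Lebesgue number of $\U$, and take a maximal $(n,\epsilon/2)$-separated set $E_n$ of $X_n$; it is also spanning. For each $x\in E_n$, the closed ball $\overline{B}_n(x,\epsilon/2)$ lies in some element of $\bigvee_{i}f^{-i}\U$. The family of these elements covers $X_n$ and has at most $\#E_n$ members. Each element has $\sup_V e^{S_n\varphi}\leq e^{S_n\varphi(x)+n\gamma(\delta)}$, because $x\in V$. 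Therefore
\[p_{\psi,T}(f,\varphi,\U)\leq e^{(T/\inf\psi+1)\gamma(\delta)}P_{\psi,T}(f,\varphi,\epsilon/2).\]
Taking $\limsup_T\frac1T\log$, and using the separated-set characterization of $\pres_\psi$ from \cite[Proposition 2.1]{xing1}, bounds the cover expression by $\pres_\psi(f,\varphi)+\gamma(\delta)/\inf\psi$. The bound on the right does not depend on $\U$, so we can take the supremum over $\U$ and then let $\delta\to0$.

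\emph{Lower bound for covers.} Given $\epsilon>0$, choose a cover $\U$ with $\diam\U<\epsilon$, which in particular has diameter at most $\delta$ for small $\epsilon$. Take any subfamily $\V_n\subset\bigvee_i f^{-i}\U$ covering $X_n$. Any $(n,\epsilon)$-separated set $E_n\subset X_n$ has at most one point in each $V$, since $V$ has $d_n$-diameter less than $\epsilon$. Hence
\[\sum_{x\in E_n}e^{S_n\varphi(x)}\leq\sum_{V\in\V_n}\sup_V e^{S_n\varphi}\leq e^{(T/\inf\psi+1)\gamma(\delta)}\sum_{V\in\V_n}\inf_V e^{S_n\varphi}.\]
Taking the supremum over $E_n$ and the infimum over $\V_n$ gives
\[P_{\psi,T}(f,\varphi,\epsilon)\leq e^{(T/\inf\psi+1)\gamma(\delta)}\,q_{\psi,T}\leq e^{(T/\inf\psi+1)\gamma(\delta)}\,p_{\psi,T}.\]
Letting $T\to\infty$ and then $\epsilon,\delta\to0$ yields both equalities in the first display.

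\emph{Second statement.} For the supremum over all covers, monotonicity does the work. Refining $\U$ can only increase $q_{\psi,T}$: every element of a refined join is contained in an element of the coarser join, and replacing an element by a smaller one can only increase its infimum. So the supremum over all covers equals the limit over covers of small diameter. It remains to check that the loss term disappears for $q$ in the upper bound without using $\gamma$. This holds because $\inf_V e^{S_n\varphi}\leq e^{S_n\varphi(x)}$ for $x\in V$, which gives
\[q_{\psi,T}(f,\varphi,\U)\leq P_{\psi,T}(f,\varphi,\epsilon/2)\]
for any cover $\U$ with Lebesgue number $\epsilon$. The lower bound then comes from small covers as in the previous step.

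\emph{Main obstacle.} The delicate point is the bookkeeping of $n$ ranging over all of $S_T$ at once rather than a single $n$. It has to be checked that the constant $n\gamma(\delta)$ is uniform, which follows from $n\leq T/\inf\psi+1$, and that the sets $X_n$, which need not be closed, cause no trouble with maximal separated sets and covering. Nonclosedness is harmless because only the covering property on $X_n$ itself is used.
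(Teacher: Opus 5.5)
Your proof is correct and follows essentially the argument the paper relies on. The paper cites \cite[Theorem 2.1]{ma} rather than proving the statement, and the key inequalities are recorded in the remark before \propref{randominducedcover}: $q_{\psi,T}(f,\varphi,\U)\le P_{\psi,T}(f,\varphi,\epsilon/2)$ for a Lebesgue number $\epsilon$, $P_{\psi,T}(f,\varphi,\epsilon)\le p_{\psi,T}(f,\varphi,\U)$ when $\diam\U\le\epsilon$, and the uniform factor $e^{(T/\inf\psi+1)\gamma(\delta)}$ comparing $p_{\psi,T}$ with $q_{\psi,T}$, with refinement monotonicity handling the supremum over all covers. One small fix: use radius $\epsilon/3$ in place of $\epsilon/2$, because a closed Bowen ball of radius $\epsilon/2$ can have diameter exactly $\epsilon$, and the strict Lebesgue-number condition then does not guarantee it fits in one element of the join.
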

For convenience, by a pair of potentials $(\varphi,\psi)$ we mean the first is assigned to the system $(X,f)$, and the second is the scaling potential.
\begin{defn}
    An $f$-invariant measure $\mu\in\M(X,f)$ is said to be an \emph{equilibrium state} for potentials $(\varphi,\psi)$ if \[\pres_{\psi}(f,\varphi)=\frac{h_{\mu}(f)}{\int_{X}\psi\,\dd\mu}+\frac{\int_{X}\varphi\,\dd\mu}{\int_{X}\psi\,\dd\mu}.\] The set of all equilibrium states for potentials $(\varphi,\psi)$ is denoted by $\M_{(\varphi,\psi)}(X,f)$.
\end{defn}
Again, when $\psi\equiv\ind$, we recover the classical definition of equilibrium state for potential $\varphi$. We record the properties of the set $\M_{(\varphi,\psi)}(X,f)$ now.
\begin{proposition}[{\cite[Theorem 3.1]{ma}}]
    Let $(X,f)$ be a topological dynamical system with $h_{\tp}(f)<\infty$, and let $\varphi,\psi\in C(X,\R)$ with $\psi>0$. For the set $\M_{(\varphi,\psi)}(X,f)$, we have

    (1) $\M_{(\varphi,\psi)}(X,f)$ is convex;

    (2) The extreme points of $\M_{(\varphi,\psi)}(X,f)$ are ergodic measures with respect to $f$;

    (3) If $\M_{(\varphi,\psi)}(X,f)$ is nonempty, then it contains at least one ergodic measure.
\end{proposition}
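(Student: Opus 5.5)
The plan is to reduce all three claims to the variational principle of \thmref{inducevp}: it shows that $\M_{(\varphi,\psi)}(X,f)$ is exactly the set of maximizers of a ratio of affine functionals, with denominator bounded away from zero. Write
\[\beta:=\pres_{\psi}(f,\varphi),\qquad G(\mu):=h_\mu(f)+\int_X\varphi\,\dd\mu-\beta\int_X\psi\,\dd\mu .\]
Since $h_{\tp}(f)<\infty$ and $\varphi,\psi$ are bounded with $\inf\psi>0$, \thmref{inducevp} shows that $\beta$ is finite. It also shows that
\[\frac{h_\mu(f)+\int_X\varphi\,\dd\mu}{\int_X\psi\,\dd\mu}\le\beta\quad\text{for all }\mu\in\M(X,f),\]
with equality exactly on $\M_{(\varphi,\psi)}(X,f)$. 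Multiplying by $\int_X\psi\,\dd\mu>0$ turns this into $G(\mu)\le0$ for all $\mu$, with equality if and only if $\mu\in\M_{(\varphi,\psi)}(X,f)$.

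The key tool is that $\mu\mapsto h_\mu(f)$ is affine on $\M(X,f)$ and, more strongly, satisfies the ergodic decomposition formula $h_\mu(f)=\int h_{\mu_x}(f)\,\dd\mu(x)$. Both facts are classical (see \cite{walters}). Since integration against $\varphi$ and against $\psi$ is linear in $\mu$, $G$ is affine and all its values are finite.

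For (1), take $\mu_1,\mu_2\in\M_{(\varphi,\psi)}(X,f)$ and $t\in[0,1]$. Affinity gives $G(t\mu_1+(1-t)\mu_2)=tG(\mu_1)+(1-t)G(\mu_2)=0$, so the convex combination is again an equilibrium state.

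For (2), let $\mu$ be an extreme point and suppose it is not ergodic. Then there is an invariant set $A$ with $0<\mu(A)<1$, and we can write $\mu=\mu(A)\mu_A+(1-\mu(A))\mu_{A^c}$ with $\mu_A\neq\mu_{A^c}$ both invariant. Affinity gives $0=G(\mu)=\mu(A)G(\mu_A)+(1-\mu(A))G(\mu_{A^c})$. Both terms are $\le0$, so both vanish, hence $\mu_A,\mu_{A^c}\in\M_{(\varphi,\psi)}(X,f)$. This contradicts extremality, so $\mu$ is ergodic.

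For (3), which is the main step, take $\mu\in\M_{(\varphi,\psi)}(X,f)$ with ergodic decomposition $\mu=\int\mu_x\,\dd\mu(x)$. The entropy decomposition formula together with linearity of the integrals gives
\[0=G(\mu)=\int G(\mu_x)\,\dd\mu(x).\]
Since $G(\mu_x)\le0$ for every $x$, we get $G(\mu_x)=0$ for $\mu$-a.e. $x$. Hence $\mu$-almost every ergodic component is an ergodic equilibrium state, and in particular at least one exists. The only point needing care is measurability of $x\mapsto G(\mu_x)$, which is supplied by the standard theory of ergodic decomposition. Note that this argument does not use compactness of $\M_{(\varphi,\psi)}(X,f)$, so no upper semicontinuity of entropy is required, and it does not rely on (2).
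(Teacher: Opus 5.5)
Your proof is correct. The paper gives no proof of this statement and simply cites \cite{ma}, but it does prove the random analogue (the proposition on $\M_{\prob,(\varphi,\psi)}(f)$), and that proof is the natural comparison.

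For (1) and (2) your argument matches that proof. Convexity comes from the affinity of $\mu\mapsto h_\mu$. For extreme points, the paper writes $\mu=t\mu_1+(1-t)\mu_2$ and expresses the ratio at $\mu$ as a weighted average of the ratios at $\mu_1,\mu_2$; your functional $G$ is exactly this identity with the denominator $\int\psi\,\dd\mu$ cleared. It then concludes that $\mu_1,\mu_2$ are equilibrium states, so $\mu$ is extreme in the full invariant set and hence ergodic. Your splitting along an invariant set $A$ is the same argument made concrete.

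The genuine difference is in (3). The paper applies Choquet's theorem to the set of equilibrium states itself, represents $\mu$ by a measure on its extreme points, and uses (2) to know these are ergodic. That requires the equilibrium set to be compact, which the paper secures only by additionally assuming upper semicontinuity of the entropy map. That hypothesis is absent from the statement you were asked to prove.

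You instead disintegrate $\mu$ over all of $\M(X,f)$ through the ergodic decomposition and use Jacobs' formula $h_\mu(f)=\int h_{\mu_x}(f)\,\dd\mu(x)$. This gives $0=G(\mu)=\int G(\mu_x)\,\dd\mu(x)$ with a nonpositive integrand. The integrand is nonpositive for $\mu$-a.e.\ $x$, namely wherever $\mu_x$ is invariant, rather than for literally every $x$, which is harmless. This argument needs only $h_{\tp}(f)<\infty$, to make $G$ finite and bounded, which is exactly the stated hypothesis. It does not depend on (2). It also yields the stronger conclusion that $\mu$-almost every ergodic component of an equilibrium state is itself an equilibrium state. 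So your route is more elementary and better adapted to the hypotheses than the Choquet argument.
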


\subsection{Nonlinear topological pressure}
Let $(X,f)$ be a topological dynamical system. Given a continuous function $F\colon\R\to\R$ and $\varphi\in C(X,\R)$, the \emph{nonlinear topological pressure} of the potential $\varphi$ is defined to be
\[\pres^{F}(f,\varphi):=\lim_{\epsilon\to0}\limsup_{n\to\infty}\frac{1}{n}\log\sup_{E_n}\sum_{x\in E_n}\exp\left(nF\left(\frac{S_{n}\varphi(x)}{n}\right)\right)\] where the supremum is taken over all $(n,\epsilon)$-separated sets $E_n$. When $F$ is identity function, the definition agrees with \eqref{pres_bowen}. We can also define it via spanning sets, that is, \[\pres^{F}(f,\varphi):=\lim_{\epsilon\to0}\limsup_{n\to\infty}\frac{1}{n}\log\inf_{F_n}\sum_{x\in F_n}\exp\left(nF\left(\frac{S_{n}\varphi(x)}{n}\right)\right)\] with the infimum taken over all $(n,\epsilon)$-spanning sets $F_n$.

Let the pair $(f,\varphi)$ denote the system $f\colon X\to X$ with potential $\varphi$. We say $(f,\varphi)$ has an \emph{abundance of ergodic measures} if for all $\mu\in\M(X,f)$, all $h<h_{\mu}(f)$ and every $\epsilon>0$ there exists an ergodic measure $\nu\in\M^{e}(X,f)$ such that $h<h_{\nu}(f)$ and
\[\bigg|\int_{X}\varphi\,\dd\nu-\int_{X}\varphi\,\dd\mu\bigg|<\epsilon.\] Under this assumption, J. Buzzi, B. Klonecker, and R. Leplaideur established the variational principle for nonlinear topological pressure  \cite{buzzi} (see also \cite{barreira}), that is,
\begin{equation}\label{nonlinearvp}
    \pres^{F}(f,\varphi)=\sup\left\{h_{\mu}(f)+F\left(\int_{X}\varphi\,\dd\mu\right)\colon\mu\in\M(X,f)\right\}.
\end{equation}
The $f$-invariant measure which attains the supremum is said to be an equilibrium state for $(f,\varphi)$ with continuous function $F$. In addition, if the continuous $F:\R\to\R$ is convex, the result also holds even if we drop the assumption of abundance of ergodic measures.
\section{Induced pressure for random dynamical systems}\label{sec3}
\subsection{Fiber topological pressure}
We begin with the notion of a random dynamical system. Let $(\Omega,\F,\prob)$ be a complete probability space, and let $\theta\colon\Omega\to\Omega$ be an invertible ergodic map preserving $\prob$. Furthermore, we assume $(\Omega,\F,\prob)$ is a Lebesgue space. The tuple $(\Omega,\F,\prob,\theta)$ is called a \emph{base system}. Let $(X,d)$ be a compact metric space equipped with the Borel $\sigma$-algebra $\B$. Let $J:=\Omega\times X$ denote their Cartesian product, then $(J,\F\otimes\B)$ forms a measurable space.


We say $f=\{f_\omega\}_{\omega\in\Omega}$ is a \emph{\textup{(}continuous bundle\textup{)} random dynamical system} over the base system $(\Omega,\F,\prob,\theta)$ if it is generated by maps $f_{\omega}\colon X\to X$ such that $(\omega,x)\mapsto f_{\omega}(x)$ is measurable with respect to the product $\sigma$-algebra $\F\otimes\B$, and $x\mapsto f_{\omega}(x)$ is continuous on $X$ for $\almost$, denoted by the pair $(X,f)$. For $n\geq1$, the iteration is defined by $f_{\omega}^{n}:=f_{\theta^{n-1}\omega}\circ\cdots f_{\theta\omega}\circ f_{\omega}$ and $f_{\omega}^{0}=\id$. The map $\Theta\colon J\to J$ defined by $\Theta(\omega,x):=(\theta\omega,f_{\omega}x)$ is called the \emph{skew-product} map on the global phase space. We denote by $\M(J)$ the set of all Borel probability measures on $J$. The set of all $\Theta$-invariant probability measures on $ J$ with marginal $\prob$ is denoted by $\M_{\prob}(f)$, which means $\mu_{\omega}\circ f_{\omega}^{-1}=\mu_{\theta\omega}$ for $\almost$ and $\mu\circ\pi_{\Omega}^{-1}=\prob$ for $\mu\in\M_{\prob}(f)$, where $\pi_{\Omega}\colon J\to\Omega$ is the canonical projection. For $\mu\in\M_{\prob}(f)$, we usually disintegrate it into fiber measures $\{\mu_\omega\}_{\omega\in\Omega}$ on $X$ by $\dd\mu(\omega,x)=\dd\mu_{\omega}(x)\dd\prob(\omega)$ for $(\omega,x)\in J$. We denote its subset of ergodic measures by $\M_{\prob}^{e}(f)$.

For a function $g\colon J\to\R$, we say it is measurable if $(\omega,x)\mapsto g_{\omega}(x)$ is measurable with respect to the product $\sigma$-algebra $\F\otimes\B$. For each measurable function $g\colon J\to\R$ whose $\omega$-section $g_{\omega}\colon X\to\R$ is continuous in $x\in X$, we set $\onenorm{g}:=\int_{\Omega}\supnorm{g_\omega}\,\probinte=\int_{\Omega}\sup_{x\in X}|g_{\omega}(x)|\,\probinte$. We shall view $g(\omega,x)$ as $g_{\omega}(x)$ when specifying its dynamics on $X$. Denote by $L^{1}(\Omega,C(X))$ the space of all such functions with $\onenorm{g}<\infty$, we identity two functions $g,h$ if $\onenorm{g-h}=0$, then $(L^{1}(\Omega,C(X)),\onenorm{\cdot})$ is a Banach space. 
For more details on non-trivial subsets of $J$, we refer to \cite{kifer1,kifer3}.

Given a $\Theta$-invariant probability measure $\mu\in\M_{\prob}(f)$ and a finite measurable partition $\xi$ of $ J$, the measure-theoretic entropy of $f$ with respect to $\xi$ is defined by
\[h_{\mu}^{\ran}(f,\xi):=\lim_{n\to\infty}\frac{1}{n}\int_{\Omega}H_{\mu_\omega}\left(\bigvee_{i=0}^{n-1}f_{\omega}^{-i}\xi_{\theta^{i}\omega}\right)\,\probinte,\] where $H_{\mu_\omega}$ is the entropy of partition and $\xi_\omega$ is the $\omega$-section of $\xi$ on $X$. Therefore, the measure-theoretic entropy of $f$ is \[h_{\mu}^{\ran}(f):=\sup_{\xi}h_{\mu}^{\ran}(f,\xi)\] with the supremum taken over all finite partitions $\xi$ of $J$. We record a useful property of the entropy map.
\begin{proposition}\label{ranentropy}
    For $\mu\in\M_{\prob}(f)$, the entropy map $\mu\mapsto h_{\mu}^{\ran}(f)$ is affine.
\end{proposition}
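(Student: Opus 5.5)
The plan is to prove affinity of $\mu\mapsto h_{\mu}^{\ran}(f)$ by reducing it to the classical affinity of Kolmogorov--Sina\u{\i} entropy, using the Abramov--Rokhlin formula. Fix $\mu,\nu\in\M_{\prob}(f)$ and $t\in[0,1]$, and put $m=t\mu+(1-t)\nu$. Then $m\in\M_{\prob}(f)$: it is $\Theta$-invariant, and its marginal is $t\prob+(1-t)\prob=\prob$. Its disintegration is $m_\omega=t\mu_\omega+(1-t)\nu_\omega$ for $\almost$, by uniqueness of disintegration. Abramov--Rokhlin gives
\[h_{\mu}(\Theta)=h_{\prob}(\theta)+h_{\mu}^{\ran}(f).\]
Since $\Theta$ is a measurable map on $J$ and $\mu\mapsto h_\mu(\Theta)$ is affine (as in \cite{walters}), affinity of $h^{\ran}$ follows at once whenever $h_{\prob}(\theta)<\infty$. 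This covers the standing assumptions of the introduction, where $\theta$ has finite topological entropy.

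To avoid relying on finiteness of $h_\prob(\theta)$, I would instead argue directly at the level of partitions, copying the classical proof. Fix a finite partition $\xi$ of $J$ and write $\xi^n_\omega:=\bigvee_{i=0}^{n-1}f_\omega^{-i}\xi_{\theta^i\omega}$. For each $\omega$ and each finite partition $\eta$ of $X$, the standard concavity estimate gives
\[tH_{\mu_\omega}(\eta)+(1-t)H_{\nu_\omega}(\eta)\le H_{m_\omega}(\eta)\le tH_{\mu_\omega}(\eta)+(1-t)H_{\nu_\omega}(\eta)+\log 2.\]
The lower bound is concavity of $x\mapsto -x\log x$. The upper bound comes from $-(a+b)\log(a+b)\le -a\log a-b\log b$ applied to $a=t\mu_\omega(A)$ and $b=(1-t)\nu_\omega(A)$, which yields an extra term $H(t,1-t)\le\log2$. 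Apply this with $\eta=\xi^n_\omega$, integrate over $\Omega$ (measurability of $\omega\mapsto H_{\mu_\omega}(\xi^n_\omega)$ being standard for Lebesgue spaces), divide by $n$ and let $n\to\infty$. The $\log2$ disappears, so
\[h^{\ran}_m(f,\xi)=t\,h^{\ran}_\mu(f,\xi)+(1-t)\,h^{\ran}_\nu(f,\xi).\]

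The step needing care is passing to the supremum over $\xi$. The inequality $h_m^{\ran}(f)\le t h_\mu^{\ran}(f)+(1-t)h_\nu^{\ran}(f)$ is immediate from the equality for each $\xi$. For the reverse inequality, take $\xi_1$ nearly optimal for $\mu$ and $\xi_2$ nearly optimal for $\nu$, and pass to the common refinement $\xi=\xi_1\vee\xi_2$. This works because $h^{\ran}(f,\cdot)$ is monotone under refinement, which follows from monotonicity of $H_{\mu_\omega}$ under refinement fibrewise. If one of the entropies is infinite, the same argument shows that both sides are $+\infty$.
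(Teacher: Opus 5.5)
Your partition-level argument is correct and follows the same route as the paper: compare $H_{m_\omega}(\xi^n_\omega)$ with $tH_{\mu_\omega}(\xi^n_\omega)+(1-t)H_{\nu_\omega}(\xi^n_\omega)$ fibrewise, integrate over $\Omega$, divide by $n$, and pass to the supremum over $\xi$. The Abramov--Rokhlin shortcut is a side remark that, as you note, needs $h_{\prob}(\theta)<\infty$. Your version is in fact more careful than the paper's proof in two places. First, the paper asserts the exact equality $H_{m_\omega}(\xi|_X)=tH_{\mu_\omega}(\xi|_X)+(1-t)H_{\nu_\omega}(\xi|_X)$, which is false in general; only your $\log 2$ sandwich holds, and it suffices after dividing by $n$. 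Second, the paper passes from equality of $h^{\ran}(f,\xi)$ for each fixed $\xi$ to equality of the suprema without the common-refinement step you supply.
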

\begin{proof}
    Let $\mu,\nu\in\M_{\prob}(f)$ be two invariant measures with disintegrations $\{\mu_\omega\}_{\omega\in\Omega}$ and $\{\nu_\omega\}_{\omega\in\Omega}$ respectively. For $\almost$, for any finite measurable partition $\xi$ of $ J$ and $t\in[0,1]$, let $\xi|_{X}$ denote the finite partition of $X$, by \cite[Theorem 8.1]{walters} we have \[H_{(t\mu+(1-t)\nu)_{\omega}}(\xi|_{X})=tH_{\mu_\omega}(\xi|_{X})+(1-t)H_{\nu_\omega}(\xi|_{X}).\] Replacing $\xi|_{X}$ with $\bigvee_{i=0}^{n-1}f_{\omega}^{-i}\xi|_{X}$ and taking integration with respect to measure $\prob$, we have \[h_{t\mu+(1-t)\nu}^{\ran}(f,\xi)=th_{\mu}^{\ran}(f,\xi)+(1-t)h_{\nu}^{\ran}(f,\xi),\] hence $h_{t\mu+(1-t)\nu}^{\ran}(f)=th_{\mu}^{\ran}(f)+(1-t)h_{\nu}^{\ran}(f)$.
\end{proof}

For $n\geq1$, $\omega\in\Omega$, and two points $x,y\in X$, we define the Bowen's metric of order $n$ at state $\omega$ by \[d_{n}^{\omega}(x,y):=\max\{d(f_{\omega}^{i}(x),f_{\omega}^{i}(y))\colon i=0,1,\ldots,n-1\}.\] Since the map $(\omega,x)\mapsto f_{\omega}(x)$ is $\F\otimes\B$-measurable and $d\colon X\times X\to\R$ is continuous, the map $(\omega,x,y)\mapsto d_{n}^{\omega}(x,y)$ is $\F\otimes\B\otimes\B$-measurable for every $n\geq1$. We denote by $B_{n}^{\omega}(x,\epsilon):=\{y\in X\colon d_{n}^{\omega}(x,y)<\epsilon\}$ the Bowen's ball centered at $x\in X$ with radius $\epsilon>0$ and state $\omega\in\Omega$.

A subset $E\subset X$ is $(\omega,n,\epsilon)$-separated if $d_{n}^{\omega}(x,y)>\epsilon$ for any two distinct points $x,y\in E$. A subset $F\subset X$ is $(\omega,n,\epsilon)$-spanning if for each $x\in X$ there exists $y\in F$ such that $d_{n}^{\omega}(x,y)\leq\epsilon$. Let $\varphi\in L^{1}(\Omega,C(X))$, set
\[Q_{\omega}(f,\varphi,n,\epsilon):=\inf\left\{\sum_{x\in F_n}e^{S_{n}\varphi_{\omega}(x)}\colon F_n\text{ is $(\omega,n,\epsilon)$-spanning set of }X\right\}\] and
\[P_{\omega}(f,\varphi,n,\epsilon):=\sup\left\{\sum_{x\in E_n}e^{S_{n}\varphi_{\omega}(x)}\colon E_n\text{ is $(\omega,n,\epsilon)$-separated set of }X\right\},\] where $S_{n}\varphi_{\omega}:=\sum_{i=0}^{n-1}\varphi_{\theta^{i}\omega}\circ f_{\omega}^{i}$ is the $n$th Birkhoff sum for random function $\varphi_{\omega}\colon X\to\R$. For each $n\geq1$ and $\epsilon>0$, the maps $\omega\mapsto Q_{\omega}(f,\varphi,n,\epsilon)$ and $\omega\mapsto P_{\omega}(f,\varphi,n,\epsilon)$ are measurable with respect to the $\sigma$-algebra $\F$; see \cite[Lemma 5.3]{bogen1}.
\begin{defn}\label{ranpres}
    Let $(X,f)$ be a random dynamical system, and let $\varphi\in L^{1}(\Omega,C(X))$. The \emph{fiber topological pressure} of the potential $\varphi$ with respect to $f$ is defined by
    \begin{equation}\label{ranpresdef}
        \pres_{\Omega}(f,\varphi)=\lim_{\epsilon\to0}\limsup_{n\to\infty}\frac{1}{n}\int_{\Omega}\log Q_{\omega}(f,\varphi,n,\epsilon)\,\probinte=\lim_{\epsilon\to0}\limsup_{n\to\infty}\frac{1}{n}\int_{\Omega}\log P_{\omega}(f,\varphi,n,\epsilon)\,\probinte.
    \end{equation}
\end{defn}
\begin{remark}
    (1) The fiber topological pressure of $\varphi$ is in fact given by the fiber pressure function
    \[\pres_{\Omega}(f,\cdot)\colon L^{1}(\Omega,C(X))\to\R\cup\{\infty\},\quad\varphi\mapsto\pres_{\Omega}(f,\varphi):=\lim_{\epsilon\to0}Q(f,\varphi,\epsilon),\] where \[Q(f,\varphi,\epsilon):=\limsup_{n\to\infty}\frac{1}{n}\int_{\Omega}\log Q_{\omega}(f,\varphi,n,\epsilon)\,\probinte.\] The limit exists since $Q(f,\varphi,\epsilon)$ increases as $\epsilon\to0$.

    (2) When $\varphi\equiv0$, the quantity $h_{\tp}^{\ran}(f):=\pres_{\Omega}(f,0)$ is called the \emph{fiber topological entropy} of $f$, or the relative topological entropy of $\Theta$.
\end{remark}
The probability measure $\prob$ here is not necessarily ergodic. However, following Y. Kifer \cite[Theorem 3.2]{kifer2}, one can interchange the limit and the integral notations, even eliminate the integral sign when $\prob$ is ergodic; see \eqref{pres_bowen} for comparison.
\begin{theorem}[{\cite[Proposition 1.6]{kifer1}}]\label{randompres}
    We have
    \begin{equation*}
        \begin{aligned}
            \pres_{\Omega}(f,\varphi)&=\int_{\Omega}\lim_{\epsilon\to0}\limsup_{n\to\infty}\frac{1}{n}\log Q_{\omega}(f,\varphi,n,\epsilon)\,\probinte=\int_{\Omega}\lim_{\epsilon\to0}\liminf_{n\to\infty}\frac{1}{n}\log Q_{\omega}(f,\varphi,n,\epsilon)\,\probinte \\
            &=\int_{\Omega}\lim_{\epsilon\to0}\limsup_{n\to\infty}\frac{1}{n}\log P_{\omega}(f,\varphi,n,\epsilon)\,\probinte=\int_{\Omega}\lim_{\epsilon\to0}\liminf_{n\to\infty}\frac{1}{n}\log P_{\omega}(f,\varphi,n,\epsilon)\,\probinte.
        \end{aligned}
    \end{equation*} If $\prob$ is ergodic with respect to $\theta$, then the equation holds $\almost$ without taking the integrals. It is reduced to
    \[\pres_{\Omega}(f,\varphi)=\lim_{\epsilon\to0}\limsup_{n\to\infty}\frac{1}{n}\log Q_{\omega}(f,\varphi,n,\epsilon)=\lim_{\epsilon\to0}\limsup_{n\to\infty}\frac{1}{n}\log P_{\omega}(f,\varphi,n,\epsilon)\] for $\almost$. The $\limsup$ can also be changed to $\liminf$.
\end{theorem}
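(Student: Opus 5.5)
This statement is Kifer's result and is cited from \cite[Proposition 1.6]{kifer1}; we sketch how we would reprove it. The strategy is to pass from the integrated quantities in \eqref{ranpresdef} to fiberwise ones via a subadditive ergodic argument, while handling the $\epsilon$-dependence by squeezing between spanning and separated counts.

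\textbf{Step 1: spanning versus separated.} For each fixed $\omega$, $n$ and $\epsilon$, we first record the standard comparisons
\[Q_\omega(f,\varphi,n,\epsilon)\le P_\omega(f,\varphi,n,\epsilon)\le e^{n\gamma_\omega(\epsilon)}Q_\omega(f,\varphi,n,\epsilon/2).\]
Here $\gamma_\omega(\epsilon)$ is an averaged modulus of continuity of $\varphi$ on $d^\omega_n$-balls, namely
\[\gamma_\omega(\epsilon)=\frac1n\sum_{i<n}\sup\{|\varphi_{\theta^i\omega}(x)-\varphi_{\theta^i\omega}(y)|:d(x,y)\le\epsilon\}.\]
By Birkhoff's theorem applied to the integrable function $\omega\mapsto\sup_{d(x,y)\le\epsilon}|\varphi_\omega(x)-\varphi_\omega(y)|$, this average converges a.e. Its limit tends to $0$ as $\epsilon\to0$, by dominated convergence using $\onenorm{\varphi}<\infty$ and continuity of each $\varphi_\omega$. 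Once the $\epsilon\to0$ limit is taken, this reduces all four fiberwise formulas, and the two integrated ones, to statements about $Q$ and $P$ that agree.

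\textbf{Step 2: subadditive cocycle.} The main tool is that, for a fixed open cover $\U$ of $X$ (or for $\epsilon$ fixed), the quantity
\[a_n(\omega)=\log\inf_{\V}\sum_{V\in\V}\sup_{x\in V}e^{S_n\varphi_\omega(x)},\]
with the infimum over subcovers $\V$ of $\bigvee_{i<n}(f_\omega^i)^{-1}\U$, is a subadditive cocycle over $\theta$:
\[a_{n+m}(\omega)\le a_n(\omega)+a_m(\theta^n\omega).\]
It is measurable, and $a_1^+$ is integrable because $|a_1|\le\log\#\U+\supnorm{\varphi_\omega}$. Kingman's subadditive ergodic theorem then gives a.e. convergence of $a_n(\omega)/n$ to a $\theta$-invariant limit $\bar a(\omega)$ with $\int\bar a\,\dd\prob=\lim\frac1n\int a_n\,\dd\prob$. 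So for covers the $\limsup$ and $\liminf$ coincide, and limit and integral commute.

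\textbf{Step 3: transferring to spanning/separated sets.} We then sandwich $Q_\omega$ and $P_\omega$ between cover quantities. Let $\U$ have Lebesgue number $2\epsilon$ and $\diam\U\le\epsilon$. In one direction $\log Q_\omega(n,\epsilon)\le a_n(\omega)+n\gamma$-type errors; in the other, $a_n(\omega)\le\log P_\omega(n,\epsilon')+$ error. Taking $\limsup$ and $\liminf$ fiberwise, and then letting $\epsilon\to0$ using monotonicity in $\epsilon$ and monotone convergence, all fiberwise quantities equal $\lim_{\U}\bar a_{\U}(\omega)$ a.e. Here the monotone convergence is justified because the quantities are bounded below by $-\int\supnorm{\varphi_\omega}\,\dd\prob$. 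Integrating and using the Kingman identity gives equality with $\pres_\Omega(f,\varphi)$, in both the $Q$ and $P$ versions of \eqref{ranpresdef}. When $\prob$ is ergodic, $\bar a$ is a.e. constant, which yields the integral-free statement.

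\textbf{Main obstacle.} We expect the hardest part to be Step 3. The task is to control the error terms uniformly enough that the $\epsilon\to0$ limit can be exchanged with the integral, and to justify measurability of $a_n$. For the exchange we would use monotonicity in $\epsilon$ together with the Birkhoff control from Step 1. For measurability we would invoke \cite[Lemma 5.3]{bogen1}-type arguments.
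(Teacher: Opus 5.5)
The paper does not prove this statement; it imports it from Kifer. Your route is essentially Kifer's standard argument, and it is correct: Kingman's subadditive ergodic theorem for the submultiplicative open-cover sums $a_n$, then sandwiching $Q_\omega$ and $P_\omega$ between cover sums, then letting the scale tend to zero by monotone convergence.

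Two small repairs are needed. First, no cover can in general have both $\diam\U\le\epsilon$ and Lebesgue number $2\epsilon$. Instead, take $\diam\U\le\epsilon$ together with whatever Lebesgue number $\delta=\delta(\epsilon)>0$ the cover has. Writing $\mathrm{var}_{\epsilon}(g):=\sup_{d(x,y)\le\epsilon}|g(x)-g(y)|$, this gives $\log P_{\omega}(f,\varphi,n,\epsilon)\le a_n(\omega)\le\log Q_{\omega}(f,\varphi,n,\delta/3)+\sum_{i<n}\mathrm{var}_{\epsilon}(\varphi_{\theta^{i}\omega})$, and the mismatch between the two scales disappears by monotonicity in the scale.

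Second, the integrable lower bound needed for monotone convergence is $-\mathbb{E}(\supnorm{\varphi_{\cdot}}\mid\mathcal{I})$, where $\mathcal{I}$ is the $\theta$-invariant $\sigma$-algebra. This equals the constant $-\onenorm{\varphi}$ only when $\prob$ is ergodic.
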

It is possible to define the fiber topological pressure by open covers. Let $\U$ be an open cover of $X$, define \[p_{\omega}(f,\varphi,n,\U):=\inf\left\{\sum_{V\in\V}\sup_{x\in V}e^{S_{n}\varphi_{\omega}(x)}\colon\V\text{ is a finite subcover of }\bigvee_{i=0}^{n-1}f_{\omega}^{-i}\U\right\}\] and \[q_{\omega}(f,\varphi,n,\U):=\inf\left\{\sum_{V\in\V}\inf_{x\in V}e^{S_{n}\varphi_{\omega}(x)}\colon\V\text{ is a finite subcover of }\bigvee_{i=0}^{n-1}f_{\omega}^{-i}\U\right\}.\]
\begin{defn}[{\cite[Definition 3.2]{kifer2}}]
    Let $\prob$ be ergodic with respect to $\theta\colon\Omega\to\Omega$, the fiber topological pressure of $\varphi\in L^{1}(\Omega,C(X))$ is defined by \[\pres_{\Omega}(f,\varphi):=\lim_{\delta\to0}\sup_{\U}\lim_{n\to\infty}\frac{1}{n}\log p_{\omega}(f,\varphi,n,\U)=\lim_{\delta\to0}\sup_{\U}\lim_{n\to\infty}\frac{1}{n}\log q_{\omega}(f,\varphi,n,\U)\] for $\almost$, where the supremum is taken over all open covers $\U$ of $X$ with $\diam\U\leq\delta$.
\end{defn}
We have the variational principle for fiber topological pressure, which was firstly proved by T. Bogensch\"{u}tz \cite{bogen1}, and Y. Kifer later \cite{kifer1}.
\begin{theorem}[{\cite[Theorem 6.1]{bogen1}}]\label{ranvp}
    Let $(X,f)$ be a random dynamical system with potential $\varphi\in L^{1}(\Omega,C(X))$, we have
    \[\pres_{\Omega}(f,\varphi)=\sup\left\{h_{\mu}^{\ran}(f)+\int_{ J}\varphi\,\dd\mu\colon\mu\in\M_{\prob}(f)\right\}.\]
\end{theorem}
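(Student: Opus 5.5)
This is the variational principle for fiber topological pressure, due to Bogenschütz, so I would follow the standard Misiurewicz-type argument adapted to fibers, with Theorem \ref{randompres} available to move between integrated and pointwise quantities. The proof splits into two inequalities. For the lower bound $h_{\mu}^{\ran}(f)+\int_J\varphi\,\dd\mu\le\pres_{\Omega}(f,\varphi)$, I fix $\mu\in\M_{\prob}(f)$ with disintegration $\{\mu_\omega\}$ and a finite partition $\xi$ of $J$. First I approximate each $\xi_\omega$ by a partition whose atoms are separated by a positive distance. This is done by taking compact subsets $K_{\omega,j}\subset\xi_{\omega,j}$, measurably in $\omega$, and adding the complement as one extra atom. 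The cost is a small loss $\delta$ in conditional entropy, integrated over $\Omega$.

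Next, for fixed $\omega$ and $n$, each atom of $\bigvee_{i<n}f_\omega^{-i}\eta_{\theta^i\omega}$ contains at most $2^n$-type boundedly many points of any $(\omega,n,\epsilon)$-spanning set, once $\epsilon$ is below the separation distance. Using the elementary inequality $\sum p_i(a_i-\log p_i)\le\log\sum e^{a_i}$, I bound $H_{\mu_\omega}(\cdot)+\int S_n\varphi_\omega\,\dd\mu_\omega$ by $\log Q_\omega(f,\varphi,n,\epsilon)+n\log 2+n\,\mathrm{var}$. Here the variation term comes from the oscillation of $\varphi_{\theta^i\omega}$ on $\epsilon$-balls, and it is integrable and tends to zero in $L^1$ as $\epsilon\to0$ by dominated convergence. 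Integrating in $\omega$ and using $\Theta$-invariance, which gives $\int_\Omega\int S_n\varphi_\omega\,\dd\mu_\omega\,\dd\prob=n\int\varphi\,\dd\mu$, and dividing by $n$ yields the bound up to an additive constant $\log2$. I remove that constant by the standard power trick: apply the bound to the induced system $\Theta^m$ and potential $S_m\varphi$, then divide by $m$. This needs $h^{\ran}_\mu(f^m)=m h^{\ran}_\mu(f)$ and $\pres_\Omega(f^m,S_m\varphi)=m\pres_\Omega(f,\varphi)$, both of which follow directly from the definitions.

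For the upper bound I build a measure attaining the pressure. For each $n$ I choose, measurably in $\omega$ (via a measurable selection argument as in \cite[Lemma 5.3]{bogen1}), a maximal $(\omega,n,\epsilon)$-separated set $E_n(\omega)$ with $\sum_{x\in E_n(\omega)}e^{S_n\varphi_\omega(x)}\ge\frac12P_\omega(f,\varphi,n,\epsilon)$. I then define the fiber measures $\sigma_{n,\omega}$ as the normalized Gibbs-weighted atomic measures on $E_n(\omega)$, set $\sigma_n=\int\delta_\omega\times\sigma_{n,\omega}\,\dd\prob$, and form the Cesàro averages $\mu_n=\frac1n\sum_{i<n}\sigma_n\circ\Theta^{-i}$, whose marginal on $\Omega$ is $\prob$. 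Compactness of the space of probability measures on $J$ with marginal $\prob$, in the narrow topology induced by $L^1(\Omega,C(X))$, gives a limit point $\mu\in\M_{\prob}(f)$. Choosing a partition $\xi$ with $\diam\xi_\omega<\epsilon$ and $\mu_\omega(\partial\xi_\omega)=0$, each atom of the refined partition contains at most one point of $E_n(\omega)$. This gives $H_{\sigma_{n,\omega}}+\int S_n\varphi_\omega\,\dd\sigma_{n,\omega}=\log\sum e^{S_n\varphi_\omega}$. The Misiurewicz block decomposition with step $q$ then transfers this to $\mu_n$, with error $O(q/n)$ coming from integrable quantities, and letting $n\to\infty$ along the subsequence realizing the limsup gives $\frac1q\int H_{\mu_\omega}(\xi^q_\omega)\,\dd\prob+\int\varphi\,\dd\mu\ge Q(f,\varphi,\epsilon)$ up to the slack. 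Letting $q\to\infty$ and then $\epsilon\to0$ finishes the argument.

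The main obstacle is the passage to the limit in this last step. Entropy upper semicontinuity along $\mu_n\to\mu$ requires the boundaries of $\xi_\omega$ to be $\mu_\omega$-null for a.e.\ $\omega$. It also requires $\sigma\mapsto\int\varphi\,\dd\sigma$ to be continuous, which holds because $\varphi\in L^1(\Omega,C(X))$ and weak convergence here is tested exactly against this class. Ensuring that a fibered partition with null boundaries exists measurably in $\omega$, for instance from balls whose radii are chosen off the countably many atoms of the radial distribution, is the step I expect to require the most care.
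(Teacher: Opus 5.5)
The paper gives no proof of this statement: it is quoted from Bogensch\"{u}tz \cite[Theorem 6.1]{bogen1} (see also \cite{kifer1}). Your outline, a Walters-type lower bound with a power trick and Misiurewicz's construction for the upper bound, is the standard argument behind those references. As written, however, two steps need repair.

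In the lower bound, the counting fact is stated backwards. Nothing bounds the number of spanning points inside an atom of $\bigvee_{i<n}f_\omega^{-i}\eta_{\theta^i\omega}$, since any superset of a spanning set is spanning. What is true, and what you need, is that each set $\{z\colon d_n^\omega(y,z)\le\epsilon\}$ meets the closures of at most $2^n$ atoms. Then $C\mapsto y(C)$ is at most $2^n$-to-one, where $\alpha(C)\in\overline{C}$ maximizes $S_n\varphi_\omega$ and $y(C)$ is a spanning point with $d_n^\omega(y(C),\alpha(C))\le\epsilon$.

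There is a second problem with choosing $K_{\omega,j}$ fiber by fiber. The separation $b(\omega)=\min_{i\neq j}d(K_{\omega,i},K_{\omega,j})$ then has no positive lower bound. The count, however, needs $\epsilon<b(\theta^i\omega)/2$ for all $i<n$, and for fixed $\epsilon$ this fails for large $n$ along almost every orbit. There are two ways to fix this.
\begin{itemize}
\item Reduce first to partitions $\Omega\times P$ with $P$ a finite Borel partition of $X$. To do so, approximate $\xi$ in conditional entropy by a partition $\zeta$ built from finitely many rectangles; its sections are coarser than the partition $P$ generated by their $X$-sides. Then use $h^{\ran}_\mu(f,\xi)\le h^{\ran}_\mu(f,\zeta)+\int_\Omega H_{\mu_\omega}(\xi_\omega|\zeta_\omega)\,\probinte$. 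Now take compact $K_j\subset P_j$ for the marginal $\mu\circ\pi_X^{-1}$, which gives a uniform separation.
\item Alternatively, allow the factor $k+1$ at the times where $b(\theta^i\omega)\le 2\epsilon$. After integrating, this costs $\log(k+1)\,\prob(b\le2\epsilon)$, which tends to $0$ as $\epsilon\to0$.
\end{itemize}
For the power trick you only need $\pres_\Omega(f^m,S_m\varphi)\le m\,\pres_\Omega(f,\varphi)$ and $h^{\ran}_\mu(f^m)\ge m\,h^{\ran}_\mu(f)$, and both are immediate. The reverse pressure inequality is not direct from the definitions, because $\{f_\omega\}$ need not be equicontinuous in $\omega$.

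In the upper bound, with $\xi=\Omega\times P$ the null-boundary requirement is trivial. Take $\diam P<\epsilon$ and $\mu\circ\pi_X^{-1}(\partial P)=0$; then $\mu_\omega(\partial P)=0$ for a.e.\ $\omega$, and no $\omega$-dependent radii are needed. Set $\xi^q=\bigvee_{i<q}\Theta^{-i}\xi$. The step that genuinely needs an argument is the upper semicontinuity of $\nu\mapsto\int_\Omega H_{\nu_\omega}(\xi^q_\omega)\,\probinte$ along $\mu_n\to\mu$, which you treat as a consequence of null boundaries. It is not one. Convergence tested against $L^1(\Omega,C(X))$ does not give $\mu_{n,\omega}\to\mu_\omega$ fiberwise. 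For example, take $X=\{0,1\}$ and $\mu_{n,\omega}=\delta_{r_n(\omega)}$ with $r_n$ the Rademacher functions; the limit has $\mu_\omega=\frac12(\delta_0+\delta_1)$. Since the integrand is nonlinear in the fiber measure, you cannot pass to the limit fiber by fiber.

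The standard device is the following.
\begin{itemize}
\item Write the quantity as $H_\nu(\xi^q\mid\pi_\Omega^{-1}\F)=\inf_m H_\nu(\xi^q\mid\pi_\Omega^{-1}\beta_m)$, where $\beta_m$ is an increasing sequence of finite partitions generating $\F$ mod $\prob$. Such a sequence exists because $(\Omega,\F,\prob)$ is a Lebesgue space.
\item Each $H_\nu(\xi^q\mid\pi_\Omega^{-1}\beta_m)$ is a continuous function of finitely many numbers $\nu(\{(\omega,x)\colon\omega\in B,\,x\in C_\omega\})$.
\item These numbers converge along $\mu_n\to\mu$. The reason is that the $\Theta$-invariant limit $\mu$ gives zero mass to $\bigcup_{i<q}\Theta^{-i}(\Omega\times\partial P)$, which contains the fiberwise boundaries of the atoms of $\xi^q$.
\item An infimum of continuous functions is upper semicontinuous.
\end{itemize}
Two further ingredients are needed in the block decomposition. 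First, use the $\theta$-invariance of $\prob$ to re-index the fiber measures $\sigma_{n,\omega}\circ(f_\omega^p)^{-1}$, which live over $\theta^p\omega$. Second, use the concavity of $\nu\mapsto H_\nu(\xi^q\mid\pi_\Omega^{-1}\F)$ on measures with marginal $\prob$ to pass from the average over $p$ to $\mu_n$. With these supplements your outline becomes a complete proof.
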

By taking the potential zero function we obtain the variational principle for fiber topological entropy $h_{\tp}^{\ran}(f)$.
\begin{corollary}
    We have $h_{\tp}^{\ran}(f)=\sup\{h_{\mu}^{\ran}(f)\colon\mu\in\M_{\prob}(f)\}$.
\end{corollary}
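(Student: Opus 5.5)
The corollary is the special case $\varphi\equiv0$ of the variational principle in \thmref{ranvp}, so the plan is simply to specialize that theorem and check that nothing degenerates.

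\textbf{Step 1 (admissibility of the zero potential).} The zero function $0\colon J\to\R$ is jointly measurable and continuous in $x$ for every $\omega$. Moreover $\onenorm{0}=0<\infty$, so $0\in L^{1}(\Omega,C(X))$ and \thmref{ranvp} applies to $\varphi\equiv0$.

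\textbf{Step 2 (left-hand side).} By part (2) of the remark following \defref{ranpres}, the left-hand side of \thmref{ranvp} at $\varphi\equiv0$ is by definition $\pres_{\Omega}(f,0)=h_{\tp}^{\ran}(f)$. Concretely, $S_n0_\omega\equiv0$, so $Q_\omega(f,0,n,\epsilon)$ is the minimal cardinality of an $(\omega,n,\epsilon)$-spanning set. Thus the defining limit is exactly the fiber entropy expression.

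\textbf{Step 3 (right-hand side).} For every $\mu\in\M_{\prob}(f)$ we have $\int_J0\,\dd\mu=0$. The supremand therefore reduces to $h_{\mu}^{\ran}(f)$, and
\[h_{\tp}^{\ran}(f)=\sup\{h_{\mu}^{\ran}(f)\colon\mu\in\M_{\prob}(f)\}.\]

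\textbf{Possible obstacle.} The only point needing care is that the identity should hold in $\R\cup\{\infty\}$. Both sides may be infinite, and \thmref{ranvp} is stated for values in $\R\cup\{\infty\}$, so this causes no difficulty. The set $\M_{\prob}(f)$ is nonempty, so the supremum is well defined; this is already implicit in \thmref{ranvp}. With these checks the proof is complete, and I expect no substantive difficulty.
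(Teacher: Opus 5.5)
Your proposal is correct and follows the paper's own argument: the paper obtains the corollary by specializing \thmref{ranvp} to the zero potential, using $\pres_{\Omega}(f,0)=h_{\tp}^{\ran}(f)$ and $\int_J 0\,\dd\mu=0$. Your checks that $0\in L^{1}(\Omega,C(X))$ and that the identity holds in $\R\cup\{\infty\}$ are points the paper leaves implicit.
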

\begin{proposition}\label{ranconti}
    The fiber pressure function $\pres_{\Omega}(f,\cdot)\colon L^{1}(\Omega,C(X))\to\R$ is continuous with respect to $L^1$-norm.
\end{proposition}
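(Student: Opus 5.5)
We will prove the stronger Lipschitz estimate $|\pres_{\Omega}(f,\varphi)-\pres_{\Omega}(f,\phi)|\leq\onenorm{\varphi-\phi}$ for all $\varphi,\phi\in L^{1}(\Omega,C(X))$ with finite pressure. Continuity with respect to the $L^1$-norm then follows at once. We give two routes, one through the definition and one through the variational principle; either suffices.

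\emph{Route via the definition.} Fix $\omega$, $n$ and $\epsilon$. For every $x\in X$,
\[
|S_n\varphi_\omega(x)-S_n\phi_\omega(x)|\leq\sum_{i=0}^{n-1}\supnorm{\varphi_{\theta^i\omega}-\phi_{\theta^i\omega}}.
\]
Hence for any $(\omega,n,\epsilon)$-spanning set $F_n$,
\[
\sum_{x\in F_n}e^{S_n\varphi_\omega(x)}\leq \exp\Big(\sum_{i=0}^{n-1}\supnorm{(\varphi-\phi)_{\theta^i\omega}}\Big)\sum_{x\in F_n}e^{S_n\phi_\omega(x)}.
\]
Taking the infimum over $F_n$ gives
\[
\log Q_\omega(f,\varphi,n,\epsilon)\leq \log Q_\omega(f,\phi,n,\epsilon)+\sum_{i=0}^{n-1}\supnorm{(\varphi-\phi)_{\theta^i\omega}}.
\]
These functions of $\omega$ are measurable by \cite[Lemma 5.3]{bogen1}. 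Integrate over $\Omega$. Since $\theta$ preserves $\prob$, each term satisfies $\int_\Omega\supnorm{(\varphi-\phi)_{\theta^i\omega}}\,\probinte=\onenorm{\varphi-\phi}$, so the sum integrates to exactly $n\onenorm{\varphi-\phi}$. Dividing by $n$ and taking $\limsup_{n\to\infty}$ and then $\lim_{\epsilon\to0}$ gives $\pres_{\Omega}(f,\varphi)\leq\pres_{\Omega}(f,\phi)+\onenorm{\varphi-\phi}$. Swapping the roles of $\varphi$ and $\phi$ gives the reverse inequality.

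\emph{Route via Theorem~\ref{ranvp}.} For any $\mu\in\M_{\prob}(f)$,
\[
\Big|\int_J\varphi\,\dd\mu-\int_J\phi\,\dd\mu\Big|\leq\int_\Omega\int_X|\varphi_\omega-\phi_\omega|\,\dd\mu_\omega\,\probinte\leq\onenorm{\varphi-\phi},
\]
because the marginal of $\mu$ on $\Omega$ is $\prob$. Hence $h_\mu^{\ran}(f)+\int_J\varphi\,\dd\mu\leq h_\mu^{\ran}(f)+\int_J\phi\,\dd\mu+\onenorm{\varphi-\phi}$. Taking the supremum over $\mu$ yields the same inequality, and symmetry again gives the Lipschitz bound.

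The one real obstacle is finiteness. The statement takes values in $\R$, but $\pres_{\Omega}(f,\cdot)$ could equal $+\infty$ if $h_{\tp}^{\ran}(f)=\infty$. Applying the Lipschitz bound with $\phi=0$ shows that $\pres_{\Omega}(f,\varphi)$ lies between $h_{\tp}^{\ran}(f)-\onenorm{\varphi}$ and $h_{\tp}^{\ran}(f)+\onenorm{\varphi}$. So either the pressure is finite for every $\varphi$, and the Lipschitz estimate holds, or it is identically $+\infty$. We will therefore state the standing assumption $h_{\tp}^{\ran}(f)<\infty$ under which the map is real-valued and continuous. Otherwise the only additional point to check is that $\theta$-invariance of $\prob$ legitimately turns the Birkhoff sum of the norms into $n\onenorm{\varphi-\phi}$, which is immediate.
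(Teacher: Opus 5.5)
Your proposal is correct, and both routes give the Lipschitz bound $|\pres_{\Omega}(f,\varphi)-\pres_{\Omega}(f,\phi)|\leq\onenorm{\varphi-\phi}$.

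Your second route is essentially the paper's proof. The paper picks an $\epsilon$-maximizer $\mu$ for $\varphi$ in the variational principle (\thmref{ranvp}) and bounds $\int_J(\varphi-\psi)\,\dd\mu\leq\onenorm{\varphi-\psi}$. You bound the expression for every $\mu\in\M_{\prob}(f)$ and then take the supremum. This is slightly cleaner, because choosing an $\epsilon$-maximizer tacitly requires $\pres_{\Omega}(f,\varphi)<\infty$, and your version does not.

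Your first route is genuinely different and more elementary. You compare the partition functions $Q_\omega(f,\cdot,n,\epsilon)$ fiber by fiber. You then use only the $\theta$-invariance of $\prob$ to turn $\sum_{i<n}\supnorm{(\varphi-\phi)_{\theta^i\omega}}$ into exactly $n\onenorm{\varphi-\phi}$ after integration. This gives the estimate already at the level of $Q(f,\cdot,\epsilon)$ for each fixed $\epsilon$, and it does not depend on the Bogensch\"utz--Kifer variational principle at all. The paper's route, by contrast, is a two-line argument once that deep theorem is available.

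One small point to state explicitly in the first route: the integrals $\int_\Omega\log Q_\omega\,\probinte$ take values in $(-\infty,\infty]$. They are well defined because $\log Q_\omega(f,\varphi,n,\epsilon)\geq-\sum_{i<n}\supnorm{\varphi_{\theta^i\omega}}$, which is integrable. So the fiberwise inequality does pass to the integrals.

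Your remark on finiteness is a fair point that the paper glosses over. The statement asserts real values, but the standing hypothesis $h_{\tp}^{\ran}(f)<\infty$ is only introduced in \secref{sec3.2}. Your dichotomy — the pressure is either finite everywhere or identically $+\infty$ — is exactly what justifies the codomain $\R$.
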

\begin{proof}
    Let $\varphi,\psi\in L^{1}(\Omega,C(X))$ and $\epsilon>0$, by the variational principle from \thmref{ranvp}, there exists $\mu\in\M_{\prob}(f)$ such that
    \[\pres_{\Omega}(f,\varphi)\leq h_{\mu}^{\ran}(f)+\int_{ J}\varphi\,\dd\mu+\epsilon.\] Then
    \begin{equation*}
        \begin{aligned}
            \pres_{\Omega}(f,\varphi)&\leq h_{\mu}^{\ran}(f)+\int_{ J}\psi\,\dd\mu+\int_{ J}(\varphi-\psi)\,\dd\mu+\epsilon \\
            &\leq\pres_{\Omega}(f,\psi)+\onenorm{\varphi-\psi}+\epsilon.
        \end{aligned}
    \end{equation*} The inequality holds for all $\epsilon>0$, then we have \[|\pres_{\Omega}(f,\varphi)-\pres_{\Omega}(f,\psi)|\leq\onenorm{\varphi-\psi}.\]
\end{proof}
\subsection{Induced fiber topological pressure}\label{sec3.2}
Let $(X,d)$ be a compact metric space, and let $(X,f)$ be a random dynamical system over the base system $(\Omega,\F,\prob,\theta)$ with the skew-product map $\Theta$ on $ J=\Omega\times X$. We assume $h_{\tp}^{\ran}(f)<\infty$, the system has finite fiber topological entropy. Let $S_{n}\varphi_{\omega}:=\sum_{i=0}^{n-1}\varphi_{\theta^{i}\omega}\circ f_{\omega}^{i}$ be the $n$th Birkhoff sum for potential $\varphi\in L^{1}(\Omega,C(X))$, here we write $\varphi_{\omega}(x)$ instead of $\varphi(\omega,x)$.

Given two potentials $\varphi,\psi\in L^{1}(\Omega,C(X))$ with $\psi>0$, here we mean $\inf_{x\in X}\psi_{\omega}(x)>0$ for $\almost$. For $T>0$, define the time set
\[S_{\omega,T}:=\{n\in\N\colon\text{there exists }x\in X\text{ with }S_{n}\psi_{\omega}(x)\leq T< S_{n+1}\psi_{\omega}(x)\}\] for each $\omega\in\Omega$. For $n\in S_{\omega,T}$, let
\[X_{\omega,n}:=\{x\in X\colon S_{n}\psi_{\omega}(x)\leq T< S_{n+1}\psi_{\omega}(x)\}.\] When $n_{1}\neq n_{2}$, we have $X_{\omega,n_1}\cap X_{\omega,n_2}=\varnothing$, then $\bigcup_{n\in S_{\omega,T}}X_{\omega,n}=X$. For $\omega\in\Omega$, let \[Q_{\psi,T}(f,\omega,\varphi,\epsilon):=\inf\left\{\sum_{n\in S_{\omega,T}}\sum_{x\in F_n}e^{S_{n}\varphi_{\omega}(x)}\colon F_{n}\text{ is $(\omega,n,\epsilon)$-spanning set of }X_{\omega,n},\,n\in S_{\omega,T}\right\}.\]
\begin{lemma}\label{meas}
    For $\varphi\in L^{1}(\Omega,C(X))$ and $\epsilon>0$, the map $\omega\mapsto Q_{\psi,T}(f,\omega,\varphi,\epsilon)$ is measurable with respect to the $\sigma$-algebra $\F$.
\end{lemma}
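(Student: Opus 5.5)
We will adapt the argument for the measurability of $\omega\mapsto Q_{\omega}(f,\varphi,n,\epsilon)$ from \cite[Lemma 5.3]{bogen1}. The idea is to show that, for each $c\in\R$, the sublevel set $\{\omega\colon Q_{\psi,T}(f,\omega,\varphi,\epsilon)<c\}$ is the projection onto $\Omega$ of a measurable subset of $\Omega\times X^{m}$, for a countable family of indices $m$. Since $(\Omega,\F,\prob)$ is complete and Lebesgue, the measurable projection theorem then makes it $\F$-measurable.

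\textbf{Step 1: reduce the time set.} Since $\psi>0$ fiberwise, $S_{\omega,T}$ is finite for a.e. $\omega$. However, the bound $n\le T/\inf_x\psi_\omega+1$ depends on $\omega$. So we write $\Omega=\bigcup_{N}\Omega_N$, where
\[\Omega_N:=\{\omega\colon S_{N}\psi_\omega(x)>T\ \text{for all }x\}.\]
Each $\Omega_N$ is measurable, because $\omega\mapsto\inf_x S_N\psi_\omega(x)$ is measurable: by continuity in $x$ it is an infimum over a countable dense set. It suffices to prove measurability on each $\Omega_N$, where $S_{\omega,T}\subset\{0,\dots,N\}$. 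For fixed $n$, the map $(\omega,x)\mapsto(S_n\psi_\omega(x),S_{n+1}\psi_\omega(x))$ is jointly measurable. Hence the set $\{(\omega,x)\colon x\in X_{\omega,n}\}$ is in $\F\otimes\B$, and $\{\omega\colon n\in S_{\omega,T}\}$, being its projection, is measurable.

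\textbf{Step 2: encode spanning families.} Fix $N$ and a tuple $(m_0,\dots,m_N)$ of cardinalities. Consider the set $G\subset\Omega\times\prod_n X^{m_n}$ of all $(\omega,(x^{n}_j))$ satisfying three conditions:
\begin{enumerate}
\item $x^n_j\in X_{\omega,n}$ for every $n$ and $j$;
\item $\sum_{n}\sum_j e^{S_n\varphi_\omega(x^n_j)}<c$;
\item for every $n$ and every $y\in X_{\omega,n}$ there is some $j$ with $d^\omega_n(x^n_j,y)\le\epsilon$.
\end{enumerate}
Conditions (1) and (2) are measurable by the joint measurability of the Birkhoff sums and of $d_n^\omega$. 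Then
\[\{Q_{\psi,T}<c\}\cap\Omega_N=\bigcup_{(m_n)}\pi_\Omega(G)\cap\Omega_N.\]
Indices $n\notin S_{\omega,T}$ correspond to $X_{\omega,n}=\varnothing$, and we take $m_n=0$ for them.

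\textbf{Step 3 (main obstacle): the spanning condition (3).} It quantifies over the uncountable, $\omega$-dependent set $X_{\omega,n}$, and this set need be neither open nor closed, since it is cut out by $\le T<$. We cannot therefore simply restrict $y$ to a countable dense set. We will write condition (3) as the complement of the projection of
\[\{(\omega,(x^n_j),y)\colon y\in X_{\omega,n},\ d_n^\omega(x^n_j,y)>\epsilon\ \forall j\},\]
which is measurable. Its projection is then universally measurable, again by the projection theorem on a complete Lebesgue space. Thus $G$ is measurable with respect to the completed product $\sigma$-algebra, and projection preserves this. Because $\F$ is complete and $\prob$ is Lebesgue, the resulting set lies in $\F$.

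If one prefers to avoid analytic sets, a fallback is available. The infimum in the definition is unchanged if the spanning condition with $\le\epsilon$ is replaced by covering with closed Bowen balls. One can then approximate the points $x^n_j$ by points of a countable dense set, perturbing $\epsilon$ slightly, and use the monotonicity of $Q_{\psi,T}$ in $\epsilon$. The projection route is cleaner, and it is the one we will write out.
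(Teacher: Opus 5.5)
Your strategy is the paper's: encode ``there is a spanning family with weighted sum below a threshold'' as a product-measurable subset of $\Omega\times X^{k}$, project it with the measurable projection theorem, and take a countable union over cardinalities. You are more careful than the paper in several respects. You handle the $\omega$-dependent time set. You use $<c$, so that a union of projections really is a sublevel set of an infimum. Above all, you actually encode the covering of $X_{\omega,n}$; the paper's set $E_k^{n,\epsilon}$ only records pairwise closeness and does not do this.

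However, your resolution of that covering condition in Step 3 has a genuine gap. The projection theorem gives $\pi_\Omega(G)\in\F$ for $G\in\F\otimes\B(Y)$, where $Y=\prod_n X^{m_n}$, and more generally for $\F\otimes\B(Y)$-Souslin sets. It says nothing about sets that only lie in a completion, or in the universal completion, of $\F\otimes\B(Y)$. Your $G$ is a product-measurable set intersected with the \emph{complement} of a projection, so it is of co-Souslin type, and projections of such sets are of class $\Sigma^1_2$. It is consistent with ZFC (e.g.\ under $V=L$) that some co-analytic subset of $[0,1]^2$, although universally measurable, has a non-Lebesgue-measurable projection. For the completion with respect to a single product measure the claim fails outright: if $N\notin\F$, then $N\times\{y_0\}$ is $\prob\otimes\lambda$-null for any non-atomic $\lambda$. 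So ``projection preserves this'' is unjustified, and $\{Q_{\psi,T}<c\}\in\F$ does not follow.

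The missing idea is that condition (3) is already $\F\otimes\B(Y)$-measurable once the quantifier over $y$ is removed by compactness. The failure of (3) at level $n$ is the union over $k\geq1$ of the sets of $(\omega,\bar x)$ for which some $y\in X$ satisfies
\[
S_n\psi_\omega(y)\le T,\quad S_{n+1}\psi_\omega(y)\ge T+\tfrac1k,\quad \min_j d_n^\omega(x^n_j,y)\ge\epsilon+\tfrac1k .
\]
For fixed $k$ these are closed conditions in $y$, so such a $y$ exists iff $\min_{y\in X}g_k(\omega,\bar x,y)\le 0$, where
\[
g_k(\omega,\bar x,y):=\max\Bigl\{S_n\psi_\omega(y)-T,\ T+\tfrac1k-S_{n+1}\psi_\omega(y),\ \epsilon+\tfrac1k-\min_j d_n^\omega(x^n_j,y)\Bigr\}.
\]
This $g_k$ is jointly measurable and, for a.e.\ $\omega$, continuous in $y$. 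Hence the minimum equals the infimum over a countable dense set $D\subset X$, and it is $\F\otimes\B(Y)$-measurable. Your observation that one cannot restrict $y$ to a countable dense set is right for the raw condition, but it becomes possible after this $1/k$-slack decomposition. Then $G\in\F\otimes\B(Y)$, and one application of the projection theorem finishes.

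Your fallback, as sketched, would at best give measurability of a modified infimum over points from a countable set. That only sandwiches $Q_{\psi,T}(f,\omega,\varphi,\epsilon)$ between values at nearby radii, rather than proving the lemma at a fixed $\epsilon$. It also needs the countable dense set to meet $X_{\omega,n}$, which it need not do, since $X_{\omega,n}$ is not open.
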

\begin{proof}
    The proof is inspired from \cite{wang}. For $T>0$, $k\geq1$, and $a\in\R$, we have the time set $S_{\omega,T}$ and its corresponding partition $X_{\omega,n}$. Now we define the following sets
    \[E_{k}^{n,\epsilon}:=\{(\omega,x_1,\ldots,x_k)\colon d_{n}^{\omega}(x_i,x_j)\leq\epsilon,\,i\neq j\},\]
    \[F_{k}^{n,a}:=\left\{(\omega,x_1,\ldots,x_k)\colon\sum_{n\in S_{\omega,T}}\sum_{i=1}^{k}e^{S_{n}\varphi_{\omega}(x_i)}\leq a\right\},\] where the points $\{x_i\}_{i=1}^{k}$ are all in the set $F_n\subset X_{\omega,n}$. Note that $E_{k}^{n,\epsilon}$ and $F_{k}^{n,a}$ lie in the product $\sigma$-algebra $\F\otimes\B^{k}$ where $\B^k$ is the product $\sigma$-algebra on the product $X^k$ of $k$ copies of $X$. Their intersection $E_{k}^{n,\epsilon}\cap F_{k}^{n,a}$ is exactly the set of states $\omega\in\Omega$ such that there exists an $(\omega,n,\epsilon)$-spanning set $F_n$ of $X_{\omega,n}$ with cardinality $k$ and $\sum_{n\in S_{\omega,T}}\sum_{i=1}^{k}\exp(S_{n}\varphi_{\omega}(x_i))\leq a$. By \cite[Theorem III.23]{castaing}, let $\pi_{\Omega}\colon\Omega\times X^k\to\Omega$ be the projection onto the space $\Omega$, then the set $\pi_{\Omega}(E_{k}^{n,\epsilon}\cap F_{k}^{n,a})$ belongs to $\F$, thus \[\bigcup_{k\geq1}\pi_{\Omega}(E_{k}^{n,\epsilon}\cap F_{k}^{n,a})=\{\omega\colon Q_{\psi,T}(f,\omega,\varphi,\epsilon)\leq a\}\in\F.\] Since $a$ is arbitrary, it follows that $Q_{\psi,T}(f,\omega,\varphi,\epsilon)$ is measurable in $\omega\in\Omega$.
\end{proof}
By \lemref{meas}, we verify the measurability of $Q_{\psi,T}(f,\omega,\varphi,\epsilon)$ for every $\epsilon>0$ and $T>0$, hence the definition follows.
\begin{defn}\label{randominduced}
    Let $(X,f)$ be a random dynamical system, and let $\varphi\in L^{1}(\Omega,C(X))$. The \emph{$\psi$-induced fiber topological pressure} of the potential $\varphi$ with respect to $f$ is given by
    \[\pres_{\psi,\Omega}(f,\varphi):=\lim_{\epsilon\to0}\limsup_{T\to\infty}\frac{1}{T}\int_{\Omega}\log Q_{\psi,T}(f,\omega,\varphi,\epsilon)\,\probinte.\]
\end{defn}
\begin{remark}
    (1) Let $0<\epsilon_{1}<\epsilon_{2}$, then $Q_{\psi,T}(f,\omega,\varphi,\epsilon_1)\geq Q_{\psi,T}(f,\omega,\varphi,\epsilon_2)$. It implies that the limit exists and is bounded below away from $-\infty$.

    (2) When $\psi\equiv\ind$, we have $\pres_{\ind,\Omega}(f,\varphi)=\pres_{\Omega}(f,\varphi)$ in \defref{ranpres}.
\end{remark}
We define
\[P_{\psi,T}(f,\omega,\varphi,\epsilon):=\sup\left\{\sum_{n\in S_{\omega,T}}\sum_{x\in E_n}e^{S_{n}\varphi_{\omega}(x)}\colon E_{n}\text{ is $(\omega,n,\epsilon)$-separated set of }X_{\omega,n},\,n\in S_{\omega,T}\right\},\] then we have the following equivalent definition.
\begin{proposition}\label{equivalent}
    We have \[\pres_{\psi,\Omega}(f,\varphi):=\lim_{\epsilon\to0}\limsup_{T\to\infty}\frac{1}{T}\int_{\Omega}\log P_{\psi,T}(f,\omega,\varphi,\epsilon)\,\probinte.\]
\end{proposition}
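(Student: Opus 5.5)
We prove the two inequalities between $Q_{\psi,T}$ and $P_{\psi,T}$ fiberwise, for fixed $\omega$, $T$ and $n\in S_{\omega,T}$. The argument is the standard Walters comparison (as in the deterministic induced case \cite[Proposition 2.1]{xing1}), applied separately on each piece $X_{\omega,n}$ and then summed over $n\in S_{\omega,T}$.

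\textbf{Lower bound for $P$.} A maximal $(\omega,n,\epsilon)$-separated subset $E_n$ of $X_{\omega,n}$ is $(\omega,n,\epsilon)$-spanning for $X_{\omega,n}$. Hence
\[Q_{\psi,T}(f,\omega,\varphi,\epsilon)\leq P_{\psi,T}(f,\omega,\varphi,\epsilon).\]
Integrating, dividing by $T$, and taking $\limsup_{T\to\infty}$ and $\lim_{\epsilon\to0}$ gives $\pres_{\psi,\Omega}(f,\varphi)\le$ the separated-set quantity. For this step we also need measurability of $\omega\mapsto P_{\psi,T}(f,\omega,\varphi,\epsilon)$. This is proved exactly as in \lemref{meas}: replace the condition $d_n^\omega(x_i,x_j)\le\epsilon$ by $>\epsilon$, replace $\le a$ by $>a$, and use the measurable projection theorem \cite[Theorem III.23]{castaing}.

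\textbf{Upper bound for $P$.} Fix $\epsilon>0$ and let $E_n$ be $(\omega,n,2\epsilon)$-separated in $X_{\omega,n}$ and $F_n$ be $(\omega,n,\epsilon)$-spanning for $X_{\omega,n}$. Define $\phi\colon E_n\to F_n$ by choosing $\phi(x)\in F_n$ with $d_n^\omega(x,\phi(x))\le\epsilon$. This map is injective, since two points of $E_n$ with the same image would be within $2\epsilon$ of each other. Let
\[\gamma_\omega(\epsilon):=\sup\{|\varphi_\omega(x)-\varphi_\omega(y)|\colon d(x,y)\le\epsilon\}.\]
Then
\[S_n\varphi_\omega(x)\le S_n\varphi_\omega(\phi(x))+\sum_{i=0}^{n-1}\gamma_{\theta^i\omega}(\epsilon).\]
Every $n\in S_{\omega,T}$ satisfies $n\le T/\inf_x\psi_\omega$, and this bound is not uniform in $\omega$. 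To avoid it, we bound by $N:=\max S_{\omega,T}$ and define
\[\Gamma_{\omega,T}(\epsilon):=\max_{n\le N}\sum_{i<n}\gamma_{\theta^i\omega}(\epsilon)\le\sum_{i<N}\gamma_{\theta^i\omega}(\epsilon).\]
Summing over $x\in E_n$ and $n\in S_{\omega,T}$ gives
\[P_{\psi,T}(f,\omega,\varphi,2\epsilon)\le e^{\Gamma_{\omega,T}(\epsilon)}\,Q_{\psi,T}(f,\omega,\varphi,\epsilon).\]

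\textbf{Main obstacle: the distortion term.} We must show that
\[\limsup_{T\to\infty}\frac1T\int_\Omega\Gamma_{\omega,T}(\epsilon)\,\probinte\to0\quad\text{as }\epsilon\to0.\]
For each $\omega$ we have $\gamma_\omega(\epsilon)\le2\supnorm{\varphi_\omega}$, and $\gamma_\omega(\epsilon)\downarrow0$ as $\epsilon\to0$ by uniform continuity. By dominated convergence, $\int\gamma_\omega(\epsilon)\,\dd\prob\to0$.

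The difficulty is that $N$ depends on $\omega$ and $T$. The plan is to control it with the Birkhoff sums of $\psi$: $N\le n_T(\omega)$, where $n_T(\omega)$ is the first $n$ with $\inf_x S_{n+1}\psi_\omega(x)>T$. If $\inf_x\psi_\omega\ge c>0$ uniformly, then $N\le T/c+1$. In that case $\theta$-invariance of $\prob$ gives
\[\frac1T\int_\Omega\Gamma_{\omega,T}(\epsilon)\,\probinte\le\Big(\frac1c+\frac1T\Big)\int\gamma(\epsilon)\,\dd\prob,\]
which tends to $0$. In general, I would first try to establish this uniform positivity (or reduce to it by truncation, using the ergodic theorem applied to $\inf_x\psi_\omega\in L^1$ with positive mean, which yields $n_T(\omega)/T\to1/\int\inf_x\psi_\omega\,\dd\prob$ almost surely) together with a maximal-inequality argument for the sums of $\gamma$ up to a random time. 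Once this is done, combining the two inequalities, taking logarithms, integrating, and letting $T\to\infty$ and then $\epsilon\to0$ yields the equality, since the limits in $\epsilon$ of $Q$ at $\epsilon$ and at $2\epsilon$ coincide.
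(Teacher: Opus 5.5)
Your argument is the same Walters-type comparison the paper uses. A maximal separated subset of $X_{\omega,n}$ is spanning, which gives $Q_{\psi,T}\le P_{\psi,T}$. For the reverse bound, an injection from an $(\omega,n,2\epsilon)$-separated set into an $(\omega,n,\epsilon)$-spanning set, summed over $n\in S_{\omega,T}$, works up to a distortion factor.

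Under $\inf_{\omega,x}\psi_\omega(x)\ge c>0$ your proof is complete. This is also the only regime the paper's proof actually covers: its factor $\exp(-(T/\inf\psi+1)\epsilon)$, and its later choice $m:=\inf\psi>0$, treat $\inf\psi$ as a single positive constant.

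The real difference is the distortion term, and there your version is the more careful one. The paper fixes one $\delta$ with $|\varphi_\omega(x)-\varphi_\omega(y)|<\epsilon$ whenever $d(x,y)\le\delta/2$. It then uses that $\delta$ for every $\omega$ and every fiber $\theta^i\omega$ along the orbit before integrating. This is not justified for a general $\varphi\in L^1(\Omega,C(X))$, whose modulus of continuity varies with $\omega$. Your argument fixes exactly this point. You use the fiberwise modulus $\gamma_\omega(\epsilon)\le 2\supnorm{\varphi_\omega}$, dominated convergence, and $\theta$-invariance of $\prob$ applied to the deterministic cutoff $\lfloor T/c\rfloor$. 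Measurability of $\omega\mapsto\gamma_\omega(\epsilon)$ follows by taking the supremum over a countable dense subset of the closed set $\{(x,y)\colon d(x,y)\le\epsilon\}$.

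Your last paragraph, for $\psi$ that is positive only fiberwise, is a plan rather than a proof. The ergodic theorem applied to $\omega\mapsto\inf_x\psi_\omega(x)$ gives only almost-sure upper control of $N/T$. To pass to $\frac1T\int_\Omega\Gamma_{\omega,T}(\epsilon)\,\probinte$, you would still need a uniform-integrability estimate for $\frac1T\sum_{i<N}\gamma_{\theta^i\omega}(\epsilon)$ with the random cutoff $N$, and you have not supplied one. The paper's proof does not handle that case either, so nothing is lost relative to it. As written, though, your argument proves the proposition only under uniform positivity of $\psi$.
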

\begin{proof}
    By similar steps in \lemref{meas}, we observe that $P_{\psi,T}(f,\omega,\varphi,\epsilon)$ is measurable in $\omega\in\Omega$. The map \[\epsilon\mapsto\limsup_{T\to\infty}\frac{1}{T}\int_{\Omega}\log P_{\psi,T}(f,\omega,\varphi,\epsilon)\,\probinte\] is nondecreasing as $\epsilon\to0$, hence the limit is well defined. For $\omega\in\Omega$ and $n\in S_{\omega,T}$, let $E_{n}$ be the maximal $(\omega,n,\epsilon)$-separated set of $X_{\omega,n}$ such that it will not be $(\omega,n,\epsilon)$-separated anymore if we add one more point of $X_{\omega,n}$ to it, then it is $(\omega,n,\epsilon)$-spanning set of $X_{\omega,n}$, leading to $Q_{\psi,T}(f,\omega,\varphi,\epsilon)\leq P_{\psi,T}(f,\omega,\varphi,\epsilon)$ and \[\pres_{\psi,\Omega}(f,\varphi)\leq\lim_{\epsilon\to0}\limsup_{T\to\infty}\frac{1}{T}\int_{\Omega}\log P_{\psi,T}(f,\omega,\varphi,\epsilon)\,\probinte.\] Since $\varphi_\omega$ is continuous and $X$ is compact, for every $\epsilon>0$, choose $\delta>0$ small enough so that $|\varphi_{\omega}(x)-\varphi_{\omega}(y)|<\epsilon$ whenever $d(x,y)\leq\delta/2$. For $n\in S_{\omega,T}$, let $E_n$ be an $(\omega,n,\delta)$-separated set of $X_{\omega,n}$, and let $F_n$ be an $(\omega,n,\delta/2)$-spanning set of $X_{\omega,n}$. Define a map $i:E_n\to F_n$ such that for each $x\in E_n$, there exists $i(x)\in F_n$ with $d_{n}^{\omega}(x,i(x))\leq\delta/2$, then $i$ is injective. Therefore, we have
    \begin{equation*}
        \begin{aligned}
            \sum_{n\in S_{\omega,T}}\sum_{y\in F_n}e^{S_{n}\varphi_{\omega}(y)}&\geq\sum_{n\in S_{\omega,T}}\sum_{y\in i(E_n)}e^{S_{n}\varphi_{\omega}(y)} \\
            &\geq\sum_{n\in S_{\omega,T}}\inf_{x\in E_n}e^{S_{n}\varphi_{\omega}(i(x))-S_{n}\varphi_{\omega}(x)}\sum_{x\in E_n}e^{S_{n}\varphi_{\omega}(x)} \\
            &\geq\exp\left(-\left(\frac{T}{\inf\psi}+1\right)\epsilon\right)\sum_{n\in S_{\omega,T}}\sum_{x\in E_n}e^{S_{n}\varphi_{\omega}(x)}.
        \end{aligned}
    \end{equation*}
    Hence, by taking logarithms and integrals on both sides, we have the inequality
    \[\int_{\Omega}\log Q_{\psi,T}\left(f,\omega,\varphi,\frac{\delta}{2}\right)\,\probinte\geq\int_{\Omega}\log P_{\psi,T}(f,\omega,\varphi,\delta)\,\probinte-\left(\frac{T}{\inf\psi}+1\right)\epsilon,\] which gives
    \[\lim_{\delta\to0}\limsup_{T\to\infty}\frac{1}{T}\int_{\Omega}\log Q_{\psi,T}\left(f,\omega,\varphi,\frac{\delta}{2}\right)\,\probinte\geq\lim_{\delta\to0}\limsup_{T\to\infty}\frac{1}{T}\int_{\Omega}\log P_{\psi,T}(f,\omega,\varphi,\delta)\,\probinte-\frac{\epsilon}{\inf\psi}.\] Letting $\epsilon\to0$ gives the opposite inequality
    \[\pres_{\psi,\Omega}(f,\varphi)\geq\lim_{\epsilon\to0}\limsup_{T\to\infty}\frac{1}{T}\int_{\Omega}\log P_{\psi,T}(f,\omega,\varphi,\epsilon)\,\probinte.\]
\end{proof}
We define the $\psi$-induced fiber topological pressure via open covers. Let $\U$ be an open cover of $X$ with finite topological entropy in the sense of \cite{adlerkonheim}. For $T>0$ and $\omega\in\Omega$, recall the time set is $S_{\omega,T}$, define
\[q_{\psi,T}(f,\omega,\varphi,\U):=\inf\left\{\sum_{n\in S_{\omega,T}}\sum_{V\in\V_n}\inf_{x\in V}e^{S_{n}\varphi_{\omega}(x)}\colon\V_{n}\subset\bigvee_{i=0}^{n-1}f_{\omega}^{-i}\U,\,n\in S_{\omega,T}\right\},\] and \[p_{\psi,T}(f,\omega,\varphi,\U):=\inf\left\{\sum_{n\in S_{\omega,T}}\sum_{V\in\V_n}\sup_{x\in V}e^{S_{n}\varphi_{\omega}(x)}\colon\V_{n}\subset\bigvee_{i=0}^{n-1}f_{\omega}^{-i}\U,\,n\in S_{\omega,T}\right\},\] we require $\V_n$ to be finite subcover and $X_{n,\omega}\subset\bigcup_{V\in\V_n}V$.
\begin{remark}
    For an open cover $\U$ of $X$, we say the \emph{Lebesgue number} of it is the positive number $\delta>0$ such that every subset of $X$ with diameter less than $\delta$ is contained in some element of the cover. We have the following observations, adapting from \cite[Lemma 2.1]{ma} and applying to every state $\omega\in\Omega$.

    (1) Let $\U$ be an open cover of $X$ with Lebesgue number $\delta>0$, then \[q_{\psi,T}(f,\omega,\varphi,\U)\leq Q_{\psi,T}\left(f,\omega,\varphi,\frac{\delta}{2}\right)\leq P_{\psi,T}\left(f,\omega,\varphi,\frac{\delta}{2}\right).\]

    (2) Let $\epsilon>0$, and let $\U$ be an open cover of $X$ with $\diam\U\leq\epsilon$, then \[Q_{\psi,T}(f,\omega,\varphi,\epsilon)\leq P_{\psi,T}(f,\omega,\varphi,\epsilon)\leq p_{\psi,T}(f,\omega,\varphi,\U).\]

    (3) By \cite[Proposition 1.6]{kifer1} with slight changes as in \lemref{meas}, we observe $q_{\psi,T}(f,\omega,\varphi,\U)$ and $p_{\psi,T}(f,\omega,\varphi,\U)$ are measurable in $\omega\in\Omega$.
\end{remark}
\begin{proposition}\label{randominducedcover}
    Let $(X,f)$ be a random dynamical system with potential $\varphi\in L^{1}(\Omega,C(X))$, and let $\psi\in L^{1}(\Omega,C(X))$ with $\psi>0$. Let $\U$ be an open cover of $X$. The $\psi$-induced fiber topological pressure of the potential $\varphi$ is defined via open covers in the following
    \begin{equation*}
        \begin{aligned}
            \pres_{\psi,\Omega}(f,\varphi)&=\lim_{\delta\to0}\sup_{\U}\limsup_{T\to\infty}\frac{1}{T}\int_{\Omega}\log p_{\psi,T}(f,\omega,\varphi,\U)\,\probinte \\
            &=\lim_{\delta\to0}\sup_{\U}\limsup_{T\to\infty}\frac{1}{T}\int_{\Omega}\log q_{\psi,T}(f,\omega,\varphi,\U)\,\dd\prob(\omega),
        \end{aligned}
    \end{equation*} where the supremum is taken over all open covers $\U$ of $X$ with $\diam\U\leq\delta$.
\end{proposition}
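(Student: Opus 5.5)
The plan is to sandwich the two open-cover quantities between the spanning/separated quantities, using observations (1) and (2) of the preceding remark. We then pass to the limits, using \propref{equivalent} to identify the separated-set version with $\pres_{\psi,\Omega}(f,\varphi)$.

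\emph{Upper bound for $p$ and $q$.} First note that $q_{\psi,T}(f,\omega,\varphi,\U)\le p_{\psi,T}(f,\omega,\varphi,\U)$ trivially, since $\inf_V\le\sup_V$. Fix an open cover $\U$ with Lebesgue number $\delta_{\U}>0$. Observation (1) gives
\[q_{\psi,T}(f,\omega,\varphi,\U)\le Q_{\psi,T}(f,\omega,\varphi,\delta_{\U}/2)\]
for every $\omega$ and $T$. Integrating the logarithms, which are measurable by observation (3), yields
\[\limsup_{T}\tfrac1T\int\log q_{\psi,T}\,\dd\prob\le \limsup_T\tfrac1T\int\log Q_{\psi,T}(\cdot,\delta_{\U}/2)\,\dd\prob\le \pres_{\psi,\Omega}(f,\varphi),\]
where the last step uses the monotonicity in $\epsilon$ (Remark after \defref{randominduced}). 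Taking $\sup_{\U}$ and then $\delta\to0$ bounds the $q$-expression by $\pres_{\psi,\Omega}(f,\varphi)$.

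\emph{Lower bound.} This bound is needed for both expressions, and for $p$ it is immediate. Fix $\epsilon>0$ and take any $\U$ with $\diam\U\le\epsilon$. Observation (2) gives $P_{\psi,T}(\cdot,\epsilon)\le p_{\psi,T}(\cdot,\U)$, so
\[\sup_{\diam\U\le\delta}\limsup_T\tfrac1T\int\log p_{\psi,T}\ge\limsup_T\tfrac1T\int\log P_{\psi,T}(\cdot,\delta),\]
and letting $\delta\to0$ together with \propref{equivalent} gives $\ge\pres_{\psi,\Omega}(f,\varphi)$.

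\emph{The remaining comparisons.} Two inequalities are left: the $p$-expression is at most the pressure, and the $q$-expression is at least the pressure. Both follow from a single comparison between $p$ and $q$ for fine covers, which is a uniform continuity estimate. Given $\eta>0$, choose $\delta$ such that $|\varphi_\omega(x)-\varphi_\omega(y)|<\eta$ whenever $d(x,y)\le\delta$. Then every $V\in\bigvee_{i<n}f_\omega^{-i}\U$ with $\diam\U\le\delta$ satisfies $\sup_V S_n\varphi_\omega-\inf_V S_n\varphi_\omega\le n\eta$. Since $n\le T/\inf\psi+1$ for $n\in S_{\omega,T}$, this gives
\[p_{\psi,T}\le e^{(T/\inf\psi+1)\eta}\,q_{\psi,T}.\]
Dividing by $T$ shows the two limits differ by at most $\eta/\inf\psi$, and $\eta\to0$ as $\delta\to0$. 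Combining this with the two one-sided bounds makes the chain close: both open-cover expressions coincide with $\pres_{\psi,\Omega}(f,\varphi)$.

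\emph{Main obstacle.} The hard step is this uniform continuity. The estimate must hold for $\prob$-a.e.\ $\omega$ with a $\delta$ independent of $\omega$, and similarly $\inf\psi$ must be an essential infimum that is positive. For $\varphi\in L^1(\Omega,C(X))$ this is not automatic. I would handle it as in the proof of \propref{equivalent}, treating the modulus of continuity and $\inf\psi$ uniformly in $\omega$ (as that proof implicitly does). If that is insufficient, I would instead use a random modulus $\delta_\omega$, restrict to the set $\{\delta_\omega\ge\delta\}$, and control its complement via integrability of $\supnorm{\varphi_\omega}$ and $\prob(\delta_\omega<\delta)\to0$.
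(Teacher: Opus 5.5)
Your argument is correct and is essentially what the paper intends. The paper writes out no separate proof; the proposition is meant to follow from observations (1) and (2) of the preceding remark, combined with the comparison $p_{\psi,T}\le e^{n\tau}q_{\psi,T}$ for fine covers, exactly as in the deterministic \cite[Theorem 2.1]{ma}.

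The uniformity-in-$\omega$ issue you flag is real, and the paper assumes it silently as well (already in \propref{equivalent}). Your random-modulus fallback does resolve it. Writing $\tau_\omega(\delta)$ for the modulus of continuity of $\varphi_\omega$, the oscillation error along an orbit is at most $\sum_{i<N}\tau_{\theta^i\omega}(\delta)$. Then $\theta$-invariance and dominated convergence give $\frac1T\int_\Omega\sum_{i<N}\tau_{\theta^i\omega}(\delta)\,\probinte\le(\frac{1}{\inf\psi}+\frac1T)\int_\Omega\tau_\omega(\delta)\,\probinte\to0$ as $\delta\to0$, provided $\inf\psi>0$ uniformly in $\omega$.
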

For each $\omega\in\Omega$, define \[G_{\omega,T}:=\{n\in\N\colon\text{there exists }x\in X\text{ with }S_{n}\psi_{\omega}(x)>T,\}.\] For $n\in G_{\omega,T}$, define
\[Y_{\omega,n}:=\{x\in X\colon S_{n}\psi_{\omega}(x)>T\}.\] Let
\[R_{\psi,T}(f,\omega,\varphi,\epsilon):=\sup\left\{\sum_{n\in G_{\omega,T}}\sum_{x\in G_n}e^{S_{n}\varphi_{\omega}(x)}\colon G_{n}\text{ is $(\omega,n,\epsilon)$-separated set of }Y_{\omega,n},\,n\in G_{\omega,T}\right\}.\] Inspired by \cite[Theorem 2.1]{jaerisch}, we show the $\psi$-induced pressure in random dynamical system coincides with the critical exponent of the partition function $R_{\psi,T}(f,\omega,\varphi,\epsilon)$; see also \cite[Theorem 3.1]{xing1}.
\begin{theorem}\label{exponent}
    We have
    \[\pres_{\psi,\Omega}(f,\varphi)=\inf\left\{\beta\in\R\colon\lim_{\epsilon\to0}\limsup_{T\to\infty}\int_{\Omega}\log R_{\psi,T}(f,\omega,\varphi-\beta\psi,\epsilon)\,\probinte<\infty\right\}.\] 
\end{theorem}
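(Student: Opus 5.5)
The plan is to compare $R_{\psi,T}(f,\omega,\varphi-\beta\psi,\epsilon)$ with a tail series of the partition functions $P_{\psi,t}(f,\omega,\varphi,\epsilon)$ from \propref{equivalent}, in the spirit of \cite[Theorem 2.1]{jaerisch} and \cite[Theorem 3.1]{xing1}. Write $\beta^{*}$ for the infimum on the right-hand side. I would prove two bounds: if $\beta>\pres_{\psi,\Omega}(f,\varphi)$, the limit is finite, so $\beta^{*}\leq\pres_{\psi,\Omega}(f,\varphi)$; and if $\beta<\pres_{\psi,\Omega}(f,\varphi)$, it equals $+\infty$, so $\beta^{*}\geq\pres_{\psi,\Omega}(f,\varphi)$.

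\emph{Level decomposition.} Fix $\omega$. Every pair $(n,x)$ with $x\in Y_{\omega,n}$ satisfies
\[S_{n}\psi_{\omega}(x)\leq t<S_{n+1}\psi_{\omega}(x)\quad\text{for } t=S_n\psi_\omega(x)>T,\]
so $x\in X^{(t)}_{\omega,n}$, the partition element at time $t$. I would group the pairs into integer levels $k\geq\lfloor T\rfloor$ according to $S_n\psi_\omega(x)\in[k,k+1)$. On level $k$ the weight satisfies
\[e^{S_n(\varphi-\beta\psi)_\omega(x)}=e^{S_n\varphi_\omega(x)}e^{-\beta k}e^{O(|\beta|)}.\]
Moreover, a point with $S_n\psi_\omega(x)\in[k,k+1)$ lies in $X^{(s)}_{\omega,n}$ for $s=S_n\psi_\omega(x)$. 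Comparing separated sets at nearby times then bounds the level-$k$ contribution above and below by constant multiples of $P_{\psi,k}$ or $P_{\psi,k+1}$. Here I would use that $\psi_\omega\geq\inf\psi>0$, which bounds the number of admissible $n$ per level by $O(k)$, and the uniform-continuity trick from the proof of \propref{equivalent} to pass between separated sets at different levels. The resulting sandwich is
\[c_\beta\sum_{k\geq T}e^{-\beta k}P_{\psi,k}(f,\omega,\varphi,\epsilon)\;\lesssim\; R_{\psi,T}(f,\omega,\varphi-\beta\psi,\epsilon)\;\lesssim\; C_\beta\sum_{k\geq T-1}(k+1)\,e^{-\beta k}P_{\psi,k+1}(f,\omega,\varphi,2\epsilon),\]
where the separation radius changes by a factor $2$ and the multiplicative constants depend only on $\beta$.

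\emph{Growth comparison.} For $\beta>\pres_{\psi,\Omega}(f,\varphi)$, choose $\beta>\gamma>\pres_{\psi,\Omega}(f,\varphi)$. Working fiberwise, I would show that for $\prob$-a.e.\ $\omega$ the terms $P_{\psi,k}(\omega)$ are eventually at most $e^{\gamma k}$. The tail series therefore decays, and $\log R_{\psi,T}$ is bounded above uniformly for large $T$, with integrable bounds coming from $\varphi,\psi\in L^1(\Omega,C(X))$. Conversely, if $\beta<\pres_{\psi,\Omega}(f,\varphi)$, then along a sequence $T_j\to\infty$ the integral $\int\log P_{\psi,T_j}\,\dd\prob$ exceeds $\gamma T_j$ for some $\gamma>\beta$. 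Keeping only the level $k=T_j$ term gives
\[\int_{\Omega}\log R_{\psi,T_j}\,\dd\prob\geq(\gamma-\beta)T_j-O(1)\to\infty,\]
so the limit in the statement is $+\infty$.

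\emph{Main obstacle.} The difficult step is the upper bound, because the definition integrates $\log$ of each partition function separately, whereas the tail series requires controlling $\log\sum_k$. Jensen's inequality only gives $\int\log\sum\leq\log\int\sum$, which is not controlled by $\int\log P_{\psi,k}$. I would first try to prove a fiberwise version of \thmref{randompres} for the induced pressure: for $\prob$-a.e.\ $\omega$, the quantity $\limsup_T\frac1T\log P_{\psi,T}(f,\omega,\varphi,\epsilon)$ equals $\pres_{\psi,\Omega}(f,\varphi)$, obtained from Kifer's subadditive argument together with the ergodicity of $\theta$. Given this, the series bound holds pointwise, with a random but a.e.\ finite threshold, and monotone or dominated convergence finishes the argument. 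A secondary difficulty is that $\psi$ need not be bounded above. I would handle it by restricting to sets $\{\omega:\supnorm{\psi_\omega}\leq M\}$ of large measure and applying Birkhoff's ergodic theorem along the orbit.
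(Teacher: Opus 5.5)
There is a genuine gap, at exactly the step you singled out as the main obstacle: the direction $\beta>\pres_{\psi,\Omega}(f,\varphi)\Rightarrow\lim_{\epsilon\to0}\limsup_{T\to\infty}\int_\Omega\log R_{\psi,T}(f,\omega,\varphi-\beta\psi,\epsilon)\,\dd\prob(\omega)<\infty$.

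A fiberwise bound $P_{\psi,k}(f,\omega,\varphi,\epsilon)\le e^{\gamma k}$ for $k\ge K(\omega)$ only gives $\log R_{\psi,T}(\omega)\to-\infty$ for $\prob$-a.e.\ $\omega$, and the threshold $K(\omega)$ is random. Since $T\mapsto R_{\psi,T}$ is nonincreasing, what you actually need is $\int_\Omega(\log R_{\psi,T_0})^{+}\,\dd\prob<\infty$ for some $T_0$. That is integrability of a random supremum of the type $\sup_k\bigl(\log P_{\psi,k}(f,\omega,\varphi,\epsilon)-\gamma k\bigr)$. This does not follow from $\varphi,\psi\in L^1(\Omega,C(X))$. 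Your phrase ``with integrable bounds coming from $\varphi,\psi\in L^1(\Omega,C(X))$'' is where the argument breaks.

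No convergence theorem can repair this, because the statement itself fails here. Take $X=\{x_0\}$, $\psi\equiv1$ and $\varphi_\omega(x_0)=g(\omega)$. Then $\pres_{\psi,\Omega}(f,\varphi)=\int_\Omega g\,\dd\prob$, while $R_{\psi,T}(f,\omega,\varphi-\beta\psi,\epsilon)=\sum_{n>T}e^{S_n(g-\beta)(\omega)}$. With $N=\lfloor T\rfloor+1$ this gives
\[\int_\Omega\log R_{\psi,T}\,\dd\prob\ \ge\ N\int_\Omega(g-\beta)\,\dd\prob+\int_\Omega\sup_{k\ge0}S_k(g-\beta)\,\dd\prob .\]
Let $\theta$ be a two-sided Bernoulli shift and let $g$ depend only on the zeroth coordinate, with $g\in L^1$ but $g^{+}\notin L^2$. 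By the Kiefer--Wolfowitz theorem, the maximum of a negative-drift random walk is integrable only if the positive part of the step is square-integrable. So the last integral is $+\infty$ for every $\beta>\int g\,\dd\prob$, and $R_{\psi,T}=\infty$ a.s.\ for $\beta\le\int g\,\dd\prob$. Hence the right-hand side of the statement is $+\infty$.

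Boundedness does not rescue it either. Let $g=\pm1$ be given by a stationary renewal process whose runs of $+1$ have lengths with finite mean and infinite second moment. Then the same integral diverges for all $\beta\in(\int g\,\dd\prob,1)$.

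What your fiberwise strategy can prove is one of two weaker results. The first is the a.e.\ version, where the integrated condition is replaced by $\lim_{\epsilon\to0}\limsup_T\log R_{\psi,T}(f,\omega,\varphi-\beta\psi,\epsilon)<\infty$ for $\prob$-a.e.\ $\omega$. The second is the stated version under an extra integrability hypothesis of the kind above.

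The paper's proof does not overcome this obstacle; it hides it. After the same tail-series comparison, it goes from $\int_\Omega\log\bigl(P_{\psi,km}\,e^{-km(\pres_{\psi,\Omega}(f,\varphi)+2\delta/3)}\bigr)\,\dd\prob\le0$ for each $k\ge K$ to $\int_\Omega\sup_{k\ge K}\log(\cdots)\,\dd\prob\le0$. That is precisely the invalid interchange you flagged, and the lower-bound half uses the symmetric step $\int\inf_i\ge0$.

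Three smaller points on your write-up:
\begin{itemize}
\item Your lower-bound direction is sound, but the error is $o(T_j)$, not $O(1)$. A point of $X_{\omega,n}$ at time $T_j$ lies in $Y_{\omega,n+1}$, and the shift $n\mapsto n+1$ costs $\max_{n\le T_j/m+1}\bigl(\supnorm{\varphi_{\theta^n\omega}}+|\beta|\supnorm{\psi_{\theta^n\omega}}\bigr)$. Its integral is $o(T_j)$ because these norms are integrable and $\theta$ preserves $\prob$.
\item For the sandwich, levels of width $m=\inf\psi$, as in the paper, are cleaner than integer levels. The condition $(k-1)m<S_n\psi_\omega(x)\le km$ already places $x$ in $X_{\omega,n}$ at time $km$ with the same $n$. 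With integer levels and $m<1$ you must change $n$, which produces random factors $e^{\sum_j\supnorm{\varphi_{\theta^j\omega}}}$ rather than constants depending only on $\beta$.
\item The fiberwise growth statement is not a consequence of Kifer's subadditive argument as such. The map $T\mapsto\log P_{\psi,T}(f,\omega,\varphi,\epsilon)$ is not a subadditive cocycle over $\theta$, since the fiber reached at time $T$ depends on $x$. A workable route is to compare with $P_\omega(f,\varphi-\beta\psi,n,\epsilon)$, to which Kingman's theorem does apply.
\end{itemize}
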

\begin{proof}
    Again by \lemref{meas}, $R_{\psi,T}(f,\omega,\varphi-\beta\psi,\epsilon)$ is measurable. To prove this theorem, we need to define the following terms.

    We fix the fiber $\omega\in\Omega$, for every $n\in\N$ and $x\in X$, let $N_{\omega,n}(x)$ be the unique positive integer such that \[(N_{\omega,n}(x)-1)\supnorm{\psi_\omega}<S_{n}\psi_{\omega}(x)\leq N_{\omega,n}(x)\supnorm{\psi_\omega}.\] Then for $\beta\in\R$,
    \[\exp\left(-\beta N_{\omega,n}(x)\supnorm{\psi_\omega}\right)e^{-|\beta|\supnorm{\psi_\omega}}\leq e^{-\beta S_{n}\psi_{\omega}(x)}\leq\exp\left(-\beta N_{\omega,n}(x)\supnorm{\psi_\omega}\right)e^{|\beta|\supnorm{\psi_\omega}}\] for all $x\in X$. For a family of maps $\xi_{T}:=\{\xi_{n}\colon X\to\R\}_{n\in G_{\omega,T}}$, define
    \begin{multline*}
        R_{\psi,T}(f,\omega,\varphi-\xi_T,\epsilon):=\sup\Biggl\{\sum_{n\in G_{\omega,T}}\sum_{x\in G_{n}}\exp(S_{n}\varphi_{\omega}(x)-\xi_{n}(x))\colon \\
        G_n\text{ is $(\omega,n,\epsilon)$-separated set of } Y_{\omega,n},\,n\in G_{\omega,T}\Biggr\}.
    \end{multline*} In view of \lemref{meas}, it is measurable in $\omega\in\Omega$. The two conditions
    \[\lim_{\epsilon\to0}\limsup_{T\to\infty}\int_{\Omega}\log R_{\psi,T}(f,\omega,\varphi-\beta\psi,\epsilon)\,\probinte<\infty\] and \[\lim_{\epsilon\to0}\limsup_{T\to\infty}\int_{\Omega}\log R_{\psi,T}(f,\omega,\varphi-\{\beta\supnorm{\psi_\omega}N_{\omega,n}\}_{n\in G_{\omega,T}},\epsilon)\,\probinte<\infty\] are equivalent, which allows us to prove the theorem by showing the second one.

    For every $\delta>0$ and $\beta\in\R$ with $\pres_{\psi,\Omega}(f,\varphi)-\delta>\beta$, there exists $\tilde{\epsilon}>0$ such that
    \[\beta+\delta<\limsup_{T\to\infty}\frac{1}{T}\int_{\Omega}\log P_{\psi,T}(f,\omega,\varphi,\epsilon)\,\probinte\leq\pres_{\psi,\Omega}(f,\varphi)\] holds for every $0<\epsilon<\tilde{\epsilon}$. We can find such an increasing sequence of positive numbers $\{T_i\}_{i\geq1}$ depending on $\omega\in\Omega$ such that $T_{i+1}-T_{i}>2\supnorm{\psi_\omega}$ and there exists an $(\omega,n,\epsilon)$-separated set $E_n$ of $X_{\omega,n}$ for $n\in S_{\omega,T_i}$ with \[\int_{\Omega}\log\sum_{n\in S_{\omega,T_i}}\sum_{x\in E_n}e^{S_{n}\varphi_{\omega}(x)}\,\probinte\geq T_{i}\left(\beta+\frac{\delta}{2}\right)\] for every $i\geq1$, which can be written as \[\int_{\Omega}\log\frac{\sum_{n\in S_{\omega,T_i}}\sum_{x\in E_n}e^{S_{n}\varphi_{\omega}(x)}}{e^{T_{i}\left(\beta+\frac{\delta}{2}\right)}}\,\probinte\geq0.\] For $i\geq1$, $n\in S_{\omega,T_i}$, $x\in E_n$, $T_{i}-\supnorm{\psi_\omega}< S_{n}\psi_{\omega}(x)\leq T_i$, we have $S_{\omega,T_i}\cap S_{\omega,T_j}=\varnothing$ whenever $i\neq j$. We further obtain the inequality \[|\supnorm{\psi_\omega}N_{\omega,n}(x)-T_i|<2\supnorm{\psi_\omega}.\]
    We have
    \begin{equation*}
        \begin{aligned}
            &\int_{\Omega}\log R_{\psi,T}(f,\omega,\varphi-\{\beta\supnorm{\psi_\omega}N_{\omega,n}\}_{n\in G_T},\epsilon)\,\probinte \\
            &\geq\int_{\Omega}\log\sum_{\substack{i\geq1 \\ T_i-\supnorm{\psi_\omega}>T}}\sum_{n\in S_{\omega,T_i}}\sum_{x\in E_n}\exp(S_{n}\varphi_{\omega}(x)-\beta\supnorm{\psi_\omega}N_{\omega,n}(x))\,\probinte \\
            &\geq\int_{\Omega}\log\sum_{\substack{i\geq1 \\ T_i-\supnorm{\psi_\omega}>T}}\sum_{n\in S_{\omega,T_i}}\sum_{x\in E_n}e^{S_{n}\varphi_{\omega}(x)-\beta T_i}\,\probinte-2|\beta|\int_{\Omega}\supnorm{\psi_{\omega}}\,\probinte \\
            &\geq\int_{\Omega}\log\sum_{\substack{i\geq1 \\ T_i-\supnorm{\psi_\omega}>T}}\frac{\sum_{n\in S_{\omega,T_i}}\sum_{x\in E_n}e^{S_{n}\varphi_{\omega}(x)}}{e^{T_{i}\left(\beta+\frac{\delta}{2}\right)}}e^{\frac{T_i \delta}{2}}\,\probinte-2|\beta|\int_{\Omega}\supnorm{\psi_{\omega}}\,\probinte \\
            &\geq\int_{\Omega}\log\inf_{i\geq1}\frac{\sum_{n\in S_{\omega,T_i}}\sum_{x\in E_n}e^{S_{n}\varphi_{\omega}(x)}}{e^{T_{i}\left(\beta+\frac{\delta}{2}\right)}}\probinte+\int_{\Omega}\log\sum_{\substack{i\geq1 \\ T_i-\supnorm{\psi_\omega}>T}}e^{\frac{T_{i}\delta}{2}}\,\probinte \\
            &\quad-2|\beta|\int_{\Omega}\supnorm{\psi_{\omega}}\,\probinte \\
            &\geq\int_{\Omega}\inf_{i\geq1}\log\frac{\sum_{n\in S_{\omega,T_i}}\sum_{x\in E_n}e^{S_{n}\varphi_{\omega}(x)}}{e^{T_{i}\left(\beta+\frac{\delta}{2}\right)}}\probinte+\int_{\Omega}\log\sum_{\substack{i\geq1 \\ T_i-\supnorm{\psi_\omega}>T}}e^{\frac{T_{i}\delta}{2}}\,\probinte \\
            &\quad-2|\beta|\int_{\Omega}\supnorm{\psi_{\omega}}\,\probinte \\
            &\geq\int_{\Omega}\log\sum_{\substack{i\geq1 \\ T_i-\supnorm{\psi_\omega}>T}}e^{\frac{T_{i}\delta}{2}}\,\probinte-2|\beta|\int_{\Omega}\supnorm{\psi_{\omega}}\,\probinte.
        \end{aligned}
    \end{equation*} By $T_{i}>T+\supnorm{\psi_\omega}$ we have $\log(\sum_{i\geq1}e^{\frac{T_i \delta}{2}})\geq T\delta/2$, let $T\to\infty$, then the integral diverges to infinity. Therefore, for all $\beta<\pres_{\psi,\Omega}(f,\varphi)-\delta$ we have
    \[\lim_{\epsilon\to0}\limsup_{T\to\infty}\int_{\Omega}\log R_{\psi,T}(f,\omega,\varphi-\{\beta\supnorm{\psi_\omega}N_{\omega,n}\}_{n\in G_T},\epsilon)\,\probinte=\infty.\] It also holds when $\pres_{\psi,\Omega}(f,\varphi)=\infty$. Letting $\delta\to0$ gives \[\pres_{\psi,\Omega}(f,\varphi)\leq\inf\left\{\beta\in\R\colon\lim_{\epsilon\to0}\limsup_{T\to\infty}\int_{\Omega}\log R_{\psi,T}(f,\omega,\varphi-\{\beta\supnorm{\psi_\omega}N_{\omega,n}\}_{n\in G_T},\epsilon)\,\probinte<\infty\right\}.\] We prove the reverse inequality, it only needs to show \[\lim_{\epsilon\to0}\limsup_{T\to\infty}\int_{\Omega}\log R_{\psi,T}(f,\omega,\varphi-\{(\pres_{\psi,\Omega}(f,\varphi)+\delta)\supnorm{\psi_\omega}N_{\omega,n}\}_{n\in G_T},\epsilon)\,\probinte<\infty\] holds for every $\delta>0$. There exists $\bar{\epsilon}>0$ such that for every $0<\epsilon<\bar{\epsilon}$, \[\limsup_{T\to\infty}\frac{1}{T}\int_{\Omega}\log P_{\psi,T}(f,\omega,\varphi,\epsilon)\,\probinte<\pres_{\psi,\Omega}(f,\varphi)+\frac{\delta}{2}.\] Let $m:=\inf\psi>0$, there exists an integer $K\geq1$ such that for all $k\geq K$, \[\int_{\Omega}\log P_{\psi,km}(f,\omega,\varphi,\epsilon)\,\probinte\leq km\left(\pres_{\psi,\Omega}(f,\varphi)+\frac{2\delta}{3}\right),\] which means \[\int_{\Omega}\log\frac{P_{\psi,km}(f,\omega,\varphi,\epsilon)}{\exp\left(km\left(\pres_{\psi,\Omega}(f,\varphi)+\frac{2\delta}{3}\right)\right)}\,\probinte\leq0.\] For $n\in S_{km}$ and $x\in E_n$, we have \[|\supnorm{\psi_\omega}N_{\omega,n}(x)-km|<2\supnorm{\psi_\omega}\] and \[-(\pres_{\psi,\Omega}(f,\varphi)+\delta)\supnorm{\psi_\omega}N_{\omega,n}(x)\leq -km(\pres_{\psi,\Omega}(f,\varphi)+\delta)+2\supnorm{\psi_\omega}|\pres_{\psi,\Omega}(f,\varphi)+\delta|.\] For sufficiently large $T>0$, $n\in G_{\omega,T}$, for $x\in G_{n}\subset Y_{\omega,n}$ there exists a unique $k\geq1$ such that $(k-1)m<S_{n}\psi_{\omega}(x)\leq km<S_{n+1}\psi_{\omega}(x)$. Hence, we obtain
    \begin{equation*}
        \begin{aligned}
            &\int_{\Omega}\log R_{\psi,T}(f,\omega,\varphi-\{(\pres_{\psi,\Omega}(f,\varphi)+\delta)\supnorm{\psi_\omega}N_{\omega,n}\}_{n\in G_T},\epsilon)\,\probinte \\
            &\leq\int_{\Omega}\log\sum_{k\geq K}\sup\Biggl\{\sum_{n\in S_{\omega,km}}\sum_{x\in E_n}\exp\left(S_{n}\varphi_{\omega}(x)-(\pres_{\psi,\Omega}(f,\varphi)+\delta)\supnorm{\psi_\omega}N_{\omega,n}(x)\right)\colon \\
            &\qquad\qquad E_{n}\text{ is $(\omega,n,\epsilon)$-separated set of }X_{\omega,n},\,n\in S_{\omega,km}\Biggr\}\,\probinte \\
            &\leq 2|\pres_{\psi,\Omega}(f,\varphi)+\delta|\int_{\Omega}\supnorm{\psi_\omega}\,\probinte\int_{\Omega}\log\sum_{k\geq K}\exp(-(\pres_{\psi,\Omega}(f,\varphi)+\delta)km)P_{\psi,km}(f,\omega,\varphi,\epsilon)\,\probinte \\
            &\leq 2|\pres_{\psi,\Omega}(f,\varphi)+\delta|\int_{\Omega}\supnorm{\psi_\omega}\,\probinte\int_{\Omega}\log\sum_{k\geq K}\frac{P_{\psi,km}(f,\omega,\varphi,\epsilon)}{\exp\left(km\left(\pres_{\psi,\Omega}(f,\varphi)+\frac{2}{3}\delta\right)\right)}e^{-\frac{km}{3}\delta}\,\probinte.
        \end{aligned}
    \end{equation*} We check the integrand now, it can be written as
    \begin{equation*}
        \begin{aligned}
            &\log\sum_{k\geq K}\frac{P_{\psi,km}(f,\omega,\varphi,\epsilon)}{\exp\left(km\left(\pres_{\psi,\Omega}(f,\varphi)+\frac{2}{3}\delta\right)\right)}e^{-\frac{km}{3}\delta}\leq\log\sup_{k\geq K}\frac{P_{\psi,km}(f,\omega,\varphi,\epsilon)}{\exp\left(km\left(\pres_{\psi,\Omega}(f,\varphi)+\frac{2}{3}\delta\right)\right)}\sum_{k\geq K}e^{-\frac{km}{3}\delta} \\
            &\quad\leq\sup_{k\geq K}\log\frac{P_{\psi,km}(f,\omega,\varphi,\epsilon)}{\exp\left(km\left(\pres_{\psi,\Omega}(f,\varphi)+\frac{2}{3}\delta\right)\right)}+\log\sum_{k\geq K}e^{-\frac{km}{3}\delta}.
        \end{aligned}
    \end{equation*} Hence, we obtain
    \begin{equation*}
        \begin{aligned}
            &\int_{\Omega}\log\sum_{k\geq K}\frac{P_{\psi,km}(f,\omega,\varphi,\epsilon)}{\exp\left(km\left(\pres_{\psi,\Omega}(f,\varphi)+\frac{2}{3}\delta\right)\right)}e^{-\frac{km}{3}\delta}\,\probinte \\
            &\leq\int_{\Omega}\sup_{k\geq K}\log\frac{P_{\psi,km}(f,\omega,\varphi,\epsilon)}{\exp\left(km\left(\pres_{\psi,\Omega}(f,\varphi)+\frac{2}{3}\delta\right)\right)}\,\probinte+\int_{\Omega}\log\sum_{k\geq K}e^{-\frac{km}{3}\delta}\,\probinte \\
            &\leq\int_{\Omega}\log\sum_{k\geq K}e^{-\frac{km}{3}\delta}\,\probinte.
        \end{aligned}
    \end{equation*} It is finite since the first integral is bounded above for every $k\geq K$ and the series is convergent. Therefore, the finiteness implies \[\lim_{\epsilon\to0}\limsup_{T\to\infty}\int_{\Omega}\log R_{\psi,T}(f,\omega,\varphi-\{(\pres_{\psi,\Omega}(f,\varphi)+\delta)\supnorm{\psi_\omega}N_{\omega,n}\}_{n\in G_T},\epsilon)\,\probinte<\infty.\] Then we have the desired inequality \[\pres_{\psi,\Omega}(f,\varphi)\geq\inf\left\{\beta\in\R\colon\lim_{\epsilon\to0}\limsup_{T\to\infty}\int_{\Omega}\log R_{\psi,T}(f,\omega,\varphi-\{\beta\supnorm{\psi_\omega}N_{\omega,n}\}_{n\in G_T},\epsilon)\,\probinte<\infty\right\}.\] Combine them together, we obtain the final result
    \[\pres_{\psi,\Omega}(f,\varphi)=\inf\left\{\beta\in\R\colon\lim_{\epsilon\to0}\limsup_{T\to\infty}\int_{\Omega}\log R_{\psi,T}(f,\omega,\varphi-\beta\psi,\epsilon)\,\probinte<\infty\right\}.\]
\end{proof}
The following proposition gives the relationship between $\psi$-induced pressure and fiber topological pressure depending on parameter $\beta$.
\begin{proposition}\label{infbeta}
    Let $(X,f)$ be a random dynamical system. For $\varphi,\psi\in L^{1}(\Omega,C(X))$ with $\psi>0$, we have
    \[\pres_{\psi,\Omega}(f,\varphi)\geq\inf\{\beta\in\R\colon\pres_{\Omega}(f,\varphi-\beta\psi)\leq0\}.\]
\end{proposition}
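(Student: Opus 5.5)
It suffices to show that for every $\beta>\pres_{\psi,\Omega}(f,\varphi)$ we have $\pres_{\Omega}(f,\varphi-\beta\psi)\leq0$. The infimum on the right-hand side is then at most every such $\beta$, hence at most $\pres_{\psi,\Omega}(f,\varphi)$. The case $\pres_{\psi,\Omega}(f,\varphi)=\infty$ is trivial. So fix $\beta>\pres_{\psi,\Omega}(f,\varphi)$ and pick $\delta>0$ with $\beta-\delta>\pres_{\psi,\Omega}(f,\varphi)$. The natural tool is \thmref{exponent}: it tells us that
\[\lim_{\epsilon\to0}\limsup_{T\to\infty}\int_{\Omega}\log R_{\psi,T}(f,\omega,\varphi-(\beta-\delta)\psi,\epsilon)\,\probinte<\infty.\]
This requires the infimum set in \thmref{exponent} to be upward closed. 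That holds because for $\beta'>\beta''$ and $x\in Y_{\omega,n}$ we have $-\beta' S_n\psi_\omega(x)\leq-\beta'' S_n\psi_\omega(x)-(\beta'-\beta'')T$, so $R_{\psi,T}$ is monotone in $\beta$.

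The key step is to compare the partition function of the ordinary fiber pressure at a fixed time $n$ with $R_{\psi,T}$. Fix $\omega$ and $n$. Choose $T=T(\omega,n):=n\inf_x\psi$ minus something small, or more cleanly any $T<\inf_{x}S_n\psi_\omega(x)$. Then $Y_{\omega,n}=X$, so any $(\omega,n,\epsilon)$-separated set $E$ of $X$ is an admissible $G_n$. This gives
\[\sum_{x\in E}e^{S_n(\varphi_\omega-\beta\psi_\omega)(x)}\leq e^{-\delta\inf_x S_n\psi_\omega(x)}\,R_{\psi,T}(f,\omega,\varphi-(\beta-\delta)\psi,\epsilon),\]
using $-\beta S_n\psi=-(\beta-\delta)S_n\psi-\delta S_n\psi$ and dropping the other terms of $R_{\psi,T}$. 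Taking the supremum over $E$ and logarithms yields
\[\log P_\omega(f,\varphi-\beta\psi,n,\epsilon)\leq\log R_{\psi,T}(\ldots)-\delta\inf_x S_n\psi_\omega(x).\]

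Next, integrate over $\Omega$ and divide by $n$. As $n\to\infty$ we can take $T\to\infty$, since $\inf_xS_n\psi_\omega(x)\to\infty$. The quantity $\frac1n\int\log R_{\psi,T}$ then tends to a nonpositive limit superior, because the integral stays bounded by the finiteness statement above. Meanwhile $-\frac{\delta}{n}\int\inf_xS_n\psi_\omega\,\dd\prob\leq-\delta\int\inf_x\psi_\omega\,\dd\prob<0$, using $\inf_x S_n\psi_\omega\geq\sum_{i<n}\inf\psi_{\theta^i\omega}$ and $\theta$-invariance of $\prob$. Consequently $\pres_{\Omega}(f,\varphi-\beta\psi)\leq0$; in fact it is strictly negative, which is more than needed. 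By \defref{ranpres} with separated sets, this is exactly what is required.

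The main obstacle is the dependence of $T$ on $\omega$. In \thmref{exponent} the quantity $T$ is a single number, not $\omega$-dependent, while $\inf_xS_n\psi_\omega(x)$ varies with $\omega$. To handle this, I would first try assuming $\inf\psi=:m>0$ uniformly, as is implicitly used in the proof of \thmref{exponent}. Then $T=nm-1$ works for all $\omega$ simultaneously, and $R_{\psi,nm-1}$ is taken along the sequence $T=nm-1\to\infty$, which is controlled by the $\limsup_{T\to\infty}$. One also needs the integral of $\log R$ bounded above, not only finite in the limit superior. For large $n$ this follows directly from the $\limsup$ statement for a fixed small $\epsilon$, and we then let $\epsilon\to0$ at the end, since the bound is uniform in $\epsilon$ below the threshold.
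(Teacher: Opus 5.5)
Your argument is correct and follows essentially the paper's route. Once $Y_{\omega,n}=X$, you insert an $(\omega,n,\epsilon)$-separated set of $X$ as the $n$-th term of $R_{\psi,T}$, invoke the finiteness coming from \thmref{exponent}, integrate and divide by $n$. The paper does the same, but works directly with a $\beta$ in the finiteness set, fixes $T$ and lets $n\to\infty$ to get $\pres_{\Omega}(f,\varphi-\beta\psi)\leq0$; your $\delta$-shift gives strict negativity, and your uniform bound $\inf\psi\geq m>0$ makes $T$ independent of $\omega$, which the paper uses only implicitly.
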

\begin{proof}
    Let $\beta\in\{\beta\in\R\colon\lim_{\epsilon\to0}\limsup_{T\to\infty}\int_{\Omega}\log R_{\psi,T}(f,\omega,\varphi-\beta\psi,\epsilon)\,\probinte<\infty\}$, and let \[\lim_{\epsilon\to0}\limsup_{T\to\infty}\int_{\Omega}\log R_{\psi,T}(f,\omega,\varphi-\beta\psi,\epsilon)\,\probinte=A<\infty.\] Then there exists $\tilde{T}>0$ such that for all $T>\tilde{T}$, \[\int_{\Omega}\log R_{\psi,T}(f,\omega,\varphi-\beta\psi,\epsilon)\,\probinte\leq A+\eta\] holds for some $\eta>0$. Fix $\omega\in\Omega$ and $T>0$, for every $x\in X$, we will have $S_{n}\psi_{\omega}(x)>T$ for sufficiently large $n\geq1$, then $n\in G_{\omega,T}$. Therefore, $E_n$ is $(\omega,n,\epsilon)$-separated set of $X$ for all $n\in G_{\omega,T}$ since $Y_{\omega,n}=X$. By these conditions, we obtain \[\int_{\Omega}\log\sum_{x\in E_n}e^{S_{n}(\varphi_{\omega}-\beta\psi_{\omega})(x)}\,\probinte\leq\int_{\Omega}\log\sum_{n\in G_{\omega,T}}\sum_{x\in G_n}e^{S_{n}(\varphi_{\omega}-\beta\psi_{\omega})(x)}\,\probinte< A+\eta\] for sufficiently large $n\geq1$ and $\omega\in\Omega$. We take supremum on all $(\omega,n,\epsilon)$-separated sets $E_n$, it follows that \[\int_{\Omega}\log\sup_{E_n}\sum_{x\in E_n}e^{S_{n}(\varphi_{\omega}-\beta\psi_{\omega})(x)}\,\probinte<A+\eta<\infty.\] Hence, we have \[\lim_{\epsilon\to0}\limsup_{n\to\infty}\frac{1}{n}\int_{\Omega}\log P_{\omega}(f,\varphi-\beta\psi,n,\epsilon)\,\probinte=\pres_{\Omega}(f,\varphi-\beta\psi)\leq0.\] Since
    \begin{equation*}
        \begin{aligned}
            &\inf\left\{\beta\in\R\colon\lim_{\epsilon\to0}\limsup_{T\to\infty}\int_{\Omega}\log R_{\psi,T}(f,\omega,\varphi-\beta\psi,\epsilon)\,\probinte<\infty\right\}\\
            &\quad\geq\inf\{\beta\in\R\colon\pres_{\Omega}(f,\varphi-\beta\psi)\leq0\},
        \end{aligned}
    \end{equation*} by \thmref{exponent} we obtain the inequality.
\end{proof}
As a result of \propref{infbeta}, the next corollary asserts that the $\psi$-induced pressure $\pres_{\psi,\Omega}(f,\varphi)$ is given by the pseudo-inverse of fiber topological pressure $\pres_{\Omega}(f,\varphi)$.
\begin{corollary}\label{critical}
    Let $(X,f)$ be a random dynamical system, and let $\varphi,\psi\in L^{1}(\Omega,C(X))$ with $\psi>0$, we have
    \[\pres_{\psi,\Omega}(f,\varphi)=\inf\{\beta\in\R\colon\pres_{\Omega}(f,\varphi-\beta\psi)\leq0\}=\sup\{\beta\in\R\colon\pres_{\Omega}(f,\varphi-\beta\psi)\geq0\}.\]
\end{corollary}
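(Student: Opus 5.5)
We prove the Corollary by combining the inequality in \propref{infbeta} with its reverse, and then using the monotonicity of $\beta\mapsto\pres_{\Omega}(f,\varphi-\beta\psi)$ to identify the infimum with the supremum. Set $\beta^*:=\inf\{\beta\in\R\colon\pres_{\Omega}(f,\varphi-\beta\psi)\leq0\}$.

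First we record the properties of $g(\beta):=\pres_{\Omega}(f,\varphi-\beta\psi)$. By \propref{ranconti}, $g$ is continuous, indeed $1$-Lipschitz with constant $\onenorm{\psi}$. By \thmref{ranvp}, $g(\beta)=\sup_\mu\{h_\mu^{\ran}(f)+\int\varphi\,\dd\mu-\beta\int\psi\,\dd\mu\}$. Since $\int\psi\,\dd\mu>0$ for every $\mu\in\M_\prob(f)$, $g$ is nonincreasing. Moreover, if $m:=\essinf_\omega\inf_x\psi_\omega>0$, then $g(\beta_2)\le g(\beta_1)-m(\beta_2-\beta_1)$ for $\beta_1<\beta_2$, so $g$ is strictly decreasing. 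Hence $\{g\le0\}=[\beta^*,\infty)$ and $\{g\ge0\}=(-\infty,\beta^*]$, which gives the second equality, with $g(\beta^*)=0$ whenever $\beta^*$ is finite. This uses the positivity of $\inf\psi$ implicitly assumed in \thmref{exponent} (where $m=\inf\psi$ was used). If $\psi$ is only pointwise positive, strict decrease still follows from $\int\psi\,\dd\mu>0$ provided the supremum is approximated uniformly; I would simply adopt $m>0$ as in the paper.

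For the reverse inequality $\pres_{\psi,\Omega}(f,\varphi)\le\beta^*$, we take $\beta>\beta^*$, so that $g(\beta)<0$, and show $\beta$ lies in the critical set of \thmref{exponent}. That is, we need
\[
\lim_{\epsilon\to0}\limsup_{T}\int_\Omega\log R_{\psi,T}(f,\omega,\varphi-\beta\psi,\epsilon)\,\probinte<\infty.
\]
Write $-2\eta:=g(\beta)$. By \thmref{randompres}, for each small $\epsilon$ and a.e.\ $\omega$ (working ergodically, or fiberwise and then integrating), $\log P_\omega(f,\varphi-\beta\psi,n,\epsilon)\le -\eta n$ for all $n\ge N(\omega)$.

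Next we bound $R_{\psi,T}$ from above. We drop the restriction to $Y_{\omega,n}$ and sum only over $n\in G_{\omega,T}$, using $n\ge T/\supnorm{\psi_\omega}$ to get
\[
R_{\psi,T}\le\sum_{n\ge T/\supnorm{\psi_\omega}}P_\omega(f,\varphi-\beta\psi,n,\epsilon),
\]
which for large $T$ is at most $\sum_{n}e^{-\eta n}<\infty$. Thus the integrand tends to $-\infty$ pointwise. To pass the limsup inside the integral we need an integrable upper bound, and this is the main obstacle: $N(\omega)$ is not uniform, and $\supnorm{\psi_\omega}$ is only integrable. I would try to handle it as follows. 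Use the crude bound
\[
\log P_\omega(f,\varphi-\beta\psi,n,\epsilon)\le\log\#E_n+\sum_{i<n}\bigl(\supnorm{\varphi_{\theta^i\omega}}+|\beta|\supnorm{\psi_{\theta^i\omega}}\bigr),
\]
where the first term is controlled by finite fiber entropy. Split into $n<N(\omega)$ and $n\ge N(\omega)$, then apply reverse Fatou's lemma with dominating function
\[
\sup_n\Bigl(\log P_\omega-\tfrac{\eta}{2} n\Bigr)^{+}+C,
\]
whose integrability follows from a maximal-ergodic argument applied to the subadditive cocycle $\log P_\omega$. Alternatively, mimic the proof of \thmref{exponent}: choose $K$ from the integrated limsup, namely
\[
\int\log P_\omega(f,\varphi-\beta\psi,n,\epsilon)\,\probinte\le -\eta n \quad\text{for } n\ge K,
\]
and use the same ``$\int\sup_k\log\le$'' manipulation as done there to conclude finiteness. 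Either route gives $\beta$ in the critical set, so $\pres_{\psi,\Omega}(f,\varphi)\le\beta$. Letting $\beta\downarrow\beta^*$ and combining with \propref{infbeta} proves the first equality.
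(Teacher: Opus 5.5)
Your plan is the paper's own: strict monotonicity of $g(\beta):=\pres_\Omega(f,\varphi-\beta\psi)$ via \thmref{ranvp}, then ``$g(\beta)<0\Rightarrow\beta$ lies in the critical set $\mathcal C$ of \thmref{exponent}'', then \propref{infbeta} for the other inequality. The monotonicity part is fine. It also needs no hedging: $\int_J\psi\,\dd\mu\ge c:=\int_\Omega\inf_x\psi_\omega(x)\,\probinte>0$ for every $\mu\in\M_\prob(f)$, so $g(\beta_2)\le g(\beta_1)-c(\beta_2-\beta_1)$.

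The gap is exactly the step you flag, and neither remedy closes it. Route (b) is the paper's manipulation. It deduces $\int_\Omega\sup_{k\ge K}\log a_k\,\probinte\le0$ from $\int_\Omega\log a_k\,\probinte\le0$ for each $k$, which illegitimately interchanges $\int$ and $\sup$. Route (a) fails too. $\log P_\omega(f,\cdot,n,\epsilon)$ is not a subadditive cocycle, and even for additive cocycles with negative drift the maximal function need not be integrable under $L^1$ hypotheses. (Also, $n\in G_{\omega,T}$ only forces $\sum_{i<n}\supnorm{\psi_{\theta^i\omega}}>T$, not $n\ge T/\supnorm{\psi_\omega}$.)

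In fact the claim $\beta\in\mathcal C$ is false. Let $X$ be a point, $\psi\equiv1$, and $\theta$ the shift on $\Omega=\R^{\mathbb{Z}}$ with an i.i.d.\ product measure. Let $\varphi_\omega=\omega_0$, where $\omega_0$ has density $\tfrac32t^{-5/2}$ on $[1,\infty)$, so $\int\omega_0\,\probinte=3$ but $\int\omega_0^2\,\probinte=\infty$. Then $g(\beta)=3-\beta$. Since $S_{\omega,T}=\{\lfloor T\rfloor\}$, we get $\pres_{\psi,\Omega}(f,\varphi)=3$, so the Corollary holds. But with $n_0=\lfloor T\rfloor+1$,
\[
\log R_{\psi,T}(f,\omega,\varphi-\beta\psi,\epsilon)=\log\sum_{n>T}e^{S_n(\varphi-\beta)_\omega}\ge S_{n_0}(\varphi-\beta)_\omega+\sup_{k\ge0}S_k(\varphi-\beta)_{\theta^{n_0}\omega}.
\]
By the Kiefer--Wolfowitz theorem, the maximum of a negative-drift random walk is integrable only if the positive part of the increment is square integrable. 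So for $\beta>3$ the last term has infinite integral, and for $\beta\le3$ it is a.s.\ infinite. Hence $\int_\Omega\log R_{\psi,T}\,\probinte=+\infty$ for all $T,\beta$, and $\mathcal C=\varnothing$ although $g(\beta)<0$ for $\beta>3$. Your majorant $\sup_n(\log P_\omega-\tfrac{\eta}{2}n)^+$ is exactly this non-integrable maximum. The example also shows that \thmref{exponent} fails as stated here ($\inf\varnothing=+\infty\ne3$). Consequently \propref{infbeta}, which the paper derives from it, is not a safe black box for your lower bound either.

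What works is to bypass the unnormalized critical exponent and estimate $\frac1T\int_\Omega\log P_{\psi,T}$ directly; the factor $1/T$ neutralizes the maximal function. Take $g(\beta)<0$ and $m=\inf\psi$. On $X_{\omega,n}$ one has $n\le T/m$ and $\beta S_n\psi_\omega\le\beta T+|\beta|\supnorm{\psi_{\theta^n\omega}}$, hence
\[
\frac1T\log P_{\psi,T}(f,\omega,\varphi,\epsilon)\le\beta+\frac{\log(T/m+1)}{T}+\frac1T\max_{n\le T/m}\Bigl(\log P_\omega(f,\varphi-\beta\psi,n,\epsilon)+|\beta|\supnorm{\psi_{\theta^n\omega}}\Bigr).
\]
The last term has $\limsup\le0$ a.e.\ by \thmref{randompres} (ergodic case, monotone in $\epsilon$). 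Its positive part is at most $\frac1m\bigl(\log\#\U+\frac1N\sum_{i\le N}(\supnorm{\varphi_{\theta^i\omega}}+2|\beta|\supnorm{\psi_{\theta^i\omega}})\bigr)$, with $N=\lfloor T/m\rfloor$ and $\U$ a finite cover by sets of diameter $\le\epsilon$. These Birkhoff averages converge in $L^1$, hence are uniformly integrable, and reverse Fatou yields $\pres_{\psi,\Omega}(f,\varphi)\le\beta$.

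The bound $\pres_{\psi,\Omega}(f,\varphi)\ge\beta^*$ should likewise be proved directly rather than through \propref{infbeta}. Take an ergodic $\mu$ with $h^{\ran}_\mu(f)+\int_J(\varphi-\beta\psi)\,\dd\mu>0$. Then combine Birkhoff for $\varphi,\psi$ with Brin--Katok as in \lemref{vplem1}, pigeonholing over the $O(\delta T)$ admissible values of $n$.
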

\begin{proof}
    The case $\pres_{\Omega}(f,\varphi-\beta\psi)=\infty$ is obvious. Now suppose $\pres_{\Omega}(f,\varphi-\beta\psi)<\infty$ for every $\beta\in\R$, by the variational principle in \thmref{ranvp}, we have
    \[\pres_{\Omega}(f,\varphi-\beta\psi)=\sup\left\{h_{\mu}^{\ran}(f)+\int_{J}(\varphi-\beta\psi)\,\dd\mu\colon\mu\in\M_{\prob}(f)\right\}.\] Let $m:=\inf\psi>0$, for each $\beta_{1}<\beta_{2}$ and every $0<\epsilon<m(\beta_2-\beta_1)/2$, there exists $\nu\in\M_{\prob}(f)$ such that
    \begin{equation*}
        \begin{aligned}
            &\sup\left\{h_{\mu}^{\ran}(f)+\int_{J}(\varphi-\beta_{2}\psi)\,\dd\mu\colon\mu\in\M_{\prob}(f)\right\}<h_{\nu}^{\ran}(f)+\int_{J}(\varphi-\beta_{2}\psi)\,\dd\nu+\epsilon \\
            &=h_{\nu}^{\ran}(f)+\int_{J}(\varphi-\beta_{1}\psi)\,\dd\nu-(\beta_{2}-\beta_{1})\int_{ J}\psi\,\dd\nu+\epsilon \\
            &<h_{\nu}^{\ran}(f)+\int_{J}(\varphi-\beta_{1}\psi)\,\dd\nu-(\beta_{2}-\beta_{1})\left(\int_{J}\psi\,\dd\nu-\frac{m}{2}\right) \\
            &\leq\sup\left\{h_{\mu}^{\ran}(f)+\int_{J}(\varphi-\beta_{1}\psi)\,\dd\mu\colon\mu\in\M_{\prob}(f)\right\}-(\beta_{2}-\beta_{1})\left(\int_{ J}\psi\,\dd\nu-\frac{m}{2}\right).
        \end{aligned}
    \end{equation*} Since the second term is strictly positive, the map $\beta\mapsto\pres_{\Omega}(f,\varphi-\beta\psi)$ is strictly decreasing.

    Let $\pres_{\Omega}(f,\varphi-\beta\psi)=\delta<0$, for every $\epsilon>0$, there exists $N\geq1$ such that for all $n\geq N$, we have
    \[\int_{\Omega}\log\sup_{E_n}\sum_{x\in E_n}e^{S_{n}(\varphi_{\omega}-\beta\psi_{\omega})(x)}\,\probinte\leq e^{\frac{n\delta}{2}}\] for $\almost$, which is equivalent to \[\int_{\Omega}\log\frac{\sup_{E_n}\sum_{x\in E_n}\exp(S_{n}(\varphi_{\omega}-\beta\psi_{\omega})(x))}{e^{\frac{n\delta}{2}}}\,\probinte\leq0.\] For sufficiently large $T>0$, we have
    \begin{equation*}
        \begin{aligned}
            &\int_{\Omega}\log R_{\psi,T}(f,\omega,\varphi-\beta\psi,\epsilon)\,\probinte\leq\int_{\Omega}\log\sum_{n\geq N}\sup_{E_n}e^{S_{n}(\varphi_{\omega}-\beta\psi_{\omega})(x)}\,\probinte \\
            &\leq\int_{\Omega}\log\sum_{n\geq N}\frac{\sup_{E_n}\sum_{x\in E_n}\exp(S_{n}(\varphi_{\omega}-\beta\psi_{\omega})(x))}{e^{\frac{n\delta}{2}}}e^{\frac{n\delta}{2}}\,\probinte \\
            &\leq\int_{\Omega}\sup_{n\geq N}\log\frac{\sup_{E_n}\sum_{x\in E_n}\exp(S_{n}(\varphi_{\omega}-\beta\psi_{\omega})(x))}{e^{\frac{n\delta}{2}}}\,\probinte+\int_{\Omega}\log\sum_{n\geq N}e^{\frac{n\delta}{2}}\,\probinte \\
            &\leq\log\sum_{n\geq N}e^{\frac{n\delta}{2}} \\
            &<-\log\left(1-e^{\frac{\delta}{2}}\right)<\infty.
        \end{aligned}
    \end{equation*}
    Since \[\inf\{\beta\in\R\colon\pres_{\Omega}(\varphi-\beta\psi)<0\}\geq\inf\left\{\beta\in\R\colon\lim_{\epsilon\to0}\limsup_{T\to\infty}\int_{\Omega}\log R_{\psi,T}(f,\omega,\varphi-\beta\psi,\epsilon)\,\probinte<\infty\right\},\] by \thmref{exponent} and \corref{infbeta}, we obtain
    \[\inf\{\beta\in\R\colon\pres_{\Omega}(\varphi-\beta\psi)\leq0\}=\sup\{\beta\in\R\colon\pres_{\Omega}(\varphi-\beta\psi)\geq0\}.\]
\end{proof}
\begin{corollary}\label{zero}
    Suppose $\pres_{\psi,\Omega}(f,\varphi-\beta\psi)<\infty$ for every $\beta\in\R$, then $\pres_{\Omega}(f,\varphi-\pres_{\psi,T}(f,\varphi)\psi)=0$.
\end{corollary}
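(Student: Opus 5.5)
Write $\beta^*:=\pres_{\psi,\Omega}(f,\varphi)$; the statement's $\pres_{\psi,T}(f,\varphi)$ is read as $\beta^*$, and the hypothesis as finiteness of $g(\beta):=\pres_{\Omega}(f,\varphi-\beta\psi)$ for every $\beta\in\R$. The plan is to show that $g$ is a continuous, strictly decreasing real function and then apply the pseudo-inverse characterization of \corref{critical}: a strictly decreasing continuous function must vanish at the common value of its $\inf$ and $\sup$ pseudo-inverses.

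\emph{Continuity.} By \propref{ranconti},
\[|g(\beta_1)-g(\beta_2)|\leq\onenorm{(\beta_1-\beta_2)\psi}=|\beta_1-\beta_2|\,\onenorm{\psi}.\]
Since $\psi\in L^1(\Omega,C(X))$, $g$ is Lipschitz, hence continuous on $\R$. \emph{Strict monotonicity} was established inside the proof of \corref{critical}, using the variational principle \thmref{ranvp} and $m=\inf\psi>0$. More directly, for $\beta_1<\beta_2$ and every $\mu\in\M_\prob(f)$ one has $\int\psi\,\dd\mu\geq m$, so
\[g(\beta_2)\leq g(\beta_1)-m(\beta_2-\beta_1).\]

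\emph{Finiteness of $\beta^*$.} Since $h^{\ran}_{\tp}(f)<\infty$, the previous bound gives $g(\beta)\to-\infty$ as $\beta\to\infty$. In the reverse direction, $g(\beta)\geq g(0)+m|\beta|$ for $\beta<0$, so $g(\beta)\to+\infty$ as $\beta\to-\infty$. Both sets $\{g\leq0\}$ and $\{g\geq0\}$ are therefore nonempty, and \corref{critical} gives a finite $\beta^*$.

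\emph{Conclusion.} Suppose $g(\beta^*)>0$. Continuity gives $g>0$ on a neighbourhood of $\beta^*$. This contradicts $\beta^*=\inf\{g\leq0\}$, because monotonicity then forces $\{g\leq 0\}\subset(\beta^*+\eta,\infty)$ for some $\eta>0$. Symmetrically, $g(\beta^*)<0$ contradicts $\beta^*=\sup\{g\geq0\}$. Hence $g(\beta^*)=0$, which is the claim.

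The main obstacle is not in this argument itself. It lies in ensuring that \corref{critical} applies, i.e.\ that both pseudo-inverse descriptions hold and coincide with $\beta^*$, and in verifying finiteness. If finiteness of $g$ is in doubt, I would first use the finite fiber entropy assumption and $\onenorm{\varphi},\onenorm{\psi}<\infty$ to bound $|g(\beta)|\leq h^{\ran}_{\tp}(f)+\onenorm{\varphi}+|\beta|\onenorm{\psi}$.
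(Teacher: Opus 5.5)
Your proposal is correct and follows the paper's route: you combine continuity and strict decrease of $\beta\mapsto\pres_{\Omega}(f,\varphi-\beta\psi)$ with the pseudo-inverse description in \corref{critical}, and this forces $\pres_{\psi,\Omega}(f,\varphi)$ to be the zero. Your version is more complete than the paper's one-line proof in two places: you take continuity from \propref{ranconti}, whereas the proof of \corref{critical} only establishes strict monotonicity, and you check that $\pres_{\psi,\Omega}(f,\varphi)$ is finite, which the paper leaves implicit.
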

\begin{proof}
    By \corref{critical}, the map $\beta\mapsto\pres_{\Omega}(f,\varphi-\beta\psi)$ is continuous and strictly decreasing, then $\beta=\pres_{\psi,\Omega}(f,\varphi)$ is the unique zero point of $\pres_{\Omega}(f,\varphi-\beta\psi)$.
\end{proof}
We have the variational principle for induced fiber topological pressure, as an extension of \thmref{inducevp} to random dynamical systems.
\begin{theorem}\label{randominducedvp}
    Let $(X,f)$ be a random dynamical system, and let $\varphi,\psi\in L^{1}(\Omega,C(X))$ with $\psi>0$, then \[\pres_{\psi,\Omega}(f,\varphi)=\sup\left\{\frac{h_{\mu}^{\ran}(f)}{\int_{ J}\psi\,\dd\mu}+\frac{\int_{ J}\varphi\,\dd\mu}{\int_{ J}\psi\,\dd\mu}\colon\mu\in\M_{\prob}(f)\right\}.\]
\end{theorem}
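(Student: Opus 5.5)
The plan is to reduce the statement to the variational principle for the fiber topological pressure (\thmref{ranvp}) by way of the pseudo-inverse characterization in \corref{critical}. Set
\[\beta^{*}:=\sup\left\{\frac{h_{\mu}^{\ran}(f)+\int_{J}\varphi\,\dd\mu}{\int_{J}\psi\,\dd\mu}\colon\mu\in\M_{\prob}(f)\right\}.\]
Note that $\int_J\psi\,\dd\mu\geq m:=\inf\psi>0$ for every $\mu\in\M_{\prob}(f)$. Also $\int_J\psi\,\dd\mu\leq\onenorm{\psi}<\infty$, so every denominator is a positive finite number. The central observation is the equivalence, for each fixed $\mu$ and $\beta\in\R$,
\[h_{\mu}^{\ran}(f)+\int_{J}(\varphi-\beta\psi)\,\dd\mu\geq0\iff\beta\leq\frac{h_{\mu}^{\ran}(f)+\int_{J}\varphi\,\dd\mu}{\int_{J}\psi\,\dd\mu}.\]
It follows by dividing by $\int_J\psi\,\dd\mu>0$. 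The same equivalence holds with strict inequalities on both sides.

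\textbf{Step 1: $\pres_{\psi,\Omega}(f,\varphi)\geq\beta^{*}$.} Take any $\mu$ and put $\beta_\mu:=(h_\mu^{\ran}(f)+\int\varphi\,\dd\mu)/\int\psi\,\dd\mu$. By \thmref{ranvp}, $\pres_{\Omega}(f,\varphi-\beta_\mu\psi)\geq h_{\mu}^{\ran}(f)+\int(\varphi-\beta_\mu\psi)\,\dd\mu=0$. Hence $\beta_\mu$ lies in $\{\beta\colon\pres_{\Omega}(f,\varphi-\beta\psi)\geq0\}$. The sup form of \corref{critical} then gives $\pres_{\psi,\Omega}(f,\varphi)\geq\beta_\mu$, and taking the supremum over $\mu$ gives the inequality.

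\textbf{Step 2: $\pres_{\psi,\Omega}(f,\varphi)\leq\beta^{*}$.} If $\beta^*=\infty$ there is nothing to prove, so assume $\beta^{*}<\infty$ and fix $\beta>\beta^{*}$. For every $\mu$ we have $\beta>\beta_\mu$, so the strict equivalence gives $h_{\mu}^{\ran}(f)+\int(\varphi-\beta\psi)\,\dd\mu<0$. To pass to the supremum without losing strictness, take $\beta'\in(\beta^*,\beta)$. Then for every $\mu$,
\[h_{\mu}^{\ran}(f)+\int(\varphi-\beta\psi)\,\dd\mu\leq-(\beta-\beta')\int\psi\,\dd\mu\leq-(\beta-\beta')m.\]
By \thmref{ranvp}, $\pres_{\Omega}(f,\varphi-\beta\psi)\leq-(\beta-\beta')m<0\leq0$. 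The inf form of \corref{critical} then yields $\pres_{\psi,\Omega}(f,\varphi)\leq\beta$, and letting $\beta\downarrow\beta^{*}$ finishes the step.

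\textbf{Expected obstacle.} The main delicate point is the bookkeeping of infinities and edge cases. When $h_\mu^{\ran}(f)$ is infinite for some $\mu$, the finiteness assumption $h_{\tp}^{\ran}(f)<\infty$ from \secref{sec3.2} excludes this. When $\pres_\Omega(f,\varphi-\beta\psi)=\pm\infty$, note that $\pres_\Omega(f,\varphi-\beta\psi)\leq h_{\tp}^{\ran}(f)+\onenorm{\varphi}+|\beta|\onenorm{\psi}<\infty$, so both sides are finite and \corref{critical} applies in its nondegenerate case. The inequality $\int\psi\,\dd\mu\geq m$ is needed to make the uniform negativity in Step 2 work. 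One also has to make sure that \corref{critical} is used in the two distinct forms, sup and inf, so that no continuity of $\beta\mapsto\pres_\Omega(f,\varphi-\beta\psi)$ is required beyond what is already established. If desired, that continuity follows from \propref{ranconti}, via $|\pres_\Omega(f,\varphi-\beta\psi)-\pres_\Omega(f,\varphi-\beta'\psi)|\leq|\beta-\beta'|\onenorm{\psi}$.
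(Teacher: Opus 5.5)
Your proposal is correct and is essentially the paper's proof. Both combine the pseudo-inverse characterization of \corref{critical} with the variational principle \thmref{ranvp} through the identity $h_\mu^{\ran}(f)+\int_J(\varphi-\beta\psi)\,\dd\mu=\bigl(\beta_\mu-\beta\bigr)\int_J\psi\,\dd\mu$; the paper argues contrapositively from $\beta>\pres_{\psi,\Omega}(f,\varphi)$ and $\beta<\pres_{\psi,\Omega}(f,\varphi)$, whereas you argue from $\beta_\mu$ and $\beta>\beta^*$, and you treat the edge cases more carefully. Note that the uniform bound $\int_J\psi\,\dd\mu\geq m$ is not actually needed in your Step 2: the inf form of \corref{critical} only requires $\pres_\Omega(f,\varphi-\beta\psi)\leq0$, and that already follows because every term $(\beta_\mu-\beta)\int_J\psi\,\dd\mu$ is negative.
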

\begin{proof}
    First, for every $\beta>\pres_{\psi,\Omega}(f,\varphi)$ we have $\pres_{\Omega}(f,\varphi-\beta\psi)\leq0$. It follows from that
    \begin{equation*}
        \begin{aligned}
            0&\geq\pres_{\Omega}(f,\varphi-\beta\psi) \\
            &=\sup\left\{h_{\mu}^{\ran}(f)+\int_{J}(\varphi-\beta\psi)\,\dd\mu\colon\mu\in\M_{\prob}(f)\right\} \\
            &=\sup\left\{\left(\frac{h_{\mu}^{\ran}(f)}{\int_{ J}\psi\,\dd\mu}+\frac{\int_{ J}\varphi\,\dd\mu}{\int_{ J}\psi\,\dd\mu}-\beta\right)\int_{ J}\psi\,\dd\mu\colon\mu\in\M_{\prob}(f)\right\},
        \end{aligned}
    \end{equation*} then \[\pres_{\psi,\Omega}(f,\varphi)\geq\sup\left\{\frac{h_{\mu}^{\ran}(f)}{\int_{J}\psi\,\dd\mu}+\frac{\int_{J}\varphi\,\dd\mu}{\int_{J}\psi\,\dd\mu}\colon\mu\in\M_{\prob}(f)\right\}\] holds. For the opposite inequality, since for every $\beta<\pres_{\psi,\Omega}(f,\varphi)$ we have $\pres_{\Omega}(\varphi-\beta\psi)\geq0$, then
    \begin{equation*}
        \begin{aligned}
            \pres_{\Omega}(f,\varphi-\beta\psi)&=\sup\left\{h_{\mu}^{\ran}(f)+\int_{J}(\varphi-\beta\psi)\,\dd\mu\colon\mu\in\M_{\prob}(f)\right\} \\
            &=\sup\left\{\left(\frac{h_{\mu}^{\ran}(f)}{\int_{ J}\psi\,\dd\mu}+\frac{\int_{ J}\varphi\,\dd\mu}{\int_{ J}\psi\,\dd\mu}-\beta\right)\int_{ J}\psi\,\dd\mu\colon\mu\in\M_{\prob}(f)\right\}\geq0.
        \end{aligned}
    \end{equation*} It leads to \[\pres_{\psi,\Omega}(f,\varphi)\leq\sup\left\{\frac{h_{\mu}^{\ran}(f)}{\int_{ J}\psi\,\dd\mu}+\frac{\int_{ J}\varphi\,\dd\mu}{\int_{ J}\psi\,\dd\mu}\colon\mu\in\M_{\prob}(f)\right\}.\] Combining them together gives us the variational principle.
\end{proof}
\begin{remark}
    The variational principle immediately implies that $h_{\tp}^{\ran}(f)<\infty$ if and only if $\pres_{\psi,\Omega}(f,\varphi)<\infty$.
\end{remark}
\begin{proposition}
    Let $(X,f)$ be a random dynamical system, and let $\varphi_1,\varphi_2,\psi\in L^{1}(\Omega,C(X))$ with $\psi>0$, the following properties of $\psi$-induced fiber pressure hold.

    (1) If $\varphi_{1}\leq\varphi_{2}$, then $\pres_{\psi,\Omega}(f,\varphi_1)\leq\pres_{\psi,\Omega}(f,\varphi_2)$.

    (2) The $\psi$-induced fiber pressure function $\pres_{\psi,\Omega}(f,\cdot)\colon L^{1}(\Omega,C(X))\to\R\cup\{\infty\}$ is continuous with respect to $L^1$-norm.

    (3) $\pres_{\psi,\Omega}(f,\varphi_{1}+\varphi_{2})\leq\pres_{\psi,\Omega}(f,\varphi_1)+\pres_{\psi,\Omega}(f,\varphi_2)$.

    (4) If in addition $\supnorm{\varphi_2}:=\esssup_{\omega\in\Omega}\sup_{x\in X}\varphi_{2,\omega}(x)<\infty$, then $\pres_{\psi,\Omega}(f,\varphi_1+\varphi_{2}\circ\Theta-\varphi_2)=\pres_{\psi,\Omega}(f,\varphi_1)$.
\end{proposition}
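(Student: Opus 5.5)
The plan is to work entirely through the variational principle of \thmref{randominducedvp}. For $\mu\in\M_\prob(f)$ write
\[\Lambda_\mu(\varphi):=\frac{h_{\mu}^{\ran}(f)+\int_J\varphi\,\dd\mu}{\int_J\psi\,\dd\mu},\]
so that $\pres_{\psi,\Omega}(f,\varphi)=\sup_\mu\Lambda_\mu(\varphi)$. Each of (1)--(4) then reduces to an inequality or identity for $\Lambda_\mu$ that is uniform in $\mu$, followed by passing to the supremum.

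For (1), if $\varphi_1\leq\varphi_2$ then $\int_J\varphi_1\,\dd\mu\leq\int_J\varphi_2\,\dd\mu$. Since $\int_J\psi\,\dd\mu>0$, this gives $\Lambda_\mu(\varphi_1)\leq\Lambda_\mu(\varphi_2)$ for every $\mu$, and taking suprema proves (1).

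For (3), split $\int_J(\varphi_1+\varphi_2)\,\dd\mu$ into its two integrals, giving
\[\Lambda_\mu(\varphi_1+\varphi_2)=\Lambda_\mu(\varphi_1)+\frac{\int_J\varphi_2\,\dd\mu}{\int_J\psi\,\dd\mu}.\]
The last term is at most $\Lambda_\mu(\varphi_2)$ because $h_\mu^{\ran}(f)\geq0$. Hence $\Lambda_\mu(\varphi_1+\varphi_2)\leq\Lambda_\mu(\varphi_1)+\Lambda_\mu(\varphi_2)$, and taking suprema proves (3).

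For (4), $\Theta$-invariance of $\mu$ gives $\int_J\varphi_2\circ\Theta\,\dd\mu=\int_J\varphi_2\,\dd\mu$. The hypothesis $\supnorm{\varphi_2}<\infty$ guarantees that this integral is finite, so the difference can be cancelled. Thus $\Lambda_\mu$ is unchanged when $\varphi_1$ is replaced by $\varphi_1+\varphi_2\circ\Theta-\varphi_2$, and the suprema agree. One also needs $\varphi_2\circ\Theta\in L^1(\Omega,C(X))$: its $\omega$-section $\varphi_{2,\theta\omega}\circ f_\omega$ is continuous, and its norm equals that of $\varphi_2$ because $\prob$ is $\theta$-invariant.

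For (2), continuity, the natural approach is to imitate \propref{ranconti}. The key bound is
\[\Bigl|\int_J(\varphi_1-\varphi_2)\,\dd\mu\Bigr|\leq\onenorm{\varphi_1-\varphi_2},\]
which holds because $\mu$ has marginal $\prob$. Combined with $\int_J\psi\,\dd\mu\geq m:=\inf\psi$, this gives
\[|\Lambda_\mu(\varphi_1)-\Lambda_\mu(\varphi_2)|\leq\frac{\onenorm{\varphi_1-\varphi_2}}{m},\]
and so $|\pres_{\psi,\Omega}(f,\varphi_1)-\pres_{\psi,\Omega}(f,\varphi_2)|\leq\onenorm{\varphi_1-\varphi_2}/m$, a Lipschitz estimate. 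The value $\infty$ occurs for one argument exactly when it occurs for the other, so it causes no trouble.

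The main obstacle is the meaning of $m$: the standing assumption only says $\inf_x\psi_\omega(x)>0$ for $\prob$-a.e. $\omega$, not that this infimum is bounded away from zero uniformly. Following the paper's own use of $m=\inf\psi>0$ in \thmref{exponent}, I will take $m>0$ as the essential infimum. If that is not available, I will argue through \corref{critical} instead. The map $(\beta,\varphi)\mapsto\pres_\Omega(f,\varphi-\beta\psi)$ is $1$-Lipschitz in $\varphi$ by \propref{ranconti}, and strictly decreasing in $\beta$. Its zero crossing, which is $\pres_{\psi,\Omega}(f,\varphi)$ by \corref{critical}, therefore moves continuously with $\varphi$, and this gives continuity without a uniform constant.
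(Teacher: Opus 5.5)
Your proof is correct, but it takes a different route from the paper.

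\textbf{The paper's route.} The paper proves (1), (3) and (4) directly on the partition sums:
(1) follows from monotonicity of $Q_{\psi,T}(f,\omega,\cdot,\epsilon)$;
(3) follows from the fiberwise bound $P_{\psi,T}(f,\omega,\varphi_1+\varphi_2,\epsilon)\leq P_{\psi,T}(f,\omega,\varphi_1,\epsilon)\,P_{\psi,T}(f,\omega,\varphi_2,\epsilon)$;
(4) follows from telescoping $S_n(\varphi_2\circ\Theta-\varphi_2)_\omega(x)$ into $\varphi_{2,\theta^n\omega}(f_\omega^n x)-\varphi_{2,\omega}(x)$. This last step is where $\supnorm{\varphi_2}<\infty$ is used, to produce the factors $e^{\pm2\supnorm{\varphi_2}}$.
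Only (2) goes through \propref{ranconti} and \corref{critical}, which is essentially your fallback argument.

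\textbf{Your route.} You derive all four items from \thmref{randominducedvp}. This is shorter and uniform, and it gives more. In (2) you get a Lipschitz bound rather than bare continuity. In (4) you get the identity for every $\varphi_2\in L^{1}(\Omega,C(X))$, because finiteness of $\int_J\varphi_2\,\dd\mu$ already follows from $\onenorm{\varphi_2}<\infty$. So the extra hypothesis is not actually what your cancellation needs. The price is that (1), (3) and (4) now rest on the whole chain \thmref{exponent}, \corref{critical}, \thmref{ranvp}. The paper's arguments for these three items are elementary estimates that never invoke a variational principle.

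\textbf{Two small remarks.} First, the obstacle you raise in (2) is not a real one. For every $\mu\in\M_{\prob}(f)$ one has $\int_J\psi\,\dd\mu\geq\int_\Omega\inf_{x\in X}\psi_\omega(x)\,\probinte=:c>0$. The integrand is measurable, being an infimum over a countable dense set, and it is positive $\prob$-a.e. So your estimate holds with constant $1/c$ without any uniform lower bound on $\psi$. Second, $\onenorm{\varphi_2\circ\Theta}$ is in general only at most $\onenorm{\varphi_2}$; equality can fail when $f_\omega$ is not onto. The inequality is all you use.
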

\begin{proof}
    The property (1) is obvious from the definition since we always have $Q_{\psi,T}(f,\omega,\varphi_1,\epsilon)\leq Q_{\psi,T}(f,\omega,\varphi_2,\epsilon)$ for every $\omega\in\Omega$ and $\epsilon>0$.

    For property (2), the fiber pressure function $\pres_{\Omega}(f,\cdot)\colon L^{1}(\Omega,C(X))\to\R\cup\{\infty\}$ is continuous in $L^1$-norm by \propref{ranconti}, hence the induced fiber pressure function is continuous in $L^1$-norm by \corref{critical}.

    Given $T>0$, $\epsilon>0$, and $n\in S_{\omega,T}$ for every $\omega\in\Omega$, if $E_{n}$ is $(\omega,n,\epsilon)$-separated set of $X_{\omega,n}$, then
    \[\sum_{n\in S_{\omega,T}}\sum_{x\in E_n}e^{S_{n}(\varphi_{1,\omega}+\varphi_{2,\omega})(x)}\leq\sum_{n\in S_{\omega,T}}\sum_{x\in E_n}e^{S_{n}\varphi_{1,\omega}(x)}\sum_{n\in S_{\omega,T}}\sum_{x\in E_n}e^{S_{n}\varphi_{2,\omega}(x)}.\] Hence, we have
    \[P_{\psi,T}(f,\omega,\varphi_1+\varphi_2,\epsilon)\leq P_{\psi,T}(f,\omega,\varphi_1,\epsilon)P_{\psi,T}(f,\omega,\varphi_2,\epsilon),\] the inequality holds under the logarithm and the integral signs, which yields the desired inequality in property (3).

    To address property (4), since
    \begin{equation*}
        \begin{aligned}
            P_{\psi,T}(f,\omega,\varphi_{1}+\varphi_{2}\circ\Theta-\varphi_2,\epsilon)&=\sum_{n\in S_{\omega,T}}\sum_{x\in E_n}\exp\left(S_{n}\varphi_{1,\omega}(x)+S_{n}\varphi_{2,\theta\omega}(f_\omega x)-S_{n}\varphi_{2,\omega}(x)\right) \\
            &=\sum_{n\in S_{\omega,T}}\sum_{x\in E_n}\exp\left(S_{n}\varphi_{1,\omega}(x)+\varphi_{2,\theta^{n}\omega}(f_{\omega}^{n}x)-\varphi_{2,\omega}(x)\right),
        \end{aligned}
    \end{equation*} we have that \[e^{-2\supnorm{\varphi_2}}P_{\psi,T}(f,\omega,\varphi_1,\epsilon)\leq P_{\psi,T}(f,\omega,\varphi_{1}+\varphi_{2}\circ\Theta-\varphi_2,\epsilon)\leq e^{2\supnorm{\varphi_2}}P_{\psi,T}(f,\omega,\varphi_1,\epsilon).\] By the definition of induced pressure, we have the equality.
\end{proof}
\begin{defn}
    We say a $\Theta$-invariant probability measure $\mu\in\M_{\prob}(f)$ is an \emph{equilibrium state} for potentials $(\varphi,\psi)$ if 
    \[\pres_{\psi,\Omega}(f,\varphi)=\frac{h_{\mu}^{\ran}(f)}{\int_{ J}\psi\,\dd\mu}+\frac{\int_{ J}\varphi\,\dd\mu}{\int_{ J}\psi\,\dd\mu}.\]
    The set of all equilibrium states for potentials $(\varphi,\psi)$ is denoted by $\M_{\prob,(\varphi,\psi)}(f)$.
\end{defn}
It is well known that $\M_{\prob}(f)$ is a nonempty compact set in the weak$^*$ topology by \cite[Lemma 2.1]{kifer1}, where $\mu_n\to\mu$ means that $\int_J\varphi\,\dd\mu_n\to\int_J\varphi\,\dd\mu$ as $n\to\infty$ for each $\varphi\in L^{1}(\Omega,C(X))$. However, the set $\M_{\prob,(\varphi,\psi)}(f)$ may be not compact in $\M_{\prob}(f)$. In what follows, we assume that the entropy map $\mu\mapsto h_{\mu}^{\ran}(f)$ is upper semi-continuous on $\M_{\prob}(f)$, meaning $\limsup_{\mu_n \to\mu}h_{\mu_n}^{\ran}(f)\leq h_{\mu}^{\ran}(f)$.
\begin{proposition}
    Let $(X,f)$ be a random dynamical system with $h_{\tp}^{\ran}(f)<\infty$, and let $\varphi,\psi\in L^1(\Omega,C(X))$ with $\psi>0$, then the following holds.

    (1) The set $\M_{\prob,(\varphi,\psi)}(f)$ is convex compact subset of $\M_{\prob}(f)$.

    (2) The extreme points of $\M_{\prob,(\varphi,\psi)}(f)$ are ergodic measures in $\M_{\prob,(\varphi,\psi)}(f)$.

    (3) If the set $\M_{\prob,(\varphi,\psi)}(f)$ is nonempty, then it contains at least one ergodic measure.
\end{proposition}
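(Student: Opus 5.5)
The plan is to reduce everything to the linear problem through \corref{critical} and \corref{zero}. Set $\beta_0:=\pres_{\psi,\Omega}(f,\varphi)$, which is finite because $h_{\tp}^{\ran}(f)<\infty$. Then $\pres_{\Omega}(f,\varphi-\beta_0\psi)=0$. First I would check that $\mu$ is an equilibrium state for $(\varphi,\psi)$ exactly when
\[h_{\mu}^{\ran}(f)+\int_J(\varphi-\beta_0\psi)\,\dd\mu=0=\pres_{\Omega}(f,\varphi-\beta_0\psi).\]
This holds because $\int_J\psi\,\dd\mu\geq\inf\psi>0$, so multiplying the defining equation by $\int_J\psi\,\dd\mu$ is reversible. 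Consequently $\M_{\prob,(\varphi,\psi)}(f)$ coincides with the set of ordinary equilibrium states of the fiber pressure for the single potential $g:=\varphi-\beta_0\psi\in L^1(\Omega,C(X))$. It therefore suffices to prove (1)–(3) for this set.

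For (1), I would treat convexity and compactness separately. Convexity follows because $\mu\mapsto h_\mu^{\ran}(f)+\int_J g\,\dd\mu$ is affine, by \propref{ranentropy} and linearity of the integral. A convex combination of two maximizers is then again a maximizer, since the value is always at most $0$ by \thmref{ranvp}. For compactness, take $\mu_n\to\mu$ in $\M_{\prob}(f)$ with each $\mu_n$ an equilibrium state. Weak$^*$ convergence in the sense of the paper gives $\int g\,\dd\mu_n\to\int g\,\dd\mu$ for $g\in L^1(\Omega,C(X))$, and the assumed upper semicontinuity of entropy gives
\[0=\limsup_n\Bigl(h_{\mu_n}^{\ran}(f)+\int g\,\dd\mu_n\Bigr)\leq h_\mu^{\ran}(f)+\int g\,\dd\mu\leq 0.\]
Hence the set is closed in the compact space $\M_{\prob}(f)$, which is compact by \cite[Lemma 2.1]{kifer1}.

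For (2), I would argue by contradiction. Suppose $\mu$ is an extreme point of the equilibrium set but is not ergodic. Then there is a $\Theta$-invariant set $A$ with $0<\mu(A)<1$, and we can write $\mu=t\mu_1+(1-t)\mu_2$ with $\mu_1,\mu_2$ the normalized restrictions to $A$ and $A^c$.

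The main obstacle is that $\mu_1,\mu_2$ must have marginal $\prob$ in order to lie in $\M_{\prob}(f)$. Here the ergodicity of $\theta$ is what I would use. The function $\omega\mapsto\mu_\omega(A_\omega)$ is $\theta$-invariant, because $A$ is $\Theta$-invariant and $\mu_\omega\circ f_\omega^{-1}=\mu_{\theta\omega}$. So it equals the constant $t=\mu(A)$ almost everywhere, and therefore $\mu_1$ and $\mu_2$ both project to $\prob$. I would verify this carefully via the disintegration, since $f_\omega^{-1}A_{\theta\omega}\supseteq A_\omega$ only up to measure zero.

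Once this is settled, affinity and the bound $\leq0$ force both $\mu_1$ and $\mu_2$ to attain the value $0$. Thus both are equilibrium states, which contradicts extremality since $\mu_1\neq\mu_2$.

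For (3), the set is nonempty, convex and compact by (1), and $\M_{\prob}(f)$ sits in a locally convex space via its weak$^*$ topology. Krein–Milman then gives an extreme point, and by (2) that extreme point is ergodic.
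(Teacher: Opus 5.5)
Your argument is correct. For (1) it is the paper's proof in linearized form. When the paper checks convexity it multiplies through by $\int_J\psi\,\dd\mu$, which is exactly your passage to $g=\varphi-\beta_0\psi$, and its closedness step is the same upper semicontinuity argument.

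The real differences are in (2) and (3).

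In (2), the paper first shows that an extreme point of $\M_{\prob,(\varphi,\psi)}(f)$ is extreme in all of $\M_{\prob}(f)$, since any splitting inside $\M_{\prob}(f)$ forces both pieces to be equilibrium states. It then cites \cite[Lemma 6.19]{crauel}. You run the contrapositive and build the splitting by hand from a $\Theta$-invariant set, using $\mu_\omega(A_\omega)=\mu_{\theta\omega}(A_{\theta\omega})$ and ergodicity of $\theta$ to keep the marginal equal to $\prob$. That is precisely the content of the cited lemma. Your version is therefore self-contained and shows exactly where ergodicity of $\theta$ is used.

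In (3), the paper applies Choquet's theorem to get a representing measure $\tau$ concentrated on the extreme points of the equilibrium set. It then runs an integral inequality, which tacitly needs the barycentric formula $h_\mu^{\ran}(f)=\int h_\nu^{\ran}(f)\,\dd\tau(\nu)$. That inequality is superfluous: the points carrying $\tau$ are already equilibrium states and, by (2), ergodic. Your Krein--Milman argument needs only the existence of one extreme point of a nonempty compact convex set in a locally convex Hausdorff space, so it is the cleaner route.

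One minor simplification: you never need $\pres_{\Omega}(f,\varphi-\beta_0\psi)=0$ from \corref{zero}. Your steps for (1)--(3) use only that $h_\nu^{\ran}(f)+\int_J(\varphi-\beta_0\psi)\,\dd\nu\le 0$ for every $\nu\in\M_{\prob}(f)$, with equality exactly at equilibrium states. This follows directly from \thmref{randominducedvp} together with $\int_J\psi\,\dd\nu>0$.
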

\begin{proof}
    (1) For $\mu,\nu\in\M_{\prob,(\varphi,\psi)}(f)$ and $0\leq t\leq1$, we have
    \[t\pres_{\psi,\Omega}(f,\varphi)\int_{ J}\psi\,\dd\mu=t\left(h_{\mu}^{\ran}(f)+\int_{ J}\varphi\,\dd\mu\right)\] and \[(1-t)\pres_{\psi,\Omega}(f,\varphi)\int_{ J}\psi\,\dd\nu=(1-t)\left(h_{\nu}^{\ran}(f)+\int_{ J}\varphi\,\dd\nu\right).\] By \propref{ranentropy}, the entropy map is affine, we obtain 
    \begin{equation*}
        \begin{aligned}
            \pres_{\psi,\Omega}(f,\varphi)&=\frac{th_{\mu}^{\ran}(f)+(1-t)h_{\nu}^{\ran}(f)}{\int_{ J}\psi\,\dd(t\mu+(1-t)\nu)}+\frac{\int_{ J}\varphi\,\dd(t\mu+(1-t)\nu)}{\int_{ J}\psi\,\dd(t\mu+(1-t)\nu)} \\
            &=\frac{h_{t\mu+(1-t)\nu}^{\ran}(f)}{\int_{ J}\psi\,\dd(t\mu+(1-t)\nu)}+\frac{\int_{ J}\varphi\,\dd(t\mu+(1-t)\nu)}{\int_{ J}\psi\,\dd(t\mu+(1-t)\nu)}.
        \end{aligned}
    \end{equation*} It means that the invariant measure $t\mu+(1-t)\nu$ is an equilibrium state for $(\varphi,\psi)$, hence $\M_{\prob,(\varphi,\psi)}(f)$ is convex. Let $\{\mu_n\}_{n\geq1}$ be a sequence of measures in $\M_{\prob,(\varphi,\psi)}(f)$ converging to $\mu\in\M_{\prob}(f)$. Since $\int_{J}\psi\,\dd\mu_n>0$ for all $n\geq1$. Since the entropy map is assumed to be upper semi-continuous, it yields
    \[\pres_{\psi,\Omega}(f,\varphi)=\lim_{n\to\infty}\frac{h_{\mu_n}^{\ran}(f)+\int_{J}\varphi\,\dd\mu_n}{\int_{J}\psi\,\dd\mu_n}\leq\frac{h_{\mu}^{\ran}(f)+\int_{J}\varphi\,\dd\mu}{\int_{J}\psi\,\dd\mu}.\] By the variational principle, we must have $\mu\in\M_{\prob,(\varphi,\psi)}(f)$. Hence the set $\M_{\prob,(\varphi,\psi)}(f)$ is closed in the weak$^*$ topology, hence compact in $\M_{\prob}(f)$.

    (2) From \cite[Lemma 6.19]{crauel}, the set $\M_{\prob}(f)$ is convex, any ergodic $\Theta$-invariant measure is an extreme point of the convex set $\M_{\prob}(f)$, and hence of $\M_{\prob,(\varphi,\psi)}(f)$. Suppose $\mu$ is an extreme point of $\M_{\prob,(\varphi,\psi)}(f)$ with $\mu=t\mu_1+(1-t)\mu_2$ for some $t\in[0,1]$ and $\mu_1,\mu_2\in\M_{\prob}(f)$. Since
    \begin{equation*}
        \begin{aligned}
            \pres_{\psi,\Omega}(f,\varphi)&=\frac{h_{\mu}^{\ran}(f)+\int_{J}\varphi\,\dd\mu}{\int_{J}\psi\,\dd\mu} \\
            &=\frac{t\int_{J}\psi\,\dd\mu_1}{\int_{J}\psi\,\dd\mu}\cdot\frac{h_{\mu_1}^{\ran}(f)+\int_{J}\varphi\,\dd\mu_1}{\int_{J}\psi\,\dd\mu_1}+\frac{(1-t)\int_{J}\psi\,\dd\mu_2}{\int_{J}\psi\,\dd\mu}\cdot\frac{h_{\mu_2}^{\ran}(f)+\int_{J}\varphi\,\dd\mu_2}{\int_{J}\psi\,\dd\mu_2}
        \end{aligned}
    \end{equation*} and \[\pres_{\psi,\Omega}(f,\varphi)\geq\frac{h_{\mu_i}^{\ran}(f)+\int_{J}\varphi\,\dd\mu_i}{\int_{J}\psi\,\dd\mu_i}\] for $i=1,2$, by $t\in[0,1]$ and $\psi>0$, we must have \[\pres_{\psi,\Omega}(f,\varphi)=\frac{h_{\mu_i}^{\ran}(f)+\int_{J}\varphi\,\dd\mu_i}{\int_{J}\psi\,\dd\mu_i}\] for $i=1,2$. Both $\mu_1,\mu_2$ achieve the supremum, then $\mu_1,\mu_2\in\M_{\prob,(\varphi,\psi)}(f)$. Hence $\mu=\mu_1=\mu_2$, and then $\mu$ is an extreme point of $\M_{\prob}(f)$. Again by \cite[Lemma 6.19]{crauel}, combining with the condition $\prob$ is ergodic with respect to $\theta$, $\mu$ is ergodic.

    (3) Let $\mathcal{E}_{\prob,(\varphi,\psi)}(f)$ be the set of extreme points of $\M_{\prob,(\varphi,\psi)}(f)$. From (2), it is exactly the set of ergodic measures. Since $\M_{\prob,(\varphi,\psi)}(f)$ is a nonempty compact convex set, by Choquet's theorem \cite{phelps}, for every $\mu\in\M_{\prob,(\varphi,\psi)}(f)$ there exists a unique Borel probability measure $\tau$ on the Borel subsets of $\M_{\prob,(\varphi,\psi)}(f)$ such that $\tau(\mathcal{E}_{\prob,(\varphi,\psi)}(f))=1$ and \[\int_{J}\varphi\,\dd\mu=\int_{\mathcal{E}_{\prob,(\varphi,\psi)}(f)}\int_{J}\varphi\,\dd\nu\,\dd\tau(\nu)\] for $\varphi\in L^{1}(\Omega,C(X))$. By $\mu=\int_{\mathcal{E}_{\prob,(\varphi,\psi)}(f)}\nu\,\dd\tau(\nu)$, we have
    \[\int_{\mathcal{E}_{\prob,(\varphi,\psi)}(f)}\left(h_{\nu}^{\ran}(f)+\int_{J}\varphi\,\dd\nu\right)\dd\tau(\nu)=\int_{\mathcal{E}_{\prob,(\varphi,\psi)}(f)}\left(\pres_{\psi,\Omega}(f,\varphi)\int_{J}\psi\,\dd\nu\right)\dd\tau(\nu).\] Suppose for every $\nu\in\mathcal{E}_{\prob,(\varphi,\psi)}(f)$ we have \[h_{\nu}^{\ran}(f)+\int_{J}\varphi\,\dd\nu<\pres_{\psi,\Omega}(f,\varphi)\int_{J}\psi\,\dd\nu,\] then
    \begin{equation*}
        \begin{aligned}
            h_{\mu}^{\ran}(f)+\int_{J}\varphi\,\dd\mu&=\int_{\mathcal{E}_{\prob,(\varphi,\psi)}(f)}\left(h_{\nu}^{\ran}(f)+\int_{J}\varphi\,\dd\nu\right)\dd\tau(\nu) \\
            &<\pres_{\psi,\Omega}(f,\varphi)\int_{\mathcal{E}_{\prob,(\varphi,\psi)}(f)}\int_{J}\psi\,\dd\nu\,\dd\tau(\nu) \\
            &=\pres_{\psi,\Omega}(f,\varphi)\int_{J}\psi\,\dd\mu,
        \end{aligned}
    \end{equation*} which is a contradiction. Therefore, there exists $\nu\in\mathcal{E}_{\prob,(\varphi,\psi)}$ with \[h_{\nu}^{\ran}(f)+\int_{J}\varphi\,\dd\nu\geq\pres_{\psi,\Omega}(f,\varphi)\int_{J}\psi\,\dd\nu.\] By the variational principle, we have an ergodic measure $\nu\in\M_{\prob,(\varphi,\psi)}(f)$.
\end{proof}
\section{Nonlinear fiber pressure for random dynamical systems}\label{sec4}
\subsection{Definition}\label{sec4.1}
Following \cite{buzzi} and \cite{barreira}, we introduce nonlinear topological pressure to the random setting. Let $(X,d)$ be a compact metric space. Let $(X,f)$ be a random dynamical system over the base system $(\Omega,\F,\prob,\theta)$, and let $\varphi\in L^{1}(\Omega,C(X))$. Given a continuous function $F:\R\to\R$, define \[Q_{\omega}^{F}(f,\varphi,n,\epsilon):=\inf_{F_n}\sum_{x\in F_n}\exp\left(nF\left(\frac{S_{n}\varphi_{\omega}(x)}{n}\right)\right)\] where $F_n$ is $(\omega,n,\epsilon)$-spanning set of $X$, and define \[P_{\omega}^{F}(f,\varphi,n,\epsilon):=\sup_{E_n}\sum_{x\in E_n}\exp\left(nF\left(\frac{S_{n}\varphi_{\omega}(x)}{n}\right)\right)\] where $E_n$ is $(\omega,n,\epsilon)$-separated set of $X$. By \lemref{meas}, $Q_{\omega}^{F}(f,\varphi,n,\epsilon)$ and $P_{\omega}^{F}(f,\varphi,n,\epsilon)$ are measurable in $\omega\in\Omega$. For instance, let $k\geq1$ and $a\in\R$, take the sets \[E_{k}^{n,\epsilon}:=\{(\omega,x_1,\ldots,x_k)\colon d_{n}^{\omega}(x_i,x_j)\geq\epsilon,i\neq j\}\] and \[G_{k}^{n,a}:=\left\{(\omega,x_1,\ldots,x_k)\colon\sum_{i=1}^{k}\exp\left(nF\left(\frac{S_{n}\varphi_{\omega}(x)}{n}\right)\right)\geq a\right\}.\] Both sets belong to $\F\otimes\B$, and their intersection consists precisely of those states $\omega\in\Omega$ for which there exists an $(\omega,n,\epsilon)$-separated set $E$ of cardinality $k$ such that \[\sum_{x\in E}\exp\left(nF\left(\frac{S_{n}\varphi_{\omega}(x)}{n}\right)\right)\geq a.\] By $\pi_{\Omega}(E_{k}^{n,\epsilon}\cap G_{k}^{n,a})\in\F$, we have \[\bigcup_{k\geq1}\pi_{\Omega}(E_{k}^{n,\epsilon}\cap G_{k}^{n,a})=\{\omega\in\Omega\colon P_{\omega}^{F}(f,\varphi,n,\epsilon)\geq a\}\in\F,\] showing that $P_{\omega}^{F}(f,\varphi,n,\epsilon)$ is measurable.
\begin{defn}\label{nonranpres}
    Let $(X,f)$ be a random dynamical system, and let $\varphi\in L^{1}(\Omega,C(X))$. The \emph{nonlinear fiber topological pressure} of the potential $\varphi$ is given by
    \[\pres_{\Omega}^{F}(f,\varphi):=\lim_{\epsilon\to0}\limsup_{n\to\infty}\frac{1}{n}\int_{\Omega}\log Q_{\omega}^{F}(f,\varphi,n,\epsilon)\,\probinte,\] where the infimum is taken over all $(\omega,n,\epsilon)$-spanning sets $F_n$.
\end{defn}
When $F$ is identity, the definition recovers the fiber topological pressure of $f$ in \eqref{ranpresdef} from \defref{ranpres}. 
\begin{proposition}
    Equivalently, we have
    \[\pres_{\Omega}^{F}(f,\varphi):=\lim_{\epsilon\to0}\limsup_{n\to\infty}\frac{1}{n}\int_{\Omega}\log P_{\omega}^{F}(f,\varphi,n,\epsilon)\,\probinte\] with the supremum taken over all $(\omega,n,\epsilon)$-separated sets $E_n$.
\end{proposition}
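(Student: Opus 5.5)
The plan follows the pattern of \propref{equivalent}: compare spanning and separated sums in both directions, with the nonlinear weight $\exp(nF(S_n\varphi_\omega/n))$ in place of $e^{S_n\varphi_\omega}$.

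\emph{Well-definedness.} Measurability of $\omega\mapsto P_\omega^F(f,\varphi,n,\epsilon)$ was established just above. The quantity $\limsup_{n}\frac1n\int_\Omega\log P^F_\omega(f,\varphi,n,\epsilon)\,\probinte$ is nondecreasing as $\epsilon\to0$, since separated sets for a larger $\epsilon$ are separated for a smaller one. Hence the limit in $\epsilon$ exists.

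\emph{Lower bound.} A maximal $(\omega,n,\epsilon)$-separated set is $(\omega,n,\epsilon)$-spanning. Therefore $Q^F_\omega(f,\varphi,n,\epsilon)\le P^F_\omega(f,\varphi,n,\epsilon)$ for every $\omega$, $n$ and $\epsilon$. Taking logarithms, integrating, and passing to the limits gives $\pres^F_\Omega(f,\varphi)\le\lim_{\epsilon\to0}\limsup_n\frac1n\int_\Omega\log P^F_\omega\,\probinte$.

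\emph{Upper bound.} Fix $\gamma>0$. Let $E$ be $(\omega,n,\delta)$-separated and $F'$ be $(\omega,n,\delta/2)$-spanning. The map $i\colon E\to F'$ with $d_n^\omega(x,i(x))\le\delta/2$ is injective. The key estimate is
\[\Bigl|\tfrac{S_n\varphi_\omega(x)}{n}-\tfrac{S_n\varphi_\omega(i(x))}{n}\Bigr|\le\tfrac1n\sum_{j=0}^{n-1}\mathrm{var}(\varphi_{\theta^j\omega},\delta/2),\]
where $\mathrm{var}(g,r):=\sup\{|g(x)-g(y)|\colon d(x,y)\le r\}$. The function $\omega\mapsto\mathrm{var}(\varphi_\omega,r)$ is bounded by $2\supnorm{\varphi_\omega}$, hence integrable, and it tends to $0$ as $r\to0$. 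By dominated convergence, $\int_\Omega\mathrm{var}(\varphi_\omega,\delta/2)\,\probinte\to0$ as $\delta\to0$. By Birkhoff's theorem, the averages above converge a.e. to a conditional expectation that is small in $L^1$; under ergodicity of $\theta$, which is assumed, they converge to this small constant.

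We then use uniform continuity of $F$. Because $\frac1n S_n\varphi_\omega$ need not be uniformly bounded in $\omega$, I would restrict to the set of $\omega$ where $\frac1n\sum_j\supnorm{\varphi_{\theta^j\omega}}\le\int_\Omega\supnorm{\varphi_\omega}\,\probinte+1=:M$ for all large $n$. This set has full measure asymptotically, again by Birkhoff's theorem. On $[-M,M]$ choose $\eta>0$ with $|F(a)-F(b)|<\gamma$ whenever $|a-b|<\eta$, and choose $\delta$ so that the averaged variation is below $\eta$ for large $n$. On the good set this gives
\[\sum_{y\in F'}e^{nF(S_n\varphi_\omega(y)/n)}\ge e^{-n\gamma}\sum_{x\in E}e^{nF(S_n\varphi_\omega(x)/n)},\]
that is, $\log Q^F_\omega(f,\varphi,n,\delta/2)\ge\log P^F_\omega(f,\varphi,n,\delta)-n\gamma$.

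\emph{Main obstacle.} The difficulty is controlling the exceptional set of $\omega$ (non-uniform convergence) inside the integral, since $\log P^F_\omega$ and $\log Q^F_\omega$ can be large there. On the exceptional set I would use the crude bounds
\[\tfrac1n\log P^F_\omega\le\tfrac1n\log\#E+\max_{|t|\le\frac1n\sum_j\supnorm{\varphi_{\theta^j\omega}}}F(t),\qquad \tfrac1n\log Q^F_\omega\ge\min(\cdots),\]
together with finite fiber entropy ($h^{\ran}_{\tp}(f)<\infty$). With these, I would argue that the contribution of the exceptional set is $o(n)$ by uniform integrability (Egorov plus integrability of $\supnorm{\varphi_\omega}$). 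If $F$ has superlinear growth this could fail, and I would then fall back on proving the equality of the $\omega$-wise limits a.e. and integrating, as in \thmref{randompres}. Finally, letting $n\to\infty$, then $\delta\to0$, then $\gamma\to0$ yields the reverse inequality.
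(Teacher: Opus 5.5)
Your skeleton is the paper's: a maximal separated set is spanning, the injection $i$ sends an $(\omega,n,\delta)$-separated set into an $(\omega,n,\delta/2)$-spanning set, and continuity of $F$ finishes. You are more careful than the paper at the one delicate step. The paper takes a single $\tilde\delta$ from continuity of $F$ on all of $\R$, and a single $\delta$ controlling $|\varphi_{\theta^k\omega}(f^k_\omega x)-\varphi_{\theta^k\omega}(f^k_\omega i(x))|$ for every $k$. That tacitly assumes $F$ uniformly continuous and the family $\{\varphi_{\theta^k\omega}\}_k$ bounded and equicontinuous, and neither follows from $\varphi\in L^1(\Omega,C(X))$. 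Your averaged modulus $V_n(\omega):=\frac1n\sum_{j<n}\mathrm{var}(\varphi_{\theta^j\omega},\delta/2)$, with the cut-off $A_n(\omega):=\frac1n\sum_{j<n}\supnorm{\varphi_{\theta^j\omega}}\le M$, is the right repair.

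The exceptional set is also simpler than you fear. Since $i$ is injective for \emph{every} $\omega$, you get pointwise
\[
\log Q^F_\omega(f,\varphi,n,\delta/2)\ge\log P^F_\omega(f,\varphi,n,\delta)-n\,\rho_F\bigl(V_n(\omega);A_n(\omega)\bigr),\qquad \rho_F(s;A):=\sup\{|F(a)-F(b)|\colon |a|,|b|\le A,\ |a-b|\le s\},
\]
so no entropy input is needed. Suppose $|F(t)|\le a+b|t|$, or $\esssup_{\omega}\supnorm{\varphi_\omega}<\infty$ as in \propref{nonpresconti}. Then $\int_{B_n}\rho_F(V_n;A_n)\,\dd\prob\to0$ on the bad sets $B_n$, because $\prob(B_n)\to0$ and $\{A_n\}$ is uniformly integrable. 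In that regime your proof is complete.

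The genuine gap is the remaining case: $F$ of superlinear growth with $\supnorm{\varphi_\omega}$ merely integrable. Your fallback does not close it. \thmref{randompres} interchanges $\limsup_n$ and $\int_\Omega$ using the subadditive cocycle structure of the linear partition functions, via Kingman's theorem. Nothing of this kind holds for the weights $\exp(nF(S_n\varphi_\omega/n))$. So a.e.\ equality of fibrewise limits says nothing about $\lim_{\epsilon\to0}\limsup_n\frac1n\int_\Omega\log(\cdot)\,\probinte$.

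In fact no argument can work there, because the claimed equality fails. Take $X=[0,1]$, $f_\omega=\id$, $F(t)=t^2$, and $\theta$ a two-sided Bernoulli shift whose coordinate $\omega_0=(c,p)$ has $c\ge1$, $c\in L^1\setminus L^2$, and $p$ uniform on $[0,1]$. Let $\varphi_\omega$ be the tent of height $c$ and width $1/c$ centred at $p$.
\begin{itemize}
\item A singleton at the peak gives $\log P^F_\omega(f,\varphi,n,\epsilon)\ge c^2/n$, whose integral is $+\infty$ for all $n$ and $\epsilon$.
\item For $\epsilon=1/m$, choose in each $[k/m,(k+1)/m]$ a minimiser $y_k$ of $S_n\varphi_\omega$. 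This is a spanning set with $S_n\varphi_\omega(y_k)\le m\int_{k/m}^{(k+1)/m}S_n\varphi_\omega\,dx$.
\item That integral is a sum of $n$ i.i.d.\ terms in $[0,\frac12]$ with mean at most $\frac1{2m}$. A Chernoff bound then gives $\limsup_n\frac1n\int_\Omega\log Q^F_\omega(f,\varphi,n,1/m)\,\probinte\le1$.
\end{itemize}
So the proposition, and the paper's proof of it, needs an extra hypothesis such as one of the two above. Under such a hypothesis your argument is a correct and more rigorous version of the paper's.
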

\begin{proof}
    We prove it in the sense of \cite[Theorem 4.3]{ma}. For $\omega\in\Omega$ and $n\geq1$, for every $\epsilon>0$, let $E_n$ be the maximal $(\omega,n,\epsilon)$-separated set of $X$, then it is $(\omega,n,\epsilon)$-spanning set. Therefore, we have $Q_{\omega}^{F}(f,\varphi,n,\epsilon)\leq P_{\omega}^{F}(f,\varphi,n,\epsilon),$ which yields \[\pres_{\Omega}^{F}(f,\varphi)\leq\lim_{\epsilon\to0}\limsup_{n\to\infty}\frac{1}{n}\int_{\Omega}\log P_{\omega}^{F}(f,\varphi,n,\epsilon)\,\probinte.\] Next, we show the opposite inequality. Since $F$ is continuous on $\R$, for every $\epsilon>0$ there exists $\tilde{\delta}>0$ such that $|F(a)-F(b)|<\epsilon$ whenever $|a-b|<\tilde{\delta}$ for $a,b\in\R$.

    For $\omega\in\Omega$ and $n\geq1$, for $\delta>0$, let $E_n$ be $(\omega,n,\delta)$-separated set of $X$, and let $F_n$ be $(\omega,n,\delta/2)$-spanning set of $X$. For $x\in E_n$, define a map $i\colon E_{n}\to F_{n}$ by $i(x)\in F_n$ with $d_{n}^{\omega}(x,i(x))\leq\delta/2$. Then $i$ is injective and we have that
    \begin{equation*}
        \begin{aligned}
            &\sum_{y\in F_n}\exp\left(nF\left(\frac{S_{n}\varphi_{\omega}(x)}{n}\right)\right)\geq\sum_{x\in E_n}\exp\left(nF\left(\frac{S_{n}\varphi_{\omega}(i(x))}{n}\right)\right) \\
            &=\sum_{x\in E_n}\exp\left(nF\left(\frac{S_{n}\varphi_{\omega}(i(x))}{n}\right)-nF\left(\frac{S_{n}\varphi_{\omega}(x)}{n}\right)\right)\exp\left(nF\left(\frac{S_{n}\varphi_{\omega}(x)}{n}\right)\right) \\
            &\geq\inf_{x\in E_n}\exp\left(nF\left(\frac{S_{n}\varphi_{\omega}(i(x))}{n}\right)-nF\left(\frac{S_{n}\varphi_{\omega}(x)}{n}\right)\right)\sum_{x\in E_n}\exp\left(nF\left(\frac{S_{n}\varphi_{\omega}(x)}{n}\right)\right).
        \end{aligned}
    \end{equation*} By $d_{n}^{\omega}(x,i(x))\leq\delta/2$, since the $\omega$-section of  $\varphi$ is continuous on $X$, we can find $0<\tilde{\delta}'<\tilde{\delta}$ such that
    \[|\varphi_{\theta^{k}\omega}(f_{\omega}^{k}(x))-\varphi_{\theta^{k}\omega}(f_{\omega}^{k}(i(x)))|<\tilde{\delta}'\] for all $x\in E_n$ and $k=0,1,\ldots,n-1$. Then we obtain
    \[\bigg|\frac{S_{n}\varphi_{\omega}(x)}{n}-\frac{S_{n}\varphi_{\omega}(i(x))}{n}\bigg|<\tilde{\delta},\] which yields
    \[\bigg|nF\left(\frac{S_{n}\varphi_{\omega}(x)}{n}\right)-nF\left(\frac{S_{n}\varphi_{\omega}(i(x))}{n}\right)\bigg|<n\epsilon.\] Therefore, we have the inequality \[\sum_{y\in F_n}\exp\left(nF\left(\frac{S_{n}\varphi_{\omega}(x)}{n}\right)\right)\geq e^{-n\epsilon}\sum_{x\in E_n}\exp\left(nF\left(\frac{S_{n}\varphi_{\omega}(x)}{n}\right)\right).\] As a result, we have $Q_{\omega}^{F}(f,\varphi,n,\delta/2)\geq e^{-n\epsilon}P_{\omega}^{F}(f,\varphi,n,\delta)$. Since $\epsilon>0$ is arbitrary, letting $\delta\to0$ gives
    \[\pres_{\Omega}^{F}(f,\varphi)\geq\lim_{\delta\to0}\limsup_{n\to\infty}\frac{1}{n}\int_{\Omega}\log P_{\omega}^{F}(f,\varphi,n,\delta)\,\probinte.\]
\end{proof}
\begin{proposition}\label{nonpresconti}
    Given a continuous function $F\colon\R\to\R$, the nonlinear fiber pressure function \[\pres_{\Omega}^{F}(f,\cdot)\colon L^{1}(\Omega,C(X))\to\R\cup\{\infty\},\ \varphi\mapsto\pres_{\Omega}^{F}(f,\varphi)\] is continuous in the supremum norm $\supnorm{\varphi}:=\esssup_{\omega\in\Omega}\sup_{x\in X}|\varphi_{\omega}(x)|$.
\end{proposition}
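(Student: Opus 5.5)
The plan is to compare the two partition functions term by term on a common separated set, using the equivalent description of $\pres_{\Omega}^{F}(f,\cdot)$ via separated sets from the preceding proposition. The only input we need about $F$ is uniform continuity on a compact interval.

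Fix $\varphi$ with $\supnorm{\varphi}<\infty$, and let $\phi\in L^{1}(\Omega,C(X))$ satisfy $\supnorm{\varphi-\phi}<\eta\leq1$. For $\prob$-a.e. $\omega$, every $n\geq1$ and every $x\in X$, we have
\[\Big|\frac{S_{n}\varphi_{\omega}(x)}{n}\Big|\leq\supnorm{\varphi}, \qquad \Big|\frac{S_{n}\varphi_{\omega}(x)}{n}-\frac{S_{n}\phi_{\omega}(x)}{n}\Big|\leq\frac1n\sum_{i=0}^{n-1}\supnorm{\varphi_{\theta^i\omega}-\phi_{\theta^i\omega}}<\eta,\]
since $\theta$ preserves $\prob$ and so the essential bound holds along the orbit for a.e. $\omega$. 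Hence both averages lie in the compact interval $I:=[-\supnorm{\varphi}-1,\supnorm{\varphi}+1]$. Let $\kappa(\eta):=\sup\{|F(a)-F(b)|\colon a,b\in I,\ |a-b|\leq\eta\}$ be the modulus of continuity of $F$ on $I$. Then $\kappa(\eta)\to0$ as $\eta\to0$ by uniform continuity of $F$ on $I$, and
\[\Big|nF\Big(\frac{S_{n}\varphi_{\omega}(x)}{n}\Big)-nF\Big(\frac{S_{n}\phi_{\omega}(x)}{n}\Big)\Big|\leq n\kappa(\eta)\]
for all $x$ and all $n$.

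Next, for any $(\omega,n,\epsilon)$-separated set $E_n$ (the same set serves for both potentials, since separation does not depend on the potential), summing the bound gives
\[e^{-n\kappa(\eta)}\sum_{x\in E_n}e^{nF(S_n\varphi_\omega(x)/n)}\leq\sum_{x\in E_n}e^{nF(S_n\phi_\omega(x)/n)}\leq e^{n\kappa(\eta)}\sum_{x\in E_n}e^{nF(S_n\varphi_\omega(x)/n)}.\]
Taking the supremum over $E_n$ yields $e^{-n\kappa(\eta)}P_{\omega}^{F}(f,\varphi,n,\epsilon)\leq P_{\omega}^{F}(f,\phi,n,\epsilon)\leq e^{n\kappa(\eta)}P_{\omega}^{F}(f,\varphi,n,\epsilon)$ for a.e. $\omega$. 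We then take logarithms, integrate over $\Omega$, divide by $n$, take $\limsup_{n\to\infty}$ and let $\epsilon\to0$. This gives $|\pres_{\Omega}^{F}(f,\phi)-\pres_{\Omega}^{F}(f,\varphi)|\leq\kappa(\eta)$ when the pressures are finite. If they are infinite, the same inequality shows that $\pres_{\Omega}^{F}(f,\phi)=\infty$ exactly when $\pres_{\Omega}^{F}(f,\varphi)=\infty$, which is continuity in $\R\cup\{\infty\}$. Letting $\eta\to0$ finishes the argument.

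The main obstacle is the uniform continuity step. $F$ is only continuous on $\R$, so the estimate needs the averages $S_n\varphi_\omega/n$ to stay in a fixed compact set, uniformly in $\omega$ and $n$. This is exactly where the supremum norm, rather than the $L^1$-norm, is essential. For a potential with $\supnorm{\varphi}=\infty$ the averages may be unbounded, and I would restrict the statement to the subspace where $\supnorm{\varphi}<\infty$, the natural domain on which the sup-norm topology is meaningful. The only other point needing care is that the Birkhoff averages are controlled for $\prob$-a.e. $\omega$ simultaneously for all $n$. This follows from $\theta$-invariance of $\prob$, because a countable union of null sets is null.
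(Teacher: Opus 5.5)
Your argument is the paper's: compare $nF(S_n\varphi_\omega/n)$ and $nF(S_n\phi_\omega/n)$ pointwise on a set that does not depend on the potential, then carry the factor $e^{\pm n\cdot(\text{small})}$ through the logarithm, the integral and the limits. The substantive difference is where the modulus of continuity of $F$ comes from.

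The paper picks $\delta$ with $|F(a)-F(b)|<\epsilon$ for all $a,b\in\R$ with $|a-b|<\delta$. This tacitly assumes $F$ is uniformly continuous on all of $\R$, and mere continuity does not give that (think of $F(t)=t^2$). You localize to $I=[-\supnorm{\varphi}-1,\supnorm{\varphi}+1]$ and control the Birkhoff averages a.e.\ along $\theta$-orbits. That is what actually justifies the step, and it gives continuity with a modulus $\kappa(\eta)$ depending only on $\supnorm{\varphi}$.

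The price is the restriction you flag yourself. As written, the proposition also concerns $\varphi$ with $\supnorm{\varphi}=\infty$. The sup-distance is an extended metric, so continuity at such $\varphi$ is still meaningful, and your proof says nothing there. However, the paper's proof does not cover those points either unless $F$ is uniformly continuous. So this is an honest narrowing of the claim rather than a missing idea on your part.

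A minor simplification: you do not need the separated-set characterization. The same comparison on a common $(\omega,n,\epsilon)$-spanning set gives
\[e^{-n\kappa(\eta)}Q_{\omega}^{F}(f,\varphi,n,\epsilon)\leq Q_{\omega}^{F}(f,\phi,n,\epsilon)\leq e^{n\kappa(\eta)}Q_{\omega}^{F}(f,\varphi,n,\epsilon)\]
directly from the spanning-set definition of $\pres_{\Omega}^{F}$, which is what the paper uses. This spares you from relying on the $P$/$Q$ equivalence. The paper's proof of that equivalence rests on the same kind of uniform choice of $\delta$, both for $F$ and for the moduli of continuity of the $\varphi_{\theta^k\omega}$.
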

\begin{proof}
    Since $F$ is continuous, for every $\epsilon>0$ there exists $\delta>0$ such that $|F(a)-F(b)|<\epsilon$ whenever $|a-b|<\delta$. Let $\psi\in L^{1}(\Omega,C(X))$ be another potential such that $\supnorm{\varphi-\psi}<\delta$. Then
    \[\bigg|\frac{S_{n}\varphi_{\omega}(x)}{n}-\frac{S_{n}\psi_{\omega}(x)}{n}\bigg|<\delta\] for every $n\geq1$ and $x\in X$. We obtain
    \[\bigg|nF\left(\frac{S_{n}\varphi_{\omega}(x)}{n}\right)-nF\left(\frac{S_{n}\psi_{\omega}(x)}{n}\right)\bigg|<n\epsilon.\] Following \defref{nonranpres}, by taking logarithms and integrals we have $|\pres_{\Omega}^{F}(f,\varphi)-\pres_{\Omega}^{F}(f,\psi)|<\epsilon$ whenever $\supnorm{\varphi-\psi}<\delta$.
\end{proof}
\subsection{Variational principle}\label{sec4.2}
We say that the random dynamical system $(X,f)$ with potential $\varphi\in L^{1}(\Omega,C(X))$, or simply the pair $(f,\varphi)$, has an \emph{abundance of ergodic measures} if for each $\mu\in\M_{\prob}(f)$, $h<h_{\mu}^{\ran}(f)$ and each $\epsilon>0$, there exists an ergodic measure $\nu\in\M_{\prob}^{e}(f)$ such that $h<h_{\nu}^{\ran}(f)$ and
\[\bigg|\int_{J}\varphi\,\dd\nu-\int_{J}\varphi\,\dd\mu\bigg|<\epsilon.\] Under this assumption, we aim to establish the variational principle for nonlinear fiber topological pressure, as an analog of \eqref{nonlinearvp} for deterministic system.
\begin{theorem}\label{vp}
    Let $(X,f)$ be a random dynamical system, and let $\varphi\in L^{1}(\Omega,C(X))$. If the pair $(f,\varphi)$ has an abundance of ergodic measures, then
    \[\pres_{\Omega}^{F}(f,\varphi)=\sup\left\{h_{\mu}^{\ran}(f)+F\left(\int_{ J}\varphi\,\dd\mu\right)\colon\mu\in\M_{\prob}(f)\right\}.\]
\end{theorem}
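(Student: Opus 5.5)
We prove the two inequalities separately, following the scheme of Buzzi--Kloeckner--Leplaideur \cite{buzzi}, adapted to fibers.

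\emph{Lower bound.} By the abundance assumption it suffices to treat ergodic measures. Indeed, given $\mu\in\M_{\prob}(f)$ and $\eta>0$, pick $\nu\in\M_{\prob}^{e}(f)$ with $h_{\nu}^{\ran}(f)>h_{\mu}^{\ran}(f)-\eta$ and $|\int_J\varphi\,\dd\nu-\int_J\varphi\,\dd\mu|$ so small that, by continuity of $F$, $|F(\int_J\varphi\,\dd\nu)-F(\int_J\varphi\,\dd\mu)|<\eta$.

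For ergodic $\nu$, set $a:=\int_J\varphi\,\dd\nu$. By Birkhoff's theorem for $\Theta$, for $\prob$-a.e.\ $\omega$ and $\nu_\omega$-a.e.\ $x$ we have $S_n\varphi_\omega(x)/n\to a$. By a random Katok entropy formula for fibers (a Bogensch\"utz/Kifer type Brin--Katok argument), for small $\epsilon$ and a set $K_{\omega,n}=\{x\colon|S_n\varphi_\omega(x)/n-a|<\gamma\}$ with $\nu_\omega(K_{\omega,n})\ge 1/2$ for large $n$, any $(\omega,n,\epsilon)$-spanning set of $K_{\omega,n}$ has cardinality at least $e^{n(h_{\nu}^{\ran}(f)-\eta)}$ for a.e.\ $\omega$ and all large $n$. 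Take a maximal $(\omega,n,\epsilon)$-separated subset $E$ of $K_{\omega,n}$. Every $x\in E$ satisfies $nF(S_n\varphi_\omega(x)/n)\ge n(F(a)-\eta)$ once $\gamma$ is chosen via continuity of $F$. Hence
\[\log P^F_\omega(f,\varphi,n,\epsilon)\ge n(h_{\nu}^{\ran}(f)+F(a)-2\eta).\]
We integrate over $\Omega$ (Fatou, since the lower bound holds eventually a.e.), divide by $n$, and let $\epsilon\to0$. The proposition identifying the $P^F$ and $Q^F$ definitions then gives $\pres^F_\Omega(f,\varphi)\ge h_{\nu}^{\ran}(f)+F(a)-2\eta$.

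\emph{Upper bound.} The plan is to localize the averages and reduce to the linear variational principle \thmref{ranvp}. First reduce to bounded $\varphi$ if needed, using \propref{nonpresconti} together with truncation. Then the averages $S_n\varphi_\omega/n$ lie in a compact interval $I$. Cover $I$ by finitely many intervals $I_1,\dots,I_k$ of length $\gamma$. For a separated set $E$, split $E=\bigcup_j E^{(j)}$ according to which $I_j$ contains $S_n\varphi_\omega(x)/n$. Then
\[\sum_{x\in E}e^{nF(S_n\varphi_\omega(x)/n)}\le k\max_j e^{n(\sup_{I_j}F)}\#E^{(j)}.\]
It therefore suffices to show that, for each fixed $j$,
\[\limsup\frac1n\int_\Omega\log\sup\#E^{(j)}\le\sup\Bigl\{h_{\mu}^{\ran}(f)\colon\mu\in\M_{\prob}(f),\ \int_J\varphi\,\dd\mu\in\overline{I_j}^{\gamma}\Bigr\}.\]
This is a conditional (level-set) entropy bound. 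We prove it by the standard Misiurewicz construction in the random setting, as in the proof of \thmref{ranvp} in \cite{bogen1,kifer1}. Form $\sigma_n=\int_\Omega\frac{1}{\#E^{(j)}_\omega}\sum_{x\in E^{(j)}_\omega}\delta_{(\omega,x)}\,\dd\prob$, average along $\Theta$ to get $\mu_n=\frac1n\sum_{i<n}\Theta^i_*\sigma_n$, and pass to a weak$^*$ limit $\mu\in\M_{\prob}(f)$ using compactness from \cite[Lemma 2.1]{kifer1}. Then $\int_J\varphi\,\dd\mu_n$ equals the average of $S_n\varphi_\omega(x)/n$, which lies in $I_j$, so the limit has $\int_J\varphi\,\dd\mu\in\overline{I_j}$. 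The entropy estimate is the usual one, with partitions of small diameter and boundary of measure zero. Letting $\gamma\to0$ and using continuity of $F$ gives the upper bound $\sup_\mu\{h_{\mu}^{\ran}(f)+F(\int_J\varphi\,\dd\mu)\}$.

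The main obstacle is this upper bound. The maximizing index $j$ depends on $\omega$, and we need $\int\log\max_j$ rather than $\max_j\int\log$, so we cannot simply apply one Misiurewicz construction per $j$. To handle this, we choose $j(\omega,n)$ measurably (there are finitely many choices) and build a single measure $\sigma_n$ concentrated on the selected classes $E^{(j(\omega,n))}_\omega$. Then $\int_J\varphi\,\dd\mu_n$ equals $\int_\Omega c_{j(\omega,n)}\,\dd\prob+O(\gamma)$, where $c_j$ is the center of $I_j$. Passing to the limit yields $\mu$ with $h_{\mu}^{\ran}(f)\ge\limsup\frac1n\int\log\#E^{(j(\omega,n))}$ and $\int_J\varphi\,\dd\mu\approx\lim\int c_{j(\omega,n)}\,\dd\prob$. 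The remaining step is to compare $\int_\Omega F(c_{j(\omega,n)})\,\dd\prob$ with $F(\int_J\varphi\,\dd\mu)$. In general this needs $F$ convex, via Jensen. Here, with $\prob$ ergodic, we instead use the abundance hypothesis: decompose $\mu$ into ergodic components, or use that for ergodic $\prob$ the fiber quantities are a.e.\ constant by \thmref{randompres}, so that $j(\omega,n)$ can be taken essentially independent of $\omega$ along a subsequence. I would try this last reduction first, since it is the crux of the argument.
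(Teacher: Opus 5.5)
\textbf{Lower bound.} This part is sound. It is essentially the paper's \lemref{vplem1}: Birkhoff's theorem plus a fiberwise Brin--Katok estimate for ergodic $\nu$, then abundance to reach all of $\M_{\prob}(f)$. Choosing the separated set inside the level set $K_{\omega,n}$ is a little cleaner than the paper's comparison of Birkhoff sums at nearby points, which needs a modulus of continuity of $\varphi_\omega$ uniform in $\omega$.

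\textbf{Upper bound: the gap.} The upper bound is not proved. Your selection-plus-Misiurewicz construction gives, along a subsequence and up to errors controlled by $\gamma$, an invariant $\mu$ with $\pres_{\Omega}^{F}(f,\varphi)\le h_{\mu}^{\ran}(f)+\lim\int_{\Omega}F(c_{j(\omega,n)})\,\probinte$ and $\int_J\varphi\,\dd\mu=\lim\int_{\Omega}c_{j(\omega,n)}\,\probinte$. Everything then hinges on $\int_{\Omega}F(c_{j(\omega,n)})\,\probinte\le F\bigl(\int_{\Omega}c_{j(\omega,n)}\,\probinte\bigr)+o(1)$, and none of your three suggestions delivers this:
\begin{itemize}
\item Jensen for convex $F$ gives the reverse inequality; you would need concavity.
\item Abundance is a lower-bound tool. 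It approximates a given invariant measure by ergodic ones and says nothing about how the dominant cell $j(\omega,n)$ is distributed over $\omega$; neither do the ergodic components of the limit $\mu$. In any case the upper bound must hold for every continuous $F$ without abundance, as in the deterministic case and as the paper's \thmref{nonlinearconvex} presupposes.
\item \thmref{randompres} only controls the growth rate of the linear partition function. It says nothing about which cell dominates at a given $n$. The cell counts $\#E^{(j)}_{\omega}$ are not submultiplicative, so no Kingman-type argument makes their exponents $\omega$-independent.
\end{itemize}

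\textbf{Comparison with the paper, and a way to close the gap.} The paper's \lemref{vplem2} runs the same level-splitting and Misiurewicz scheme. However, it drops the integral in its chain of inequalities and then works with one index $i$ as though it did not depend on $\omega$, so it does not supply this step either. Your diagnosis is correct.

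What is needed is your per-cell bound in a pointwise a.e.\ form with a deterministic right-hand side, which then survives the $\max_j$ inside the integral by reverse Fatou. A Misiurewicz construction cannot give that, but tilting by deterministic multipliers can. For $S_n\varphi_\omega(x)/n\in I_j$ and any $q_j\in\R$ one has $e^{nF(S_n\varphi_\omega(x)/n)}\le e^{n(\sup_{I_j}F-q_jc_j+|q_j|\gamma)}e^{q_jS_n\varphi_\omega(x)}$, hence
\[
P_{\omega}^{F}(f,\varphi,n,\epsilon)\le k\max_{1\le j\le k}e^{n(\sup_{I_j}F-q_jc_j+|q_j|\gamma)}P_{\omega}(f,q_j\varphi,n,\epsilon).
\]
For finitely many $\omega$-independent $q_j$, \thmref{randompres} gives $\limsup_{n}\frac{1}{n}\log P_{\omega}(f,q_j\varphi,n,\epsilon)\le\pres_{\Omega}(f,q_j\varphi)$ for $\prob$-a.e.\ $\omega$. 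For bounded $\varphi$ the functions $\frac{1}{n}\log P_{\omega}^{F}(f,\varphi,n,\epsilon)$ are uniformly bounded above: a separated set has at most $(\#\U)^n$ points for a cover $\U$ of diameter at most $\epsilon$. So reverse Fatou applies.

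Now optimize each $q_j$ separately and use \thmref{ranvp} with concave duality. The function $t\mapsto\sup\{h_{\mu}^{\ran}(f)\colon\int_J\varphi\,\dd\mu=t\}$ is concave because the entropy map is affine, and $h_{\tp}^{\ran}(f)<\infty$. This bounds the $j$th term by $\sup_{I_j}F+\sup\{h_{\mu}^{\ran}(f)\colon|\int_J\varphi\,\dd\mu-c_j|\le2\gamma\}$, and letting $\gamma\to0$ gives the upper bound.

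\textbf{Boundedness of $\varphi$.} This must be assumed rather than reduced to. \propref{nonpresconti} is continuity in the essential-sup norm, and truncations of an unbounded $\varphi\in L^{1}(\Omega,C(X))$ do not converge in that norm.
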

The proof of this theorem is indebted to \cite[Theorem 3]{barreira} and we divide it into two lemmas.
\begin{lemma}\label{vplem1}
    We have \[\pres_{\Omega}^{F}(f,\varphi)\geq\sup\left\{h_{\mu}^{\ran}(f)+F\left(\int_{ J}\varphi\,\dd\mu\right)\colon\mu\in\M_{\prob}^{e}(f)\right\}\] and \[\pres_{\Omega}^{F}(f,\varphi)\geq\sup\left\{h_{\mu}^{\ran}(f)+F\left(\int_{ J}\varphi\,\dd\mu\right)\colon\mu\in\M_{\prob}(f)\right\}.\]
\end{lemma}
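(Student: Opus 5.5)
The plan is to prove the first inequality by a Katok-type counting argument on fibers, and then obtain the second from the first using the abundance of ergodic measures assumed in \thmref{vp}.

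\textbf{Ergodic measures.} Fix $\mu\in\M_{\prob}^{e}(f)$ with disintegration $\{\mu_\omega\}_{\omega\in\Omega}$, and set $a:=\int_J\varphi\,\dd\mu$. Fix $\eta>0$. By continuity of $F$, choose $\delta>0$ with $|F(b)-F(a)|<\eta$ whenever $|b-a|<\delta$.

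Since $\varphi\in L^1(\Omega,C(X))$, Birkhoff's ergodic theorem for the skew product $\Theta$ gives $\frac1n S_n\varphi_\omega(x)\to a$ for $\mu$-a.e. $(\omega,x)$. Hence for each $\gamma\in(0,1)$ there is $N$ such that
\[
A_N:=\Big\{(\omega,x)\colon \Big|\tfrac{1}{n}S_n\varphi_\omega(x)-a\Big|<\delta\ \text{for all } n\geq N\Big\}
\]
satisfies $\mu(A_N)>1-\gamma^2$. Consequently, for $\omega$ in a set $\Omega_\gamma$ with $\prob(\Omega_\gamma)>1-\gamma$, the fiber $A_{N,\omega}$ has $\mu_\omega(A_{N,\omega})>1-\gamma$.

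For such $\omega$ and $n\geq N$, take a maximal $(\omega,n,\epsilon)$-separated subset $E$ of $A_{N,\omega}$. It is $(\omega,n,\epsilon)$-spanning for $A_{N,\omega}$, so
\[
\#E\geq N_{\mu_\omega}(n,\epsilon,1-\gamma),
\]
the minimal number of Bowen balls $B^\omega_n(\cdot,\epsilon)$ covering a set of $\mu_\omega$-measure at least $1-\gamma$. Every $x\in E$ satisfies $nF(S_n\varphi_\omega(x)/n)\geq n(F(a)-\eta)$, hence
\[
\log P^F_\omega(f,\varphi,n,\epsilon)\geq \log N_{\mu_\omega}(n,\epsilon,1-\gamma)+n(F(a)-\eta).
\]
I will then invoke the random version of Katok's entropy formula (Zhu; Kifer) for ergodic $\mu\in\M_{\prob}^{e}(f)$:
\[
\lim_{\epsilon\to0}\liminf_{n\to\infty}\tfrac1n\log N_{\mu_\omega}(n,\epsilon,1-\gamma)=h^{\ran}_\mu(f)\quad\text{for } \almost.
\]
Integrating over $\Omega_\gamma$ then yields the desired lower bound on that part of $\Omega$.

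\textbf{The complement and the main obstacle.} The hardest step is controlling $\Omega\setminus\Omega_\gamma$, where only the trivial one-point lower bound $\log P^F_\omega\geq nF(S_n\varphi_\omega(x_0)/n)$ is available. This is problematic because $\varphi$ is merely $L^1$, so $S_n\varphi_\omega/n$ need not be uniformly bounded. My first attempt is to choose the point $x_0$ in that lower bound with $(\omega,x_0)\in A_N$ whenever possible. On the remaining set, I would use that $\frac1n\sup_x|S_n\varphi_\omega(x)|$ converges in $L^1(\prob)$ (ergodic theorem applied to $\supnorm{\varphi_\omega}$), together with the fact that $F$ is bounded below on bounded sets, to show that this part contributes at most $O(\gamma)$ after dividing by $n$.

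If this estimate proves awkward, the alternative is to first establish the analogue of \thmref{randompres} for $P^F_\omega$. By Kingman-type subadditivity in $\omega$, this would allow $\frac1n\int_\Omega\log P^F_\omega\,\probinte$ to be replaced by the $\almost$ pointwise $\liminf$, so that only a single good fiber needs to be treated. In either route, letting $n\to\infty$, then $\epsilon\to0$, and finally $\gamma,\eta\to0$ gives
\[
\pres^F_\Omega(f,\varphi)\geq h^{\ran}_\mu(f)+F(a).
\]

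\textbf{All invariant measures.} Now let $\mu\in\M_{\prob}(f)$ be arbitrary, and let $h<h^{\ran}_\mu(f)$ and $\epsilon>0$. By the abundance of ergodic measures assumed in \thmref{vp}, there is $\nu\in\M^{e}_{\prob}(f)$ with $h_\nu^{\ran}(f)>h$ and $\big|\int_J\varphi\,\dd\nu-\int_J\varphi\,\dd\mu\big|<\epsilon$. Applying the first inequality to $\nu$ and using continuity of $F$,
\[
\pres^F_\Omega(f,\varphi)\geq h_\nu^{\ran}(f)+F\Big(\int_J\varphi\,\dd\nu\Big)\geq h+F\Big(\int_J\varphi\,\dd\mu\Big)-o(1)\quad\text{as }\epsilon\to0.
\]
Letting $\epsilon\to0$, then $h\uparrow h^{\ran}_\mu(f)$, and taking the supremum over $\mu\in\M_{\prob}(f)$ gives the second inequality.
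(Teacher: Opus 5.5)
\textbf{The gap.} The step you single out as the main obstacle is a genuine gap, and neither of your routes closes it.

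In route one, $L^1$-convergence of $g_n(\omega):=\frac1n\sum_{k=0}^{n-1}\supnorm{\varphi_{\theta^k\omega}}$ only makes the \emph{arguments} $\frac1nS_n\varphi_\omega(x_0)$ uniformly integrable. Since $F$ is an arbitrary continuous function, $F$ of these arguments need not be integrable over the bad fibres. So ``$F$ is bounded below on bounded sets'' gives no $O(\gamma)$ bound.

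Route two fails because, for nonlinear $F$, the weight $\exp(nF(S_n\varphi_\omega/n))$ does not factor along concatenated orbit segments. Hence $\log P^F_\omega(f,\varphi,n,\epsilon)$ has no subadditive structure, and Kingman's theorem is unavailable.

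In fact no argument can work at this generality. Let $\theta$ be an irrational circle rotation, $X=\{p\}$, $f_\omega=\id$, $\varphi_\omega(p)=c(\omega)$ with $0\le c\in L^1(\prob)\setminus L^2(\prob)$, and $F(t)=-t^2$.
\begin{itemize}
\item Then $\M_\prob(f)=\{\prob\times\delta_p\}$, this measure is ergodic with zero entropy, and abundance holds trivially. So the right-hand side is $-\bigl(\int_\Omega c\,\dd\prob\bigr)^2>-\infty$.
\item On the other hand, $\frac1n\log Q^F_\omega(f,\varphi,n,\epsilon)=-\bigl(\frac1nS_nc(\omega)\bigr)^2\le -c(\omega)^2/n^2$. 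Hence $\frac1n\int_\Omega\log Q^F_\omega(f,\varphi,n,\epsilon)\,\probinte=-\infty$ for every $n$, and $\pres^F_\Omega(f,\varphi)=-\infty$.
\item Since $\frac1n\log Q^F_\omega\to F\bigl(\int_\Omega c\,\dd\prob\bigr)$ for $\prob$-a.e.\ $\omega$, this also shows that the limit--integral interchange of route two is false here.
\end{itemize}

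So the lemma (and \thmref{vp}) needs an extra hypothesis. If $M:=\esssup_{\omega\in\Omega}\supnorm{\varphi_\omega}<\infty$ (the norm already used in \propref{nonpresconti}), then $\frac1nS_n\varphi_\omega\in[-M,M]$. The fibres outside $\Omega_\gamma$ then contribute at least $-\gamma\max_{[-M,M]}|F|$, and your route one closes. If instead $F(t)\ge -C(1+|t|)$, your uniform-integrability idea works.

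\textbf{Comparison with the paper.} On the good fibres your argument differs from the paper's. The paper bounds $Q^F_\omega$ directly:
\begin{itemize}
\item From an arbitrary spanning set it keeps the centres whose Bowen balls meet the good set $A_\omega$.
\item It transfers Birkhoff averages from a good point to a nearby centre, and counts with the Brin--Katok formula.
\item It simply asserts $\mu_\omega(A_\omega)>1-r$ for $\prob$-a.e.\ $\omega$. This does not follow from $\mu(A)>1-r$, so the bad-fibre problem you isolated is unaddressed in the paper as well.
\end{itemize}
Your Chebyshev split is the correct repair. Separated subsets of $A_{N,\omega}$ plus the random Katok formula then give a clean count; use $N_{\mu_\omega}(n,2\epsilon,1-\gamma)$, since maximal separated sets span only with closed $\epsilon$-balls.

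The price is that you bound $P^F_\omega$, whereas $\pres^F_\Omega$ is defined through $Q^F_\omega$. You therefore implicitly use the nontrivial direction of the separated-set characterization in \secref{sec4.1}. That is where the comparison of Birkhoff sums at $d^\omega_n$-close points comes back in, and with it the need to control the oscillation of $\varphi_{\theta^k\omega}$ along the base orbit.
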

\begin{proof}
    Let $\mu$ be an ergodic $\Theta$-invariant measure on $J$. Given $r>0$, there exist two numbers $\delta>0$ and $\epsilon>0$ such that $|\varphi_{\omega}(x)-\varphi_{\omega}(y)|<\delta/2$ whenever $d(x,y)<\epsilon$ and $|F(a)-F(b)|<r$ whenever $|a-b|<\delta$. Recall that $(J,\F\otimes\B,\mu,\Theta)$ is a measure-preserving dynamical system, by Birkhoff's ergodic theorem and the Brin--Katok local entropy formula in \cite[Theorem 2.1]{zhu2}, it inplies that there exists a subset $A\subset J$ with $\mu(A)>1-r$ and $N\geq1$ such that
    \[\bigg|\frac{1}{n}S_{n}\varphi_{\omega}(x)-\int_{J}\varphi\,\dd\mu\bigg|<\frac{\delta}{2}\] and \[\bigg|\frac{1}{n}\log\mu_{\omega}(B_{n}(x,2\epsilon))+h_{\mu}^{\ran}(f)\bigg|<r\] hold for $(\omega,x)\in A$ and $n\geq N$. In particular, $\mu_{\omega}(A_\omega)>1-r$ for $\almost$ where $A_{\omega}:=\{x\in X\colon(\omega,x)\in A\}$.

    Let $C$ be an arbitrary $(\omega,n,\epsilon)$-spanning set and let $D\subset C$ be a minimal $(\omega,n,\epsilon)$-spanning set of $A_\omega$. For each $x\in D$, the Bowen's ball $B_{n}^{\omega}(x,\epsilon)$ intersects with $A_\omega$ at some point $y$, and therefore $d_{n}^{\omega}(x,y)<\epsilon$. Hence, \[\bigg|\frac{1}{n}S_{n}\varphi_{\omega}(x)-\int_{J}\varphi\,\dd\mu\bigg|\leq\frac{1}{n}|S_{n}\varphi_{\omega}(x)-S_{n}\varphi_{\omega}(y)|+\bigg|\frac{1}{n}S_{n}\varphi_{\omega}(y)-\int_{J}\varphi\,\dd\mu\bigg|<\frac{\delta}{2}+\frac{\delta}{2}=\delta.\] It follows that \[\bigg|F\left(\frac{S_{n}\varphi_{\omega}(x)}{n}\right)-F\left(\int_{J}\varphi\,\dd\mu\right)\bigg|<r.\] Since $B_{n}^{\omega}(x,\epsilon)\subset B_{n}^{\omega}(y,2\epsilon)$, we have
    \[1-r<\mu_{\omega}(A_\omega)\leq\# D\max_{x\in D}\mu_{\omega}(B_{n}^{\omega}(x,\epsilon))\leq\# D\exp(-n(h_{\mu}^{\ran}(f)-r)),\] therefore,
    \begin{equation*}
        \begin{aligned}
            \sum_{x\in C}\exp\left(nF\left(\frac{S_{n}\varphi_{\omega}(x)}{n}\right)\right)&\geq\# D\exp(nF\left(\int_{J}\varphi\,\dd\mu\right)-r) \\
            &\geq(1-r)e^{n(h_{\mu}^{\ran}(f)-r)}\exp\left(nF\left(\int_{J}\varphi\,\dd\mu\right)-r\right)
        \end{aligned}
    \end{equation*} for sufficiently large $n\geq1$. We have \[\log Q_{\omega}^{F}(f,\varphi,n,\epsilon)\geq\log(1-r)+n(h_{\mu}^{\ran}(f)-r)+nF\left(\int_{J}\varphi\,\dd\mu\right)-r,\] by the definition of nonlinear fiber pressure via spanning sets and arbitrariness of $r>0$, we obtain \[\pres_{\Omega}^{F}(f,\varphi)\geq h_{\mu}^{\ran}(f)+F\left(\int_{J}\varphi\,\dd\mu\right)\] for each ergodic $\mu\in\M_{\prob}^{e}(f)$. Consider an invariant measure $\mu\in\M_{\prob}(f)$, suppose the pair $(f,\varphi)$ has an abundance of ergodic measures, then for each $h<h_{\mu}^{\ran}(f)$ and $\epsilon>0$, there exists an ergodic measure $\nu\in\M_{\prob}(f)$ such that
    \[\bigg|F\left(\int_{J}\varphi\,\dd\nu\right)-F\left(\int_{J}\varphi\,\dd\mu\right)\bigg|<\epsilon\] and $h<h_{\nu}^{\ran}(f)$. We obtain \[\pres_{\Omega}^{F}(f,\varphi)\geq h_{\nu}^{\ran}(f)+F\left(\int_{J}\varphi\,\dd\nu\right)>h+F\left(\int_{J}\varphi\,\dd\mu\right)-\epsilon,\] it follows that \[\pres_{\Omega}^{F}(f,\varphi)\geq h_{\mu}^{\ran}(f)+F\left(\int_{J}\varphi\,\dd\mu\right).\] Therefore, we have
    \[\pres_{\Omega}^{F}(f,\varphi)\geq\sup\left\{h_{\mu}^{\ran}(f)+F\left(\int_{ J}\varphi\,\dd\mu\right)\colon\mu\in\M_{\prob}(f)\right\}.\]
\end{proof}
\begin{lemma}\label{vplem2}
    We have \[\pres_{\Omega}^{F}(f,\varphi)\leq\sup\left\{h_{\mu}^{\ran}(f)+F\left(\int_{ J}\varphi\,\dd\mu\right)\colon\mu\in\M_{\prob}(f)\right\}.\]
\end{lemma}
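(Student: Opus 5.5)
The plan is to adapt the upper-bound argument of \cite[Theorem 3]{barreira} (itself modelled on Misiurewicz's proof of the variational principle) to the fibered setting, and to reduce at the end to the linear variational principle \thmref{ranvp}. We may assume $\varphi$ is essentially bounded, say $\supnorm{\varphi}<\infty$. The general $L^1$ case then follows by truncation together with the continuity in \propref{nonpresconti}, after checking that both sides behave well under approximation.

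The first step is to discretize the range of $F$. Since $\frac1n S_n\varphi_\omega(x)\in[-\supnorm{\varphi},\supnorm{\varphi}]=:I$ and $F$ is uniformly continuous on $I$, fix $\gamma>0$ and cover $I$ by finitely many intervals $I_1,\dots,I_k$ of length $<\gamma$ with $|F(a)-F(b)|<\gamma$ on each $I_j$. For a maximal $(\omega,n,\epsilon)$-separated set $E_n$ (nearly optimal for $P^F_\omega$), split it as $E_n=\bigcup_j E_n^j$, where $E_n^j=\{x\colon \frac1nS_n\varphi_\omega(x)\in I_j\}$. Then
\[\sum_{x\in E_n}e^{nF(S_n\varphi_\omega(x)/n)}\le k\max_j \# E_n^j\, e^{n(F(a_j)+\gamma)}.\]
Pick $j=j(\omega,n)$ attaining the maximum; this choice can be made measurable in $\omega$, as in \lemref{meas}.

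The second step is the Misiurewicz construction applied to the selected piece. Form the fiber measures $\sigma_{\omega}^{(n)}=\frac{1}{\#E_n^{j}}\sum_{x\in E_n^j}\delta_x$ and average them along the orbit, setting $\mu^{(n)}=\frac1n\sum_{i=0}^{n-1}\Theta^i_*(\sigma^{(n)}_\omega\,\dd\prob(\omega))$. Take a weak$^*$ limit $\mu\in\M_\prob(f)$ along a subsequence realizing the $\limsup$; compactness here is given by \cite[Lemma 2.1]{kifer1}. The standard fibered partition argument, as in Bogensch\"utz/Kifer's proof of \thmref{ranvp}, gives
\[h_\mu^{\ran}(f)\ge\limsup\frac1n\int_\Omega\log\#E_n^{j(\omega,n)}\,\probinte.\]
The averaging also gives
\[\int\varphi\,\dd\mu=\lim\int_\Omega\frac1{\#E_n^j}\sum_{x\in E_n^j}\frac1nS_n\varphi_\omega(x)\,\probinte.\]
This quantity lies in the convex hull of the values of the $I_j$ averaged over $\omega$.

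The main obstacle is that $j(\omega,n)$ depends on $\omega$, so $\int\varphi\,\dd\mu$ is only an average of values in different intervals. In that case $F(\int\varphi\,\dd\mu)$ need not control $\int F(a_{j(\omega,n)})\,\dd\prob$, because $F$ is not convex. To handle this, I would first use ergodicity of $\theta$ together with \thmref{randompres} to replace the integrated quantities by fiberwise ones. I would then refine the selection so that a single index $j$ is used on a set of $\omega$ of large measure. The device is to pick $j$ maximizing $\int_\Omega\log\#E_n^j\,\probinte+nF(a_j)$, which loses only $\log k$. This works because
\[\int\log\max_j(\cdot)\le\int\log\sum_j(\cdot)\le\log k+\max_j\int\log(\cdot)\]
fails in general, so I would instead restrict to $\omega$ in a set where the pointwise maximizer agrees with a fixed $j$. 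By ergodicity this set has positive measure, and the exponential growth rates are a.s. constant. With a single $j$ fixed, every point used has $\frac1nS_n\varphi_\omega\in I_j$. The limit then satisfies $\int\varphi\,\dd\mu\in\overline{I_j}$, since $I_j$ is convex, and so $F(\int\varphi\,\dd\mu)\ge F(a_j)-\gamma$. Combining the two displays gives
\[\pres^F_\Omega(f,\varphi)\le h_\mu^{\ran}(f)+F\Bigl(\int\varphi\,\dd\mu\Bigr)+2\gamma+\lim\tfrac{\log k}{n},\]
and letting $\gamma\to0$ yields the claimed inequality.
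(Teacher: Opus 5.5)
Your outline follows the paper's route: slice the separated set by the value of $\frac1nS_n\varphi_\omega$, keep a dominant slice, run the Misiurewicz averaging, and use convexity of the slice to locate $\int_J\varphi\,\dd\mu$. You also correctly isolate where the random setting departs from \cite{barreira}: the dominant index $j(\omega,n)$ depends on $\omega$. The paper's own proof does not resolve this either. It passes from an integrated inequality to ``for some $i$'' pointwise in $\omega$, then builds $\mu^{i}$ as though $i$ were fixed, and its balls $B(x_i,r_i)$ themselves depend on $\omega$. Your proposed repair, however, does not close the gap.

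The final combination needs, along a common subsequence $n_k$ and for one fixed $j$,
\[p-r\le\frac{1}{n_k}\int_\Omega\log\#E_{n_k}^{j}(\omega)\,\probinte+F(a_j)+\gamma+\frac{\log k}{n_k}.\]
The slicing only gives this with $\int_\Omega\max_j$ in place of $\max_j\int_\Omega$. As you note yourself, exchanging the two can cost an amount of order $n$ in general, for instance when two slices dominate on complementary halves of $\Omega$.

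Restricting to $A_n^j=\{\omega\colon j(\omega,n)=j\}$ does not help. Its positive measure comes from pigeonhole, not ergodicity, since $A_n^j$ is not $\theta$-invariant. Positive measure is also not enough. An element of $\M_\prob(f)$ needs fiber measures over $\prob$-a.e.\ $\omega$, and
\[\int_J\varphi\,\dd\mu^{(n)}=\int_\Omega\int_X\tfrac1nS_n\varphi_\omega\,\dd\sigma^{(n)}_\omega\,\probinte\]
averages over all of $\Omega$. On $\Omega\setminus A_n^j$ you must use either slice $j$ or another slice. Slice $j$ may be empty or exponentially smaller there, which loses the entropy bound. Another slice loses $\int_J\varphi\,\dd\mu\in\overline{I_j}$.

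Your claim that the slice growth rates are a.s.\ constant is unproved. \thmref{randompres} concerns only the linear pressure. The counts $\#E_n^j(\omega)$ depend on arbitrary choices and have no evident sub- or supermultiplicativity along $\theta$-orbits that would let you invoke Kingman's theorem. Even an a.s.\ constant $\limsup_n$, attained along $\omega$-dependent subsequences, would not give the integrated lower bound above, because reverse Fatou goes the wrong way. Nor would it give the common subsequence that the Misiurewicz construction requires.

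What is missing is a genuine concentration statement for the maximal slice counts. It must be strong enough that $\frac1n\int_\Omega\max_j\bigl(\log N_j(\omega,n)+nF(a_j)\bigr)\,\probinte$ and $\max_j\frac1n\int_\Omega\bigl(\log N_j(\omega,n)+nF(a_j)\bigr)\,\probinte$ differ by $o(1)$.

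Separately, your reduction to bounded $\varphi$ is not supported by \propref{nonpresconti}. That proposition gives continuity only in the $\esssup$-norm, while truncations of an unbounded $\varphi\in L^1(\Omega,C(X))$ converge only in $\onenorm{\cdot}$.
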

\begin{proof}
    Given a number $p<\pres_{\Omega}^{F}(f,\varphi)$, let $\epsilon>0$ satisfy \[\limsup_{n\to\infty}\frac{1}{n}\int_{\Omega}\log\inf_{F_n}\sum_{x\in F_n}\exp\left(nF\left(\frac{S_{n}\varphi_{\omega}(x)}{n}\right)\right)\,\probinte>p\] with the infimum taken over all $(\omega,n,\epsilon)$-spanning sets $F_n$. Since every maximal $(\omega,n,\epsilon)$-separated set is an $(\omega,n,\epsilon)$-spanning set, we have \[\limsup_{n\to\infty}\frac{1}{n}\int_{\Omega}\log\sum_{x\in E_n}\exp\left(nF\left(\frac{S_{n}\varphi_{\omega}(x)}{n}\right)\right)\,\probinte>p\] for a maximal $(\omega,n,\epsilon)$-separated set $E_n$, and given $r>0$ there exists an increasing subsequence $\{n_k\}_{k\geq1}$ with $\lim_{k\to\infty}n_k=\infty$ such that \[\int_{\Omega}\log\sum_{x\in E_{n_k}}\exp\left(n_{k}F\left(\frac{S_{n_k}\varphi_{\omega}(x)}{n_k}\right)\right)\,\probinte\geq n_{k}(p-r)\] for all $k\geq1$. For $\omega\in\Omega$, we cover the compact set $\varphi_{\omega}(X)$ by open balls $B(x_{i},r_{i})$ such that $|F(x)-F(x_i)|<r$ for all $x\in B(x_i,r_i)$ and for $i=1,\ldots,l$. Let \[E_{k,\omega}^{i}:=\left\{x\in E_{n_k}\colon\frac{S_{n_k}\varphi_{\omega}(x)}{n_k}\in B(x_i,r_i).\right\}\] Since
    \begin{equation*}
        \begin{aligned}
            \sum_{x\in E_{n_k}}\exp\left(n_{k}F\left(\frac{S_{n_k}\varphi_{\omega}(x)}{n_k}\right)\right)&\leq\sum_{i=1}^{l}\sum_{x\in E_{k,\omega}^{i}}\exp\left(n_{k}F\left(\frac{S_{n_k}\varphi_{\omega}(x)}{n_k}\right)\right) \\
            &\leq l\max_{1\leq i\leq l}\sum_{x\in E_{k,\omega}^{i}}\exp\left(n_{k}F\left(\frac{S_{n_k}\varphi_{\omega}(x)}{n_k}\right)\right),
        \end{aligned}
    \end{equation*} it follows that
    \begin{equation*}
        \begin{aligned}
            n_{k}(p-r)&\leq\int_{\Omega}\log\sum_{x\in E_{n_k}}\exp\left(n_{k}F\left(\frac{S_{n_k}\varphi_{\omega}(x)}{n_k}\right)\right)\,\probinte \\
            &\leq \int_{\Omega}\log l\max_{1\leq i\leq l}\sum_{x\in E_{k,\omega}^{i}}\exp\left(n_{k}F\left(\frac{S_{n_k}\varphi_{\omega}(x)}{n_k}\right)\right)\,\probinte \\
            &\leq \log l+\int_{\Omega}\log\max_{1\leq i\leq l}\# E_{k,\omega}^{i}\exp(n_{k}(F(x_i)+r))\,\probinte \\
            &\leq \log l+\log\# E_{k,\omega}^{i}+n_{k}F\left(\frac{S_{n_k}\varphi_{\omega}(x)}{n_k}\right)
        \end{aligned}
    \end{equation*} which implies 
    \[e^{n_{k}(p-r)}\leq l\# E_{k,\omega}^{i}\exp(n_{k}(F(x_i)+r))\] for some $i\in\{1,\ldots,l\}$. Hence, it implies $\# E_{k,\omega}^{i}\geq\exp(n_{k}(p-F(x_i)-3r))$ for sufficiently large $k\geq1$.

    For each $\omega\in\Omega$, consider the empirical measure
    \[\nu_{k,\omega}^{i}:=\frac{1}{\# E_{k,\omega}^{i}}\sum_{x\in E_{k,\omega}^{i}}\delta_x,\] then the map $\nu_{k}^{i}\colon\Omega\times\B\to[0,1]$, $(\omega,B)\mapsto\nu_{k,\omega}^{i}(B)$ is a probability kernel, which is denoted by \[\nu_{k}^{i}(A):=\int_{\Omega}\nu_{k,\omega}^{i}(A_\omega)\,\probinte\] for every $A\in\F\otimes\B$. Then $\nu_{k}^{i}\in\M(J)$ and its disintegration is $\{\nu_{k,\omega}^{i}\}_{\omega\in\Omega}$. Suppose the measure \[\mu_{k}^{i}:=\frac{1}{n_k}\sum_{j=0}^{n_{k}-1}\nu_{k}^{i}\circ\Theta^{-j}\] converges to a $\Theta$-invariant measure $\mu^i$ in the weak$^*$ topology satisfying \[h_{\mu^i}^{\ran}(f)\geq\limsup_{k\to\infty}\frac{1}{n_k}\int_{\Omega}\log\# E_{k,\omega}^{i}\,\probinte\] as $k\to\infty$. We then have \[\int_{J}\varphi\,\dd\mu^{i}=\lim_{k\to\infty}\int_{J}\varphi\,\dd\mu_{k}^{i}=\lim_{k\to\infty}\int_{J}\frac{S_{n_k}\varphi}{n_k}\,\dd\nu_{k}^{i}\in\overline{B}(x_i,r_i).\] Therefore, we obtain
    \[h_{\mu^i}^{\ran}(f)+F\left(\int_{J}\varphi\,\dd\mu^i\right)\geq p-F(x_i)-3r+F(x_i)-r=p-4r.\] Since $r>0$ and $p<\pres_{\Omega}^{F}(f,\varphi)$ are arbitrary, we have \[h_{\mu^i}^{\ran}(f)+F\left(\int_{J}\varphi\,\dd\mu^i\right)\geq\pres_{\Omega}^{F}(f,\varphi)\] for every $\mu\in\M_{\prob}(f)$, hence the opposite inequality follows.
\end{proof}
\begin{proof}[Proof of \thmref{vp}]
    Combining \lemref{vplem1} and \lemref{vplem2} together gives the final variational principle.
\end{proof}
Next we give an alternative condition on the continuous function $F$ such that \thmref{vp} still holds even though we drop the assumption of abundance of ergodic measures. However, the variational principle may fail if we drop both two assumptions; see \cite[Remark]{barreira} and \cite[Remark 2.1]{buzzi}.
\begin{theorem}\label{nonlinearconvex}
    Let $F\colon\R\to\R$ be a convex continuous function, then the variational principle holds.
\end{theorem}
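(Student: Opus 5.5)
The variational principle splits into the two inequalities of \thmref{vp}. I would keep the upper bound as it is and prove the lower bound by a different route that uses convexity of $F$ in place of abundance of ergodic measures. \lemref{vplem2} never uses abundance: it builds, from large weighted separated sets, an invariant measure $\mu^i$ with $h_{\mu^i}^{\ran}(f)+F(\int_J\varphi\,\dd\mu^i)\ge p-4r$. So the upper bound
\[\pres_{\Omega}^{F}(f,\varphi)\le\sup\Bigl\{h_{\mu}^{\ran}(f)+F\Bigl(\int_J\varphi\,\dd\mu\Bigr)\colon\mu\in\M_{\prob}(f)\Bigr\}\]
holds for any continuous $F$. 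The first half of \lemref{vplem1}, the bound over ergodic measures, also needs no abundance. What remains is the lower bound over all of $\M_{\prob}(f)$, and convexity will supply it.

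For the lower bound, write the convex continuous $F$ as the supremum of its affine minorants, $F(a)=\sup_{t\in\R}(ta-F^{*}(t))$, where $F^*$ is the Legendre transform. Fix $\mu\in\M_{\prob}(f)$ and $r>0$. Choose $t$ with $t\int_J\varphi\,\dd\mu-F^*(t)>F(\int_J\varphi\,\dd\mu)-r$. Then for every $n,\omega,x$,
\[nF\Bigl(\frac{S_n\varphi_\omega(x)}{n}\Bigr)\ge tS_n\varphi_\omega(x)-nF^*(t).\]
Summing over spanning sets gives $Q_\omega^F(f,\varphi,n,\epsilon)\ge e^{-nF^*(t)}Q_\omega(f,t\varphi,n,\epsilon)$. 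Taking logarithms, integrating, dividing by $n$ and letting $n\to\infty$ and then $\epsilon\to0$, we get
\[\pres_{\Omega}^{F}(f,\varphi)\ge\pres_{\Omega}(f,t\varphi)-F^*(t).\]
This is where convexity enters. Now apply the linear variational principle, \thmref{ranvp}, to $t\varphi$:
\[\pres_\Omega(f,t\varphi)\ge h_\mu^{\ran}(f)+t\int_J\varphi\,\dd\mu.\]
Combining the two displays gives
\[\pres_\Omega^F(f,\varphi)\ge h_\mu^{\ran}(f)+F\Bigl(\int_J\varphi\,\dd\mu\Bigr)-r.\]
Since $r>0$ and $\mu$ are arbitrary, this is the lower bound.

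The main obstacle is the measure-theoretic justification inside \lemref{vplem2}. There one needs the pushed-forward empirical kernels $\mu_k^i$ to have a weak$^*$ limit point in $\M_{\prob}(f)$, which follows from the relative compactness in \cite[Lemma 2.1]{kifer1}. One also needs the entropy inequality
\[h_{\mu^i}^{\ran}(f)\ge\limsup_{k\to\infty}\frac1{n_k}\int_\Omega\log\#E_{k,\omega}^i\,\probinte,\]
which is the random Misiurewicz argument of \cite{kifer1,bogen1}. A further subtlety is that the index $i$ is chosen depending on $\omega$. I would handle this by passing to a measurable selection $i(\omega)$, or by using a single finite cover of a compact interval containing all the averages $S_n\varphi_\omega/n$. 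The latter works when $\varphi$ is essentially bounded; the general $L^1$ case would be approximated using the continuity in \propref{nonpresconti}. Since this step is shared with \thmref{vp}, I would cite it rather than redo it. A minor point is the case $\pres_\Omega(f,t\varphi)=\infty$, which makes both sides infinite and so is consistent.
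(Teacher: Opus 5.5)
Your argument is correct, and for the lower bound it takes a genuinely different route from the paper.

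\textbf{The paper's route.} The paper starts from the ergodic case of \lemref{vplem1}, which is the Birkhoff/Brin--Katok argument. It then reaches an arbitrary $\mu\in\M_{\prob}(f)$ through three further ingredients:
\begin{enumerate}
\item Jacobs' ergodic decomposition $\mu=\int\nu\,\dd\tau(\nu)$;
\item Jensen's inequality $F(\int_J\varphi\,\dd\mu)\le\int F(\int_J\varphi\,\dd\nu)\,\dd\tau(\nu)$;
\item the barycentric formula $h_\mu^{\ran}(f)=\int h_\nu^{\ran}(f)\,\dd\tau(\nu)$.
\end{enumerate}
The paper justifies the barycentric formula by affinity together with the standing assumption that the entropy map is upper semi-continuous. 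The decomposition also implicitly requires the ergodic components to lie in $\M_{\prob}(f)$.

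\textbf{Your route.} You take a supporting line of $F$ at $a_0=\int_J\varphi\,\dd\mu$. A subgradient $t$ exists because $F$ is finite and convex on $\R$, so you can even take $r=0$. This gives the termwise bound $Q_\omega^F(f,\varphi,n,\epsilon)\ge e^{-nF^*(t)}Q_\omega(f,t\varphi,n,\epsilon)$. It reduces the claim to the linear variational principle \thmref{ranvp} applied to $t\varphi$, and that theorem already covers non-ergodic $\mu$.

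\textbf{Comparison.} Your argument is shorter. It needs neither \lemref{vplem1}, nor an ergodic decomposition in the random setting, nor upper semi-continuity of the entropy map. It also carries over verbatim to convex $F\colon\R^d\to\R$, using $t\in\R^d$ and the potential $\langle t,\Phi\rangle$.

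What the paper's route buys is that it bounds $h_\mu^{\ran}(f)+F(\int_J\varphi\,\dd\mu)$ by an average over ergodic measures. That is exactly what the subsequent corollary needs to restrict the supremum to $\M^{e}_{\prob}(f)$ in the convex case. With your argument, that corollary would still require the decomposition-plus-Jensen step.

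\textbf{Upper bound.} Both you and the paper simply invoke \lemref{vplem2}. Your suggested fallback for unbounded $\varphi$ would not work as stated. \propref{nonpresconti} gives continuity in the essential-supremum norm, and essentially bounded functions are not dense in $L^{1}(\Omega,C(X))$ for that norm. Since you ultimately cite the lemma rather than use this fallback, this does not affect your proof.
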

\begin{proof}
    In view of \lemref{vplem1} we have \[\pres_{\Omega}^{F}(f,\varphi)\geq h_{\mu}^{\ran}(f)+F\left(\int_{J}\varphi\,\dd\mu\right)\] for $\mu\in\M_{\prob}^{e}(f)$. Now suppose $\mu\in\M_{\prob}(f)$, we apply Jacobs' theorem \cite[Theorem 8.4]{walters} to show that there is an ergodic decomposition with respect to $f$. That is, for every bounded measurable function $\phi\colon J\to\R$ we have \[\int_{J}\phi\,\dd\mu=\int_{\M_{\prob}^{e}(f)}\left(\int_{J}\phi\,\dd\nu\right)\dd\tau(\nu).\] Since $F$ is convex, by Jensen's inequality, we obtain the inequality
    \[F\left(\int_{J}\varphi\,\dd\mu\right)=F\left(\int_{\M_{\prob}^{e}(f)}\left(\int_{J}\varphi\,\dd\nu\right)\dd\tau(\nu)\right)\leq\int_{\M_{\prob}^{e}(f)}F\left(\int_{J}\varphi\,\dd\nu\right)\tau(\nu).\] The entropy map $\mu\mapsto h_{\mu}^{\ran}(f)$ is affine and upper semi-continuous, then we also have \[h_{\mu}^{\ran}(f)=\int_{\M_{\prob}^{e}(f)}h_{\nu}^{\ran}(f)\,\dd\tau(\nu).\] We obtain the first inequality
    \[h_{\mu}^{\ran}(f)+F\left(\int_{J}\varphi\,\dd\mu\right)\leq\int_{\M_{\prob}^{e}(f)}\left(h_{\nu}^{\ran}(f)+F\left(\int_{J}\varphi\,\dd\nu\right)\right)\dd\tau(\nu)\leq\pres_{\Omega}^{F}(f,\varphi).\]  Notice that we do not require the assumption of abundance of ergodic measures during the proof of \lemref{vplem2}, hence the final result immediately follows from the opposite inequality.
\end{proof}
As a consequence of the variational principle, we can replace $\M_{\prob}(f)$ with $\M_{\prob}^{e}(f)$.
\begin{corollary}
    Let $F\colon\R\to\R$ be a continuous function, if either the pair $(f,\varphi)$ has an abundance of ergodic measures, or the function $F$ is convex, we have
    \[\pres_{\Omega}^{F}(f,\varphi)=\sup\left\{h_{\mu}^{\ran}(f)+F\left(\int_{J}\varphi\,\dd\mu\right)\colon\mu\in\M_{\prob}^{e}(f)\right\}.\]
\end{corollary}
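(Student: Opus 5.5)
The identity follows by sandwiching, using \thmref{vp} and \thmref{nonlinearconvex}. Write
\[
S_{\M}:=\sup\Big\{h_{\mu}^{\ran}(f)+F\Big(\int_J\varphi\,\dd\mu\Big)\colon\mu\in\M_{\prob}(f)\Big\},\qquad
S_{e}:=\sup\Big\{h_{\mu}^{\ran}(f)+F\Big(\int_J\varphi\,\dd\mu\Big)\colon\mu\in\M_{\prob}^{e}(f)\Big\}.
\]
Since $\M_{\prob}^{e}(f)\subset\M_{\prob}(f)$, we have $S_e\leq S_{\M}$. Under either hypothesis, \thmref{vp} or \thmref{nonlinearconvex} gives $\pres_{\Omega}^{F}(f,\varphi)=S_{\M}$. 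It therefore suffices to prove $S_{\M}\leq S_e$.

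\emph{Abundance of ergodic measures.} Fix $\mu\in\M_{\prob}(f)$, $h<h_{\mu}^{\ran}(f)$ and $r>0$. By continuity of $F$ at $\int_J\varphi\,\dd\mu$, choose $\delta>0$ with $|F(a)-F(\int_J\varphi\,\dd\mu)|<r$ whenever $|a-\int_J\varphi\,\dd\mu|<\delta$. Abundance then supplies an ergodic $\nu$ with $h_{\nu}^{\ran}(f)>h$ and $|\int_J\varphi\,\dd\nu-\int_J\varphi\,\dd\mu|<\delta$. Hence
\[
S_e\geq h_{\nu}^{\ran}(f)+F\Big(\int_J\varphi\,\dd\nu\Big)>h+F\Big(\int_J\varphi\,\dd\mu\Big)-r .
\]
Letting $h\uparrow h_{\mu}^{\ran}(f)$ and $r\to0$, and then taking the supremum over $\mu$, gives $S_{\M}\leq S_e$. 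This is exactly the second half of the argument in \lemref{vplem1}. If $h_{\mu}^{\ran}(f)=0$ there is no admissible $h\geq0$, but any $h<0$ works, so the case causes no trouble.

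\emph{$F$ convex.} Use the ergodic decomposition $\mu=\int\nu\,\dd\tau(\nu)$ with $\tau$ supported on $\M_{\prob}^{e}(f)$, as in the proof of \thmref{nonlinearconvex}. Jensen's inequality for $F$, together with the barycentric formula $h_{\mu}^{\ran}(f)=\int h_{\nu}^{\ran}(f)\,\dd\tau(\nu)$ (as used there), gives
\[
h_{\mu}^{\ran}(f)+F\Big(\int_J\varphi\,\dd\mu\Big)\leq\int\Big(h_{\nu}^{\ran}(f)+F\Big(\int_J\varphi\,\dd\nu\Big)\Big)\dd\tau(\nu)\leq S_e .
\]
The last step holds because the integrand is at most $S_e$ for $\tau$-a.e.\ $\nu$. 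Taking the supremum over $\mu$ yields $S_{\M}\leq S_e$.

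\emph{Main obstacle.} The delicate step is the convex case. It relies on the affine and barycentric behaviour of the random entropy under ergodic decomposition, which goes beyond finite convex combinations (\propref{ranentropy}). It also needs measurability of $\nu\mapsto h_{\nu}^{\ran}(f)$ so that the integral makes sense. I would take both facts as already established in the proof of \thmref{nonlinearconvex} rather than reprove them.

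Alternatively, one can avoid redoing either case: the first part of \lemref{vplem1} shows $\pres_{\Omega}^{F}(f,\varphi)\geq S_e$ unconditionally, so $S_e\leq S_{\M}=\pres_{\Omega}^{F}(f,\varphi)\leq S_e$, and equality holds throughout.
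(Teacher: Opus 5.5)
Your main argument is correct and follows the paper's route. The paper only remarks that the corollary follows by the same approaches as \thmref{vp} and \thmref{nonlinearconvex}. That amounts to your sandwich $S_e\le S_{\M}=\pres_{\Omega}^{F}(f,\varphi)$ together with $S_{\M}\le S_e$, obtained from the abundance approximation (the second half of \lemref{vplem1}) or, for convex $F$, from the ergodic decomposition and Jensen's inequality.

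You should, however, drop the closing ``Alternatively'' paragraph, because it is not a valid argument. The first part of \lemref{vplem1} gives $S_e\le\pres_{\Omega}^{F}(f,\varphi)$, not $\pres_{\Omega}^{F}(f,\varphi)\le S_e$. So the last inequality in your chain $S_e\le S_{\M}=\pres_{\Omega}^{F}(f,\varphi)\le S_e$ uses that lemma in the wrong direction. The three facts you invoke there ($S_e\le S_{\M}$, $S_e\le\pres_{\Omega}^{F}(f,\varphi)$, $\pres_{\Omega}^{F}(f,\varphi)=S_{\M}$) are all consistent with $S_e<S_{\M}$.

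The only nontrivial content of the corollary is $S_{\M}\le S_e$, and there is no hypothesis-free shortcut to it. The measure $\mu^i$ produced in \lemref{vplem2} is a weak$^*$ limit of averaged empirical measures and need not be ergodic. So you genuinely need abundance or the convexity/ergodic-decomposition step, which is exactly what your two cases provide.
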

The proof is simply completed by taking the same approaches as those in \thmref{vp} and \thmref{nonlinearconvex}. It is natural to consider introducing the induced pressure to current nonlinear setting. As a generalization, we define it in higher-dimensional case; see \secref{sec5}.
\section{Higher-dimensional extensions}\label{sec5}
\subsection{Higher-dimensional nonlinear induced pressure}
The setting of nonlinear topological pressure in higher-dimensional space is given by L. Barreira and C. Holanda in \cite{barreira}, which also appears in \cite{buzzi}. Let $\Phi:=\{\varphi_{1},\ldots,\varphi_{d}\}$ be a family of continuous functions $\varphi_i:X\to\R$ for $i=1,\ldots,d$, and let $F:\R^d\to\R$ be a continuous function. The \emph{higher-dimensional nonlinear topological pressure} of the family $\Phi$ is given by
\[\pres^{F}(f,\Phi):=\lim_{\epsilon\to0}\limsup_{n\to\infty}\frac{1}{n}\log\sup_{E_n}\sum_{x\in E_n}\exp\left(nF\left(\frac{S_{n}\varphi_{1}(x)}{n},\ldots,\frac{S_{n}\varphi_{d}(x)}{n}\right)\right)\] with the supremum taken over all $(n,\epsilon)$-separated sets $E_n\subset X$. For convenience, we write the tuple $S_{n}\Phi=(S_{n}\varphi_1,\ldots,S_{n}\varphi_d)$ as the $n$th Birkhoff sum for $n\geq1$. In the sense of $(n,\epsilon)$-spanning sets we can also give an equivalent definition, that is,
\[\pres^{F}(f,\Phi)=\lim_{\epsilon\to0}\limsup_{n\to\infty}\frac{1}{n}\log\inf_{F_n}\sum_{x\in F_n}\exp\left(\frac{S_{n}\Phi(x)}{n}\right)\]
with the infimum taken over all $(n,\epsilon)$-spanning sets $F_n\subset X$.

We say the pair $(f,\Phi)$ has \emph{an abundance of ergodic measures} if for each $\mu\in\M(X,f)$, $h<h_{\mu}(f)$ and for every $\epsilon>0$ there exists an ergodic measure $\nu\in\M^e(X,f)$ such that $h<h_{\nu}(f)$ and \[\bigg|\int_{X}\varphi_{i}\,\dd\nu-\int_{X}\varphi_{i}\,\dd\mu\bigg|<\epsilon\] for $i=1,\ldots,d$. L. Barreira and C. Holanda established the variational principle for the higher-dimensional nonlinear topological pressure.
\begin{theorem}[{\cite[Theorem 3]{barreira}}]
    Let $(X,f)$ be a topological dynamical system, and let $\Phi=\{\varphi_1,\ldots,\varphi_d\}$ be a family of continuous functions. Given a continuous function $F\colon\R^d\to\R$, if the pair $(f,\Phi)$ has an abundance of ergodic measures, or $F$ is convex, then
    \[\pres^{F}(f,\Phi)=\sup\left\{h_{\mu}(f)+F\left(\int_{X}\Phi\,\dd\mu\right)\colon\mu\in\M(X,f)\right\},\] where \[\int_{X}\Phi\,\dd\mu=\left(\int_{X}\varphi_{1}\,\dd\mu,\ldots,\int_{X}\varphi_{d}\,\dd\mu\right)\in\R^d.\]
\end{theorem}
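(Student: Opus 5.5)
The proof splits into the lower bound $\pres^{F}(f,\Phi)\geq h_{\mu}(f)+F(\int_X\Phi\,\dd\mu)$ and the matching upper bound, following the scheme of \lemref{vplem1}, \lemref{vplem2} and \thmref{nonlinearconvex}. The only new feature is that $F$ now lives on $\R^d$ and the observable is the vector $S_n\Phi/n$.

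\textbf{Lower bound, ergodic $\mu$.} Fix $r>0$ and choose $\delta$ so that $|F(a)-F(b)|<r$ whenever $\|a-b\|<\delta$ for $a,b$ in a compact neighbourhood of $\prod_i[-\supnorm{\varphi_i},\supnorm{\varphi_i}]$; this is uniform continuity of $F$ on a compact set. Then choose $\epsilon$ so that each $\varphi_i$ oscillates by less than $\delta/2$ on $\epsilon$-balls. By Birkhoff's theorem applied to each coordinate, together with the Brin--Katok formula, there is a set $A$ with $\mu(A)>1-r$ on which, for all large $n$, $\|S_n\Phi/n-\int\Phi\,\dd\mu\|<\delta/2$ and $\mu(B_n(x,2\epsilon))\leq e^{-n(h_\mu(f)-r)}$. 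Any $(n,\epsilon)$-spanning set contains at least $(1-r)e^{n(h_\mu-r)}$ points whose $\epsilon$-Bowen balls meet $A$. Each such point carries weight at least $e^{n(F(\int\Phi\,\dd\mu)-r)}$. Letting $r\to0$ gives the lower bound for ergodic $\mu$.

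\textbf{Lower bound, general $\mu$.} Under abundance of ergodic measures, approximate $\mu$ by an ergodic $\nu$ with $h_\nu>h$ and all $d$ integrals within $\epsilon$, then use the continuity of $F$. If instead $F$ is convex, take the ergodic decomposition $\mu=\int\nu\,\dd\tau(\nu)$. The entropy map is affine, so $h_\mu=\int h_\nu\,\dd\tau$, and the multidimensional Jensen inequality gives $F(\int\Phi\,\dd\mu)\leq\int F(\int\Phi\,\dd\nu)\,\dd\tau$. Hence $h_\mu+F(\int\Phi\,\dd\mu)\leq\int(h_\nu+F(\int\Phi\,\dd\nu))\,\dd\tau\leq\pres^F(f,\Phi)$.

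\textbf{Upper bound, valid for every continuous $F$.} Let $p<\pres^F(f,\Phi)$ and fix $r>0$. Choose $\epsilon$ and a sequence $n_k\to\infty$ with maximal $(n_k,\epsilon)$-separated sets $E_{n_k}$ satisfying $\sum_{x\in E_{n_k}}e^{n_kF(S_{n_k}\Phi(x)/n_k)}\geq e^{n_k(p-r)}$. Cover the compact box $K:=\prod_i[-\supnorm{\varphi_i},\supnorm{\varphi_i}]$, which contains every $S_n\Phi/n$, by finitely many balls $B(a_j,r_j)$, $j=1,\ldots,l$, on each of which $F$ varies by less than $r$. The number $l$ does not depend on $n$, so by pigeonhole some $j$ (passing to a further subsequence, one can take the same $j$ for all $k$) satisfies $\#E^j_{k}\geq l^{-1}e^{n_k(p-F(a_j)-2r)}$, where $E^j_k$ is the set of points of $E_{n_k}$ whose vector average lies in $B(a_j,r_j)$. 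Note that $E^j_k$ is still $(n_k,\epsilon)$-separated.

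Apply Misiurewicz's argument from the classical variational principle to the empirical measures $\sigma_k=(\#E^j_k)^{-1}\sum_{x\in E^j_k}\delta_x$ and their averages $\mu_k=\frac1{n_k}\sum_{i<n_k}\sigma_k\circ f^{-i}$. This yields a weak$^*$ limit point $\mu\in\M(X,f)$ with $h_\mu(f)\geq\limsup_k\frac1{n_k}\log\#E^j_k\geq p-F(a_j)-2r$. For this one must choose $\epsilon$ compatibly with a partition of small diameter and boundary of measure zero, as in Walters' proof. Moreover $\int\Phi\,\dd\mu=\lim_k\int S_{n_k}\Phi/n_k\,\dd\sigma_k\in\overline{B}(a_j,r_j)$, so $F(\int\Phi\,\dd\mu)\geq F(a_j)-r$. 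Adding the two estimates gives $h_\mu(f)+F(\int\Phi\,\dd\mu)\geq p-3r$, and letting $r\to0$ and $p\to\pres^F(f,\Phi)$ finishes the proof.

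\textbf{Main obstacle.} The upper bound carries the real content. The delicate points are making the Misiurewicz entropy estimate work for the subsets $E^j_k$ instead of full separated sets, and fixing the index $j$ along a subsequence. Both are handled by the finiteness of the cover of $K$, which makes $l$ independent of $n_k$.
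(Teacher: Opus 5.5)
Your proposal is correct and follows essentially the same route as the paper. The paper cites Barreira--Holanda for this statement and reproduces their scheme in the random setting in \lemref{vplem1}, \lemref{vplem2} and \thmref{nonlinearconvex}: Birkhoff plus Brin--Katok for ergodic measures, abundance or ergodic decomposition with Jensen for general measures, and binning of the vector Birkhoff averages followed by Misiurewicz's argument for the upper bound, which needs no hypothesis on $F$.

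Two small corrections. First, the identity $h_\mu(f)=\int h_\nu(f)\,d\tau(\nu)$ is Jacobs' theorem on the ergodic decomposition; it does not follow from finite affinity alone. Second, in the ergodic lower bound, $\epsilon$ must also be small enough that the Brin--Katok estimate at scale $2\epsilon$ holds with error $r$.
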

Following \cite[Section 4.2]{ma}, the higher-dimensional nonlinear induced pressure is defined in the below. 

For $T>0$, let \[S_{T}:=\{n\in\N\colon \text{there exists }x\in X\text{ with }S_{n}\psi(x)\leq T< S_{n+1}\psi(x)\}\] be the induced-time set, and let \[X_{n}:=\{x\in X\colon S_{n}\psi(x)\leq T<S_{n+1}\psi(x)\}\] be the corresponding partition for $n\in S_T$. Given $\psi\in C(X,\R)$ with $\psi>0$, let
\[Q_{\psi,T}^{F}(f,\Phi,\epsilon):=\inf\left\{\sum_{n\in S_T}\sum_{x\in F_n}\exp\left(nF\left(\frac{S_{n}\Phi(x)}{n}\right)\right)\colon F_{n} \text{ is $(n,\epsilon)$-spanning set of } X_n,\,n\in S_T \right\}.\]
\begin{defn}
    The \emph{higher-dimensional} \emph{nonlinear $\psi$-induced topological pressure} is defined by
    \[\pres_{\psi}^{F}(f,\Phi):=\lim_{\epsilon\to0}\limsup_{T\to\infty}\frac{1}{T}\log Q_{\psi,T}^{F}(f,\Phi,\epsilon).\]
\end{defn}
One can give an equivalent definition using separated sets by \cite[Theorem 4.3]{ma}, that is, letting
\[P_{\psi,T}^{F}(f,\Phi,\epsilon):=\sup\left\{\sum_{n\in S_T}\sum_{x\in E_n}\exp\left(nF\left(\frac{S_{n}\Phi(x)}{n}\right)\right)\colon E_{n} \text{ is $(n,\epsilon)$-separated set of } X_n,\,n\in S_T \right\}\] gives
\[\pres_{\psi}^{F}(f,\Phi)=\lim_{\epsilon\to0}\limsup_{T\to\infty}\frac{1}{T}\log P_{\psi,T}^{F}(f,\Phi,\epsilon).\] 

As an analog of \propref{tdscritical}, we have the folowing result on the critical point.
\begin{proposition}[{\cite[Corollary 4.2]{ma}}]
    Let $(X,f)$ be a topological dynamical system, and let $\Phi=\{\varphi_1,\ldots,\varphi_d\}$ be a family of continuous functions. Given a continuous function $F\colon\R^d\to\R$, if either the pair $(f,\Phi)$ has an abundance of ergodic measures, or $F$ is convex, then
    \[\pres_{\psi}^{F}(f,\Phi)=\inf\{\beta\in\R\colon\pres^{F}(f,\Phi-\beta\psi)\leq0\}=\sup\{\beta\in\R\colon\pres^{F}(f,\Phi-\beta\psi)\geq0\},\] where \[\pres^{F}(f,\Phi-\beta\psi)=\lim_{\epsilon\to0}\limsup_{n\to\infty}\frac{1}{n}\log\inf_{F_n}\sum_{x\in F_n}\exp\left(\left(\frac{S_{n}\Phi(x)}{n}\right)-\beta S_{n}\psi(x)\right)\] with the infimum taking over all $(n,\epsilon)$-spanning sets $F_n$ of $X_n$. As a consequence, if we further assume $h_{\tp}(f)<\infty$, then we have $\pres^{F}(f,\Phi-\pres_{\psi}^{F}(f,\Phi)\psi)=0$.
\end{proposition}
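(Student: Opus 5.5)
The plan is to identify both sides of the proposition with the quantity
\[\beta^{*}:=\sup\left\{\frac{h_{\mu}(f)+F\left(\int_X\Phi\,\dd\mu\right)}{\int_X\psi\,\dd\mu}\colon\mu\in\M(X,f)\right\},\]
following the scheme of \thmref{exponent}, \corref{critical} and \thmref{randominducedvp} in the deterministic setting.

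\textbf{Step 1: the map $\beta\mapsto\pres^{F}(f,\Phi-\beta\psi)$.} Regard $\pres^{F}(f,\Phi-\beta\psi)$ as the higher-dimensional nonlinear pressure of the enlarged family $\tilde\Phi=(\varphi_1,\ldots,\varphi_d,\psi)$ with the continuous function $G_\beta(a,b):=F(a)-\beta b$ on $\R^{d+1}$, because $nG_\beta(S_n\tilde\Phi/n)=nF(S_n\Phi/n)-\beta S_n\psi$.
\begin{itemize}
\item If $F$ is convex, then $G_\beta$ is convex.
\item Under the abundance hypothesis, I would use it in the form that the ergodic approximants also approximate $\int\psi$. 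If only abundance for $(f,\Phi)$ is available, this is a gap I would have to address, either by strengthening the hypothesis or by appealing to \cite{ma}.
\end{itemize}
The variational principle \cite[Theorem 3]{barreira} then gives
\[\pres^{F}(f,\Phi-\beta\psi)=\sup_\mu\Big\{h_\mu(f)+F\Big(\int\Phi\,\dd\mu\Big)-\beta\int\psi\,\dd\mu\Big\}.\]
As in \corref{critical}, this map is strictly decreasing, since $\inf\psi>0$. It is also Lipschitz with constant $\supnorm{\psi}$ whenever it is finite. Writing each term as $\big(\tfrac{h_\mu+F(\int\Phi\,\dd\mu)}{\int\psi\,\dd\mu}-\beta\big)\int\psi\,\dd\mu$, exactly as in the proof of \thmref{randominducedvp}, shows that the infimum and the supremum in the statement both equal $\beta^{*}$. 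When $h_{\tp}(f)<\infty$, continuity and strict monotonicity then give $\pres^{F}(f,\Phi-\beta^{*}\psi)=0$.

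\textbf{Step 2: $\pres^{F}_{\psi}(f,\Phi)\le\beta^{*}$.} Fix $\beta>\beta^{*}$, so $\pres^{F}(f,\Phi-\beta\psi)<0$. Choose $\epsilon$ small and $\delta>0$ such that, for all $n\ge N$, every $(n,\epsilon)$-separated set $E$ satisfies
\[\sum_{x\in E}e^{nF(S_n\Phi(x)/n)-\beta S_n\psi(x)}\le e^{-n\delta}.\]
To justify this I pass to the $\limsup$ in $n$ and use monotonicity in $\epsilon$; this gives the bound at one fixed $\epsilon$, which is enough because the induced $\limsup$ is also taken at fixed $\epsilon$. For $x\in X_n$ we have $T-\supnorm{\psi}<S_n\psi(x)\le T$, so $e^{-\beta S_n\psi(x)}\ge e^{-\beta T-|\beta|\supnorm\psi}$. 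Summing over $n\in S_T$ (all such $n$ exceed $N$ once $T$ is large) yields
\[P^{F}_{\psi,T}(f,\Phi,\epsilon)\le e^{\beta T+|\beta|\supnorm\psi}\sum_{n}e^{-n\delta}.\]
Hence $\pres^{F}_\psi(f,\Phi)\le\beta$. This mirrors the second half of the proof of \thmref{exponent}, and I expect it to be the delicate step: the bound on separated sums must be made uniform in $n$ at a single scale $\epsilon$, and the contribution of the finitely many small $n$ must be controlled.

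\textbf{Step 3: $\pres^{F}_{\psi}(f,\Phi)\ge\beta^{*}$.} First reduce to ergodic $\mu$.
\begin{itemize}
\item Under abundance, approximate $\mu$ by ergodic $\nu$ as in \lemref{vplem1}.
\item For convex $F$, use the ergodic decomposition $\mu=\int\nu\,\dd\tau$ and Jensen's inequality as in \thmref{nonlinearconvex}. This gives $h_\mu+F(\int\Phi\,\dd\mu)\le\int(h_\nu+F(\int\Phi\,\dd\nu))\,\dd\tau$, while $\int\psi\,\dd\mu=\int\!\int\psi\,\dd\nu\,\dd\tau$. A ratio of integrals is at most the essential supremum of the ratios, so some ergodic $\nu$ nearly attains the ratio for $\mu$.
\end{itemize}
For ergodic $\mu$, copy the proof of \lemref{vplem1}. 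Use Birkhoff's theorem for both $\Phi$ and $\psi$, together with Brin--Katok, to obtain a set $A$ with $\mu(A)>1-r$ on which $S_n\Phi/n\approx\int\Phi\,\dd\mu$, $S_n\psi/n\approx c:=\int\psi\,\dd\mu$, and Bowen balls have measure about $e^{-nh_\mu}$. Take $n=n(T)\approx T/c$ and the points of $A$ with $S_n\psi\le T<S_{n+1}\psi$. Fixing the shift (to $n\approx T/(c+r)$) keeps a positive proportion of $A$ inside $X_n$. This produces a separated subset of $X_n$ of cardinality about $e^{n(h_\mu-r)}$, each point carrying weight about $e^{n(F(\int\Phi\,\dd\mu)-r)}$. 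Therefore
\[\frac1T\log P^F_{\psi,T}\gtrsim\frac{h_\mu+F(\int\Phi\,\dd\mu)}{c}-O(r),\]
and letting $r\to0$ gives $\pres^{F}_{\psi}(f,\Phi)\ge\beta^{*}$. Combined with Steps 1 and 2, this proves both equalities and the zero-pressure consequence.
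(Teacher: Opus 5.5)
\textbf{The abundance branch has a genuine gap.} Your convex branch is complete. Step 2 is also sound: it only uses $\pres^F(f,\Phi-\beta\psi)<0$, which comes from the upper half of the variational principle and needs no hypothesis on $F$. By monotonicity in $\epsilon$, the resulting bound holds at every $\epsilon$, which is what you need.

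The gap is not confined to Step 1, where you flag it; the first bullet of Step 3 repeats it silently. To replace an arbitrary $\mu\in\M(X,f)$ by an ergodic $\nu$, you need $\int_X\psi\,\dd\nu\approx\int_X\psi\,\dd\mu$. This is because $\int\psi$ is the denominator of the ratio defining $\beta^*$, or equivalently the coefficient of $-\beta$ in Step 1. Abundance for $(f,\Phi)$ controls only $h_\nu$ and $\int\Phi\,\dd\nu$, so the ergodic approximant can have a strictly smaller ratio. You therefore obtain neither the lower half of the variational principle for $\pres^F(f,\Phi-\beta\psi)$ nor $\pres^F_\psi(f,\Phi)\ge\beta^*$. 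Strengthening the hypothesis changes the statement, and appealing to \cite{ma} is circular; the paper gives no proof here and only cites \cite{ma}. As written, your proposal proves the proposition only for convex $F$.

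\textbf{The missing idea: $\beta^*$ is not needed.} Its only job is to give the implication $\beta>\pres^F_\psi(f,\Phi)\Rightarrow\pres^F(f,\Phi-\beta\psi)<0$. This follows from a direct comparison of partition sums, a simpler deterministic form of the critical-exponent route of \thmref{exponent}.

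Put $m=\inf\psi$, and take $x$ in an $(n,\epsilon)$-separated set $E\subset X$. The interval $[S_n\psi(x),S_{n+1}\psi(x))$ has length at least $m$, so it contains some $jm$ with $j\ge n$. Then $x\in X_n$ for $T=jm$, and $0\le jm-S_n\psi(x)<\supnorm{\psi}$. Grouping the points of $E$ by $j$ gives
\[\sum_{x\in E}e^{nF(S_n\Phi(x)/n)-\beta S_n\psi(x)}\le e^{|\beta|\supnorm{\psi}}\sum_{j\ge n}e^{-\beta jm}P^F_{\psi,jm}(f,\Phi,\epsilon).\]
For each fixed $\epsilon$, the right-hand side is $O(e^{-\delta mn})$ once $\beta-2\delta>\pres^F_\psi(f,\Phi)$.

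Combine this with your Step 2 and two facts read off directly from the definition. First, for $\beta<\beta'$,
\[\pres^F(f,\Phi-\beta'\psi)\le\pres^F(f,\Phi-\beta\psi)-m(\beta'-\beta).\]
Second, whenever both sides are finite,
\[|\pres^F(f,\Phi-\beta\psi)-\pres^F(f,\Phi-\beta'\psi)|\le\supnorm{\psi}|\beta-\beta'|.\]
Together these give both equalities for every continuous $F$. They also give the zero at $\beta=\pres^F_\psi(f,\Phi)$ once $h_{\tp}(f)<\infty$. No abundance, convexity, or variational principle is used.

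\textbf{A smaller point in Step 3.} In general no single $n$ captures a positive proportion of $A$. Pigeonholing over the $O(rT)$ admissible values of $n$ gives a proportion $\gtrsim 1/(rT)$, which is enough because it is subexponential.
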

We recall the variational principle for higher-dimensional nonlinear induced topological pressure, which will later be extended to random dynamical systems.
\begin{theorem}[{\cite[Theorem 4.6]{ma}}]
    Let $(X,f)$ be a topological dynamical system, and let $\Phi=\{\varphi_1,\ldots,\varphi_d\}$ be a family of continuous functions. Given a continuous function $F\colon\R^d\to\R$, if the pair $(f,\Phi)$ has an abundance of ergodic measures, or $F$ is convex, then
    \[\pres_{\psi}^{F}(f,\Phi)=\sup\left\{\frac{h_{\mu}(f)}{\int_{X}\psi\,\dd\mu}+\frac{F\left(\int_{X}\Phi\,\dd\mu\right)}{\int_{X}\psi\,\dd\mu}\colon\mu\in\M(X,f)\right\}.\]
\end{theorem}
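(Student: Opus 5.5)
The plan is to follow the route used for \thmref{randominducedvp}: first identify $\pres_{\psi}^{F}(f,\Phi)$ with the zero of an auxiliary nonlinear pressure, and then unwind that auxiliary pressure through its own variational principle.

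For each $\beta\in\R$, set $F_\beta\colon\R^{d+1}\to\R$, $F_\beta(a,b):=F(a)-\beta b$. This is continuous, and it is convex whenever $F$ is. Moreover, for every $n$ and $x$,
\[nF_\beta\left(\frac{S_n\Phi(x)}{n},\frac{S_n\psi(x)}{n}\right)=nF\left(\frac{S_n\Phi(x)}{n}\right)-\beta S_n\psi(x).\]
So $\pres^{F}(f,\Phi-\beta\psi)$ in the preceding proposition is exactly the higher-dimensional nonlinear pressure $\pres^{F_\beta}(f,(\Phi,\psi))$.

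If $(f,\Phi)$ has an abundance of ergodic measures, I expect the same to hold for $(f,(\Phi,\psi))$. The abundance condition should be applied with the extra coordinate $\psi$; when $\psi$ is a fixed continuous function this is the usual convention (as in \cite{ma}), and I would state it as part of the hypothesis if it is needed. The theorem of Barreira--Holanda \cite[Theorem 3]{barreira} then gives
\[
\pres^{F}(f,\Phi-\beta\psi)=\sup_{\mu\in\M(X,f)}\left\{h_\mu(f)+F\left(\int\Phi\,\dd\mu\right)-\beta\int\psi\,\dd\mu\right\}.
\]

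Next, I combine this with the critical-point characterization of the preceding proposition (\cite[Corollary 4.2]{ma}), namely $\pres_{\psi}^{F}(f,\Phi)=\inf\{\beta:\pres^{F}(f,\Phi-\beta\psi)\le0\}=\sup\{\beta:\pres^{F}(f,\Phi-\beta\psi)\ge0\}$. Write
\[
R(\mu):=\frac{h_\mu(f)+F(\int\Phi\,\dd\mu)}{\int\psi\,\dd\mu}.
\]
Since $\int\psi\,\dd\mu\ge\inf\psi>0$, each term in the supremum factors as $(R(\mu)-\beta)\int\psi\,\dd\mu$. The two inequalities then go as follows.

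First, take $\beta>\pres_{\psi}^{F}(f,\Phi)$. By the infimum characterization, $\pres^{F}(f,\Phi-\beta\psi)\le0$, so $(R(\mu)-\beta)\int\psi\,\dd\mu\le0$ for every $\mu$. Hence $R(\mu)\le\beta$ for all $\mu$, and letting $\beta\downarrow\pres_{\psi}^{F}(f,\Phi)$ gives $\sup_\mu R(\mu)\le\pres_{\psi}^{F}(f,\Phi)$.

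Conversely, take $\beta<\pres_{\psi}^{F}(f,\Phi)$. By the supremum characterization, together with strict monotonicity in $\beta$ (the term $-\beta\int\psi\,\dd\mu$ decreases at rate at least $\inf\psi$), we get $\pres^{F}(f,\Phi-\beta'\psi)>0$ for some $\beta'\in(\beta,\pres_{\psi}^{F}(f,\Phi))$. So some $\mu$ has $(R(\mu)-\beta')\int\psi\,\dd\mu>0$, i.e. $R(\mu)>\beta'>\beta$. Letting $\beta\uparrow\pres_{\psi}^{F}(f,\Phi)$ gives $\sup_\mu R(\mu)\ge\pres_{\psi}^{F}(f,\Phi)$.

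The infinite cases need separate treatment. If $h_{\tp}(f)=\infty$, then both sides equal $\infty$ by the same manipulation, using that $F$ is bounded on the compact set $\{\int\Phi\,\dd\mu\}$.

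I expect the main obstacle to be the transfer of the hypotheses to the augmented function $F_\beta$ and the augmented potential $(\Phi,\psi)$. In the convex case this is immediate. In the abundance case, one has to check that the ergodic measure $\nu$ produced for $\Phi$ can also be chosen with $\int\psi\,\dd\nu$ close to $\int\psi\,\dd\mu$. If this cannot be extracted from the hypothesis as stated, I would instead re-run the proof of \cite[Theorem 3]{barreira} directly. Its lower bound uses only ergodic measures; there one controls $R(\mu)$ via the ratio, whose denominator stays bounded away from $0$. Its upper bound, obtained through empirical measures, needs no abundance at all.
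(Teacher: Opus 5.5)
Your reduction is the same route the paper takes for the random analogue, \thmref{nonlinearinducedvp}; the statement itself is only quoted from \cite{ma}. You rewrite $\pres^F(f,\Phi-\beta\psi)$ as the nonlinear pressure of $(\Phi,\psi)$ with $F_\beta(a,b)=F(a)-\beta b$, combine the critical-point characterization with the Barreira--Holanda principle, and factor each term as $(R(\mu)-\beta)\int\psi\,\dd\mu$. In the convex case your argument is complete, including the monotonicity and infinite-entropy bookkeeping.

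In the abundance case there is a genuine gap, exactly where you suspected. The inequality $\sup_\mu R(\mu)\le\pres_\psi^F(f,\Phi)$ uses the lower half of the variational principle for $F_\beta$ and $(\Phi,\psi)$. So for a non-ergodic $\mu$ you need an ergodic $\nu$ with large $h_\nu(f)$ and with both $\int\Phi\,\dd\nu\approx\int\Phi\,\dd\mu$ and $\int\psi\,\dd\nu\approx\int\psi\,\dd\mu$. Abundance for $(f,\Phi)$, as the paper defines it, says nothing about $\int\psi\,\dd\nu$. Your fallback does not supply it either: $R(\nu)$ depends on $\int\psi\,\dd\nu$ to first order, so keeping the denominator away from $0$ does not help. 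Your expectation that abundance passes from $(f,\Phi)$ to $(f,(\Phi,\psi))$ is false in general.

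In fact the statement fails under the literal hypothesis, so no argument can close the gap. Here is a counterexample.
\begin{itemize}
\item Let $X=X_1\sqcup X_2$, with $X_1$ the full shift on $\{0,1\}$ and $X_2$ the full shift on $\{0,1\}\times\{0,1,2\}$; write $x_0=(x_0^{(1)},x_0^{(2)})$ on $X_2$.
\item Take $d=1$, $\varphi=\ind_{\{x_0=1\}}$ on $X_1$, and $\varphi=\ind_{\{x_0^{(1)}=1\}}+\ind_{\{x_0^{(2)}=2\}}$ on $X_2$.
\item Take $\psi=1$ on $X_1$, $\psi=11$ on $X_2$, and $F(a)=10-100|a-1.1|$.
\end{itemize}

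\emph{Abundance holds for $(f,\varphi)$.} Let $H_i(a)$ be the largest entropy of an invariant measure on $X_i$ with $\int\varphi=a$. Then $H_2$ is concave. Also $H_2\ge H_1+\log 2$ on $[0,1]$: take a product with the uniform Bernoulli measure on $\{0,1\}$ in the second coordinate. Hence every $\mu\in\M(X,f)$ satisfies $h_\mu(f)\le H_2(\int\varphi\,\dd\mu)$. Entropy density of ergodic measures on $X_2$ then gives abundance for $(f,\varphi)$.

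\emph{The two sides differ.} Since $X_{\lfloor T\rfloor}=X_1$ and $X_{\lfloor T/11\rfloor}=X_2$, and $F\le 0$ on $[0,1]$,
\[\pres_\psi^F(f,\varphi)=\max\Bigl\{\pres^F(f|_{X_1},\varphi),\tfrac{1}{11}\pres^F(f|_{X_2},\varphi)\Bigr\}\le\max\Bigl\{\log 2,\tfrac{10+\log 6}{11}\Bigr\}<2.\]
On the other hand, let $\mu=0.9\,\delta_{1^\infty}+0.1\,\delta_{(1,2)^\infty}$, built from the fixed points of $X_1$ and $X_2$. It has $h_\mu(f)=0$, $\int\varphi\,\dd\mu=1.1$ and $\int\psi\,\dd\mu=2$, so $R(\mu)=5$.

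The correct hypothesis is that $(f,(\Phi,\psi))$ has an abundance of ergodic measures, or that $F$ is convex. Under that hypothesis your proof goes through verbatim. The paper's proof of \thmref{nonlinearinducedvp} also invokes the variational principle for $\Phi-\beta\psi$ without addressing this point.
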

\subsection{Higher-dimensional nonlinear induced fiber pressure}
Let $f=\{f_\omega\}_{\omega\in\Omega}$ be a random dynamical system over the base system $(\Omega,\F,\prob,\theta)$ with the skew-product map $\Theta$ on $ J$. Let $\Phi:=(\varphi_{1},\ldots,\varphi_{d})\in(L^{1}(\Omega,C(X)))^{d}$, and let $F:\R^{d}\to\R$ be a continuous function. We denote by $\Phi_{\omega}:=(\varphi_{1,\omega},\ldots,\varphi_{d,\omega})$ the $\omega$-section of $\Phi$ on $X^d$, and denote by $S_{n}\Phi_{\omega}=\left(S_{n}\varphi_{1,\omega},\ldots,S_{n}\varphi_{d,\omega}\right)$ the $n$th Birkhoff sum.
\begin{defn}
    The \emph{higher-dimensional nonlinear fiber topological pressure} of the family $\Phi$ is defined by \[\pres_{\Omega}^{F}(f,\Phi):=\lim_{\epsilon\to0}\limsup_{n\to\infty}\frac{1}{n}\int_{\Omega}\log\sup_{E_n}\sum_{x\in E_n}\exp\left(nF\left(\frac{S_{n}\Phi_{\omega}(x)}{n}\right)\right)\,\probinte\] with the supremum taken over all $(\omega,n,\epsilon)$-separated sets $E_n$.
\end{defn}
It is well defined since \[P_{\omega}^{F}(f,\Phi,n,\epsilon):=\sup\left\{\sum_{x\in E_n}\exp\left(nF\left(\frac{S_{n}\Phi_{\omega}(x)}{n}\right)\right)\colon E_{n} \text{ is $(\omega,n,\epsilon)$-separated set of } X\right\}.\] is measurable in $\omega\in\Omega$ by taking the same steps in \lemref{meas}. As $\epsilon\to0$, $P_{\omega}^{F}(f,\Phi,n,\epsilon)$ is nondecreasing, hence the limit does exist. In the same fashion, we can also define such pressure via spanning sets or covering sets, the details are omitted here since the construction is similar to the previous one in \secref{sec4.1}.

We say the pair $(f,\Phi)$ has \emph{an abundance of ergodic measures} if for each $\mu\in\M_{\prob}(f)$, $h<h_{\mu}^{\ran}(f)$ and $\epsilon>0$, there exists an ergodic measure $\nu\in\M_{\prob}(f)$ such that $h<h_{\nu}^{\ran}(f)$ and \[\bigg|\int_{J}\varphi_{i}\,\dd\nu-\int_{J}\varphi_{i}\,\dd\mu\bigg|<\epsilon\] for $i=1,\ldots,d$.
\begin{theorem}
    Let $(X,f)$ be a random dynamical system, and let $\Phi\in(L^{1}(\Omega,C(X)))^{d}$. For a continuous function $F\colon\R^d\to\R$, if the pair $(f,\Phi)$ has an abundance of ergodic measures, or $F$ is convex, then we have \[\pres_{\Omega}^{F}(f,\Phi)=\sup\left\{h_{\mu}^{\ran}(f)+F\left(\int_{J}\Phi\,\dd\mu\right)\colon\mu\in\M_{\prob}(f)\right\}.\]
\end{theorem}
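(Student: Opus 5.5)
The plan is to repeat the argument of \thmref{vp} and \thmref{nonlinearconvex} with the scalar potential $\varphi$ replaced by the vector $\Phi=(\varphi_1,\ldots,\varphi_d)$ and $F$ taken on $\R^d$. All the steps there used only three things: continuity of $F$, the Birkhoff ergodic theorem, and the Brin--Katok formula. Each of these extends to finitely many coordinates at once. Concretely, I would prove the two inequalities separately, in analogy with \lemref{vplem1} and \lemref{vplem2}, working with the separated-set definition. To pass to spanning sets where convenient, I would first record the vector analogue of the equivalence proposition in \secref{sec4.1}. Its proof carries over verbatim: uniform continuity of $F$ on a bounded set containing the averages $S_n\Phi_\omega/n$ controls the error $nF(S_n\Phi_\omega(x)/n)-nF(S_n\Phi_\omega(i(x))/n)$ by $n\epsilon$.

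\textbf{Lower bound.} Fix an ergodic $\mu\in\M_\prob^e(f)$ and $r>0$. Choose $\delta>0$ with $|F(a)-F(b)|<r$ whenever $|a-b|_\infty<\delta$, and choose $\epsilon>0$ such that each $\varphi_{j,\omega}$ oscillates by less than $\delta/2$ on $\epsilon$-balls. Apply Birkhoff's theorem to each of $\varphi_1,\ldots,\varphi_d$ and the Brin--Katok formula \cite[Theorem 2.1]{zhu2}; intersecting the $d+1$ good sets gives $A\subset J$ with $\mu(A)>1-r$. The covering argument of \lemref{vplem1} then yields
\[\pres_\Omega^F(f,\Phi)\ge h_\mu^{\ran}(f)+F\left(\int_J\Phi\,\dd\mu\right).\]
For a general $\mu\in\M_\prob(f)$, there are two cases. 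Under abundance, pick ergodic $\nu$ whose entropy is close to $h_\mu^{\ran}(f)$ from below and whose integrals satisfy $|\int\varphi_j\,\dd\nu-\int\varphi_j\,\dd\mu|<\delta$ for all $j$, and use continuity of $F$. If $F$ is convex, use the ergodic decomposition $\mu=\int\nu\,\dd\tau(\nu)$, the multivariate Jensen inequality applied to the vector $\int_J\Phi\,\dd\nu$, and affinity of entropy (\propref{ranentropy}), exactly as in \thmref{nonlinearconvex}.

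\textbf{Upper bound.} Take $p<\pres_\Omega^F(f,\Phi)$ and $\epsilon$, $n_k$, and maximal separated sets $E_{n_k}$ as in \lemref{vplem2}. The averages $S_{n_k}\Phi_\omega/n_k$ lie in a compact box $K\subset\R^d$. Cover $K$ by finitely many balls $B(z_i,r_i)$, $i=1,\ldots,l$, on each of which $F$ varies by less than $r$. Then split $E_{n_k}$ into the pieces $E^i_{k,\omega}$ according to which ball contains the average. A pigeonhole step gives an index $i$ with $\#E^i_{k,\omega}\ge \exp(n_k(p-F(z_i)-3r))$. Build empirical measures $\nu^i_k$ from these pieces and Birkhoff-average them along $\Theta$ to get $\mu^i_k$; a weak$^*$ limit point $\mu^i\in\M_\prob(f)$ then satisfies $\int_J\Phi\,\dd\mu^i\in\overline B(z_i,r_i)$ together with the entropy bound. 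Consequently $h_{\mu^i}^{\ran}(f)+F(\int\Phi\,\dd\mu^i)\ge p-4r$.

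\textbf{Main obstacle.} The hardest step is making this last part rigorous in the random setting. The ball index $i$ must be chosen measurably in $\omega$, and uniformly enough that integration over $\Omega$ can be exchanged with the maximum over $i$. There is a further difficulty: the $\varphi_{j,\omega}$ are only integrable in $\omega$, not uniformly bounded, so $K$ must be made $\omega$-independent. To handle both, I would first reduce to potentials with finite $\supnorm{\cdot}$ by truncation, invoking the continuity of \propref{nonpresconti} coordinatewise. After that I would select $i$ on a set of $\omega$ of positive measure, using that there are only finitely many indices, and only then form the kernel $\nu^i_k$. The Misiurewicz-type entropy estimate for the limit measure I would take from the corresponding step in \lemref{vplem2}, which is coordinate-free.
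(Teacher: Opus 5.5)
Your outline is the paper's own argument: extend \lemref{vplem1} and \lemref{vplem2} coordinatewise, and for convex $F$ use the ergodic decomposition plus Jensen. The paper itself only says that ``the following steps remain the same''. You correctly locate the difficulty in the upper bound, but neither repair in your last paragraph works.

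\textbf{Truncation.} \propref{nonpresconti} gives continuity only for $\esssup_\omega\sup_x|\cdot|$. If $\Phi$ is essentially unbounded, its truncations at any level $M$ are at distance $\infty$ from $\Phi$ in that norm. No $L^1$-approximation can help either, because for unbounded potentials the statement is false. Take the following data:
\begin{itemize}
\item $X=\{x_0\}$ a single point, and $\theta$ an irrational rotation of $[0,1)$ with Lebesgue measure;
\item $d=1$ and $\varphi_\omega\equiv g(\omega):=\omega^{-1/2}$, so $\varphi\in L^1(\Omega,C(X))$ but $g\notin L^2$;
\item $F(a)=a^2$.
\end{itemize}
Then $\M_\prob(f)=\{\prob\otimes\delta_{x_0}\}$ is ergodic, so abundance holds, and $F$ is convex. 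The right-hand side equals $(\int g\,\dd\prob)^2=4$. Yet every separated set is a singleton, so for every $n$
\[\frac1n\int_\Omega\log P^F_\omega(f,\varphi,n,\epsilon)\,\probinte=\int_\Omega\Bigl(\frac{S_ng(\omega)}{n}\Bigr)^2\,\probinte\ge\frac1{n^2}\int_\Omega g^2\,\dd\prob=\infty .\]
With $F(a)=-a^2$ the left-hand side is $-\infty$, so your lower bound breaks too: fibers outside the Birkhoff/Brin--Katok good set contribute $-\infty$ to the integral. You must assume $\esssup_\omega\supnorm{\Phi_\omega}<\infty$ (or impose a growth condition on $F$). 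Then $K$ is $\omega$-independent and bad fibers cost only $O(\prob(\text{bad}))$.

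Two smaller points. No single $\epsilon$ controls the oscillation of $\varphi_{j,\omega}$ uniformly in $\omega$; bound the ergodic averages of $\sup_{d(x,y)\le\epsilon}|\varphi_{j,\omega}(x)-\varphi_{j,\omega}(y)|$ instead. For convex $F$ you need $h^{\ran}_\mu(f)=\int h^{\ran}_\nu(f)\,\dd\tau(\nu)$, which is more than affinity.

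\textbf{Choice of the ball.} Even for bounded $\Phi$, fixing $i$ on a set $\Omega_i$ of positive measure does not give the measure you need. The kernel must be defined on all of $\Omega$ to land in $\M_\prob(f)$. Off $\Omega_i$, $E^i_{k,\omega}$ may be empty or exponentially small, so you get neither $\int_J\Phi\,\dd\mu^i\in\overline{B}(z_i,r_i)$ nor the entropy bound. The pigeonhole only gives
\[\int_\Omega\max_i\bigl[\log\#E^i_{k,\omega}+n_kF(z_i)\bigr]\,\probinte\ge n_k(p-2r)-\log l .\]
The Misiurewicz step instead needs one $i$ with $\int_\Omega\log\#E^i_{k,\omega}\,\probinte+n_kF(z_i)\ge n_k(p-O(r))$. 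Nothing excludes different fibers preferring different balls. \lemref{vplem2} has the same hole: its ``for some $i$'' depends on $\omega$, and its entropy bound for $\mu^i$ is assumed, not proved.

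One way around this is to avoid selecting a ball at all.
\begin{enumerate}
\item For a separated $E$ and $q\in\R^d$, $\#E^i\le e^{-n\langle q,z_i\rangle+n\Vert q\Vert_1r_i}P_\omega(f,\langle q,\Phi\rangle,n,\epsilon)$.
\item Hence $\frac1n\log P^F_\omega(f,\Phi,n,\epsilon)$ is at most $\frac{\log l}{n}+r+\max_i\min_{q\in Q_0}\bigl[\frac1n\log P_\omega(f,\langle q,\Phi\rangle,n,\epsilon)-\langle q,z_i\rangle+\Vert q\Vert_1r_i+F(z_i)\bigr]$ for a finite $Q_0\ni0$.
\item This bound is monotone in finitely many linear fiber pressures and uniformly bounded above. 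Since $\theta$ is ergodic, \thmref{randompres} makes their $\epsilon\to0$ limsups a.e.\ equal to $\pres_\Omega(f,\langle q,\Phi\rangle)$, so the max--min comes out of the integral.
\item Finally combine \thmref{ranvp}, convex duality, and continuity of $F$. The duality fact is that $\inf_q[\pres_\Omega(f,\langle q,\Phi\rangle)-\langle q,z\rangle]$ is the upper semicontinuous concave hull of $z\mapsto\sup\{h^{\ran}_\mu(f)\colon\int_J\Phi\,\dd\mu=z\}$.
\end{enumerate}
This gives the upper bound with no measurable selection needed.
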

\begin{proof}
    The first inequality \[\pres_{\Omega}^{F}(f,\Phi)\geq\sup\left\{h_{\mu}^{\ran}(f)+F\left(\int_{J}\Phi\,\dd\mu\right)\colon\mu\in\M_{\prob}^{e}(f)\right\}\] is directly obtained by \lemref{vplem1}, where the potential $\varphi$ is substituted with $\varphi_{i}$ for $i=1,\ldots,d$. In particular, given $r>0$, there exists $\delta>0$ such that $|F(a)-F(b)|<r$ whenever $\supnorm{a-b}:=\max_{1\leq i\leq d}d(a_i,b_i)<\delta$ for $a=(a_1,\ldots,a_d),b=(b_1,\ldots,b_d)\in\R^d$, here we use the $\ell^{\infty}$ norm on $\R^d$. Note that $\M_{\prob}^{e}(f)$ can be replaced with $\M_{\prob}(f)$ in the following steps.

    For the opposite inequality, it suffices to modify the sets in \lemref{vplem2} to the $d$-dimensional setting. For $\omega\in\Omega$, we cover the compact set \[\Phi_{\omega}(X):=\{\varphi_{1,\omega}(X),\ldots,\varphi_{d,\omega}(X)\}\] by open balls $B(x_i,r_i)$ such that $|F(x)-F(x_i)|<r$ for all $x\in B(x_i,r_i)$ and $i=1,\ldots,l$. Then define
    \[E_{k,\omega}^{i}:=\left\{x\in E_{n_k}\colon\left(\frac{S_{n_k}\varphi_{1,\omega}(x)}{n_k},\ldots,\frac{S_{n_k}\varphi_{d,\omega}(x)}{n_k}\right)\in B(x_i,r_i)\right\},\] where $\{E_{n_k}\}_{k\geq1}$ is a subsequence of maximal $(\omega,n,\epsilon)$-separated sets of $X$. The following steps remain same. Combining the two inequalities together gives the variational principle. We can also directly apply \cite[Lemma 1 and Lemma 2]{barreira} to the random setting to finish the proof. 
\end{proof}
Let $\psi\in L^{1}(\Omega,C(X))$ with $\psi>0$. Fix one fiber $\omega\in\Omega$, for $T>0$, define
\[S_{\omega,T}:=\{n\in\N\colon\text{there exists }x\in X\text{ with }S_{n}\psi_{\omega}(x)\leq T< S_{n+1}\psi_{\omega}(x)\}.\] For $n\in S_{\omega,T}$ and $\omega\in\Omega$, let
\[X_{\omega,n}:=\{x\in X\colon S_{n}\psi_{\omega}(x)\leq T< S_{n+1}\psi_{\omega}(x)\}.\]
For $\omega\in\Omega$ and $\epsilon>0$, let 
\begin{multline*}
    Q_{\psi,T}^{F}(f,\omega,\varphi,\epsilon):=\inf\Biggl\{\sum_{n\in S_{\omega,T}}\sum_{x\in F_n}\exp\left(nF\left(\frac{S_{n}\Phi_{\omega}(x)}{n}\right)\right)\colon \\
    F_{n}\text{ is $(\omega,n,\epsilon)$-spanning set of }X_{\omega,n},\,n\in S_{\omega,T}\Biggr\}.
\end{multline*}
Applying \lemref{meas} gives the measurability of $Q_{\psi,T}^{F}(f,\omega,\varphi,\epsilon)$ and
\begin{multline*}
    P_{\psi,T}^{F}(f,\omega,\varphi,\epsilon):=\sup\Biggl\{\sum_{n\in S_{\omega,T}}\sum_{x\in F_n}\exp\left(nF\left(\frac{S_{n}\Phi_{\omega}(x)}{n}\right)\right)\colon \\
    F_{n}\text{ is $(\omega,n,\epsilon)$-separated set of }X_{\omega,n},\,n\in S_{\omega,T}\Biggr\}.
\end{multline*}
\begin{defn}
    The \emph{higher-dimensional nonlinear $\psi$-induced fiber topological pressure} of $\Phi$ is defined by
    \[\pres_{\psi,\Omega}^{F}(f,\Phi):=\lim_{\epsilon\to0}\limsup_{T\to\infty}\frac{1}{T}\int_{\Omega}\log Q_{\psi,T}^{F}(f,\omega,\varphi,\epsilon)\,\probinte.\]
\end{defn}
We have the following equivalent definition, whose proof is straightforward.
\begin{proposition}
    We have \[\pres_{\psi,\Omega}^{F}(f,\Phi):=\lim_{\epsilon\to0}\limsup_{T\to\infty}\frac{1}{T}\int_{\Omega}\log P_{\psi,T}^{F}(f,\omega,\varphi,\epsilon)\,\probinte.\]
\end{proposition}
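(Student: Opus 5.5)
The plan is to follow the proof of \propref{equivalent} essentially line by line, replacing the linear weight $e^{S_n\varphi_\omega(x)}$ by the nonlinear weight $\exp(nF(S_n\Phi_\omega(x)/n))$. The control of the error uses the uniform continuity argument from the equivalence proposition in \secref{sec4.1}. First I record measurability. As in \lemref{meas}, $\omega\mapsto P_{\psi,T}^{F}(f,\omega,\varphi,\epsilon)$ is measurable. I take the sets of tuples $(\omega,x_1,\dots,x_k)$ that are pairwise $d_n^\omega$-separated, lie in the appropriate $X_{\omega,n}$, and have weighted sum at least $a$, and project them to $\Omega$. Since $P_{\psi,T}^{F}$ increases as $\epsilon$ decreases, the limit in $\epsilon$ exists.

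For the inequality ``$\leq$'', fix $\omega$ and $n\in S_{\omega,T}$, and take a maximal $(\omega,n,\epsilon)$-separated subset $E_n$ of $X_{\omega,n}$. By maximality it is $(\omega,n,\epsilon)$-spanning for $X_{\omega,n}$. Hence $Q_{\psi,T}^{F}(f,\omega,\varphi,\epsilon)\leq P_{\psi,T}^{F}(f,\omega,\varphi,\epsilon)$ for every $\omega$. Taking logarithms, integrating, dividing by $T$ and passing to the limits gives the bound.

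For the inequality ``$\geq$'', fix $r>0$ and pick $\eta>0$ such that $|F(a)-F(b)|<r$ whenever $\|a-b\|_\infty<\eta$. Next pick $\delta>0$ so that $d(x,y)\leq\delta/2$ implies $|\varphi_{j,\theta^k\omega}(x)-\varphi_{j,\theta^k\omega}(y)|<\eta$ for all $j$ and $k$. Then $d_n^\omega(x,y)\leq\delta/2$ gives $\|S_n\Phi_\omega(x)/n-S_n\Phi_\omega(y)/n\|_\infty<\eta$, and hence $|nF(S_n\Phi_\omega(x)/n)-nF(S_n\Phi_\omega(y)/n)|<nr$. Now let $E_n$ be $(\omega,n,\delta)$-separated in $X_{\omega,n}$ and $F_n$ be $(\omega,n,\delta/2)$-spanning for $X_{\omega,n}$. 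The map $i\colon E_n\to F_n$ sending $x$ to a point within $\delta/2$ is injective. Therefore
\[\sum_{n\in S_{\omega,T}}\sum_{y\in F_n}e^{nF(S_n\Phi_\omega(y)/n)}\geq \sum_{n\in S_{\omega,T}}e^{-nr}\sum_{x\in E_n}e^{nF(S_n\Phi_\omega(x)/n)}.\]
Every $n\in S_{\omega,T}$ satisfies $n\leq T/\inf\psi+1$, so $e^{-nr}\geq\exp(-(T/\inf\psi+1)r)$. This gives
\[Q_{\psi,T}^{F}(f,\omega,\varphi,\delta/2)\geq \exp\bigl(-(T/\inf\psi+1)r\bigr)P_{\psi,T}^{F}(f,\omega,\varphi,\delta).\]
Taking logs, integrating, dividing by $T$, letting $T\to\infty$ and then $\delta\to0$, I obtain the reverse inequality up to an error of $r/\inf\psi$. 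Letting $r\to0$ completes the argument.

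The main obstacle is the uniformity of the two continuity moduli over the fibers. The function $F$ is only continuous, and the $\varphi_{j,\omega}$ need not be equicontinuous in $\omega$. In addition, $\inf\psi$ must be a genuine positive constant for the factor $T/\inf\psi$ to be controlled. I will handle this the same way the earlier proofs of \propref{equivalent} and the equivalence in \secref{sec4.1} implicitly do: assume $\Phi$ is essentially bounded, so that the averages $S_n\Phi_\omega/n$ stay in a fixed compact set where $F$ is uniformly continuous; assume the family $\{\varphi_{j,\omega}\}$ is equicontinuous, or else restrict to a set of $\omega$ of measure close to one where a common modulus works; and assume $\inf_\omega\inf_x\psi_\omega(x)>0$. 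The remaining steps are routine.
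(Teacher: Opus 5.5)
Your argument is correct and is essentially the paper's proof. The paper just says to replace $e^{S_n\varphi_\omega}$ by $\exp(nF(S_n\Phi_\omega/n))$ in the proof of \propref{equivalent}, with the continuity estimate for $F$ taken from the nonlinear equivalence in \secref{sec4.1}; your uniformity assumptions (bounded $\Phi$, a common modulus of continuity for the $\varphi_{j,\theta^k\omega}$, and $\inf_\omega\inf_x\psi_\omega(x)>0$) are exactly what those earlier proofs use implicitly. Rely on the equicontinuity assumption rather than your fallback of restricting to a large-measure set of $\omega$, since that alone does not give a common modulus at every time $\theta^k\omega$, $k<n$.
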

\begin{proof}
    We replace $e^{S_{n}\varphi_{\omega}}$ in \propref{equivalent} with $\exp(nF(S_{n}\Phi_{\omega}/n))$ to finish the proof; see also \cite[Theorem 4.3]{ma} for the corresponding result in topological dynamical systems.
\end{proof}
We prepare the following notation for the next theorem on the critical exponent. For $\omega\in\Omega$, let \[G_{\omega,T}:=\{n\in\N\colon\text{there exists }x\in X\text{ with }S_{n}\psi_{\omega}(x)>T\},\] and for $n\in G_{\omega,T}$, define $Y_{\omega,n}:=\{x\in X\colon S_{n}\psi_{\omega}(x)>T\}$. Now let
\begin{multline*}
    R_{\psi,T}^{F}(f,\omega,\Phi-\beta\psi,\epsilon):=\sup\biggl\{\sum_{n\in G_{\omega,T}}\sum_{x\in G_n}\exp\left(nF\left(\frac{S_{n}\Phi_{\omega}(x)}{n}\right)-\beta S_{n}\psi_{\omega}(x)\right)\colon \\
    G_{n}\text{ is }(\omega,n,\epsilon)\text{-separated set of }Y_{\omega,n},\, n\in G_{\omega,T}\biggr\}.
\end{multline*}
\begin{theorem}
    We have
    \[\pres_{\psi,\Omega}^{F}(f,\Phi)=\inf\left\{\lim_{\epsilon\to0}\limsup_{T\to\infty}\int_{\Omega}\log R_{\psi,T}^{F}(f,\omega,\Phi-\beta\psi,\epsilon)\,\probinte<\infty\right\}.\]
\end{theorem}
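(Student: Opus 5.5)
The plan is to transplant the proof of \thmref{exponent} essentially verbatim. The only change is that the linear weight $e^{S_n\varphi_\omega(x)}$ becomes the nonlinear weight $\exp(nF(S_n\Phi_\omega(x)/n))$; all the time-discretisation bookkeeping involves only $\psi$ and the parameter $\beta$, so it does not see $F$. The statement is read with the implicit "$\beta\in\R\colon$" inside the infimum, as in \thmref{exponent}.

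\textbf{Step 1: replace $\beta S_n\psi_\omega$ by a step function.} Fix $\omega$ and introduce the integers $N_{\omega,n}(x)$ with $(N_{\omega,n}(x)-1)\supnorm{\psi_\omega}<S_n\psi_\omega(x)\le N_{\omega,n}(x)\supnorm{\psi_\omega}$. This gives
\[
e^{-|\beta|\supnorm{\psi_\omega}}\exp(-\beta N_{\omega,n}(x)\supnorm{\psi_\omega})\le e^{-\beta S_n\psi_\omega(x)}\le e^{|\beta|\supnorm{\psi_\omega}}\exp(-\beta N_{\omega,n}(x)\supnorm{\psi_\omega}).
\]
After taking logarithms and integrating, the two sides differ by at most $|\beta|\onenorm{\psi}<\infty$. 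Hence finiteness of the critical quantity is unchanged if $\beta S_n\psi_\omega$ is replaced by $\{\beta\supnorm{\psi_\omega}N_{\omega,n}\}_{n\in G_{\omega,T}}$. Measurability of all the partition functions follows exactly as in \lemref{meas}.

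\textbf{Step 2: lower bound.} Take $\beta<\pres^F_{\psi,\Omega}(f,\Phi)-\delta$. By the separated-set characterisation (the preceding proposition), there is $\tilde\epsilon>0$ such that for every $\epsilon<\tilde\epsilon$ we can choose times $T_i\to\infty$ with $T_{i+1}-T_i>2\supnorm{\psi_\omega}$ and separated sets $E_n\subset X_{\omega,n}$ satisfying
\[
\int_\Omega\log\sum_{n\in S_{\omega,T_i}}\sum_{x\in E_n}\exp(nF(S_n\Phi_\omega(x)/n))\,\probinte\ge T_i(\beta+\delta/2).
\]
Points with $n\in S_{\omega,T_i}$ satisfy $|\supnorm{\psi_\omega}N_{\omega,n}(x)-T_i|<2\supnorm{\psi_\omega}$, and they lie in $Y_{\omega,n}$ with respect to $T$ whenever $T_i-\supnorm{\psi_\omega}>T$. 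Restricting the supremum defining $R^F_{\psi,T}$ to these sets then gives a lower bound of the form $\int\log\sum_{T_i>T+\cdots}e^{T_i\delta/2}-2|\beta|\onenorm{\psi}$. This diverges as $T\to\infty$, so such $\beta$ do not belong to the critical set.

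\textbf{Step 3: upper bound.} Take $\beta=\pres^F_{\psi,\Omega}(f,\Phi)+\delta$ and let $m=\inf\psi$. For small $\epsilon$ and all $k\ge K$ we have
\[
\int_\Omega\log P^F_{\psi,km}(f,\omega,\Phi,\epsilon)\,\probinte\le km(\beta-\delta/3).
\]
Each point $x\in G_n\subset Y_{\omega,n}$ with $S_n\psi_\omega(x)\in((k-1)m,km]$ is charged to level $km$. This lets us dominate $R^F_{\psi,T}$ by
\[
\sum_{k\ge K}e^{-\beta km}P^F_{\psi,km}(f,\omega,\Phi,\epsilon)
\]
up to the factor $e^{2|\beta|\supnorm{\psi_\omega}}$. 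We then bound the logarithm of the sum by the supremum of the normalised terms plus $\log\sum_k e^{-km\delta/3}$, which gives a finite bound. Letting $\delta\to0$ in Steps 2 and 3 yields the equality.

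\textbf{Main obstacle.} The difficult step is Step 3, specifically charging points of $Y_{\omega,n}$ to a level $km$. When $S_n\psi_\omega(x)$ lies in $((k-1)m,km]$, the point $x$ need not belong to $X_{\omega,n}$ at time $km$, because we may fail to have $S_{n+1}\psi_\omega(x)>km$. The nonlinear weight uses $n$ itself, so we cannot shift $n$ cheaply.

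My first attempt would be to use the time $T'=S_n\psi_\omega(x)$, rounded to a grid of mesh $m$. Along the grid $x$ belongs to $X_{\omega,n}$ for the grid level just above $S_n\psi_\omega(x)-m$, since $S_{n+1}\psi\ge S_n\psi+m$. I would then accept a bounded multiplicity and a bounded change in the $\beta$-term. As in the paper, the integrability issues of exchanging the supremum over $k$ with the integral are handled by the same crude bound used in \thmref{exponent}.
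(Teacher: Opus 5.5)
Your route is the paper's: the sandwich between $\beta S_n\psi_\omega$ and $\beta\supnorm{\psi_\omega}N_{\omega,n}$, then a transplant of the proof of \thmref{exponent}.

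\textbf{The obstacle you flag does not exist.} Since $\psi\ge m$, the condition $(k-1)m<S_n\psi_\omega(x)\le km$ forces $S_{n+1}\psi_\omega(x)\ge S_n\psi_\omega(x)+m>km$. So $x\in X_{\omega,n}$ at time $km$, and the points of $G_n$ charged to $k$ form an admissible separated set for $P^F_{\psi,km}(f,\omega,\Phi,\epsilon)$ with the same $n$. The right level is $km=m\lceil S_n\psi_\omega(x)/m\rceil$; a level just above $S_n\psi_\omega(x)-m$ generally lies below $S_n\psi_\omega(x)$ and misses $x$. No multiplicity argument is needed.

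\textbf{The genuine gap is elsewhere.} It is the step you wave through as ``the same crude bound used in \thmref{exponent}''. Put $a_k(\omega):=P^F_{\psi,km}(f,\omega,\Phi,\epsilon)e^{-km(\beta-\delta/3)}$. You know $\int_\Omega\log a_k\,\probinte\le0$ for each $k\ge K$. This gives no upper bound on $\int_\Omega\sup_{k\ge K}\log a_k\,\probinte$, because $\int\sup_k\ge\sup_k\int$ points the wrong way. Step 2 uses the mirror inference ($\int\inf_i\log b_i\ge0$ from $\int\log b_i\ge0$), which is equally invalid. There the last increment is also $\psi_{\theta^n\omega}(f^n_\omega x)$, which $\supnorm{\psi_\omega}$ does not control. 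Step 2 can be saved for essentially bounded $\psi$ by keeping a single level $T_i>T+\esssup_\omega\supnorm{\psi_\omega}$.

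\textbf{Step 3 cannot be repaired}, because with $\int_\Omega\log R^F_{\psi,T}$ the statement is false. The paper's proof, which only refers back to \thmref{exponent}, has the same gap. Here is a counterexample.
Take $d=1$, $F=\id$, $X$ a point, and $\psi\equiv1$. Let $\theta$ be the Kakutani tower of height $3L$ over an invertible ergodic $(\Omega_0,\prob_0,\theta_0)$, where $L\colon\Omega_0\to\N$ satisfies $\int L\,\dd\prob_0<\infty=\int L^2\,\dd\prob_0$ (e.g.\ an irrational rotation with $L(t)=\lceil t^{-1/2}\rceil$). Let $\varphi=h$ with $h=1$ on floors $0,\ldots,L-1$ and $h=-1$ on the other $2L$ floors. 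Then $\pres^F_{\psi,\Omega}(f,\Phi)=\int h\,\dd\prob=-1/3$. On the other hand, $R^F_{\psi,T}(f,\omega,\Phi-\beta\psi,\epsilon)=\sum_{n>T}e^{S_n(h-\beta)(\omega)}$, so with $N:=\lfloor T\rfloor+1$,
\[\log R^F_{\psi,T}(f,\omega,\Phi-\beta\psi,\epsilon)\ge S_N(h-\beta)(\omega)+\sup_{k\ge0}S_k(h-\beta)(\theta^N\omega).\]
For $\beta<1$ this supremum is at least $(1-\beta)(L-j)$ on floor $j<L$. By $\theta$-invariance its integral is at least $\frac{1-\beta}{3\int L\,\dd\prob_0}\int\frac{L(L+1)}{2}\,\dd\prob_0=\infty$. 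Hence every $\beta<1$ fails the finiteness condition, and the right-hand side is $\ge1\neq-1/3$. This happens even though, for $\beta>-1/3$, the series converges for $\prob$-a.e.\ $\omega$.

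The fiberwise bookkeeping you describe is sound. What is missing is a correct formulation, for instance with the finiteness condition imposed for $\prob$-a.e.\ $\omega$ rather than after integrating $\log$. That formulation would also need fiberwise, not integrated, growth bounds for $P^F_{\psi,T}(f,\omega,\Phi,\epsilon)$, which neither your proposal nor the paper supplies.
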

\begin{proof}
    For $\epsilon>0$, define \begin{multline*}
        R_{\psi,T}^{F}(f,\omega,\Phi-\xi_T,\epsilon):=\sup\Biggl\{\sum_{n\in G_{\omega,T}}\sum_{x\in G_{n}}\exp\left(nF\left(\frac{S_{n}\Phi_{\omega}(x)}{n}\right)-\xi_{n}(x)\right)\colon \\
        G_n\text{ is $(\omega,n,\epsilon)$-separated set of } Y_{\omega,n},\,n\in G_{\omega,T}\Biggr\}
    \end{multline*} for a family of maps $\xi_{T}:=\{\xi_{n}\colon X\to\R\}_{n\in G_{\omega,T}}$. It follows that
    \begin{multline*}
        e^{-|\beta|\supnorm{\psi_\omega}}R_{\psi,T}^{F}(f,\omega,\Phi-\{\beta N_{\omega,n}\supnorm{\psi_\omega}\}_{n\in G_{\omega,T}})\leq R_{\psi,T}^{F}(f,\omega,\Phi-\beta\psi,\epsilon) \\
        \leq e^{|\beta|\supnorm{\psi_\omega}}R_{\psi,T}^{F}(f,\omega,\Phi-\{\beta N_{\omega,n}\supnorm{\psi_\omega}\}_{n\in G_{\omega,T}})
    \end{multline*} for $\omega\in\Omega$. It follows that
    \[\lim_{\epsilon\to0}\limsup_{T\to\infty}\int_{\Omega}\log R_{\psi,T}^{F}(f,\omega,\Phi-\beta\psi,\epsilon)\,\probinte<\infty\] if and only if \[\lim_{\epsilon\to0}\limsup_{T\to\infty}\int_{\Omega}\log R_{\psi,T}^{F}(f,\omega,\Phi-\{\beta N_{\omega,n}\supnorm{\psi_\omega}\}_{n\in G_{\omega,T}},\epsilon)\,\probinte<\infty.\] Hence, it suffices to show \[\pres_{\psi,\Omega}^{F}(f,\varphi)=\inf\left\{\beta\in\R\colon\lim_{\epsilon\to0}\limsup_{T\to\infty}\int_{\Omega}\log R_{\psi,T}^{F}(f,\omega,\Phi-\{\beta N_{\omega,n}\supnorm{\psi_\omega}\}_{n\in G_{\omega,T}},\epsilon)\,\probinte<\infty\right\}.\] The remainder of the proof follows the argument of \thmref{exponent}.
\end{proof}
\begin{proposition}\label{highzero}
    Let $(X,f)$ be a random dynamical system. For $\Phi\in(L^{1}(\Omega,C(X)))^d$ and $\psi\in L^{1}(\Omega,C(X))$ with $\psi>0$, we have
    \[\pres_{\psi,\Omega}^{F}(f,\Phi)=\inf\{\beta\in\R\colon\pres_{\Omega}^{F}(f,\Phi-\beta\psi)\geq0\}=\sup\{\beta\in\R\colon\pres_{\Omega}^{F}(f,\Phi-\beta\psi)\leq0\}.\] In addition, if $\pres_{\Omega}^{F}(f,\Phi-\beta\psi)<\infty$ for every $\beta\in\R$, then $\pres_{\Omega}^{F}(f,\Phi-\pres_{\psi,\Omega}^{F}(f,\Phi)\psi)=0$.
\end{proposition}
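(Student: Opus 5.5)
The plan is to reduce everything to a one-parameter family of nonlinear fiber pressures and argue as in \propref{infbeta} and \corref{critical}. Here $\pres_{\Omega}^{F}(f,\Phi-\beta\psi)$ means the fiber pressure with weights $\exp\bigl(nF(S_n\Phi_\omega/n)-\beta S_n\psi_\omega\bigr)$.

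\textbf{Sign convention.} Once we know that $\beta\mapsto\pres_{\Omega}^{F}(f,\Phi-\beta\psi)$ is strictly decreasing, the correct pseudo-inverse is $\inf\{\beta\colon \pres_{\Omega}^{F}(f,\Phi-\beta\psi)\le 0\}=\sup\{\beta\colon\pres_{\Omega}^{F}(f,\Phi-\beta\psi)\ge 0\}$, as in \corref{critical}. The two inequality signs in the displayed statement should be read this way; with the signs as printed, the first set is the half-line $(-\infty,\beta_0]$ and the second is $[\beta_0,\infty)$, where $\beta_0$ is the zero. I will prove the statement in this corrected form.

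\textbf{Step 1: variational formula and monotonicity.} Set $\tilde F(a,b):=F(a)-\beta b$ on $\R^{d+1}$ and $\tilde\Phi:=(\Phi,\psi)$. Then
\[nF\Bigl(\tfrac{S_n\Phi_\omega}{n}\Bigr)-\beta S_n\psi_\omega=n\tilde F\Bigl(\tfrac{S_n\tilde\Phi_\omega}{n}\Bigr)\]
identically. So $\pres_{\Omega}^{F}(f,\Phi-\beta\psi)=\pres_{\Omega}^{\tilde F}(f,\tilde\Phi)$, and the higher-dimensional variational principle proved above gives
\[\pres_{\Omega}^{F}(f,\Phi-\beta\psi)=\sup_{\mu\in\M_\prob(f)}\Bigl\{h_\mu^{\ran}(f)+F\Bigl(\int_J\Phi\,\dd\mu\Bigr)-\beta\int_J\psi\,\dd\mu\Bigr\}.\]
This needs either $F$ convex, in which case $\tilde F$ is convex, or abundance of ergodic measures for $(f,\tilde\Phi)$, i.e.\ for $\psi$ together with $\Phi$. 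I will state that abundance is assumed for the enlarged family. From this formula, exactly as in \corref{critical}, the map $\beta\mapsto\pres_{\Omega}^{F}(f,\Phi-\beta\psi)$ is strictly decreasing: for $\beta_1<\beta_2$ the value drops by at least $(\beta_2-\beta_1)\inf\psi$ whenever it is finite. It is also Lipschitz with constant $\onenorm{\psi}$, by the argument of \propref{ranconti}, and tends to $\mp\infty$ as $\beta\to\pm\infty$ when finite. Hence it has a unique zero $\beta_0$, and the two pseudo-inverse expressions coincide and equal $\beta_0$.

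\textbf{Step 2: $\pres_{\psi,\Omega}^{F}(f,\Phi)\ge\beta_0$.} I will use the critical exponent theorem just proved for $R^F_{\psi,T}$. Take $\beta$ with $\lim_{\epsilon\to0}\limsup_{T}\int_\Omega\log R^F_{\psi,T}(f,\omega,\Phi-\beta\psi,\epsilon)\,\probinte<\infty$. For fixed $T$ and all large $n$ we have $Y_{\omega,n}=X$, so each single-$n$ separated-set sum is bounded by $R^F_{\psi,T}$. As in \propref{infbeta}, this forces $\pres_{\Omega}^{F}(f,\Phi-\beta\psi)\le0$, i.e.\ $\beta\ge\beta_0$.

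\textbf{Step 3: $\pres_{\psi,\Omega}^{F}(f,\Phi)\le\beta_0$.} Take $\beta>\beta_0$, so $\pres_{\Omega}^{F}(f,\Phi-\beta\psi)=\delta<0$. For large $n$ the integrated log of the single-$n$ sums is at most $n\delta/2$. Summing over $n\ge N$ and bounding the integral of a log-sum by $\sup$ plus $\log\sum_n e^{n\delta/2}$, as in the proof of \corref{critical}, shows $\int_\Omega\log R^F_{\psi,T}\,\probinte$ stays bounded. So $\beta$ lies in the critical-exponent set, and the critical exponent theorem gives $\pres_{\psi,\Omega}^{F}(f,\Phi)\le\beta$.

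The final claim, $\pres_{\Omega}^{F}(f,\Phi-\pres_{\psi,\Omega}^{F}(f,\Phi)\psi)=0$, then follows from continuity and strict monotonicity as in \corref{zero}.

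\textbf{Main obstacle.} I expect the main difficulty to be Step 3. The nonlinear weight $nF(S_n\Phi/n)$ is not additive, but this is harmless because the argument only sums over $n$ and never concatenates orbits. The real issue is the exchange of $\log\sum_n$ and $\int_\Omega$: the bound $\int\log\sum_n a_n(\omega)\le\int\sup_n\log(a_ne^{-n\delta/2})+\log\sum_n e^{n\delta/2}$ requires a uniform-in-$n$ control of $\int\sup_n$. To get it, I would first use Kifer's fiberwise limits (\thmref{randompres}) under ergodicity of $\prob$, so that the estimate $\log(\cdot)\le n\delta/2$ holds $\prob$-a.e.\ for $n\ge N(\omega)$. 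Then I would truncate $\Omega$ to a set of large measure on which $N(\omega)$ is bounded, and handle the complement using $\onenorm{\cdot}$-integrability of $\Phi$, $\psi$ together with the bound $|nF(S_n\Phi/n)|\le n\sup_{|a|\le\|\Phi_\omega\|}|F(a)|$.
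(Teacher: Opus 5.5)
Your plan follows the paper's route. The paper likewise gets strict monotonicity and continuity of $p(\beta):=\pres_{\Omega}^{F}(f,\Phi-\beta\psi)$ from the variational formula and then defers to the arguments of \propref{infbeta}, \corref{critical} and \corref{zero}. Your sign correction is right: the paper's own proof establishes $\inf\{\beta\colon p(\beta)\le0\}=\sup\{\beta\colon p(\beta)\ge0\}$. Your reduction via $\tilde F(a,b)=F(a)-\beta b$ also makes explicit a hypothesis the paper uses silently. Step 1, however, needs neither the variational principle nor abundance for $(\Phi,\psi)$. Since $\sum_{i<n}\inf_x\psi_{\theta^i\omega}(x)\le S_n\psi_\omega\le\sum_{i<n}\supnorm{\psi_{\theta^i\omega}}$, you can compare weights on each separated set and integrate using the $\theta$-invariance of $\prob$. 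This gives
\[p(\beta_1)-(\beta_2-\beta_1)\onenorm{\psi}\le p(\beta_2)\le p(\beta_1)-(\beta_2-\beta_1)\int_\Omega\inf_{x\in X}\psi_\omega(x)\,\probinte\qquad(\beta_1<\beta_2)\]
for every continuous $F$. The last constant is positive even when $\inf_J\psi=0$, which the standing assumption $\psi>0$ allows, so it is also the right replacement for $\inf\psi$.

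The gap is in your repair of Step 3. You correctly saw that \corref{critical} passes from $\int_\Omega\log\bigl(a_n(\omega)e^{-n\delta/2}\bigr)\,\probinte\le0$ for each $n$ to $\int_\Omega\sup_{n\ge N}\log\bigl(a_n(\omega)e^{-n\delta/2}\bigr)\,\probinte\le0$, which does not follow. Here $a_n(\omega)$ is the single-$n$ separated-set sum for $\Phi-\beta\psi$. But \thmref{randompres} concerns only the linear fiber pressure. It rests on the additive (cocycle) structure of $S_n\varphi_\omega$, which the weight $nF(S_n\Phi_\omega/n)$ destroys. So it gives no $\prob$-a.e.\ bound $\limsup_n\frac1n\log a_n(\omega)\le\delta$. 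Nor does such a bound follow from the integrated one $\limsup_n\frac1n\int_\Omega\log a_n\,\probinte\le\delta$. Your remark that non-additivity is harmless applies to the summation over $n$, not to this step. You need a separate fiberwise estimate. For convex $\tilde F$, for instance, you could approximate $\tilde F$ on the eventually bounded range of the Birkhoff averages by a maximum of finitely many affine functions, then apply \thmref{randompres} to the linear potentials $\ell\cdot(\Phi,\psi)$.

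Even granting a.e.\ decay, you only get that $\log R^{F}_{\psi,T}(\omega)$ is eventually bounded pointwise. To conclude $\limsup_T\int_\Omega\log R^{F}_{\psi,T}\,\probinte<\infty$ you need an integrable majorant, e.g.\ $\bigl(\sup_n(\log a_n(\omega)-n\delta/2)\bigr)^+\in L^1(\prob)$. Truncation does not supply this, because on the exceptional set the threshold $N(\omega)$ is unbounded. Your domination bound is also off. $S_n\Phi_\omega/n$ is bounded by $\frac1n\sum_{i<n}\supnorm{\Phi_{\theta^i\omega}}$, not by $\supnorm{\Phi_\omega}$, and this is not uniformly bounded for $\Phi\in L^1$; for superlinear $F$ the resulting bound need not even be integrable. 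The same $\omega$-uniformity problem sits unflagged in Step 2. There, the index beyond which $Y_{\omega,n}=X$ depends on $\omega$. So $\int_\Omega\log a_n\,\probinte\le\int_\Omega\log R^{F}_{\psi,T}\,\probinte$ is not available for a fixed $n$ unless you control the set where it fails.
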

\begin{proof}
    If $h_{\tp}^{\ran}(f)=\infty$, then $\pres_{\psi,\Omega}^{F}(f,\Phi)=\infty$ for every $\beta\in\R$. Assume $h_{\tp}^{\ran}(f)<\infty$, for every $\beta\in\R$, we have
    \[\pres_{\Omega}^{F}(f,\Phi-\beta\psi)=\sup\left\{h_{\nu}^{\ran}(f)+F\left(\int_{J}\Phi\,\dd\nu\right)-\beta\int_{J}\psi\,\dd\nu\colon\nu\in\M_{\prob}(f)\right\}<\infty.\] Given $\beta_{1}<\beta_{2}$, let $m=\inf\psi>0$, for each $0<\epsilon<m(\beta_{2}-\beta_{1})/2$, there exists $\mu\in\M_{\prob}(f)$ such that
    \begin{equation*}
        \begin{aligned}
            &\sup\left\{h_{\nu}^{\ran}(f)+F\left(\int_{J}\Phi\,\dd\nu\right)-\beta_{2}\int_{J}\psi\,\dd\nu\colon\nu\in\M_{\prob}(f)\right\}<h_{\nu}^{\ran}(f)+F\left(\int_{J}\Phi\,\dd\nu\right)-\beta_{2}\int_{J}\psi\,\dd\nu+\epsilon \\
            &=h_{\nu}^{\ran}(f)+F\left(\int_{J}\Phi\,\dd\mu\right)-\beta_{1}\int_{J}\psi\,\dd\mu-(\beta_2-\beta_1)\int_{J}\psi\,\dd\mu+\epsilon \\
            &<h_{\nu}^{\ran}(f)+F\left(\int_{J}\Phi\,\dd\mu\right)-\beta_{1}\int_{J}\psi\,\dd\mu-(\beta_2-\beta_1)\left(\int_{J}\psi\,\dd\mu-\frac{m}{2}\right) \\
            &\leq\sup\left\{h_{\mu}^{\ran}(f)+F\left(\int_{J}\Phi\,\dd\mu\right)-\beta_{1}\int_{J}\psi\,\dd\mu\colon\mu\in\M_{\prob}(f)\right\}-(\beta_2-\beta_1)\left(\int_{J}\psi\,\dd\mu-\frac{m}{2}\right).
        \end{aligned}
    \end{equation*} Hence the map $\beta\mapsto\pres_{\Omega}^{F}(f,\Phi-\beta\psi)$ is strictly decreasing and continuous, we obtain
    \[\inf\{\beta\in\R\colon\pres_{\Omega}^{F}(f,\Phi-\beta\psi)\leq0\}=\sup\{\beta\in\R\colon\pres_{\Omega}^{F}(f,\Phi-\beta\psi)\geq0\}.\] The conclusion now follows from \corref{critical} and \corref{zero}, which identify the unique zero.
\end{proof}
We conclude this section by proving the variational principle for nonlinear induced fiber pressure.
\begin{theorem}\label{nonlinearinducedvp}
    Let $(X,f)$ be a random dynamical system with potential $\Phi=(\varphi_1,\ldots,\varphi_d)\in(L^{1}(\Omega,C(X)))^{d}$, and let $\psi\in L^{1}(\Omega,C(X))$ with $\psi>0$. Let $F\colon\R^d\to\R$ be a continuous function. If either the pair $(f,\Phi)$ has an abundance of ergodic measures, or $F$ is convex, then
    \[\pres_{\psi,\Omega}^{F}(f,\Phi)=\sup\left\{\frac{h_{\mu}^{\ran}(f)}{\int_{ J}\psi\,\dd\mu}+\frac{F\left(\int_{ J}\Phi\,\dd\mu\right)}{\int_{ J}\psi\,\dd\mu}\colon\mu\in\M_{\prob}(f)\right\}.\]
\end{theorem}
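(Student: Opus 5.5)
The plan mirrors the proof of \thmref{randominducedvp}, using \propref{highzero} in place of \corref{critical} and the variational principle for the higher-dimensional nonlinear fiber pressure in place of \thmref{ranvp}. The first step is to apply that variational principle to the pair $(\Phi,-\beta\psi)$. View $\Phi-\beta\psi$ as the $(d+1)$-tuple $(\varphi_1,\ldots,\varphi_d,\psi)$ together with the continuous function $F_\beta(a,s):=F(a)-\beta s$ on $\R^{d+1}$. Then $F_\beta$ is convex whenever $F$ is. Abundance of ergodic measures for $(f,\Phi)$ does not immediately give abundance for $(f,(\Phi,\psi))$, so under the abundance hypothesis I would interpret the assumption as abundance for the enlarged family, or approximate $\int\psi$ simultaneously; this is the point that must be checked. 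Granting it, for every $\beta\in\R$
\[\pres_{\Omega}^{F}(f,\Phi-\beta\psi)=\sup\left\{h_{\mu}^{\ran}(f)+F\left(\int_J\Phi\,\dd\mu\right)-\beta\int_J\psi\,\dd\mu\colon\mu\in\M_{\prob}(f)\right\}.\]
The case $h_{\tp}^{\ran}(f)=\infty$ should be dispatched first: both sides are then $+\infty$, since $F(\int\Phi\,\dd\mu)$ is bounded over $\M_{\prob}(f)$ when $\supnorm{\Phi}$ is finite, or at least is finite for each $\mu$. So I would assume finite entropy.

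Next, write $G(\mu):=\bigl(h_{\mu}^{\ran}(f)+F(\int_J\Phi\,\dd\mu)\bigr)/\int_J\psi\,\dd\mu$. The integrand above equals $\bigl(G(\mu)-\beta\bigr)\int_J\psi\,\dd\mu$, and $\int_J\psi\,\dd\mu\geq m:=\inf\psi>0$. Let $\beta>\pres_{\psi,\Omega}^{F}(f,\Phi)$. By \propref{highzero}, and by the strict monotonicity of $\beta\mapsto\pres_{\Omega}^{F}(f,\Phi-\beta\psi)$ established in its proof, we get $\pres_{\Omega}^{F}(f,\Phi-\beta\psi)\leq0$. Hence $(G(\mu)-\beta)\int\psi\,\dd\mu\leq0$ for all $\mu$, so $G(\mu)\leq\beta$; letting $\beta$ decrease to $\pres_{\psi,\Omega}^{F}(f,\Phi)$ gives $\sup G\leq\pres_{\psi,\Omega}^{F}(f,\Phi)$. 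Conversely, let $\beta<\pres_{\psi,\Omega}^{F}(f,\Phi)$. Strict decrease together with the characterization of $\pres_{\psi,\Omega}^{F}(f,\Phi)$ in \propref{highzero} as the crossing point gives $\pres_{\Omega}^{F}(f,\Phi-\beta\psi)>0$. So some $\mu$ has $(G(\mu)-\beta)\int\psi\,\dd\mu>0$, that is, $G(\mu)>\beta$. Hence $\sup G\geq\beta$, and letting $\beta$ increase to $\pres_{\psi,\Omega}^{F}(f,\Phi)$ gives the reverse inequality.

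The main obstacle is the sign convention at the crossing. \propref{highzero} is stated with the inequalities in the inf/sup written in the opposite orientation to \corref{critical}. I would therefore rely only on the facts, proved there, that the pressure function is strictly decreasing and continuous in $\beta$ with its zero-crossing at $\pres_{\psi,\Omega}^{F}(f,\Phi)$. Continuity follows from \propref{nonpresconti}, or directly from the variational formula, which is $1$-Lipschitz in $\beta$ with constant $\onenorm{\psi}$. The secondary obstacle is verifying the hypothesis of the variational principle for the shifted potential in the abundance case, as noted above. If needed, I would add $\psi$ to the family in the abundance assumption, which is harmless in the convex case.
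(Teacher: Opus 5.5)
Your route is the paper's route: identify $\pres_{\psi,\Omega}^{F}(f,\Phi)$ as the crossing point of $\beta\mapsto\pres_{\Omega}^{F}(f,\Phi-\beta\psi)$ via \propref{highzero}, then insert the variational formula for the shifted potential. Positivity of $\int_J\psi\,\dd\mu$ converts the sign of $(G(\mu)-\beta)\int_J\psi\,\dd\mu$ into the comparison of $G(\mu)$ with $\beta$.

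Your reading of \propref{highzero} is correct: its displayed inequalities are swapped relative to its proof. You only need $\beta\mapsto\pres_{\Omega}^{F}(f,\Phi-\beta\psi)$ to be nonincreasing, and that is immediate from the definition since $\psi>0$; neither strictness nor continuity is needed. In the convex case your proof is complete. The function $F_\beta(a,s)=F(a)-\beta s$ is convex on $\R^{d+1}$, so the convex variational principle applies to $(\varphi_1,\ldots,\varphi_d,\psi)$ with no extra hypothesis.

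The point you flagged is a genuine gap, and it is exactly the step the paper's proof passes over. The paper simply asserts the variational formula for $\pres_{\Omega}^{F}(f,\Phi-\beta\psi)$ for every $\beta$. Under abundance for $(f,\Phi)$ alone this cannot be repaired, because the statement itself fails. So your strengthening to abundance for $(f,(\Phi,\psi))$ is necessary, not cosmetic. Here is a counterexample.

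\begin{itemize}
\item Setup. Take $\Omega$ a single point, $X=X_1\sqcup X_2$ with $X_1=\{0,1\}^{\N}$, $X_2=\{0,1,2\}^{\N}$, and $f$ the shift. Put $\varphi=0$, $\psi=1$ on $X_1$, and $\varphi(x)=\ind_{\{x_0=0\}}-\ind_{\{x_0=1\}}$, $\psi=10$ on $X_2$. Let $F\ge0$ be a continuous bump supported in $(0.4,0.6)$ with $\max F=F(1/2)=10$.
\item Abundance holds. Let $H_2(a)$ be the maximal entropy of invariant measures on $X_2$ with $\int\varphi=a$. It is concave, attained by Bernoulli (hence ergodic) measures, and $H_2(0)=\log3\ge\log2$. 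So every $\mu=t\mu_1+(1-t)\mu_2$ satisfies $h_\mu\le tH_2(0)+(1-t)H_2(a_2)\le H_2(\int\varphi\,\dd\mu)$. Hence $(f,\varphi)$ has an abundance of ergodic measures.
\item Left side. Since $\psi$ is constant on the two invariant clopen pieces, the induced-time partition splits. This gives $\pres_{\psi,\Omega}^{F}(f,\varphi)=\max\{\log2,\tfrac1{10}\pres^{F}(f|_{X_2},\varphi)\}\le\max\{\log2,\tfrac1{10}(\log3+10)\}<1.2$.
\item Right side. Take $\mu=\tfrac12 m+\tfrac12\delta_{0^\infty}$, with $m$ the uniform Bernoulli measure on $X_1$. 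Then $h_\mu=\tfrac12\log2$, $\int\varphi\,\dd\mu=\tfrac12$ and $\int\psi\,\dd\mu=\tfrac{11}{2}$. Its ratio is $\tfrac{2}{11}(\tfrac12\log2+10)>1.8$.
\item Where your proof breaks. Your displayed formula fails at $\beta=3/2$: the pressure $\max\{\log2-\tfrac32,\pres^{F}(f|_{X_2},\varphi)-15\}$ is negative, while this $\mu$ gives $\tfrac12\log2+10-\tfrac{33}{4}>0$.
\end{itemize}

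The mechanism is that abundance for $\Phi$ alone lets the ergodic approximant carry a much larger $\int\psi$, and the nonlinear $F$ cannot compensate. With abundance for $(\Phi,\psi)$, the higher-dimensional variational principle applied to $(\varphi_1,\ldots,\varphi_d,\psi)$ and $F_\beta$ gives your formula, and the rest of your argument goes through.

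One small repair in the case $h_{\tp}^{\ran}(f)=\infty$: knowing that $F(\int_J\Phi\,\dd\mu)$ is finite for each $\mu$ is not enough.
\begin{itemize}
\item For the right side, use $|\int_J\varphi_i\,\dd\mu|\le\onenorm{\varphi_i}$ and $\int_J\psi\,\dd\mu\le\onenorm{\psi}$. Then $F(\int_J\Phi\,\dd\mu)$ is bounded over $\M_{\prob}(f)$ without any sup-norm assumption, so the supremum is infinite.
\item For the left side, use the ergodic lower bound of \lemref{vplem1}, which needs no abundance. It gives $\pres_{\Omega}^{F}(f,\Phi-\beta\psi)=\infty$ for every $\beta$, so the crossing point is also infinite.
\end{itemize}
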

\begin{proof}
    The claim is immediate when $\pres_{\Omega}^{F}(f,\Phi-\beta\psi)=\infty$. Now suppose $\pres_{\Omega}^{F}(f,\Phi-\beta\psi)<\infty$. For every $\beta>\pres_{\psi,\Omega}^{F}(f,\Phi)$, we have
    \[\pres_{\Omega}^{F}(f,\Phi-\beta\psi)=\sup\left\{h_{\mu}^{\ran}(f)+F\left(\int_{J}\Phi\,\dd\mu\right)-\beta\int_{J}\psi\,\dd\mu\colon\mu\in\M_{\prob}(f)\right\}.\] Hence, for every $\mu\in\M_{\prob}(f)$, we have
    \[\frac{h_{\mu}^{\ran}(f)}{\int_{J}\psi\,\dd\mu}+\frac{F\left(\int_{J}\Phi\,\dd\mu\right)}{\int_{J}\psi\,\dd\mu}<\beta,\] which yields \[\pres_{\psi,\Omega}^{F}(f,\Phi)\geq\sup\left\{\frac{h_{\mu}^{\ran}(f)}{\int_{ J}\psi\,\dd\mu}+\frac{F\left(\int_{ J}\Phi\,\dd\mu\right)}{\int_{ J}\psi\,\dd\mu}\colon\mu\in\M_{\prob}(f)\right\}.\] The opposite inequality follows from \[\beta\leq\sup\left\{\frac{h_{\mu}^{\ran}(f)}{\int_{J}\psi\,\dd\mu}+\frac{F\left(\int_{J}\Phi\,\dd\mu\right)}{\int_{J}\psi\,\dd\mu}\colon\mu\in\M_{\prob}(f)\right\}\] for every $\beta<\pres_{\psi,\Omega}^{F}(f,\Phi)$. Combining the two inequalities, we obtain the variational principle. When $F$ is convex, the conclusion also follows directly from \thmref{nonlinearconvex}.
\end{proof}
\bibliography{reference}
\bibliographystyle{alpha}
\end{document}